\newtheorem{theorem}{Theorem}[section]
\newtheorem{example}[theorem]{Example}
\newtheorem{remark}[theorem]{Remark}
\newtheorem{cor}[theorem]{Corollary}
\newenvironment{corollary}{\begin{cor} \em}{\end{cor}}
\newtheorem{tont}[theorem]{Definition}
\newenvironment{definition}{\begin{tont} \em}{\end{tont}}
\newtheorem{lemma}[theorem]{Lemma}
\newtheorem{proposition}[theorem]{Proposition}
 \DeclareMathOperator\codim{codim}
\begin{document}

\title [Local volumes, equisingularity, and generalized smoothability]{Local volumes, equisingularity and generalized smoothability}
\author{Antoni Rangachev}

\begin{abstract} We introduce the {\it restricted local volume} of a relatively very ample invertible sheaf as an invariant of equisingularity by determining its change across families. We apply this result to give numerical control of Whitney--Thom (differential) equisingularity for families of isolated complex analytic singularities. The characterization of the vanishing of the local volume gives rise to the class of {\it deficient conormal} (dc) singularities. We introduce a notion of {\it generalized smoothability} by considering the class of singularities that deform to dc singularities.  Using Whitney stratifications and the functoriality properties of conormal spaces we show that fibers of conormal spaces
are well-behaved under transverse maps. Then by Thom's transversality, the structure theorems of Hilbert--Burch and Buchsbaum--Eisenbud, we show that all smoothable singularities of dimension at least $2$, Cohen--Macaulay codimension $2$, Gorenstein codimension $3$, and more generally determinantal and Pfaffian singularities deform to dc singularities. 

\end{abstract}

\subjclass[2010]{32S15, 32S30, 32S60, 14C17, 14B15 13A30, 13B22, 13H15}
\keywords{Equisingularity, Excess--Degree Formula, local volume of an invertible sheaf, Local Volume Formula, deficient conormal (dc) singularities, generalized smoothability, Thom's transversality}
\address{Department of Mathematics\\
  University of Chicago\\
 Chicago, IL 60637 \\
  Institute of Mathematics \\
and Informatics, Bulgarian Academy of Sciences\\
Akad. G. Bonchev, Sofia 1113, Bulgaria}
\maketitle
\tableofcontents

\section{Introduction}

A principal goal of equisingularity theory is to decide if two germs of sets or maps look alike in some sense. In general, this is a hard problem, but if the germs are part of a family, then it is somewhat easier to predict when the members of the family are the ``same'' \cite{Zariski}. The conditions that will guarantee this ``similiarity'' depend on the total space of the family. Nevertheless, we would like to control these conditions by fiberwise dependent numerical invariants. 

The main notion of equsingularity theory that will be addressed in this work is that of {\it Whitney--Thom equisingularity}, also known as {\it differential equisingularity}. We will use its algebro-geometric formulation and interpret the problem of finding numerical control for it as a problem of intersection theory. Variations of this problem appear in resolutions of singularities, equiresolutions, numerical control of flatness and existence of simultaneous canonical models. 

For the most part we will work in a fairly general setup: the schemes considered are of finite type over an arbitrary field. The applications of our results though will be in the complex analytic category via a classical translation.

Let’s fix some notation. Let $X$  and $Y$ be affine reduced schemes of finite type over a field $\Bbbk$. Assume $X$ is equidimensional and $Y$ is regular and integral of dimension one. Suppose $h \colon X \rightarrow Y$ is a morphism with equidimensional fibers of positive dimension. All points considered are assumed to be closed unless stated otherwise. Let $S$ be a subscheme of $X$ that is proper over $Y$  such that $S_y$ is nowhere dense in $X_y$ for each point $y \in Y$.  Let $C$  be an equidimensional reduced scheme, projective over $X$ such that the structure morphism $c \colon C \rightarrow X$  maps each irreducible component of $C$ to an irreducible component of $X$. Set $D:= c^{-1}S$ and $\dim C:= r+1$.

Fix a point $y_0$ in $Y$. Denote by $D_{\mathrm{vert}}$ the union of components of $D$ of top dimension $r$ that map to $y_0$ under $h \circ c$. Our goal is to control numerically the presence of $D_{\mathrm{vert}}$. 

In the applications to equisingularity, $S$ is a subscheme supported over the singular locus of a complex analytic variety $X$, $Y$ is a smooth subspace of $X$, and $X$ is viewed as the total space of a family obtained by a transverse retraction from $X$ to $Y$. The scheme $C$ is a conormal modification of $X$. Then the condition for $X$ to be equisingular along $Y$ in a neighborhood of  $y_0$ is expressed by asking $D_{\mathrm{vert}}$ to be empty. 

The first case where one can use basic intersection theory to control numerically the presence of $D_{\mathrm{vert}}$ is when $C:= \mathrm{Bl}_{S}X$. In this case $D$ is the exceptional divisor of the blowup. For each $y$ denote by $C(y)$ the blowup of $X_y$ by $S_y$ and by $D(y)$ the exceptional divisor. Set $l:=c_{1}\mathcal{O}_{C}(1)$ and $l_y:=c_{1}\mathcal{O}_{C(y)}(1)$. Let $U$ be a small enough affine neighborhood of $y_0$. Then we have the following {\it Excess--Degree Formula} (EDF):

$$ \int_{S_{y_0}} l_{y_0}^{r-1}[D(y_0)] - 
\int_{S_{y}}  l_{y}^{r-1}[D(y)] = \int_{S_{y_0}} l^{r}[D_{\mathrm{vert}}]$$
for $y \in U-\{y_0\}$. Suppose $\mathcal{O}_{C}(1)$ is ample on $D_{\mathrm{vert}}$. Then $D_{\mathrm{vert}}$ is empty if and only if the intersection number $\int_{S_{y}}  l_{y}^{r-1}[D(y)]$ is constant for $y \in Y$. A typical situation when $\mathcal{O}_{C}(1)$ is ample on $D_{\mathrm{vert}}$ is when $S$ is finite over $Y$, for example. 

What happens if $D$ is more generally a Weil divisor in $C$, and if $C$ is not birational to $X$? If $\dim X=2$ we show that the EDF still applies. In general, to obtain a formula similar to the EDF, one needs to introduce a volume-type invariant that generalizes the top self-intersection number of a Cartier divisor. 
The {\it volume of a line bundle} is an invariant studied extensively by birational geometers (cf.\ \cite[Chp.\ 2]{Laz} for definitions, basic properties and constructions). Its local counterpart the {\it local volume} was introduced by Fulger \cite{Fulger}.  The local volume's  algebraic analogue was studied by \cite{Validashti}.  

From now on assume $S$ is finite over $Y$ and for simplicity of the exposition assume that $\Bbbk$ is the residue field of each closed point $y$ and the points in $S_{y}$.
Let $\mathcal{L}$ be an invertible very ample sheaf on $C$ relative to $X$. Let $\mathcal{A}:= \oplus_{n \geq 0} \Gamma (C, \mathcal{L}^{\otimes n})$ be the ring of sections of $\mathcal{L}$. Denote by $\mathcal{A}_n$ the $n$th graded piece of $\mathcal{A}$. Assume $\dim X_y \geq 2$. Inspired by \cite{Fulger}, \cite{Validashti} and \cite{ELMNP}, for each point $y \in Y$ define the  {\it restricted local volume} of $\mathcal{L}$ at $S_y$ as

$$
\mathrm{vol}_{C_y}(\mathcal{L}):= \limsup_{n\to \infty} \frac{r!}{n^r} \dim_{\Bbbk}H_{S}^{1}(\mathcal{A}_n)\otimes_{\mathcal{O}_Y} k(y).
$$



The existence of the volumes as a limit has been a topic of extensive research (see \cite{Cut}  and Kaveh and Khovanskii \cite{KK}). The local volume might be an irrational number as shown by Cutkosky, Herzog and Srinivasan \cite{Irr.}.

One of the main results of this paper is the following relation which we refer simply as the {\it Local Volume Formula} (LVF): 

$$\mathrm{vol}_{C_{y_0}}(\mathcal{L})- \mathrm{vol}_{C_{y}}(\mathcal{L}) =  \int_{S_{y_0}} l^{r}[D_{\mathrm{vert}}].$$


Particular instances of the LVF and the EDF were known before as results of Hironaka, Schickhoff (see Rmk.\ 2.6 in \cite{Lip} and \cite{Hironaka} for a related result) and Teissier \cite{Teissier} for the Hilbert--Samuel multiplicity, Kleiman and Gaffney \cite{GK}, and Gaffney (\cite{GaffP} and \cite{Gaf3}) for the Buchsbaum--Rim multiplicity. For related work in the projective setting see \cite{Kol} and \cite[Sct.\ 4]{HMX18}. 

A direct consequence of the proof of the LVF is the following result: $D$ is flat over $Y$ if and only if $\dim_{\Bbbk}H_{S}^{1}(\mathcal{A}_n)\otimes_{\mathcal{O}_Y} k(y)$ is constant for $n \gg 0$. This is a local counterpart to the classical result of Hironaka that says that a family of projective schemes over an integral base is flat if and only if the Hilbert polynomials of the fibers remain the same (see \cite{Hironaka3} or \cite[Thm.\ 9.9, Chp.\ III]{Hartshorne}). An important local analogue of Hironaka's flatness result appears in \cite{Resolution} in regard to what Hironaka calls { \it normal flatness}, which is the flatness of the normal cone of a smooth subvariety in a smooth ambient variety. Hironaka controls normal flatness using the Hilbert--Samuel function. His result can be derived from the proof of the LVF. 

How do we control numerically the presence of $D_{\mathrm{vert}}$ when $Y$ is integral of arbitrary dimension? In the case of the EDF assuming $Y$ is regular, we reduce to the case $\dim Y=1$. In the setup of the LVF we can do this provided that the restricted local volume is constant over Zariski open dense subset of $Y$. From now on assume $\Bbbk = \mathbb{C}$. View $X \rightarrow Y$ as a part of a larger family $\mathcal{X} \rightarrow W$, where $W$ is irreducible and $Y \subset W$. Assume that $S$ and $C$ are defined in the same way for $\mathcal{X}$. We say that $\mathrm{vol}_{C_w}(\mathcal{L})$ is {\it stable}  if it is constant for $w$ in a Zariski open dense subset of $W$. In this case we say that $W$ is a {\it good base space}.

Following \cite{GR} we show that we can control the presence of vertical components of $D$ for $X \rightarrow Y$ by computing the restricted local volumes from generic one-parameter families connecting $X_{y_0}$ and $X_y$
to fibers $\mathcal{X}_w$ for which the local volume stabilizes. This gives an extension of the LVF to good base spaces of arbitrary dimension. 

The extension of the LVF to the complex analytic setting is obtained in a standard fashion using compact Stein neighborhoods. The main application of the LVF in this work is obtaining numerical control for {\it Whitney--Thom (differential) equisingularity}. Let's review briefly its definition and some of its applications. By a classical result of Whitney every complex analytic variety $V$  can be partitioned into a locally finite family of submanifolds, called {\it strata}, so that any pair of incident strata gives rise to a family of singularities obtained as the fibers of a retraction to the lower dimensional stratum and such that the total space of the family satisfies certain geometric compatibility conditions. 

More precisely, let $X \subset V$ be a complex analytic variety and $Y$ a smooth subvariety of $X$ of dimension $k$ such that $X-Y$ is smooth and $(X-Y,Y)$ is a pair of strata. Embed $X$ in $\mathbb{C}^{n+k}$ in such a way that so that $Y$ contains $0$ and $(Y,0)$ is linear subspace of dimension $k$. We say $H$ is a tangent hyperplane at $x \in X-Y$ if $H$ is a hyperplane in $\mathbb{C}^{n+k}$ that contains the tangent space $T_{x}X$. Let $(x_i)$ be a sequence of points from $X-Y$ and $(y_i)$ be a sequence of points from $Y$ both converging to $0$. Suppose that the sequence of secants $(\overline{x_{i}y_{i}})$ has limit $l$ and the sequence of tangent hyperplanes $\{T_{x_i}X\}$ has limit $T$. We say that $X$ is {\it Whitney equisingular} along $Y$ at $0$, or that the pair of strata $(X-Y,Y)$ satisfies Whitney conditions at $0$, if $l \subset T$. 

The Whitney stratifications play an important role in the classification of differentiable maps (\cite{Mather1}, \cite{Mather2}, and \cite{Gaf0}), in $D$-module theory and the solution of the Riemann--Hilbert problem \cite{LM}, and in the Goresky--McPherson theory of intersection homology \cite{GM}, to mention few.

Let $f$ be a function on $X$ of constant rank off $Y$. The relative form of Whitney conditions, called the $W_f$ condition, is defined in the same way as the Whitney condition for the pair $(X-Y,Y)$ by replacing the tangent hyperplanes that contain $T_{x_i}X$ by tangent hyperplanes that contain the tangent space
$T_{x_i}f^{-1}fx_{i}$ to the level surface $f^{-1}fx_{i}$, where each $x_i$ is a smooth point of $f^{-1}fx_{i}$. 

Denote by $X_y$ the fiber of a transverse projection to $Y$ and set $f_y:=f|X_y$. Then the Thom-Mather second isotopy lemma yields the following result: if $W_f$ holds, then there exists an ambient homeomorphism $q$ from $X_0 \times Y$ onto $X$ such that $fq = f_0 \times 1_Y$. Hence, for all $y \in Y$ close enough to $0$ the pairs $X_y,f_y$ are topologically the same. 

Our goal is to find numerical invariants depending on the fibers $X_y,f_y$ and use the LVF to show that their constancy across $Y$ is equivalent to $W_f$. As $W_f$ involves limits of tangent hyperplanes and secant lines, it's natural to expect that the invariants should be defined in terms of conormal spaces. Define the {\it conormal space} $C(X,f)$ (see Sct.\ \ref{Whitney, Jacobian}) relative to $f$ as the closure in $X \times \check{\mathbb{P}}^{n+k-1}$ of the set of pairs $(x,H)$ such that $x$ is a smooth point of the level set $f^{-1}fx$ and $H$ is a tangent hyperplane to $f^{-1}fx$. Let $c_{X,f} \colon C(X,f) \rightarrow X$ be structure morphism. Consider the blowup 
of $C(X,f)$ with center $c_{X,f}^{-1}(Y)$. Teissier shows that $W_f$ holds at $0$ if and only if the exceptional divisor of the blowup does not have vertical components of top dimension. To control the presence of vertical components with invariants depending on the fibers $X_y,f_y$ one needs to replace $C(X,f)$ with its relative version. Define the {\it relative conormal space} $C_{\mathrm{rel}}(X,f)$ of $X$ relative to $h: X \rightarrow Y$ in the same way but this time requiring that $H$ contains a parallel to $Y$. This replacement is achieved by Teissier's Principle of Specialization of Integral Dependence.

Define the conormal space $C_{\mathrm{rel}}(X)$ of $X$ relative to $h: X \rightarrow Y$ to be the closure in $X \times \check{\mathbb{P}}^{n+k-1}$ of the set of pairs $(x,H)$ where $x$ is a smooth point of $X$ and $H$ is a tangent hyperplane at $x$ containing a parallel to $Y$.  Denote by $J_{\mathrm{rel}}(X)$ the {\it Jacobian module}, and by $J_{\mathrm{rel}}(X,f)$ the {\it Jacobian module} of $X$ and $f$ with respect to the fiber coordinates (see Sct.\ \ref{Whitney, Jacobian}). Both modules are contained in a free module $\mathcal{F}$. Then $C_{\mathrm{rel}}(X,f) = \mathrm{Projan}(\mathcal{R}(J_{\mathrm{rel}}(X,f)))$ where $\mathcal{R}(J_{\mathrm{rel}}(X,f))$ is the Rees algebra of  $J_{\mathrm{rel}}(X,f)$. 
Denote by $C$ the blowup of $C_{\mathrm{rel}}(X,f)$ with center the inverse image of $Y$. Then  $C:=\mathrm{Projan}(\mathcal{R}(m_{Y}J_{\mathrm{rel}}(X,f)))$ where $m_Y$ is the ideal of $Y$ in $\mathcal{O}_{X,0}$.



Assume $X_y$ and $f_y$ have isolated singularities at $y$. Let $m_y$ be the ideal of $y$ in $\mathcal{O}_{X_{y},y}$. For $C$ denote by $\varepsilon m(y)$ the restricted local volume of $\mathcal{O}_{C}(1)$ at $y$. For $C_{\mathrm{rel}}(X)$ denote by $\varepsilon (y)$ the restricted local volume of $\mathcal{O}_{C_{\mathrm{rel}}(X)}(1)$ at $y$.
These notations align with the closely related notion of {\it $\varepsilon$-multiplicity of modules} (see \cite{Validashti}): $\varepsilon m(y)$ and $\varepsilon (y)$ are the restricted versions of the $\varepsilon$-multiplicity of $m_{y}J(X_y,f_y)$ and $J(X_y,f_y)$ where $m_y$ is the ideal of $y$ in $\mathcal{O}_{X_y}$ and $J(X_y,f_y)$ is the  Jacobian module of $X_y,f_y$.



Assume $X \rightarrow Y$ can be read from  the family $\mathcal{X} \rightarrow W$, where $W$ is a component of the miniversal base space of $X_0$ and $\mathcal{X}$ is the total deformation space of $X_0$ over $W$. Assume $\varepsilon (w)$ is stable for generic $w \in W$ and generic unfolding $\tilde{f}$ of $f$. By generic specialization, the stability condition means that $\varepsilon (w)$, which in this case is the $\varepsilon$-multiplicity $\varepsilon(J(\mathcal{X}_w,\tilde{f}_w))$ of the Jacobian module $J(\mathcal{X}_w,\tilde{f}_w)$, is constant for generic $w$. As an application of the LVF we show that $(X-Y,Y)$ satisfies $W_f$ if and only if $\varepsilon m (y)$
is contanst along $Y$. Thus owing to Grauert's theorem \cite{Grauert}, in the case of volume stability, to each isolated singularity $X_0$, we can associate finitely many invariants, each associated with an irreducible component of the base space of miniversal deformations of $X_0$, which allow to recognize all equisingular deformations of $X_0$.



We prove a similar statement for Thom's $A_f$ condition, which is a relative stratification condition for the study of functions and mappings on stratified sets. It plays an important role in Thom’s second isotopy theorem, and provides a transversality condition in the development of the Milnor fibration.

Our result for $W_f$ was first proved for isolated hypersurface singularities by Teissier \cite{Teissier2} using the Hilbert-Samuel multiplicity. Gaffney (\cite{Gaf1} and \cite{Gaf2} based on ideas conceived in \cite{Gaf0}) and Gaffney and Kleiman \cite{GK} treated the case of isolated complete-intersection singularities using the Buchsbaum--Rim (BR) multiplicity. More recently, Gaffney and Rangachev \cite{GR} adressed the case of families of isolated determinantal singularities using Gaffney's Multiplicity-Polar Theorem for the relative BR multiplicity. In all this cases the base space $W$ of miniversal deformations of $X_0$ is smooth, $\mathcal{X}_w$ is smooth for generic $w$, and $\varepsilon (w)=0$ which as we show in Rmk.\ \ref{stability of f} is equivalent to the vanishing of the $\varepsilon$-multiplicity $\varepsilon (J(\mathcal{X}_w))$ where $J(\mathcal{X}_w)$ of the Jacobian module of $\mathcal{X}_w$. 

Thus it is natural to consider the class of isolated singularities $X_0$ for which $\varepsilon (w)=0$ for generic $w$, which is the simplest instance of volume stability. Assume $\mathcal{X}_w \subset \mathbb{C}^n$. 
Let $c(w) \colon C(\mathcal{X}_w) \rightarrow \mathcal{X}_w$ be the structure morphism for the conormal space of $\mathcal{X}_w$ in $\mathbb{C}^n$ and let $S_w$ be the singular locus of $\mathcal{X}_w$. We show that 
$$\varepsilon (w)=0 \  \text{iff} \  \codim (c(w)^{-1}S_w, C(\mathcal{X}_w)) \geq 2 \ \ \text{iff} \  X_w \  \text{has no polar curve}.$$

In other words the volume vanishes if and only if the fibers of  $C(\mathcal{X}_w)$ over the singular points of $\mathcal{X}_w$ are of dimension less than expected. We call such $\mathcal{X}_w$ {\it deficient conormal (dc) singularity}. We show that the dc property is independent of the embedding of $\mathcal{X}_w$ in affine space and stable under deformations. Furthermore, if $\mathcal{X}_w$ is smooth of dimension at least $2$, then $\mathcal{X}_w$ is a dc singularity. Thus the class of singularities that admit deformations to dc singularities is a natural generalization of the class of smoothable singularities of dimension at least $2$. We show that determinantal and Pfaffian singularities belong to this class. This allows us to propose a notion of {\it generalized smoothability} by considering all singularities that deform to dc singularities. 

The paper is organized as follows. In Sct.\ \ref{Excess-Degree} we prove the EDF. Based on Ramanujam's interpretation of the Hilbert--Samuel multiplicity we recover and generalize  formulas due to Teissier for the Hilbert--Samuel multiplicity and Gaffney's Multiplicity-Polar Theorem. Our approach relies on basic intersection theory as developed by Fulton in \cite{Ful}, Bertini type theorems,  and the geometric theory of the multiplicity of pairs of standard graded algebras developed by Kleiman and Thorup in \cite{KT-Al}. We show how to derive the EDF in the complex analytic setting and discuss some of its applications to the deformation theory of singularities. 

In Sct.\ \ref{computing} we compute the local volume using a Noether normalization-type result of the author \cite[Prp.\ 2.6 ]{Ran19a} that shows that every reduced standard graded algebra $\mathcal{A}$ admits a homogeneous embedding in a standard graded algebra $\mathcal{B}$ that behaves like a polynomial ring. We show that in the computation for the local volume we can replace $H_{S}^{1}(\mathcal{A}_n)$ by $H_{S}^{0}(\mathcal{B}_n/\mathcal{A}_n)$ which is more manageable. The section culminates with a result characterizing the vanishing of the local volume. The main technical ingredient here are results of the author (\cite{Ran19a} and \cite{Rangachev2}) about the structure of $\mathrm{Ass}_{\mathcal{A}}(\mathcal{B}_n/\overline{\mathcal{A}_n})$. We pay special attention 
to the case when $\mathcal{A}$ is the Rees algebra of a module, in which case the local volume was introduced by \cite{Validashti} under the name of epsilon multiplicity. Its relevance to equisingularity was discovered by Kleiman, Ulrich and Validashti \cite{KUV}. Their work served as an inspiration for the present work. 

In Sct.\ \ref{sec. main result} we prove the LVF. We show that the restricted local volume specializes
with passage to the generic fibers using a result of the author about the finiteness of $\mathrm{Ass}_{\mathcal{A}}(\mathcal{B}_n/\overline{\mathcal{A}_n})$ (see \cite[Thm.\ 1.1 (ii)]{Ran19a}). Also, we show that the restricted local volume exists as a limit. 

In Sct.\ \ref{stability} we show how to extend the LVF to the case when $\dim Y >1$ under the assumption of volume stability  using a covering argument due to Gaffney and the author \cite{GR} and by computing the local volumes through one-parameter generic deformations. We prove a general version of Teissier's Principle of Specialization of Integral Dependence (PSID). 

In Sct.\ \ref{Whitney, Jacobian} we review the algebraic and analytic formulations of Whitney--Thom equisingularity (the $W_f$ and Thom's $A_f$ conditions) and we state the necessary results about integral closure of modules and conormal geometry needed for their numerical characterization. In Sct.\ \ref{numerical control} we show how to characterize numerically $W_f$ and $A_f$ using the LVF and the PSID.

In Sct.\ \ref{deficient section} we introduce the notion of deficient conormal (dc) singularities. 
We give examples coming from affine cones over projective varieties having duals with positive defect. We show that the dc property is intrinsic, using a result of Teissier about polar varieities, and that it is stable under infinitesimal deformations using the LVF. We show that Cohen--Macaulay codimension $2$, Gorensten codimension $3$, and more generally determinantal and Pfaffian singularities 
(with two exceptions), admit deformations to dc singularities. The proof is based on several results. 

First we show that the fibers of conormal spaces pullback set-theoretically under morphisms between affine spaces satisfying certain transversality conditions. In particular, in our setup a dc singularity pulls back to a dc singularity. We prove all this using stratification theory, the Lagrangian and functoriality nature of conormal spaces and a recent result of Gaffney and the author \cite[Thm.\ 3.1]{GRB}. 

Then using the structure theorems of Hilbert-Burch and Buchsbaum-Eisenbud, we show that the
classes of singularities under consideration can be obtained as pullbacks of generic determinantal and Pfaffian singularities by holomorphic maps between affine spaces.  By a result of Laksov and Buchweitz, unfolding of these maps gives rise to deformation of the singularities under consideration. By a complex analytic version of Thom's transversality due to Trivedi, the unfoldings can be done in such a way so that the resulting maps are generically transverse to the strata of a Whitney stratification of the generic singularities. Finally, we are left with showing that the generic determinantal and Pfaffian singularities are dc. This is achieved by computations of Gaffney and the author, and Lakshmibai and Singh.




Inspired by a result of Koll\'ar and Kov\'{a}cs \cite{KK18} we show that affine cones over normally embedded abelian varieties of dimension at least $2$ cannot be deformed to dc singularities. We finish the section by showing how to compute the restricted local volume associated with the conormal space of an isolated nonsmoothable Cohen--Macaulay codimension $2$ singularity.

{\bf  Acknowledgements.} It's a pleasure to thank Terence Gaffney, Steven Kleiman and Bernard Teissier for countless helpful and stimulating discussions and for their support and encouragement. Special thanks go to David Massey and Madhav Nori for many fruitful and stimulating conversations. I would like to thank  Lawrence Ein, Mihai Fulger, S\'{a}ndor Kov\'{a}cs, Rahul Singh, Bernd Ulrich, Javid Validashti and Mathias Zach for helpful conversations. I was partially supported by the University of Chicago FACCTS grant ``Conormal and Arc Spaces in the Deformation Theory of Singularities'' and NEU summer and travel grants. I would like to acknowledge the hospitality of IMPA and IMJ-PRG where part of this work was completed.

\section{The Excess--Degree Formula and applications}\label{Excess-Degree}

In this section we prove the {\it Excess--Degree Formula} (EDF) and generalize and recover multiplicity-polar results by Gaffney for the Buchsbaum--Rim (BR) multiplicity and by Teissier for the Hilbert--Samuel multiplicity. 

The following problem is at the heart of equisingularity theory. Let $h \colon X \rightarrow Y$ be a morphism of schemes of finite type over a field $\Bbbk$ with equidimensional fibers of positive dimension $r$. Assume $X$ is equidimensional and $Y$ is regular. Let $S$ be a closed subscheme of $X$ proper over $Y$ such that for each point $y \in Y$ the fiber $S_y$ is nowhere dense in $X_y$. Denote by $C$ the blowup of $X$ with center $S$, by $D$ the exceptional divisor, and by $c$ the blowup map $c \colon C \rightarrow X$. For each $y \in Y$ denote by $C(y)$ the blowup of $X_y$ with center $S_y$ and by $D(y)$ the exceptional divisor. Let $y_0$ be a closed point of $Y$.  We can ask: when do we have an equality of fundamental cycles
\begin{equation}\label{fund. cycles}
[C_{y_0}] =[C(y_0)]?
\end{equation}
It's not hard to see that (\ref{fund. cycles}) holds if and only if the fiber of the exceptional divisor $D_{y_0}$ is equidimensional. More precisely, we would like to find a  numerical invariant depending solely on $D(y)$ whose constancy across $Y$ guarantees (\ref{fund. cycles}) for each closed point $y_0 \in Y$. 

Set $l:= c_1 \mathcal{O}_{C}(1)$ and $l_y:= c_1 \mathcal{O}_{C(y)}(1)$ for each $y \in Y$. Denote by $D_{\mathrm{vert}}$ the union of the components of $D$ that surject onto $y_0$. For a closed point $y_0 \in Y$ and affine neighborhood $U$ of $y_0$ denote by $U'$ the punctured neighborhood $U-\{y_0\}$. A partial answer to our question is given by the following result.

\begin{theorem}[Excess--Degree Formula]\label{EDF} The following holds.
\begin{itemize}
\item [\rm{(i)}] Assume $Y$ is integral and regular of dimension one and $y_0$ is a closed point in $Y$. Let $U$ be a small enough affine neighborhood of $y_0$ such that $C_{y_0}$ is a principal Cartier divisor in $C$ after base change $U \rightarrow Y$ and $C$ is flat over $U'$. Then we have the following Excess--Degree Formula (EDF):
$$ \int_{S_{y_0}} l_{y_0}^{r-1}[D(y_0)] - 
\int_{S_{y}}  l_{y}^{r-1}[D(y)] = \int_{S_{y_0}} l^{r}[D_{\mathrm{vert}}]$$
for $y \in U'$. 
\item [\rm{(ii)}] Assume $Y$ is regular and $S$ is finite over $Y$. Then (\ref{fund. cycles}) holds if and only if $\int_{S_{y}}  l_{y}^{r-1}[D(y)]$ is constant for each closed $y$ is a sufficiently small affine neighborhood $U$ of $y_0$. 
\end{itemize}
\end{theorem}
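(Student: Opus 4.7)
The plan for (i) is to interpret both sides of the EDF as degrees of $0$-cycles on $C$ arising from the specialization of a single class from a generic fibre to the special fibre, so that the difference reduces to the vertical components of $D$ over $y_0$. The proper morphism $\pi = h\circ c\colon C\to U$ is flat over $U'=U\setminus\{y_0\}$, and $C_{y_0}$ is the Cartier divisor on $C$ cut out by $\pi^{*}t$ for a uniformizer $t\in\mathcal{O}_{Y}(U)$ at $y_0$. I would first decompose its associated Weil divisor into $r$-dimensional components of two types: those of the strict transform $C(y_0)\hookrightarrow C_{y_0}$, which appear with multiplicity one by flatness of $\pi$ over $U'$ propagated to the generic point of each such component, and the top-dimensional components of $D$ lying over $y_0$, i.e.\ the components of $D_{\mathrm{vert}}$, gathered into an effective cycle $R_{\mathrm{vert}}$. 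This gives
$$
\pi^{*}[y_{0}] = [C(y_{0})] + R_{\mathrm{vert}}, \qquad \pi^{*}[y] = [C(y)] \ (y\in U'),
$$
in $A_{r}(C|_U)$. After shrinking $U$ so that $[y_0]\sim[y]$ on the affine curve $U$, the two pullbacks are rationally equivalent on $C|_U$. Capping with $l^{r}$ preserves this equivalence, and pushing down to a point yields
$$
\deg(l^{r}\cap[C(y_{0})]) + \deg(l^{r}\cap R_{\mathrm{vert}}) = \deg(l^{r}\cap[C(y)]).
$$
Because $\mathcal{O}_{C}(1)=\mathcal{O}_{C}(-D)$, one factor of $l$ turns $[C(y)]$ into $-[D(y)]$, with intersection performed inside $C(y)$ where $l$ restricts to $l_{y}$; the projection formula then gives $\deg(l^{r}\cap[C(y)]) = -\int_{S_{y}}l_{y}^{r-1}[D(y)]$, and similarly at $y_0$. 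Identifying $\deg(l^{r}\cap R_{\mathrm{vert}})$ with $\int_{S_{y_{0}}}l^{r}[D_{\mathrm{vert}}]$ and rearranging yields the EDF.

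For (ii), I would reduce to (i) by cutting $Y$ with a regular curve $Y'$ through $y_{0}$ and transferring to the one-parameter family $C\times_{Y}Y'\to Y'$, which for generic $Y'$ agrees with $\mathrm{Bl}_{S\times_{Y}Y'}(X\times_{Y}Y')$. If (\ref{fund. cycles}) holds at $y_{0}$, then no $r$-dimensional component of $D$ lies over $y_{0}$, and by upper semicontinuity of fibre dimensions for $D\to Y$ the same persists on an affine neighborhood $U$ of $y_{0}$; picking such a curve through any $y\in U$ and applying (i) forces equality of the intersection numbers at $y$ and $y_{0}$. Conversely, constancy on $U$ together with (i) applied to a generic curve through $y_{0}$ yields $\int l^{r}[D'_{\mathrm{vert}}]=0$. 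Since $S$ is finite over $Y$, each component of $D'_{\mathrm{vert}}$ lies inside a fibre of $c$ over a closed point of $S_{y_{0}}$, on which the relatively very ample $\mathcal{O}_{C}(1)$ is ample, so this vanishing forces $D'_{\mathrm{vert}}=\emptyset$, whence (\ref{fund. cycles}).

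The step I expect to be hardest is the cycle decomposition $\pi^{*}[y_{0}]=[C(y_{0})]+R_{\mathrm{vert}}$ in (i)—specifically, verifying that the Weil divisor $[C_{y_0}]$ has exactly the expected $r$-dimensional components with the claimed multiplicities, by analyzing how $\mathrm{Bl}_{S}X$ interacts with the non-flat base change $y_{0}\hookrightarrow Y$. A secondary subtlety in (ii) will be a transversality argument ensuring that blowup commutes with restriction to a generic $Y'$ and that the vertical locus in the original family is still detected by the restricted one.
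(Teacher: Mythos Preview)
Your approach to (i) is essentially the paper's: both start from the cycle identity $[C_{y_0}]=[C(y_0)]+[D_{\mathrm{vert}}]$ (your $R_{\mathrm{vert}}$), use $\mathcal{O}_C(1)=\mathcal{O}_C(-D)$ to convert a factor of $l$ into intersection with $-D$, and then compare the resulting $0$-cycles over $y_0$ and a general $y$. Your outline for (ii) likewise matches the paper's reduction to $\dim Y=1$ via a local Bertini argument.

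One point deserves more care. You write that capping $[C_{y_0}]\sim[C_y]$ with $l^r$ and ``pushing down to a point'' yields equal degrees, but $C\to\mathrm{Spec}\,\Bbbk$ is not proper, so rational equivalence on $C$ does not by itself give equal degrees. The paper handles this by observing that after one application of $l=-[D]$ the relevant classes live on $D$, which \emph{is} proper over $Y$; it then invokes Fulton's conservation of number (Prop.~10.2) for the $1$-cycle $l^{r-1}[D_{\mathrm{hor}}]$ under the proper map $D\to Y$, together with commutation of Gysin pullback with Chern classes. Your argument can be completed the same way: take the Gysin map $i_D^{!}\colon A_r(C)\to A_{r-1}(D)$ along the Cartier divisor $D$, so that $i_D^{!}[C_{y_0}]=i_D^{!}[C_y]$ in $A_{r-1}(D)$, and then cap with $l^{r-1}$ and push forward along the proper $D\to Y$. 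This is the step you should not leave implicit; it is where properness of $S\to Y$ (hence $D\to Y$) actually enters. A second minor point: your claimed ``multiplicity one'' for the $C(y_0)$-components in $[C_{y_0}]$ is not needed and not always true---what matters (and what the paper uses) is that $C_{y_0}$ and $C(y_0)$ agree as schemes over $X_{y_0}\setminus S_{y_0}$, so their fundamental cycles differ only by a cycle supported on $D_{y_0}$.
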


\begin{proof}
Preserve the setup from the introduction. For each $k$ denote by $Z_k(C)$ the group of $k$-cycles on $C$ and by $A_{k}(C)$ the group of $k$-cycles modulo rational equivalence.  For each $y \in Y$ consider the refined  Gysin homomorphism $i_{y}^{!} \colon A_{k}(C) \rightarrow A_{k-1}(C_y)$ defined from the fiber square
\begin{displaymath}
\begin{CD}
C_y @>>> C\\
@VVV  @VVV\\
 \{y\}  @>>> Y
\end{CD}
\end{displaymath}
(see \cite[Sct.\ 6.2]{Ful}). For a $k$-cycle class $Z$ on $C$ set $Z_y: = i_{y}^{!}(Z).$ Finally, denote by $D_{\mathrm{hor}}$ the union of the irreducible components of $D$ that surject onto $Y$.

First, note that  $C(y_0)$ and $C_{y_0}$ are isomorphic over points $x \in X_{y_0}$ with $x \notin S_{y_0}$.  Also, the irreducible components of  $C(y_0)$ surject onto those of $X_{y_0}$. But $S_{y_0}$ is nowhere dense in $X_{y_0}$ by hypothesis. Hence the irreducible components of $C_{y_0}$ are either vertical components of $D$ or components that surject onto the irreducible components of $X_{y_0}$. Thus we have

\begin{equation}\label{key eq.}
[C(y_0)]  -   [C_{y_0}]   =  -[D_{\mathrm{vert}}] \ \text{in} \  Z_r(C). 
\end{equation}

Second, $ D\cdot[C_{y_0}]=C_{y_0}\cdot[D]$ because $C_{y_0}$ is Cartier.  Write $[D]=[D_{\mathrm{hor}}]+[D_{\mathrm{vert}}]$. Then $C_{y_0}\cdot[D] = C_{y_0}\cdot[D_{\mathrm{hor}}]$ in $Z_{r-1}(C)$ because $C_{y_0}$ is principal in $C$, so $C_{y_0}\cdot[D_{\mathrm{vert}}]=0$ (see Rmk.\ 2.3 in \cite{Ful}). Thus, 
$C_{y_0}\cdot [D] =[D_{\mathrm{hor}}]_{y_0}$. Hence $ D\cdot[C_{y_0}]=[D_{\mathrm{hor}}]_{y_0}.$ 

Additionally, observe that $D\cdot[C(y_0)]=[D(y_0)]$. Hence by intersecting each term in (\ref{key eq.}) with $D$ we get 

$$[D(y_0)]  - [D_{\mathrm{hor}}]_{y_0} = -D\cdot[D_{\mathrm{vert}}] \ \text{in} \  A_{r-1}(C)$$
or equivalently 
\begin{equation}\label{divisors}
[D(y_0)]  - [D_{\mathrm{hor}}]_{y_0} = l[D_{\mathrm{vert}}].
\end{equation}
as $D$ is dual to $\mathcal{O}_{C}(1)$. Because $\mathcal{O}_{C}(1)$ restricts to $O_{C(y_0)}(1)$ on $C(y_0)$ then $l^{r-1} [D(y_0)]= l_{y_0}^{r-1} [D(y_0)].$ Apply $l^{r-1}$ to both sides of (\ref{divisors})
to get

\begin{equation}\label{zero cycle}
l_{y_0}^{r-1}[D(y_0)]  - l^{r-1}[D_{\mathrm{hor}}]_{y_0} = l^{r}[D_{\mathrm{vert}}] \ \text{in} \  A_{0}(C).
\end{equation}

Next apply \cite[Prp.10.2]{Ful}
to the $1$-cycle $l^{r-1}[D_{\mathrm{hor}}]$ in $D$ and the map $D \rightarrow Y$ which is proper because it is composition of two proper maps: $D \rightarrow S$ and $S \rightarrow Y$. We have

\begin{equation}\label{conservation}
\int_{S_{y_0}} (l^{r-1}[D_{\mathrm{hor}}])_{y_0}=
\int_{S_y} (l^{r-1}[D_{\mathrm{hor}}])_{y}.
\end{equation}

Note that $[D]_{y} = [D(y)]$ because by assumption $C$ is flat over $U'$ so $C_y=C(y)$ as schemes. Then by \cite[Prop.\ 10.1 (d)]{Ful} about specialization of Chern classes, we have

\begin{equation}\label{specialization}
l^{r-1}[D_{\mathrm{hor}}]_{y}= l_{y}^{r-1}[D(y)].
\end{equation}

Combining (\ref{zero cycle}), (\ref{specialization}) and (\ref{conservation}) we get


$$\int_{S_{y_0}} l_{y_0}^{r-1}[D(y_0)] - \int_{S_y}l_{y}^{r-1}[D(y)] = \int_{S_{y_0}} l^{r}[D_{\mathrm{vert}}].$$
The proof of $\rm{(i)}$ is complete.

Consider $\rm{(ii)}$. Suppose that for each affine neighborhood $U$ of $y_0$ there exists $y \in U$ such that $$ \int_{S_{y_0}} l_{y_0}^{r-1}[D(y_0)] \neq
\int_{S_{y}}  l_{y}^{r-1}[D(y)].$$ 
Suppose $\dim D_{y_0} <r$. Because $D \rightarrow Y$ is proper, by upper semi-continuity $\dim D_y <r$ for each closed $y$ in a sufficiently small neighborhood $U$ of $y_0$.
Then $[D_y]=[D(y)]$ for $y \in U$. Because $i_{y} \colon y \hookrightarrow Y$ is a regular embedding, the operation of pullback along $i_y$ commutes with that of pushforth along the proper map $D \rightarrow Y$. Thus by \cite[Prp.10.2]{Ful} $\int_{S_{y}}  l_{y}^{r-1}[D(y)]$ is constant along $U$ which is a contradiction. Therefore, $\dim D_{y_0} \geq r$ which implies $[C_{y_0}] \neq [C(y_0)]$.

Next, suppose $[C_{y_0}] \neq [C(y_0)]$. Then $\dim D_{y_0} \geq r$. Replace $Y$ with an affine open neighborhood of $y_0$. Then $h_{|S}^{-1}Y$ is affine because $S \rightarrow Y$ is finite. Replace $X$ with an affine open subset containing $h_{|S}^{-1}Y$. As $X$ is affine we can fix an embedding $C \hookrightarrow X \times \mathbb{P}^m$. Denote by $A(C):=\mathcal{O}_X[v_0, \ldots, v_m]/\mathcal{I}_C$ the homogeneous coordinate ring of $C$ in $X \times \mathbb{P}^m$. Select $r$ general (to be specified below) hypersurfaces $V_1, \ldots, V_r$ in $X \times \mathbb{P}^m$ such that each hypersurface is defined by a general $\Bbbk$-linear combination of the generators of a sufficiently high power of the irrelevant ideal $(v_0, \ldots, v_m)$. 

Let's specify the genericity conditions on the $V_i$s. We rely on the fact that irrelevant ideal $(v_0, \ldots, v_m)$ is maximal in $A(C)$. By prime avoidance we can select the defining equations for the $V_i$s in $A(C)$ in such a way that the ideal of $V:=V_1 \cap \ldots \cap V_r$ is of height $r$ modulo each minimal prime. Then by the dimension formula \cite[Ch.\ 5, \S 14]{Matsumura} $V \cap C$ is equidimensional of dimension equal to $\dim Y$. Furthermore, by prime avoidance we can ensure that $\dim (V \cap D) = \dim Y-1$. Thus, the generic point of each irreducible component of $V \cap C$ is not contained in $D$. Because $C$ is birational to $X$, we get that $c(V \cap C)$ is equidimensional and $\dim c(V \cap C)=\dim Y$. Because $\dim D(y_0) = r-1$ by prime avoidance we can select the $V_i$s so that $V \cap D(y_0) = \emptyset$. Thus $c(V \cap C(y_0))$ is either empty or consists of finitely many points which do not lie in $S_{y_0}$. By shrinking $X$ if necessary we get $V \cap C(y_0) = \emptyset$. But $[C_{y_0}]-[C(y_0)] \subset D_{y_0}$. Because $\dim D_{y_0} \geq r$ we have $V \cap C_{y_0} \neq \emptyset$. Thus, $V \cap C_{y_0} \subset  D_{y_0}$. Hence $c(V \cap C)_{y_0} \subset S_{y_0}$. Therefore, $(h \circ c) (V \cap C)$ is dense in $Y$. 

We have the following local version of Bertini's theorem. Denote by $\mathfrak{m}_{y_0}$ the maximal ideal of $\mathcal{O}_{Y,y_0}$. Let $P_1, \ldots, P_n$ be prime ideals in $\mathcal{O}_{Y,y_0}$ different from $\mathfrak{m}_{y_0}$. Let $h_1 \in \mathfrak{m}_{y_0}-\mathfrak{m}_{y_0}^2$. Note that the ideal $(h_1)+\mathfrak{m}_{y_0}^2$ avoids each $P_i$. Then by E.\ Davis' version of prime avoidance \cite[Ex.\ 16.8]{Matsumura87}, there exists $h_2 \in \mathfrak{m}_{y_0}^2$ such that $h:=h_1+h_2$ avoids each $P_i$. Furthermore, because $h_1 \not \in \mathfrak{m}_{y_0}^2$ the ring $\mathcal{O}_{Y,y_0}/h\mathcal{O}_{Y,y_0}$ is regular. We will apply this observation to reduce the proof of part $\rm{(ii)}$ to the case $\dim Y=1$.

Say that a component of $D$ is {\it vertical} if it maps to a proper closed subset of $Y$. Set $\dim Y:=k$. Using repeatedly the local version of Bertini's theorem we select a nested sequence $Y:=Y_0 \supset Y_1 \ldots \supset Y_{k-1}$ where $Y_i$ is regular hypersurface in $Y_{i-1}$ through $y_0$ for $i=1, \ldots, k-1$ subject to the following genericity conditions:

\begin{itemize}
    \item [\rm{(1)}]$Y_1$ is chosen so that it intersects properly the images under $h \circ c$ of all vertical components of $D$.
    
    \item [\rm{(2)}] For each $i=1, \ldots, k-1$ set $C_{Y_i}:= C \times_{Y} Y_i$. Denote by $C(Y_i)$ the blowup of $X \times_{Y} Y_i$ with center  $S \times_{Y} Y_i$ and by $D(Y_i)$ the corresponding exceptional divisor. We choose $Y_i$ so that it intersects properly the closures of the images under $h \circ c$ of the vertical components of $D(Y_{i-1})$.
    \item[\rm{(3)}] $Y_{k-1}$ is not in the closure of $Y \setminus (h \circ c) (V \cap C)$.
    
\end{itemize}
Then by applying repeatedly Bertini's theorem for extreme morphisms \cite[Thm.\ 4.3]{Rangachev2} for the semi-local scheme $(X \times_{Y} Y_i,S \times_{Y} Y_i)$ we get $C_{Y_i}=D_{y_0} \cup C(Y_i)$. In particular, $C_{Y_{k-1}}= D_{y_0} \cup C(Y_{k-1})$ where $Y_{k-1}$ is a regular curve. 


We can assume that $(C \cap V) \times_{Y} Y_{k-1}$  surjects onto $Y_{k-1}$ after possibly replacing the latter with a smaller affine neighborhood containing $y_0$. But $(C \cap V) \times_{Y} Y_{k-1} = C_{Y_{k-1}} \cap V$. Thus $V \cap C(Y_{k-1})$ is a curve that surjects onto $Y_{k-1}$. In particular, $V \cap C(Y_{k-1})_{y_0} \neq \emptyset$. But $V \cap C(y_0) = \emptyset$ and $V$ is generic. Thus $D(Y_{k-1})$ has a vertical component $D_{\mathrm{vert}}$. But $D_{\mathrm{vert}}$ is supported over a subscheme of $S_{y_0} \times \mathbb{P}^m$, so  $\mathcal{O}_{C(Y_{k-1})}(1)$ is ample on $D_{\mathrm{vert}}$. Set $l:= c_{1}\mathcal{O}_{C(Y_{k-1})}(1)$. Then $l^r[D_{\mathrm{vert}}]>0$. By part $\rm{(i)}$ we obtain $$\int_{S_{y_0}} l_{y_0}^{r-1}[D(y_0)] \neq
\int_{S_{y}}  l_{y}^{r-1}[D(y)]$$
for generic $y \in Y_{k-1}$. 
\end{proof}

Below we prove strengthening of Thm.\ \ref{MPT} \rm{(i)}. We show that the assumption that $S_{y_0}$ is nowhere dense in $X_{y_0}$ in part $\rm{(i)}$ can be substantially relaxed which is what's needed for applications to equisingularity theory.  

\begin{corollary}\label{str} Let $X_{y_0}'$ be the union of those components of $X_{y_0}$ that are not irreducible components of $S$. Set $S_{y_0}'= S_{y_0} \times_{X_{y_0}} X_{y_0}'$. Denote by $C'(y_0)$ the blowup of $X_{y_0}'$ with center $S_{y_0}'$ and denote by $D'(y_0)$ its exceptional divisor. Then the EDF remains valid after replacing $D(y_0)$ with $D'(y_0)$. 
\end{corollary}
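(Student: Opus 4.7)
The proof of Theorem \ref{EDF}(i) invoked the hypothesis that $S_{y_0}$ is nowhere dense in $X_{y_0}$ only to derive the key cycle identity \eqref{key eq.}, $[C(y_0)] - [C_{y_0}] = -[D_{\mathrm{vert}}]$. My plan is to establish the analogous identity $[C'(y_0)] - [C_{y_0}] = -[D_{\mathrm{vert}}]$ in the relaxed setting of the corollary, where $D_{\mathrm{vert}}$ now also absorbs the vertical components of $D$ supported over the excised locus, and then re-run the remainder of the original argument without further change.

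To establish the modified identity, set $Z := X_{y_0} \setminus X'_{y_0}$, the union of the components of $X_{y_0}$ that are irreducible components of $S$. Because $Z \subset S$, one has $c^{-1}(Z) \subset D$. I would then classify the top-dimensional components of $C_{y_0}$: each such component either lies in $D$, in which case it is a vertical component of $D$, or surjects onto a component of $X_{y_0}$ under $c$. In the latter case the image cannot be a component of $Z$ (otherwise the component of $C_{y_0}$ would lie in $c^{-1}(Z) \subset D$, contradicting that it is horizontal), so it must be a component of $X'_{y_0}$, and the ambient component of $C_{y_0}$ is its strict transform. By the universal property of blowups, the union of these strict transforms is precisely $C'(y_0) = \mathrm{Bl}_{S'_{y_0}} X'_{y_0}$ sitting inside $C$, and the cycle identity $[C_{y_0}] = [C'(y_0)] + [D_{\mathrm{vert}}]$ follows.

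With this in place, I would follow the rest of the original proof verbatim. Intersecting with $D$ and using that $C_{y_0}$ is principal Cartier (so $C_{y_0} \cdot [D_{\mathrm{vert}}] = 0$ as cycle classes) together with $D \cdot [C'(y_0)] = [D'(y_0)]$ (since $D \cap C'(y_0)$ is the exceptional divisor of $C'(y_0) \to X'_{y_0}$) yields the analog of \eqref{divisors}: $[D'(y_0)] - [D_{\mathrm{hor}}]_{y_0} = l \cdot [D_{\mathrm{vert}}]$. Capping with $l^{r-1}$, pushing forward along the proper map $D \to Y$, and invoking conservation of numbers \cite[Prp.\ 10.2]{Ful} together with the specialization of Chern classes \cite[Prop.\ 10.1 (d)]{Ful} then delivers the desired EDF with $D'(y_0)$ in place of $D(y_0)$. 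The one point requiring genuine care is the strict-transform analysis establishing the modified cycle identity; beyond that, everything reduces cleanly to the original proof.
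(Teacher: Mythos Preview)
Your proposal is correct and follows essentially the same approach as the paper. The paper's proof is terser: it simply notes that $C_{y_0}$ and $C'(y_0)$ are isomorphic over points $x\in X_{y_0}\setminus S_{y_0}$, so the cycle $[C'(y_0)]-[C_{y_0}]$ is supported over $c^{-1}(S_{y_0})$ and hence equals $-[D_{\mathrm{vert}}]$; it then records $D\cdot[C'(y_0)]=[D'(y_0)]$ (since $D\times_C C'(y_0)=D'(y_0)$) and declares the rest unchanged. Your more explicit component classification---separating the pieces of $C_{y_0}$ over $Z$ from the strict transforms over $X'_{y_0}$---is exactly the content of the paper's first two sentences, just spelled out. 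One minor remark: the components of $D$ lying over $Z$ are already in $D_{\mathrm{vert}}$ by its original definition (they map to $y_0$ under $h\circ c$), so no enlargement of $D_{\mathrm{vert}}$ is needed.
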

\begin{proof} Note that $C_{y_0}$ and $C'(y_0)$ are isomorphic over points $x \in X_{y_0}$ with $x \not \in S_{y_0}$. 
Therefore the cycle $[C(y_0)]-[C_{y_0}]$ is supported over the irreducible components of $S$ that are also irreducible components of $X_{y_0}$. Hence, once again $[C(y_0)]-[C_{y_0}]=-[D_{\mathrm{vert}}].$ Additionally, observe that 
 $D \cdot [C'(y_0)]=[D'(y_0)]$ because 
 $D \times_{C} C'(y_0) = D'(y_0)$. The rest of the proof goes through unchanged. 
\end{proof}
In equisingularity theory we apply Cor.\ \ref{str} with $C$ the blowup of the relative conormal space of a family
$(\mathfrak{X},x_0) \rightarrow (Y,y_0)$ with center the singular locus of $\mathfrak{X}$. In this setting $X_{y_0}'$ is the conormal space of $\mathfrak{X}_{y_0}$. So 
the intersection numbers appearing on the left-hand side of the EDF depend only on the fibers.

For related results to Thm.\ \ref{EDF} see \cite[Chp.\ 5]{Kol}. As a first application of Theorem \ref{MPT} \rm{(i)} we recover a result by Teissier for  the Hilbert-- Samuel multiplicity. Preserve the setup of Thm.\  \ref{EDF}. Assume $S$ is finite over $Y$. Following Fulton (see \cite{Ram}, and \S 4.3 and Ex.\ 4.3.4 in \cite{Ful}) define the Hilbert--Samuel multiplicity $$e(S_y,X_y):= \int_{S_y} l_{y}^{r-1}[D(y)].$$

\begin{theorem}[Teissier, Prp.\ 3.1 in \cite{Teissier2} and Rmk.\ 5.1.1 in  \cite{Teissier}]\label{MPTT}
We have 
$$e(S_{y_0},X_{y_0})-e(S_{y},X_y) = \mathrm{deg}(D_{\mathrm{vert}})$$
where $\mathrm{deg}(D_{\mathrm{vert}}):=\int l^r[D_{\mathrm{vert}}].$
\end{theorem}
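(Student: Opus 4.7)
The plan is to derive this as an immediate translation of the Excess--Degree Formula (Theorem \ref{EDF}). By the Fulton--Ramanujam definition recalled just above the statement, we have the equalities
\[
e(S_y, X_y) \;=\; \int_{S_y} l_y^{r-1}[D(y)], \qquad \deg(D_{\mathrm{vert}}) \;=\; \int l^r[D_{\mathrm{vert}}],
\]
so the identity to be proved is literally the EDF, with the finiteness of $S$ over $Y$ guaranteeing the properness hypothesis on $S$ over $Y$ and the ampleness of $\mathcal{O}_C(1)$ on the vertical part.

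First I would dispense with the case $\dim Y = 1$: since $Y$ is regular integral of dimension one, after shrinking to a small affine neighborhood $U$ of $y_0$ the fiber $C_{y_0}$ is cut out by a single regular function (a uniformizer at $y_0$) and $C$ is flat over $U' := U - \{y_0\}$ (being generically flat over an integral regular base). Thus the hypotheses of Theorem \ref{EDF}(i) are met, and the desired formula is exactly its conclusion, rewritten via the definitions of $e(S_y, X_y)$ and $\deg(D_{\mathrm{vert}})$.

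For the case $\dim Y = k > 1$, I would reduce to the curve case by the local Bertini argument already used in the proof of Theorem \ref{EDF}(ii). Using E.\ Davis' prime avoidance (applied in $\mathcal{O}_{Y,y_0}$ with the ideals being the images of components of $D_{\mathrm{vert}}$ and the other loci one wishes to miss), choose a nested sequence $Y = Y_0 \supset Y_1 \supset \cdots \supset Y_{k-1}$ of regular subschemes through $y_0$, with $Y_{k-1}$ a smooth curve chosen to intersect transversally each closure of the image under $h \circ c$ of every vertical component of $D$ at the successive stages, and which meets the generic locus of $Y$. Then base change to $Y_{k-1}$: because $S \to Y$ is finite, the blowup $C(Y_{k-1})$ of $X \times_Y Y_{k-1}$ along $S \times_Y Y_{k-1}$ coincides, away from $y_0$, with $C \times_Y Y_{k-1}$, and the line bundle $\mathcal{O}_C(1)$ pulls back to $\mathcal{O}_{C(Y_{k-1})}(1)$.

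Since $Y_{k-1} \hookrightarrow Y$ is a regular embedding, the Gysin operation it induces commutes with the proper pushforward along $D \to Y$ (cf.\ \cite[Prp.\ 10.2]{Ful}), so the intersection numbers $e(S_{y_0}, X_{y_0})$ and $e(S_y, X_y)$ are unchanged, and $\int l^r[D_{\mathrm{vert}}]$ computed inside $C(Y_{k-1})$ agrees with the one computed inside $C$ (as $D_{\mathrm{vert}}$ is supported over the point $y_0 \in Y_{k-1}$). Applying Theorem \ref{EDF}(i) to the curve $Y_{k-1}$ then yields the claim. The only genuinely delicate step is the Bertini-style selection of $Y_{k-1}$ ensuring the proper intersection with the images of the vertical components and the retention of the correct multiplicities at $y_0$; but this is exactly the argument carried out at the end of the proof of Theorem \ref{EDF}(ii), which we may invoke verbatim.
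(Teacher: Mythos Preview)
Your treatment of the $\dim Y=1$ case is exactly the paper's proof: invoke Theorem~\ref{EDF}(i) and recognize that the Fulton--Ramanujam definition of $e(S_y,X_y)$ makes the EDF literally equal to the asserted identity. The paper's entire proof is the one sentence ``Follows immediately from Thm.~\ref{EDF}~(i).''

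Your additional reduction for $\dim Y>1$ is unnecessary and, as written, not quite well-posed. The statement of Theorem~\ref{MPTT} lives in the setup of Theorem~\ref{EDF}(i), where $Y$ is a regular integral curve; in that setup $D_{\mathrm{vert}}$ is a cycle of pure dimension $r$, so $\deg(D_{\mathrm{vert}})=\int l^r[D_{\mathrm{vert}}]$ is a number. For $\dim Y=k>1$ the exceptional divisor $D$ has dimension $r+k-1$, and a ``vertical'' component over $y_0$ can have dimension up to $r+k-1$, so $\int l^r[D_{\mathrm{vert}}]$ is not a $0$-cycle and the formula does not literally make sense without further interpretation. Your Bertini cut to a curve $Y_{k-1}$ would produce a meaningful degree, but that degree depends on the choice of $Y_{k-1}$ and need not agree with any intrinsic quantity attached to the original $D_{\mathrm{vert}}$. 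So the higher-dimensional paragraph should simply be dropped.
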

\begin{proof} Follows immediately from Thm.\ \ref{EDF} \rm{(i)}.
\end{proof}

\begin{remark}\label{CA}
\rm{In complex analytic singularity theory one works with a complex analytic germ $(X,0) \subset (\mathbb{C}^{n+k},0)$ and a smooth subspace $(Y,0) \subset (X,0)$ which can be taken to be $(\mathbb{C}^{k},0)$. In this situation $h \colon X \rightarrow Y$ is obtained from a transverse projection to $Y$ (see the beginning of Sct.\ \ref{Whitney, Jacobian}). To obtain the EDF in this setting one needs to work with the right analogue of an affine subsets of $X$ and $Y$. This is done via distinguished compact Stein sets: one takes an open neighborhood $U$ of $0$ in $\mathbb{C}^{n+k}$ and considers $Q \cap X$ where $Q$ is a compact stone in $U$ (see Chp.\ II, Sct.\ 1 and Chp.\ III in Moonen's appendix in \cite{HIO88}). We do the same for $(Y,0)$. Next one considers the corresponding analytic spectrums and the projective analytic spectrum on the blowup. The basic intersection theory we used in the proof of the EDF extends to this new setting (cf.\ \cite[App.\ A]{Massey}).}
\end{remark}

\begin{example}
\rm{Thm.\ \ref{MPTT} provides a useful tool in the deformation theory of singularities. Consider an analytic function $f\colon (\mathbb{C}^n,0) \rightarrow (\mathbb{C},0)$ with an isolated critical point at the origin. Let ${\bf x}:=(x_1,\ldots,x_n)$ be a linear choice of coordinates. Denote by $J(f):=(\partial f / \partial x_1, \ldots, \partial f / \partial x_n)$ the {\it Jacobian ideal} of $f$ in $\mathcal{O}_{\mathbb{C}^n,0}$. Because $J(f)$ is generated by $n$ elements and is primary to the maximal ideal of the regular local ring $\mathcal{O}_{\mathbb{C}^n,0}$, the Hilbert--Samuel multiplicity $e(J(f),\mathcal{O}_{\mathbb{C}^n,0})$ equals
the Milnor number $\mu (f): = \dim_{\mathbb{C}} \mathbb{C}\{x_1, \ldots, x_n\}/J(f)$. Consider $f_t\colon (\mathbb{C}^n \times \mathbb{C}, (0,0)) \rightarrow (\mathbb{C},0)$ where $f_0=f$ and $f_t$ for $t \neq 0$ is a generic perturbation of $f$ such that its critical points are nondegenerate, or Morse. Let $J_{{\bf x}}(f_t)$ be the relative Jacobian ideal, i.e.\ the ideal generated by the partials of $f_t$ with respect to the ${\bf x}$ coordinates. 
Let $S$ be the subscheme of $\mathbb{C}^n \times \mathbb{C}$ defined by $J_{{\bf x}}(f_t)$. There are no vertical components of the exceptional divisor $D$ of $\mathrm{Bl}_{S} (\mathbb{C}^n \times \mathbb{C})$ because $D_0$ is set-theoretically in $\{0\} \times \mathbb{P}^{n-1}$. Applying Thm.\ \ref{MPTT}  we get that $\mu (f)= \mu(f_t)$ for generic $t$. But $f_t$ has only Morse critical points and their number is $\mu(f_t)$. Thus $\mu (f)$ equals the number of critical points of a generic perturbation of $f$.

As another example, let $(X,0) \rightarrow (Y,0)$ be a smoothing of an isolated hypersurface $(X_0,0)$ singularity, defined as the zero locus in $(\mathbb{C}^N,0)$ of a complex analytic function $F$, and $S$ be the subscheme defined by the partials of $F$ with respect to the fiber coordinates. Applying the EDF to this setting we get that the first left-hand side term is equal to the Hilbert--Samuel multiplicity of the Jacobian ideal in $\mathcal{O}_{X_{0},0}$ associated with $X_0$, whereas the second term is zero because $X_y$ is smooth. By  conservation of number argument, the right-hand side of the EDF is equal to the intersection multiplicity of the {\it relative polar curve} associated with the total space of the family with a generic fiber. In turn, by \cite[Cor.\ 1.5, pg.\ 320]{Teissier2} this intersection multiplicity is the sum of the Milnor number of $(X_{0},0)$ and the Milnor number of a generic hyperplane slice of $(X_0,0)$}. 
\end{example}

In what follows we extend Thm.\ \ref{EDF} to the more general case when we work with a projective scheme over $X$. Let $X \rightarrow Y$ be a morphism of schemes of finite type over a field $\Bbbk$ with equidimensional fibers. Assume $X$ is equidimensional and reduced and $Y$ is integral and regular of dimension one. Consider two graded sheaves of $\mathcal{O}_X$-algebras $\mathcal{A}:=\bigoplus \mathcal{A}_n$ and $\mathcal{A}':=\bigoplus \mathcal{A}_n'$ with $\mathcal{A}_0=\mathcal{A}_0'=\mathcal{O}_X$ such that $\mathcal{A}$ and $\mathcal{A}'$ are locally generated by $\mathcal{A}_1$ and $\mathcal{A}_1'$ as $\mathcal{O}_X$-algebras. Assume $\mathcal{A}_1$ and $\mathcal{A}_1'$ are coherent $\mathcal{O}_X$-modules and  $\mathcal{A} \subset \mathcal{A}'$ is a homogeneous and birational inclusion. Set $P:=\mathrm{Proj}(\mathcal{A})$ and $Q:=\mathrm{Proj}(\mathcal{A}')$. Assume that the irreducible components of $P$ and $Q$ surject onto those of $X$. Denote by $Z:=\mathbb{V}(\mathcal{A}_1)$ the variety of the ideal sheaf in $\mathcal{A}'$ generated by $\mathcal{A}_{1}$. Assume $Z$ is proper over $Y$. Let $C$ be the blowup of $Q$ with center $Z$ and let $D$ be its exceptional divisor. Denote by $c_p$ and $c_q$ the projections of $C$ to $P$ and $Q$ respectively. Let $p \colon P \rightarrow X$ and $q \colon Q \rightarrow X$ be structure morphisms. Set $\mathcal{L}:=\mathcal{O}_{P}(1)$ and $l:=c_{1}(\mathcal{L})$, and $\mathcal{L}':=\mathcal{O}_{Q}(1)$ and $l':=c_{1}(\mathcal{L}')$. 

Further, we assume that $\mathcal{A}$ and $\mathcal{A}'$ are contained in a graded $\mathcal{O}_X$-algebra sheaf $\mathcal{B}:=\oplus \mathcal{B}_n$ satisfying locally the properties described in Prp.\ \ref{nice embedding}. If $\mathcal{A}'$ is reduced, such an embedding exists by Prp.\ \ref{nice embedding}. 
For each $y \in Y$ denote by $\mathcal{B}_{y}$ the induced sheaf on $X_y$ and by $\mathcal{A}(y)$ and $\mathcal{A}'(y)$ the images of $\mathcal{A}$ and $\mathcal{A}'$ in $\mathcal{B}_y$. Set $P(y):= \mathrm{Proj}(\mathcal{A}(y))$ and $Q(y):=\mathrm{Proj}(\mathcal{A}'(y))$. Consider the following diagram 

\begin{displaymath}
\begin{CD}
C(y) @>c_q>> Q(y)\\
@VVc_pV  @VVqV\\
P(y)  @>p>> X_y
\end{CD}
\end{displaymath}
where $C(y)$ is the blowup of $Q(y)$ with center $Z \times_{Q} Q(y)$ and $D(y)$ is the corresponding exceptional divisor. Set $\mathcal{L}_y:= \mathcal{O}_{P(y)}(1)$ and $\mathcal{L}_y':= \mathcal{O}_{Q(y)}(1)$ and $l_y:= c_{1}(\mathcal{L}_y')$ and 
 $l_y':= c_{1}(\mathcal{L}_y')$. Assume $P(y)$ and $Q(y)$ are birational and equidimensional of dimension $r$. Define the generalized Buchsbaum--Rim (BR) multiplicity (see \cite[Sct.\ 5]{KT-Al})
\begin{equation}\label{BR}
e(\mathcal{A}(y),\mathcal{A}'(y)):= \sum_{i=0}^{r-1} \int_{Z_y} (c_{p}^{*}l_{y})^{r-i-1}(c_{q}^{*}l_{y}')^{i}[D(y)]. 
\end{equation}
where $c_{p}^{*}l_{y}= c_{1}(c_{p}^{*}\mathcal{L}_y)$ and $c_{q}^{*}l_{y}'= c_{1}(c_{q}^{*}\mathcal{L}_y')$. Finally, denote by $D_{\mathrm{vert}}^{P}$ and $D_{\mathrm{vert}}^{Q}$ the projections of $D_{\mathrm{vert}}$ to $P$ and $Q$. 
\begin{theorem}\label{GMPT}
There exists an affine neighborhood $U$ of $y_0$ in $Y$
such that 
\begin{equation}\label{GMPTF}
e(\mathcal{A}(y_0),\mathcal{A}'(y_0))-e(\mathcal{A}(y),\mathcal{A}'(y))=l^r[D_{\mathrm{vert}}^{P}]-l'^r[D_{\mathrm{vert}}^{Q}]. 
\end{equation}  
for each closed point $y \in U-\{y_0\}$.
\end{theorem}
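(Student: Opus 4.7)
The plan is to mirror the proof of Theorem \ref{EDF}(i) on the blowup $C$ of $Q$ along $Z$, but now keeping track of both line bundles $c_p^{*}\mathcal{L}$ and $c_q^{*}\mathcal{L}'$ instead of only one. The crucial new input is the blowup relation
$$c_p^{*}\mathcal{O}_{P}(1)\cong c_q^{*}\mathcal{O}_{Q}(1)\otimes\mathcal{O}_{C}(-D),\qquad\text{equivalently}\qquad [D]=c_q^{*}l'-c_p^{*}l\ \text{ in }\ A^{1}(C),$$
which holds because $c_p\colon C\to P$ resolves the rational map $Q\dashrightarrow P$ defined by the inclusion $\mathcal{A}_1\subset\mathcal{A}_1'$, whose base locus is precisely $Z$. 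Coupled with the telescoping identity
$$\Bigl(\sum_{i=0}^{r-1}(c_p^{*}l)^{r-1-i}(c_q^{*}l')^{i}\Bigr)\cdot(c_p^{*}l-c_q^{*}l')=(c_p^{*}l)^{r}-(c_q^{*}l')^{r},$$
this is what produces the mixed sum defining $e(\mathcal{A}(y),\mathcal{A}'(y))$ on the left-hand side of the desired formula and the two pure powers $l^{r}$ and $l'^{r}$ on the right.

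First I would establish the cycle identity $[C(y_0)]-[C_{y_0}]=-[D_{\mathrm{vert}}]$ in $Z_{r}(C)$, arguing as in the proof of Theorem \ref{EDF} that $C_{y_0}$ and $C(y_0)$ agree off the preimage of $Z_{y_0}$, so their difference is supported over $y_0$ and accounts exactly for the vertical components of $D$. After shrinking $Y$ to an affine neighborhood $U$ of $y_0$ so that $C_{y_0}$ is principal Cartier in $C$ (using $\dim Y=1$) and $C$ is flat over $U-\{y_0\}$ (generic flatness), I would intersect with $D$. Using $C_{y_0}\cdot[D_{\mathrm{vert}}]=0$ (principality) and $D\cdot[C(y_0)]=[D(y_0)]$, this gives
$$[D(y_0)]-[D_{\mathrm{hor}}]_{y_0}=-D\cdot[D_{\mathrm{vert}}]\quad\text{in } A_{r-1}(C).$$
Applying the operator $\Theta:=\sum_{i=0}^{r-1}(c_p^{*}l)^{r-1-i}(c_q^{*}l')^{i}$ to both sides and pushing forward to $Z_{y_0}$, specialization of Chern classes and conservation of number along the proper map $D_{\mathrm{hor}}\to Y$ (\cite[Prp.\ 10.1(d) and 10.2]{Ful}), together with $[D_{\mathrm{hor}}]_{y}=[D(y)]$ from flatness, convert the left-hand side into $e(\mathcal{A}(y_0),\mathcal{A}'(y_0))-e(\mathcal{A}(y),\mathcal{A}'(y))$ for $y\in U-\{y_0\}$, while the telescoping identity above combined with the projection formula for $c_p$ and $c_q$ converts the right-hand side into $l^{r}[D_{\mathrm{vert}}^{P}]-l'^{r}[D_{\mathrm{vert}}^{Q}]$.

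The step I expect to be the main obstacle is the cycle identity in Step 1, since in the present graded setting $Q(y_0)=\mathrm{Proj}(\mathcal{A}'(y_0))$ need not coincide with the scheme-theoretic fiber $Q_{y_0}$ of $Q\to Y$, and extra embedded or non-equidimensional pieces could in principle spoil the clean correspondence between excess components of $C_{y_0}$ and $D_{\mathrm{vert}}$. This is where the hypothesis that $\mathcal{A}$ and $\mathcal{A}'$ embed in the well-behaved graded algebra $\mathcal{B}$ of Proposition \ref{nice embedding} enters: it controls the fiberwise behavior of $\mathcal{A}'$ and ensures that $Q(y_0)$ is equidimensional of dimension $r$ with components matching those of $C_{y_0}$ away from the preimage of $Z_{y_0}$, so that the analogue of equation (\ref{key eq.}) from the proof of Theorem \ref{EDF} goes through.
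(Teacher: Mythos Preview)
Your proposal is correct and follows essentially the same route as the paper: invoke Corollary~\ref{str} to obtain the analogue of (\ref{divisors}) on $C=\mathrm{Bl}_{Z}Q$, apply the mixed monomials $(c_p^*l)^{r-1-i}(c_q^*l')^{i}$, use conservation of number and specialization along $D_{\mathrm{hor}}\to Y$, and then collapse the right-hand side via the Kleiman--Thorup relation $\mathcal{O}_C(D)\cong c_q^*\mathcal{L}'\otimes c_p^*\mathcal{L}^{-1}$ together with the telescoping identity and the projection formula. Your diagnosis of the one delicate point---that $Q(y_0)$ need not equal $Q_{y_0}$, and that Proposition~\ref{nice embedding} is exactly what guarantees the components of $Q(y_0)$ surject onto those of $X_{y_0}$ so that the excess-cycle identity still holds---is precisely how the paper handles it (via Corollary~\ref{str}).
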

\begin{proof}
By Prp.\ \ref{nice embedding} the irreducible components of $Q(y_0)$ surject onto those of $X_{y_0}$. Because $P(y_0)$ and $Q(y_0)$ are birational $q(Z_{y_0})$ and $X_{y_0}$ do not share irreducible components. By Cor.\ \ref{str} applied to the family $Q \rightarrow Y$ with $S:=Z$ and by (\ref{divisors}) we get 
\begin{equation}\label{mixed divisors}
(c_{p}^{*}l_{y_0})^{r-i-1}(c_{q}^{*}l_{y_0}')^{i}[D(y_0)]  -(c_{p}^{*}l_{y})^{r-i-1}(c_{q}^{*}l_{y}')^{i} [D_{\mathrm{hor}}]_{y_0} = -(c_{p}^{*}l_{y})^{r-i-1}(c_{q}^{*}l_{y}')^{i}D[D_{\mathrm{vert}}].
\end{equation}
By (\ref{conservation}) and (\ref{specialization}) we get
\begin{equation}\label{conserv}
\int_{Z_{y_0}}(c_{p}^{*}l_{y_0})^{r-i-1}(c_{q}^{*}l_{y_0}')^{i} [D_{\mathrm{hor}}]_{y_0}=
\int_{Z_y} (c_{p}^{*}l_{y})^{r-i-1}(c_{q}^{*}l_{y}')^{i}[D(y)].
\end{equation}
By \cite[Prp.\ 2.2 ]{KT-Al} we have 
\begin{equation}\label{KTDiv}
\mathcal{O}_{C}(D) = c_{p}^{*}\mathcal{L} \otimes c_{q}^{*}\mathcal{L}'^{-1}.
\end{equation}
Then (\ref{KTDiv}) yields
\begin{equation}\label{cancellation}
\sum_{i=1}^{r-1}-(c_{p}^{*}l_{y})^{r-i-1}(c_q^{*}(l_{y}')^{i}D[D_{\mathrm{vert}}]=(c_p^{*}l)^r[D_{\mathrm{vert}}]-(c_q^{*}l')^r[D_{\mathrm{vert}}].
\end{equation}
Summing over $i$ in (\ref{mixed divisors}) and plugging (\ref{cancellation}) and (\ref{conserv}) in (\ref{mixed divisors}) we get 
$$e(\mathcal{A}(y_0),\mathcal{A}'(y_0))-e(\mathcal{A}(y),\mathcal{A}'(y))=(b_p^{*}l)^r[D_{\mathrm{vert}}]-(b_c^{*}l')^r[D_{\mathrm{vert}}].$$
Applying the projection formula  (see  \cite[Prp.\ 2.5 (c)]{Ful}) to each term of the right-hand side of the last equality 
we get (\ref{GMPTF}). 
\end{proof}

Suppose $X$ and $Y$ are local with closed points $x_0$ and $y_0$ respectively and suppose the fibers of $X \rightarrow Y$ are equidimensional of positive dimension $d$. Let $\mathcal{M} \subset \mathcal{N} \subset \mathcal{F}$ be coherent $\mathcal{O}_{X}$-modules such that $\mathcal{F}$ is free and $\mathcal{M}$ and $\mathcal{N}$ are free of constant rank $e$ at the generic point of each irreducible component of $X$. Let $\mathcal{R}(\mathcal{M})$ and $\mathcal{R}(\mathcal{N})$ be the Rees algebras of $\mathcal{M}$ and $\mathcal{N}$ respectively. These are defined as the subalgebras of $\mathrm{Sym}(\mathcal{F})$ generated in degree one by the generators of each of the two modules. 

Now work in the setup of Thm.\ \ref{GMPT}. Set $C:= \mathrm{Proj}(\mathcal{R}(\mathcal{M}))$ and $P:= \mathrm{Proj}(\mathcal{R}(\mathcal{N}))$.   Assume $q(Z)$ is finite over $Y$. One can check that this is equivalent to assuming that  $\mathrm{Supp}_{X}(\overline{\mathcal{N}}/\overline{\mathcal{M}})$ is finite over $Y$, where $\overline{\mathcal{N}}$ and $\overline{\mathcal{M}}$ are the integral closure of the two modules in $\mathcal{F}$ (see Sct.\ \ref{Whitney, Jacobian} for the definition of integral closure of a module). For each $y \in Y$ denote by $\mathcal{F}_{y}$ the restriction of $\mathcal{F}$ to $X_y$ and by $\mathcal{M}(y)$ and $\mathcal{N}(y)$ the images of $\mathcal{M}$ and $\mathcal{N}$ in $\mathcal{F}_{y}$. Define the Buchsbaum--Rim multiplicity $e(\mathcal{M}(y),\mathcal{N}(y))$ as in (\ref{BR}) with $\mathcal{A}(y)=\mathcal{R}(\mathcal{M}(y))$ and $\mathcal{A}'(y)=\mathcal{R}(\mathcal{N}(y))$. Here we turn results of Kleiman and Thorup (see \cite[Sct.\ 5]{KT-Al}) into a definition. In the original treatment of Buchsbaum and Rim \cite{Buch}, the multiplicity $e(\mathcal{M}(y),\mathcal{N}(y))$ is defined as the normalized leading coefficient of the length $\lambda (\mathcal{N}^{i}(y)/\mathcal{M}^{i}(y))$ which is a polynomial of degree $r:=d+e-1$ for $i$ large enough, where $\mathcal{N}^{i}(y)$ and $\mathcal{M}^{i}(y)$ are the $i$th graded components of the respective Rees algebras.

Suppose $\Bbbk$ is algebraically closed field of characteristic zero. Assume $X$ is generically reduced and $Y$ is smooth of arbitrary dimension. Let $NF(\mathcal{M})$ be the nonfree locus of $\mathcal{M}$ in $X$. Consider the composition of maps

$$p^{-1} (NF(\mathcal{M})) \hookrightarrow  X \times \mathbb{P}^{g(M)-1} \xrightarrow{pr_2} \mathbb{P}^{g(M)-1}$$
where $g(M)$ is the number of a generating set for $\mathcal{M}$ as an $\mathcal{O}_X$-module. As $\mathcal{M}$ is of generic rank $e$, by the Kleiman Transversality Theorem \cite{K}, the intersection of $p^{-1} (NF(\mathcal{M}))$ with a general plane $H_{r}$ from  $\mathbb{P}^{g(M)-1}$ of codimension $r$, is of dimension at most $\dim Y - 1$. Denote by $\Gamma_d(\mathcal{M})$ the projection of $\mathrm{Proj}(\mathcal{R}(\mathcal{M})) \cap H_{r}$ to $X$. This is what Gaffney \cite{GaffP} calls the {\it polar curve} of $\mathcal{M}$. For a generic $y  \in Y$ the fiber  of $\Gamma_d(\mathcal{M})$ over $y$ consists of the same number of points, each of them appearing with multiplicity one because $X$ is generically reduced, and because at each one of them $\mathcal{M}$ is free. Denote this number by $\mathrm{mult}_{Y}\Gamma_d(\mathcal{M})$. Similarly, define $\mathrm{mult}_{Y}\Gamma_d(\mathcal{N}).$ Thm.\ \ref{GMPT} yields Gaffney's Multiplicity-Polar Theorem (see \cite{GaffMPT}).

\begin{corollary}(Gaffney).\label{MPT} Suppose $\Bbbk$ is algebraically closed of characteristic zero. Suppose $X$ is generically reduced, $Y$ is smooth, and $\mathrm{Supp}_{X}(\overline{\mathcal{N}}/\overline{\mathcal{M}})$ is finite over $Y$. Then for generic $y \in Y$
$$
e(\mathcal{M}(y_0),\mathcal{N}(y_0))-
e(\mathcal{M}(y),\mathcal{N}(y))=\mathrm{mult}_{Y}\Gamma_d(\mathcal{M})-\mathrm{mult}_{Y}\Gamma_d(\mathcal{N}).$$
\end{corollary}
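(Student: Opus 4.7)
The strategy is to specialize Theorem~\ref{GMPT} to Rees algebras and reinterpret the vertical-divisor intersection numbers on its right-hand side as polar multiplicities. First, using a Bertini-type slicing analogous to the one in the proof of Theorem~\ref{EDF}(ii), I would cut $Y$ by a sufficiently general regular curve through $y_0$ to reduce to $\dim Y = 1$: the Buchsbaum--Rim multiplicities at the two points, the polar multiplicities $\mathrm{mult}_Y\Gamma_d(\mathcal{M})$ and $\mathrm{mult}_Y\Gamma_d(\mathcal{N})$, and the hypothesis that $\mathrm{Supp}_X(\overline{\mathcal{N}}/\overline{\mathcal{M}})$ is finite over $Y$ are all compatible with this restriction.

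Next, I take $\mathcal{A}:=\mathcal{R}(\mathcal{M})$, $\mathcal{A}':=\mathcal{R}(\mathcal{N})$ and verify the hypotheses of Theorem~\ref{GMPT}: the inclusion is homogeneous and birational since both modules have generic rank $e$ on each component of $X$; the irreducible components of $P:=\mathrm{Proj}(\mathcal{A})$ and $Q:=\mathrm{Proj}(\mathcal{A}')$ surject onto those of $X$; the scheme $Z=\mathbb{V}(\mathcal{A}_1)\subset Q$ is proper over $Y$ by the equivalence stated just before the Corollary; and $\mathcal{R}(\mathcal{N})$ is reduced because $X$ is generically reduced and $\mathcal{N}\subset\mathcal{F}$ is torsion-free. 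Since the generalized Buchsbaum--Rim multiplicity (\ref{BR}) coincides with the classical one by Kleiman--Thorup \cite[Sct.\ 5]{KT-Al}, Theorem~\ref{GMPT} yields
$$e(\mathcal{M}(y_0),\mathcal{N}(y_0))-e(\mathcal{M}(y),\mathcal{N}(y))=l^r[D^P_{\mathrm{vert}}]-(l')^r[D^Q_{\mathrm{vert}}].$$

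The expected main obstacle is then to identify the right-hand side above with $\mathrm{mult}_Y\Gamma_d(\mathcal{M})-\mathrm{mult}_Y\Gamma_d(\mathcal{N})$. By the Kleiman transversality bound on $p^{-1}(NF(\mathcal{M}))\cap H_r$ together with the generic reducedness of $X$, the $1$-cycle $l^r\cdot[P]$ has generic fibers over $Y$ contained in the free locus of $\mathcal{M}$ where $p$ restricts to an isomorphism, so $(h\circ p)_*(l^r\cdot[P])=\mathrm{mult}_Y\Gamma_d(\mathcal{M})\cdot[Y]$; by conservation of number \cite[Prp.\ 10.2]{Ful} one has $\mathrm{mult}_Y\Gamma_d(\mathcal{M})=\deg(l^r\cdot i_{y_0}^![P])$, and analogously for $\mathcal{N}$, $Q$. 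Pushing the cycle decomposition $[C_{y_0}]=[C(y_0)]+[D_{\mathrm{vert}}]$ established in the proof of Theorem~\ref{EDF} forward along the birational morphisms $c_p\colon C\to P$ and $c_q\colon C\to Q$ splits the difference $\deg(l^r\cdot i_{y_0}^![P])-\deg((l')^r\cdot i_{y_0}^![Q])$ into a horizontal contribution coming from $[C(y_0)]$ and the vertical contribution $l^r[D^P_{\mathrm{vert}}]-(l')^r[D^Q_{\mathrm{vert}}]$. The horizontal contribution is then handled via the Kleiman--Thorup telescoping $(c_p^*l)^r-(c_q^*l')^r = D\cdot\sum_{i=0}^{r-1}(c_p^*l)^{r-1-i}(c_q^*l')^i$ applied to $[C(y_0)]$, which relates it to the Buchsbaum--Rim intersection sum (\ref{BR}) on $[D(y_0)]$; combining with the output of Theorem~\ref{GMPT} in Step~2 yields the desired formula.
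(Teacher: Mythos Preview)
Your Step~3 contains a genuine gap. The map $h\circ p\colon P\to Y$ is not proper (only $p\colon P\to X$ is), so neither the pushforward $(h\circ p)_*(l^r\cdot[P])$ nor the conservation-of-number identity $\mathrm{mult}_Y\Gamma_d(\mathcal{M})=\deg(l^r\cdot i_{y_0}^![P])$ is available: for generic $y$ the class $l^r\cdot[P_y]$ lives on a non-proper scheme and has no well-defined degree. Concretely, over the free locus of $\mathcal{M}$ the restriction $\mathcal{O}_P(1)|_{P_y}$ is a line bundle on a projective bundle over an affine base, and $l^r\cap[P_y]=0$ in $A_0(P_y)$ even though the specific effective representative $P\cap H_r$ meets $P_y$ in $\mathrm{mult}_Y\Gamma_d(\mathcal{M})$ points; you cannot recover that count from the Chern class alone. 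Even if one ignores properness, your telescoping on the horizontal piece $[C(y_0)]$ reproduces exactly the computation carried out inside the proof of Theorem~\ref{GMPT} (equations (\ref{mixed divisors})--(\ref{cancellation})), yielding $e(\mathcal{M}(y_0),\mathcal{N}(y_0))$; combining this once more with the output of Theorem~\ref{GMPT} double-counts and gives $2e(\mathcal{M}(y_0),\mathcal{N}(y_0))-e(\mathcal{M}(y),\mathcal{N}(y))$ on the right-hand side rather than the desired difference.

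The paper's route is much shorter and sidesteps both issues. It first observes that the pushforward cycle $D_{\mathrm{vert}}^{P}$ coincides with $P_{\mathrm{vert}}$, the union of top-dimensional components of $p^{-1}X_{y_0}$ supported over the closed point $x_0$ (and likewise $D_{\mathrm{vert}}^{Q}=Q_{\mathrm{vert}}$). Since $P_{\mathrm{vert}}$ sits over $x_0$ it is proper over $\Bbbk$, so $\int l^r[P_{\mathrm{vert}}]$ is well-defined, and a direct conservation-of-number count on the polar curve $P\cap H_r$ over a generic smooth curve through $y_0$ identifies this degree with $\mathrm{mult}_Y\Gamma_d(\mathcal{M})$. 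Substituting into (\ref{GMPTF}) immediately gives the formula; no decomposition of $[C_{y_0}]$ and no second use of telescoping is needed.
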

\begin{proof}
Let $P_{\mathrm{vert}}$ and $Q_{\mathrm{vert}}$ be the union of irreducible components of $p^{-1}X_{y_0}$ and $q^{-1}X_{y_0}$ that are supported over $x_0$. Then $D_{\mathrm{vert}}^{P} = P_{\mathrm{vert}}$ and $D_{\mathrm{vert}}^{Q}=Q_{\mathrm{vert}}$. By a conservation of number, for generic smooth curve $Y'$ passing through $y_0$, the degrees on the right-hand side of (\ref{GMPTF})
are equal to $\mathrm{mult}_{Y}\Gamma_d(\mathcal{M})$ and $\mathrm{mult}_{Y}\Gamma_d(\mathcal{N})$, respectively. Then the statement follows from (\ref{GMPTF}).
\end{proof}

If $\mathcal{N}$ is free, then $\dim q^{-1}x_0 = e-1<r$, so $\mathrm{mult}_{Y}\Gamma_d(\mathcal{N})=0$. Additionally, if $e=1$ one recovers Thm.\ \ref{MPTT}. In these situations the change of multiplicities detects precisely the presence of $P_{\mathrm{vert}}$. In general, the multiplicities may fail to detect $P_{\mathrm{vert}}$ because of the contribution of $\mathrm{mult}_{Y}\Gamma_d(\mathcal{N})$. To get rid of $\mathrm{mult}_{Y}\Gamma_d(\mathcal{N})$, instead of the Buchsbaum--Rim multiplicity we use the local volume which absorbs $\mathrm{mult}_{Y}\Gamma_d(\mathcal{N})$. This is done by the LVF in Thm.\ \ref{LVF}. Another way to do this is indicated in Rmk.\ \ref{Fujita Rmk}. There an asymptotic version of (\ref{GMPTF}) is indicated in which the contribution coming from the various $D_{\mathrm{vert}}^Q$ ``vanishes asymptotically". 


\section{Computing and vanishing of local volumes}\label{computing} 
Let $(R,\mathfrak{m})$ be a reduced Noetherian local equidimensional ring
of dimension at least $2$. Let $\mathcal{A}:= \oplus_{i=0}^{\infty} \mathcal{A}_i$ be a reduced equidimensional standard graded $R$-algebra of dimension $r$.  Denote by $\lambda_{R}(-)$ the length function. In this section we will be interested in computing
\begin{equation}\label{epsilon}
\varepsilon(\mathcal{A}):= \limsup_{n \to \infty} \frac{r!}{n^r} \lambda_{R}( H_{\mathfrak{m}}^{1}(\mathcal{A}_n)).
\end{equation}
In our applications to geometry, $R$ is be assumed to be essentially of finite type over a field and $\mathcal{A}$ is the homogeneous coordinate ring of $\mathrm{Proj}(\mathcal{A})$ associated with an invertible very ample sheaf $\mathcal{L}$ on $\mathrm{Proj}(\mathcal{A})$ relative to $\mathrm{Spec}(R)$.  Then $(\ref{epsilon})$ is what Fulger\cite{Fulger} calls the {\it local volume} of $\mathcal{L}$.


When $\mathcal{A}$ is the Rees algebra of a torsion-free finitely generated $R$-module $\mathcal{M}$ (see \cite{Eisenbud} for a definition), then (\ref{epsilon}) is called the {\it epsilon multiplicity} of $\mathcal{M}$. It is denoted by $\varepsilon(\mathcal{M})$ (see \cite{Validashti} and the remark below). 

Often it's preferable to work with an $H_{\mathfrak{m}}^{0}$ instead of $H_{\mathfrak{m}}^{1}$. For example, we can do this if we assume that the minimal primes of $\mathcal{A}$ contract to minimal primes of $R$. Set $\mathfrak{X}:=\mathrm{Spec}(R)$. Let $x_0$ be the closed point of $\mathfrak{X}$. Then $\mathcal{A}$ viewed as $\mathcal{O}_{\mathfrak{X},x_0}$-module is torsion free. Set $U:=\mathfrak{X}-x_0$. For each $n$ consider the standard exact sequence 
$$H_{x_0}^{0}(\mathfrak{X},\widetilde{\mathcal{A}_n}) \rightarrow H^{0}(\mathfrak{X},\widetilde{\mathcal{A}_n}) \rightarrow H^{0}(U,\widetilde{\mathcal{A}_n)} \rightarrow H_{x_0}^{1}(\mathfrak{X},\widetilde{\mathcal{A}_n}) \rightarrow H^{1}(\mathfrak{X},\widetilde{\mathcal{A}_n})$$
where $\widetilde{A_n}$ is the associated sheaf of $\mathcal{A}_n$. The two extreme terms vanish because $\mathcal{A}$ is torsion free and $\mathfrak{X}$ is affine. Thus we can compute $H_{x_0}^{1}(\mathcal{A}_n)$ as $H^{0}(U,\widetilde{\mathcal{A}_n}) / \widetilde{\mathcal{A}_n}$ (see Ex.\ 3.3 (b) Chp.\ III in  \cite{Hartshorne}).

The existence of (\ref{epsilon}) as a limit 
has been established in some cases by Das \cite{Das} and Cutkosky \cite{Cut} based on ideas of Kaveh and Khovanskii \cite{KK}, and Okounkov \cite{Ok}, and by Fulger \cite{Fulger}. 

Assume $\mathcal{B}$ is a graded $R$-algebra, not necessarily finitely generated over $R$, such that $\mathcal{A} \subset \mathcal{B}$ is a homogeneous inclusion and
\begin{equation}
\lim_{n \to \infty} \frac{r!}{n^r}\lambda_{R}( H_{\mathfrak{m}}^{i}(\mathcal{B}_n))=0
\end{equation}
for $i=1$ and $2$. Then an exact sequence of local cohomology yields 
\begin{equation}\label{zero and one}
\limsup_{n \to \infty} \frac{r!}{n^r}\lambda_{R}(H_{\mathfrak{m}}^{1}(\mathcal{A}_n)) = \limsup_{n \to \infty} \frac{r!}{n^r}\lambda_{R}(H_{\mathfrak{m}}^{0}(\mathcal{B}_n/\mathcal{A}_n)).
\end{equation}
The reason we will make use of (\ref{zero and one}) is that $H_{\mathfrak{m}}^{0}(\mathcal{B}_n/\mathcal{A}_n)$ is more manageable 
than $H_{\mathfrak{m}}^{1}(\mathcal{A}_n)$ when studying the behavior of $\varepsilon(\mathcal{A})$ in families. 
Below we list several instances when the representation (\ref{zero and one}) is possible. Denote by  $c_{\mathcal{A}}$ and by $c_{\mathcal{B}}$ the structure morphisms from $\mathrm{Proj}(\mathcal{A})$ and  $\mathrm{Proj}(\mathcal{B})$ to $\mathrm{Spec}(R)$, respectively. 

\begin{proposition}\label{intr. of limits} Assume that $(R,\mathfrak{m})$ is a local Noetherian equidimensional ring with $\dim R \geq 2$. The following holds:
\begin{enumerate}
\item[\rm{(i)}] Assume $\mathrm{depth}(R) \geq 2$. Suppose the minimal primes of $\mathcal{A}$ contract to minimal primes of $R$. Then (\ref{zero and one}) holds with $\mathcal{B}_n:= \mathcal{A}_n^{**}$, where $\mathcal{A}_n^{**}$ is the double dual of the $R$-module $\mathcal{A}_n$.

\item[\rm{(ii)}] Assume that $R$ is Nagata and $\mathcal{B}$ is reduced, equidimensional of dimension $r$. Suppose that $\codim c_{\mathcal{B}}^{-1}(\mathfrak{m}) \geq 2$ in $\mathrm{Proj}(\mathcal{B})$ and the minimal primes of $\mathcal{B}$ contract to minimal primes of $R$. Then  (\ref{zero and one}) holds. 

\item[\rm{(iii)}] Assume $R$ is essentially of finite type over a field. Let $\mathcal{M}$ be an $R$-module free of rank $e$ at locally at each minimal prime of $R$. Then $\mathcal{M}$ admits an embedding into an $R$-free module $\mathcal{F}$ of rank $e$ such that (\ref{zero and one}) holds with $\mathcal{A}:=\mathcal{R}(\mathcal{M})$ and $\mathcal{B}:= \mathrm{Sym}(\mathcal{F})$. Moreover, the right-hand side of (\ref{zero and one}) is in fact a limit. 
\end{enumerate}
\end{proposition}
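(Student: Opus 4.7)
The uniform strategy for all three parts is to apply the long exact sequence of local cohomology associated to the short exact sequence $0 \to \mathcal{A}_n \to \mathcal{B}_n \to \mathcal{B}_n/\mathcal{A}_n \to 0$; its four-term piece
\begin{equation*}
H^0_{\mathfrak{m}}(\mathcal{B}_n) \to H^0_{\mathfrak{m}}(\mathcal{B}_n/\mathcal{A}_n) \to H^1_{\mathfrak{m}}(\mathcal{A}_n) \to H^1_{\mathfrak{m}}(\mathcal{B}_n)
\end{equation*}
yields the inequality $|\lambda_R H^1_{\mathfrak{m}}(\mathcal{A}_n) - \lambda_R H^0_{\mathfrak{m}}(\mathcal{B}_n/\mathcal{A}_n)| \leq \lambda_R H^0_{\mathfrak{m}}(\mathcal{B}_n) + \lambda_R H^1_{\mathfrak{m}}(\mathcal{B}_n)$, so (\ref{zero and one}) follows at once from the normalized $H^{\bullet}_{\mathfrak{m}}(\mathcal{B}_n)$-vanishing recorded just above the proposition.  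For each part I would exhibit an explicit $\mathcal{B}$ and check that vanishing.

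For part (i), the plan is to exploit reflexivity.  The hypothesis that minimal primes of $\mathcal{A}$ contract to minimal primes of $R$ makes each $\mathcal{A}_n$ torsion-free as an $R$-module, hence $\mathcal{A}_n \hookrightarrow \mathcal{A}_n^{**}$.  Since $\mathrm{depth}(R) \geq 2$, the standard fact that the double dual of a finitely generated module over a depth-two Noetherian ring is $S_2$ at $\mathfrak{m}$ gives $H^0_{\mathfrak{m}}(\mathcal{A}_n^{**}) = H^1_{\mathfrak{m}}(\mathcal{A}_n^{**}) = 0$ for every $n$.  The long exact sequence then produces a term-by-term isomorphism $H^0_{\mathfrak{m}}(\mathcal{A}_n^{**}/\mathcal{A}_n) \cong H^1_{\mathfrak{m}}(\mathcal{A}_n)$, so (\ref{zero and one}) holds without any asymptotic slack.

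For part (ii), the heart of the matter is bounding $\lambda_R H^i_{\mathfrak{m}}(\mathcal{B}_n) = O(n^{r-2})$ for the relevant $i$.  The plan is to identify, for $n \gg 0$, $H^{i+1}_{\mathfrak{m}}(\mathcal{B}_n)$ with sheaf cohomology on the punctured spectrum $U := \mathrm{Spec}(R) \setminus \{\mathfrak{m}\}$ and then transfer this, via $\mathrm{Proj}(\mathcal{B})$, to cohomology of $\mathcal{O}_{\mathrm{Proj}(\mathcal{B})}(n)$ on a neighborhood of the exceptional fiber $c_{\mathcal{B}}^{-1}(\mathfrak{m})$.  The Nagata hypothesis together with reducedness of $\mathcal{B}$ lets us pass to the finite normalization of $\mathcal{B}$ without changing asymptotics, while the minimal-primes hypothesis keeps $\mathcal{B}_n$ torsion-free, killing $H^0_{\mathfrak{m}}(\mathcal{B}_n)$.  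The decisive input is the codimension assumption $\codim(c_{\mathcal{B}}^{-1}(\mathfrak{m}), \mathrm{Proj}(\mathcal{B})) \geq 2$, which forces the support of the relevant cohomology to have dimension at most $r-2$ and hence yields the $O(n^{r-2})$ bound.  The main obstacle is implementing this bound cleanly in the non-Cohen--Macaulay relative setting; here I anticipate invoking the associated-primes finiteness result \cite[Thm.\ 1.1]{Ran19a}.

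For part (iii), the plan is to apply the Noether-normalization-type embedding \cite[Prp.\ 2.6]{Ran19a} to produce a homogeneous inclusion $\mathcal{R}(\mathcal{M}) \hookrightarrow \mathrm{Sym}(\mathcal{F})$ with $\mathcal{F}$ a free module of the generic rank $e$ of $\mathcal{M}$.  Then each $\mathcal{B}_n = \mathrm{Sym}(\mathcal{F})_n$ is $R$-free of rank $\binom{n+e-1}{e-1}$, so $H^i_{\mathfrak{m}}(\mathcal{B}_n) \cong H^i_{\mathfrak{m}}(R)^{\binom{n+e-1}{e-1}}$; since $r = \dim R + e$, the quotient $\binom{n+e-1}{e-1}/n^r$ is of order $n^{-\dim R - 1}$, and after a Noether normalization of $R$ over the ground field arranging the relevant $H^i_{\mathfrak{m}}(R)$ to have finite length, the required vanishing is immediate.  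To promote the right-hand side of (\ref{zero and one}) from a limsup to a genuine limit, the plan is to apply \cite[Thm.\ 1.1 (ii)]{Ran19a} to deduce finiteness of $\bigcup_n \mathrm{Ass}_{\mathcal{A}}(\mathcal{B}_n/\overline{\mathcal{A}_n})$ and then deploy the Kaveh--Khovanskii / Okounkov-body machinery, as used by Cutkosky \cite{Cut}, on the resulting filtration.  The most delicate point I foresee is arranging the embedding so that it simultaneously meets both the depth conditions forcing the $H^i_{\mathfrak{m}}$-vanishing and the hypotheses of \cite[Thm.\ 1.1 (ii)]{Ran19a} needed for the limit assertion.
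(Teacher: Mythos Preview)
Your part (i) matches the paper's argument essentially verbatim.

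For part (ii), your route diverges from the paper's and is less direct. The paper also passes to the normalization $\overline{\mathcal{B}}$ (module-finite since $R$ is Nagata and $\mathcal{B}$ is reduced), and bounds $\lambda_R H^0_{\mathfrak{m}}(\overline{\mathcal{B}_n}/\mathcal{B}_n)$ by a polynomial of degree $\leq r-1$ via the support in $c_{\mathcal{B}}^{-1}(\mathfrak{m})$. But instead of chasing sheaf cohomology on $\mathrm{Proj}$ to get an $O(n^{r-2})$ bound, the paper obtains \emph{exact} vanishing $H^0_{\mathfrak{m}}(\overline{\mathcal{B}_n}) = H^1_{\mathfrak{m}}(\overline{\mathcal{B}_n}) = 0$ by constructing a $2$-regular sequence $x_1,x_2 \in \mathfrak{m}$ on $\overline{\mathcal{B}}$: take $x_1$ avoiding the minimal primes of $R$; since $\overline{\mathcal{B}}$ is normal, the associated primes of $(x_1)$ have height one, and the hypothesis $\codim c_{\mathcal{B}}^{-1}(\mathfrak{m}) \geq 2$ forces none of them to contract to $\mathfrak{m}$, so prime avoidance yields $x_2$. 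This is the key idea you are missing; your proposed sheaf-theoretic bound is not wrong in spirit, but you yourself flag its implementation as the ``main obstacle,'' and the regular-sequence argument sidesteps it entirely. The reference to \cite[Thm.\ 1.1]{Ran19a} is not needed here.

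For part (iii), there are two genuine gaps. First, \cite[Prp.\ 2.6]{Ran19a} produces a birational graded extension $\mathcal{B}$ of $\mathcal{A}$ with good fiber-codimension properties, but it does \emph{not} produce the symmetric algebra of a free module; there is no reason the $\mathcal{B}$ it outputs is $\mathrm{Sym}(\mathcal{F})$. The paper instead builds $\mathcal{F}$ by hand: choose $e$ generic $R$-combinations of generators of $\mathcal{M}$ that generate $\mathcal{M}_{\mathfrak{p}_i}$ at every minimal prime, view them in $\mathcal{M}\otimes Q(R)$, and scale so that $\mathcal{M}\subset\mathcal{F}$ with $\mathcal{F}$ free of rank $e$. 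Second, your appeal to ``a Noether normalization of $R$'' to arrange $\lambda_R H^1_{\mathfrak{m}}(R) < \infty$ does not make sense; Noether normalization does not alter $R$. The paper invokes Grothendieck's finiteness theorem (SGA2, Exp.\ VIII, Cor.\ 2.3): since $R$ is essentially of finite type over a field it is a quotient of a regular ring, and $\dim R \geq 2$ then gives $H^1_{\mathfrak{m}}(R)$ finitely generated, hence of finite length. (Incidentally, $r = \dim \mathcal{R}(\mathcal{M}) = \dim R + e - 1$, not $\dim R + e$, though this does not affect the vanishing.) For the limit assertion, the paper simply notes $R$ is analytically unramified and cites \cite[Thm.\ 3.2]{Cut}; your detour through \cite[Thm.\ 1.1(ii)]{Ran19a} and Okounkov bodies is unnecessary.
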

\begin{proof}
Consider $\rm{(i)}$. Because $\mathcal{A}$ is reduced and its minimal primes contract to minimal primes of $R$, then $\mathcal{B}$ and $\mathcal{A}$ are torsion free. Because $X$ is reduced, the natural map $\mathcal{A}_n \rightarrow \mathcal{A}_n^{**}$ is injection by a straightforward generalization of  \cite[\href{http://stacks.math.columbia.edu/tag/0AV0}{Tag 0AV0}]{Stacks}. But $\mathcal{A}_n^{**}$ is reflexive, so any $2$-regular sequence from $\mathfrak{m}$ lifts to a $2$-regular sequence of $\mathcal{A}_n^{**}$ (see  \cite[\href{http://stacks.math.columbia.edu/tag/0AV5}{Tag 0AV5}]{Stacks}). Thus, $H_{x_0}^{i}(\mathcal{B}_n)=0$ for $i=0,1$ by \cite[Thm.\ 3.8]{Gr}. 

Consider $\rm{(ii)}$. Observe that the total ring of fractions $Q(\mathcal{B})$ of 
$\mathcal{B}$ is a graded ring. Denote by $\overline{\mathcal{B}}$
the integral closure of $\mathcal{B}$ in $Q(\mathcal{B})$. Then $\overline{\mathcal{B}}$ is graded by Prp.\ 2.3.5 in \cite{Huneke}. Because $\mathcal{B}$ is reduced and is of finite type over $R$, which is Nagata, then  $\overline{\mathcal{B}}$ is module-finite over $\mathcal{B}$ by \cite[\href{http://stacks.math.columbia.edu/tag/03GH}{Tag 03GH}]{Stacks}. Thus $H_{\mathfrak{m}}^{0}(\overline{\mathcal{B}}/\mathcal{B})$ is a finite $\mathcal{B}/\mathfrak{m}^{l}\mathcal{B}$-module for some positive $l$. This implies that $\lambda_{R}(H_{\mathfrak{m}}^{0}(\overline{\mathcal{B}_n}/\mathcal{B}_n))$ is a polynomial of degree at most $\dim c_{\mathcal{B}}^{-1}(\mathfrak{m})$. Because $\mathcal{B}$ is equidimensional of dimension $r$, and the minimal primes of $\mathcal{B}$ contract to minimal primes of $R$, then $\dim c_{\mathcal{B}}^{-1}(\mathfrak{m}) \leq r-1$. Hence 
\begin{equation}\label{int. zero}
\limsup_{n \to \infty} \frac{r!}{n^r}\lambda_{R}(H_{\mathfrak{m}}^{0}(\overline{\mathcal{B}_n}/\mathcal{B}_n)) =0.
\end{equation}
Consider the sequence 
\begin{equation}\label{standard ses}
H_{\mathfrak{m}}^{0}(\overline{\mathcal{B}_n}/\mathcal{B}_n) \rightarrow H_{\mathfrak{m}}^{1}(\mathcal{B}_n) \rightarrow H_{\mathfrak{m}}^{1}(\overline{\mathcal{B}_n}).
\end{equation}
Because $\overline{\mathcal{B}}$ and $\mathcal{B}$ are generically equal, each minimal prime of the former ring contracts to a minimal prime of the latter. Select $x_1 \in \mathfrak{m}$ that avoids the minimal primes of $R$. Hence $x_1$ avoids the minimal primes of $\overline{\mathcal{B}}$. Because $\overline{\mathcal{B}}$ is normal, it follows that
the associated primes of the ideal $(x_1)$ generated by $x_1$ in  $\overline{\mathcal{B}}$ are of height one. But  $\codim c_{\mathcal{B}}^{-1}(\mathfrak{m}) \geq 2$ in $\mathrm{Proj}(\mathcal{B})$, so  $\codim c_{\overline{\mathcal{B}}}^{-1}(\mathfrak{m}) \geq 2$ in $\mathrm{Proj}(\overline{\mathcal{B}})$. Therefore, none of the associated primes of $(x_1)$ in  $\overline{\mathcal{B}}$ contracts to $\mathfrak{m}$. Hence by prime avoidance we can select $x_2 \in \mathfrak{m}$ such that $x_2$ is a nonzero divisor of $\overline{\mathcal{B}}/x_1\overline{\mathcal{B}}$. In this way we have constructed a $2$-regular sequence of $\overline{\mathcal{B}_n}$ for each $n$. Thus $H_{\mathfrak{m}}^{i}(\overline{\mathcal{B}_n})=0$ for $i=0,1$ by \cite[Thm.\ 3.8]{Gr}. Finally, (\ref{standard ses}) and (\ref{int. zero}) yield (\ref{zero and one}). 

Consider $\rm{(iii)}$. Note that the Rees algebra of $\mathcal{R}(\mathcal{M})$ is reduced because $X$ is reduced. Thus $\mathcal{R}(\mathcal{M}) \hookrightarrow \mathcal{R}(\mathcal{M}) \otimes Q(R)$ where $Q(R)$ is the total ring of fractions of $R$. Let $\mathfrak{p}_1, \ldots, \mathfrak{p}_q$ be the minimal primes of $R$. By hypothesis $\mathcal{M}_{\mathfrak{p}_i} =e$ for each for each minimal prime $\mathfrak{p}_i$. Then there exists $e$ generic combinations of the generators of $\mathcal{M}$ that generate  $\mathcal{M}_{\eta_i}$ for each $i$. Consider the module $\mathcal{F}'$ generated by these elements in $\mathcal{M} \otimes Q(R)$. Scale the generators of $\mathcal{F}'$ with elements from $Q(R)$ in such a way that all of the remaining generators of $\mathcal{M}$ can be expressed as a linear combination of the scaled generators $\mathcal{F}'$ with coefficients in $R$. Denote by $\mathcal{F}$ the $R$-module generated by the scaled generators of $\mathcal{F}'$. Then $\mathcal{F}$ is free of rank $e$.
At this point one can apply part \rm{(ii)} with $\mathcal{A}:=\mathcal{R}(\mathcal{M})$ and $\mathcal{B}:=\mathrm{Sym}(\mathcal{F})$ to derive (\ref{zero and one}), or proceed directly as follows.
Let $\mathcal{F}^n$ be the $n$th symmetric power of $\mathcal{F}$ and $\mathcal{M}^n$ be the $n$th homogeneous component of the subalgebra of $\mathrm{Sym}(\mathcal{F})$ generated by $\mathcal{M}$. Consider the exact sequence 
\begin{equation}\label{ses modules}
H_{\mathfrak{m}}^{0}(\mathcal{F}^n) \rightarrow H_{\mathfrak{m}}^{0}(\mathcal{F}^n/\mathcal{M}^n) \rightarrow H_{\mathfrak{m}}^{1}(\mathcal{M}^n) \rightarrow H_{\mathfrak{m}}^{1}(\mathcal{F}^n).
\end{equation}
Because $X$ is reduced of positive dimension and $\mathcal{F}^n$ is free, then $H_{\mathfrak{m}}^{0}(\mathcal{F}^n)=0$. 
Because $\mathcal{F}^n$ is free of rank $\binom{e+n-1}{n}$, then  $H_{\mathfrak{m}}^{1}(\mathcal{F}^n)$ is equal to the direct sum of  $\binom{e+n-1}{n}$ copies of $H_{\mathfrak{m}}^{1}(R)$. Because $R$ is esentially of finite type over a field, then $R$ is a homomorphic image of a regular ring. Because $\dim R \ge 2$, Grothendieck's finiteness theorem (see Expos\'e VIII, Corollaire 2.3 in \cite{Gr2}) implies that $H_{\mathfrak{m}}^{1}(R)$ is finitely generated. Hence $\lambda_{R}( H_{\mathfrak{m}}^{1}(\mathcal{F}^n)) = O(n^{e-1})$. Recall that by \cite[Prp.\ 5.1 $\rm{(2)}$]{Rangachev2} we have $\dim \mathcal{R}(\mathcal{M})= \dim X + e-1$. Because $R$ is of positive dimension, then $r>e-1$, so $\lim_{n \to \infty} \frac{r!}{n^r} \lambda_{R}( H_{\mathfrak{m}}^{1}(\mathcal{F}^n))=0$ which proves (\ref{zero and one}) for $\mathcal{A}:=\mathcal{R}(\mathcal{M})$ and $\mathcal{B}:= \mathrm{Sym}(\mathcal{F})$. 

Note that $R$ is analytically unramified because $R$ is reduced and of finite type over a field. So by Theorem 3.2 in \cite{Cut}
$\limsup_{n \to \infty} \frac{r!}{n^r}\lambda_{R}(H_{\mathfrak{m}}^{0}(\mathcal{F}^n/\mathcal{M}^n))$
exists as a limit. \end{proof}

The next proposition guarantees the existence of a $\mathcal{B}$ satisfying the hypothesis of Prp.\ \ref{intr. of limits} \rm{(ii)}. The proof is based on Noether normalization. 

\begin{proposition}[Prp.\ 2.6 in \cite{Ran19a}]\label{nice embedding}
 Suppose $R$ is a reduced equidimensional universally catenary Noetherian ring of positive dimension or an infinite field. Assume $\mathcal{A} = \oplus_{i=0}^{\infty} \mathcal{A}_{i}$ is a reduced equidimensional standard graded algebra over $R$. Assume that the minimal primes of $\mathcal{A}$ contract to minimal primes of $R$. Then there exists a standard graded $R$-algebra $\mathcal{B} = \oplus_{i=0}^{\infty} \mathcal{B}_{i}$ such that 
\begin{enumerate}
    \item[\rm{(1)}]$\mathcal{B}$ is a birational extension of $\mathcal{A}$, and the inclusion $\mathcal{A} 
 \subset \mathcal{B}$ is  homogeneous;
   
    \item[\rm{(2)}] For each prime $\mathfrak{p}$ in $R$ the minimal primes of $\mathfrak{p}\mathcal{B}$ are of height at least $\mathrm{ht}(\mathfrak{p}/\mathfrak{p}_{\mathrm{min}})$ where $\mathfrak{p}_{\mathrm{min}}$ is a minimal prime of $R$ contained in $\mathfrak{p}$.
\end{enumerate}
\end{proposition}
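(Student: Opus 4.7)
My plan is to combine a relative graded Noether normalization with a controlled birational enlargement. I would first reduce to the case where $R$ is an integral domain with infinite residue fields: since $R$ is reduced and equidimensional with minimal primes $\mathfrak{p}_1,\dots,\mathfrak{p}_s$, and every minimal prime of $\mathcal{A}$ contracts to some $\mathfrak{p}_i$, the canonical injection $\mathcal{A}\hookrightarrow\prod_i\mathcal{A}_i$ (where $\mathcal{A}_i$ is the reduced quotient of $\mathcal{A}$ by a minimal prime lying over $\mathfrak{p}_i$) lets me construct each component $\mathcal{B}_i$ separately over $R/\mathfrak{p}_i$ and reassemble $\mathcal{B}$ as a graded $R$-subalgebra of the product. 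If some residue fields are finite, a faithfully flat base change $R\to R(T)$ fixes this; all hypotheses (reducedness, equidimensionality, universal catenarity, and the hypothesis on minimal primes) are preserved and the conclusions descend.

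\textbf{Noether normalization and construction of $\mathcal{B}$.} With $R$ a domain with infinite residue fields (or an infinite field outright), I would apply graded Noether normalization: the classical argument with generic $R$-linear combinations of the degree-one generators $a_0,\dots,a_m$ of $\mathcal{A}$, combined with graded prime avoidance, yields $y_1,\dots,y_d\in\mathcal{A}_1$ (with $d:=\dim\mathcal{A}-\dim R$) that are algebraically independent over $R$ and such that $R[y_1,\dots,y_d]\hookrightarrow\mathcal{A}$ is module-finite. I would then take $\mathcal{B}$ to be a standard graded $R$-subalgebra of the graded total ring of fractions $Q^{h}(\mathcal{A})$ that contains $\mathcal{A}$, is integral over $R[y_1,\dots,y_d]$, and is generated over $R$ in degree one: concretely, $\mathcal{B}$ is generated by $\mathcal{A}$ together with degree-one liftings into $Q^{h}(\mathcal{A})$ of the degree-one pieces of each component quotient $\mathcal{A}/\mathfrak{q}_j$, rescaled by degree-zero units if necessary so that the ring of coefficients remains $R$.

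\textbf{Verification and main obstacle.} Condition (1) is built into the construction. For (2), fix a prime $\mathfrak{p}\subset R$ and a minimal prime $\mathfrak{q}$ of $\mathfrak{p}\mathcal{B}$. In the polynomial ring $R[y_1,\dots,y_d]$, minimal primes of $\mathfrak{p}R[y_1,\dots,y_d]$ have height exactly $\mathrm{ht}(\mathfrak{p}/\mathfrak{p}_{\min})$; because $R$ is universally catenary and $R[y_1,\dots,y_d]\subset\mathcal{B}$ is integral, going-up together with the dimension formula forces $\mathrm{ht}(\mathfrak{q})\ge\mathrm{ht}(\mathfrak{p}/\mathfrak{p}_{\min})$, which is (2). \emph{The main obstacle} is the construction of $\mathcal{B}$ itself --- simultaneously ensuring that $\mathcal{A}\subset\mathcal{B}$ is a homogeneous birational inclusion of standard graded $R$-algebras while $\mathcal{B}$ remains module-finite over a polynomial subring. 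The naive integral closure in $Q^{h}(\mathcal{A})$ need not be standard graded, and the reassembly across the reduction to domains does not automatically yield a standard graded $R$-algebra; the careful choice of degree-one enrichments in $Q^{h}(\mathcal{A})$, together with the hypothesis that minimal primes of $\mathcal{A}$ contract to minimal primes of $R$, is the real content.
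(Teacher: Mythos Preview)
The paper does not actually prove this proposition; it is quoted from \cite{Ran19a} with only the one-line hint ``The proof is based on Noether normalization.'' Your proposal is built around exactly that idea---a relative graded Noether normalization $R[y_1,\dots,y_d]\hookrightarrow\mathcal{A}$ followed by a birational enlargement so that $\mathcal{B}$ sits module-finitely over the polynomial subring---so at the level of strategy you are aligned with what the paper indicates.

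One point deserves tightening. Your description of $\mathcal{B}$ as ``generated by $\mathcal{A}$ together with degree-one liftings into $Q^{h}(\mathcal{A})$ of the degree-one pieces of each component quotient $\mathcal{A}/\mathfrak{q}_j$'' is not quite a construction: the degree-one part of $\mathcal{A}$ already surjects onto each $(\mathcal{A}/\mathfrak{q}_j)_1$, so there is nothing new to lift. What you presumably intend (and what your first paragraph sets up) is to solve the problem componentwise over each $R/\mathfrak{p}_i$, obtaining $\mathcal{B}_j$ birational to $\mathcal{A}/\mathfrak{q}_j$ and finite over a polynomial ring, and then take $\mathcal{B}$ to be the $R$-subalgebra of $\prod_j\mathcal{B}_j$ generated in degree one. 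You should check that this reassembled $\mathcal{B}$ is genuinely standard graded over $R$ (not merely over $\prod_i R/\mathfrak{p}_i$), and that the integrality over a polynomial subring survives the gluing; this is exactly the ``main obstacle'' you flag, and it is where the content of the cited argument lies. Your verification of (2) via the dimension formula is correct once you know $\mathcal{B}$ is equidimensional and integral over $R[y_1,\dots,y_d]$.
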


\begin{corollary}\label{epsilon as limit}
Suppose $(R,\mathfrak{m})$ is excellent and equidimensional with $\dim R \geq 2$. Suppose $\mathcal{A}$ is reduced. Then $\varepsilon(\mathcal{A})$ exists as a finite limit.
\end{corollary}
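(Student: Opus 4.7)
The plan is to use Prp \ref{nice embedding} to replace $\mathcal{A}$ by a well-behaved overring $\mathcal{B}$, apply Prp \ref{intr. of limits} (ii) to rewrite $\varepsilon(\mathcal{A})$ in terms of $H^{0}_{\mathfrak{m}}(\mathcal{B}_n/\mathcal{A}_n)$, and finally invoke Cutkosky's theorem on convergence of normalized lengths over an analytically unramified base.

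Since $R$ is excellent, it is reduced (standing hypothesis of the section), Nagata, universally catenary, and analytically unramified. Prp \ref{nice embedding} then produces a standard graded $R$-algebra $\mathcal{B}$ containing $\mathcal{A}$ homogeneously and birationally, and satisfying the height inequality (2). Reducedness of $\mathcal{A}$ makes $Q(\mathcal{A})$ a finite product of fields, so $\mathcal{B} \subset Q(\mathcal{A})$ is reduced; birationality moreover keeps $\mathcal{B}$ equidimensional of dimension $r$ and preserves the contraction correspondence between minimal primes. Taking $\mathfrak{p} = \mathfrak{m}$ in (2) and using that $R$ is equidimensional with $\dim R \geq 2$ yields $\mathrm{ht}(\mathfrak{m}/\mathfrak{p}_{\min}) = \dim R \geq 2$, so every minimal prime of $\mathfrak{m}\mathcal{B}$ has height at least $2$; equivalently, $\codim c_{\mathcal{B}}^{-1}(\mathfrak{m}) \geq 2$ in $\mathrm{Proj}(\mathcal{B})$. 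All hypotheses of Prp \ref{intr. of limits} (ii) are now in place, and it gives
$$\varepsilon(\mathcal{A}) \;=\; \limsup_{n \to \infty} \frac{r!}{n^r}\, \lambda_R\bigl(H^{0}_{\mathfrak{m}}(\mathcal{B}_n/\mathcal{A}_n)\bigr).$$

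Each $\mathcal{B}_n$ is a finitely generated $R$-module since $\mathcal{B}$ is standard graded, so $\mathcal{B}_n/\mathcal{A}_n$ is finitely generated and its $\mathfrak{m}$-torsion has finite length. To promote the $\limsup$ to an honest, finite limit, I would invoke Cutkosky's Theorem 3.2 in \cite{Cut}, whose key hypothesis — that $R$ be analytically unramified — holds here by excellence. The argument runs in parallel with the last paragraph of the proof of Prp \ref{intr. of limits} (iii), where the same convergence was deduced for the filtration $\{\mathcal{F}^n/\mathcal{M}^n\}_n$. The main obstacle I anticipate is precisely this last appeal: checking that the graded family $\{\mathcal{B}_n/\mathcal{A}_n\}_n$ fits the Okounkov-body formalism of \cite{Cut}, which is ensured once one uses that $\mathcal{B}$ is standard graded and finitely generated over $R$, mirroring the role played by $\mathrm{Sym}(\mathcal{F})$ in Prp \ref{intr. of limits} (iii).
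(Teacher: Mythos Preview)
Your approach is essentially identical to the paper's: produce $\mathcal{B}$ via Prp.~\ref{nice embedding}, verify the hypotheses of Prp.~\ref{intr. of limits}~(ii) using $\dim R\ge 2$ and equidimensionality, and pass to the $H^0_{\mathfrak m}$ formulation. The only divergence is in the final citation, and this is exactly where you flagged uncertainty. The paper does \emph{not} invoke Cutkosky's Theorem~3.2 here; instead it cites \cite[Thm.~5]{Das}, which is formulated directly for epsilon multiplicities of graded algebras and handles the general $\mathcal{A}\subset\mathcal{B}$ situation without needing $\mathcal{B}$ to be a symmetric algebra of a free module. Your concern that the Okounkov-body machinery of \cite{Cut} may not apply verbatim to an arbitrary standard graded $\mathcal{B}$ is well-founded---Cutkosky's Theorem~3.2 is stated for graded families inside a polynomial-type ambient---and the paper sidesteps this by appealing to Das's extension rather than forcing the argument through \cite{Cut}. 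So your structure is correct; just swap the last reference.
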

\begin{proof}
Because $R$ is excellent, then $R$ is Nagata and universally catenary. By Prp.\ \ref{nice embedding} applied with $\mathfrak{p}=\mathfrak{m}$ there exists a standard graded $R$-algebra $\mathcal{B}$ such that $\mathcal{A} \subset \mathcal{B}$ is a homogeneous inclusion, and $\mathcal{B}$ satisfies the hypothesis of Prp.\ \ref{intr. of limits} \rm{(ii)} because $R$ is local and $\dim R \geq 2$. Then by Prp.\ \ref{intr. of limits} \rm{(ii)} (\ref{zero and one}) holds. By \cite[Thm.\ 5]{Das}
$\varepsilon(\mathcal{A})$ exists as a finite limit.
\end{proof}

The following proposition is key to proving
our vanishing result for $\varepsilon(\mathcal{A})$. It's essentially the content of the two main results of \cite{Ran19a}. Let $\mathcal{A} \subset \mathcal{B}$ be commutative rings with identity. Denote by $\overline{\mathcal{A}}$ the integral closure of $\mathcal{A}$ in $\mathcal{B}$. 

\begin{proposition}\label{associated primes} Let $(R,\mathfrak{m})$ be a local Noetherian equidimensional ring. Let $\mathcal{A} \subset \mathcal{B}$ be finitely generated $R$-algebras. 
\begin{enumerate}
    \item [\rm{(i)}] Suppose $R$ is universally catenary and the minimal primes of $\mathcal{B}$ contract to minimal primes of $\mathcal{A}$. If $\mathfrak{m} \in \mathrm{Ass}_{R}(\mathcal{B}/\overline{\mathcal{A}})$, then 
    $\codim c_{\mathcal{A}}^{-1}(\mathfrak{m}) \leq 1$.
    \item [\rm{(ii)}] Suppose $\codim c_{\mathcal{B}}^{-1}(\mathfrak{m}) \geq  2$. If $\codim c_{\mathcal{A}}^{-1}(\mathfrak{m}) \leq 1$, then $\mathfrak{m} \in \mathrm{Ass}_{R}(\mathcal{B}/\overline{\mathcal{A}}).$
\end{enumerate}
\end{proposition}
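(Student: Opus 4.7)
The proposition is flagged as being ``essentially the content of the two main results of \cite{Ran19a},'' so my plan is to reduce each of the two parts to the corresponding implication in that reference, applied with the choice $\mathfrak{p}=\mathfrak{m}$.

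For part \rm{(i)}, the key is to use that, under universal catenarity of $R$ and the assumption that minimal primes of $\mathcal{B}$ contract to minimal primes of $\mathcal{A}$ (and hence, since $\mathcal{A}\subset\mathcal{B}$, to minimal primes of $R$), the dimension formula applies on both $\mathrm{Proj}(\mathcal{A})$ and $\mathrm{Proj}(\mathcal{B})$. This lets me translate $\codim c_{\mathcal{A}}^{-1}(\mathfrak{m})\geq 2$ into the existence of a height-two sequence in $\mathfrak{m}\mathcal{A}$ that avoids every minimal prime of $\mathcal{A}$, and similarly for $\overline{\mathcal{A}}$. By prime avoidance applied in $R$, I would lift this to a sequence $x_{1},x_{2}\in\mathfrak{m}$ that is regular modulo every associated prime of $\mathcal{B}/\overline{\mathcal{A}}$ that does not contract to $\mathfrak{m}$ in $\mathrm{Proj}(\overline{\mathcal{A}})$. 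The main point, supplied by the structural results of \cite{Ran19a}, is that every $\mathfrak{p}\in\mathrm{Ass}_{R}(\mathcal{B}/\overline{\mathcal{A}})$ in fact contracts to a prime whose fiber in $\mathrm{Proj}(\mathcal{A})$ has codimension at most one. The contrapositive then yields the conclusion of \rm{(i)}.

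For part \rm{(ii)}, the task is to produce an element of $\mathcal{B}$ whose class in $\mathcal{B}/\overline{\mathcal{A}}$ is annihilated exactly by $\mathfrak{m}$. My approach would be to use the valuative criterion for integral closure: the hypothesis $\codim c_{\mathcal{A}}^{-1}(\mathfrak{m})\leq 1$ implies that over every prime $\mathfrak{p}\subsetneq\mathfrak{m}$ the fiber of $c_{\mathcal{A}}$ is large enough that $\overline{\mathcal{A}}_{\mathfrak{p}}=\mathcal{B}_{\mathfrak{p}}$, whereas the hypothesis $\codim c_{\mathcal{B}}^{-1}(\mathfrak{m})\geq 2$ guarantees that $\overline{\mathcal{A}}_{\mathfrak{m}}\neq\mathcal{B}_{\mathfrak{m}}$ (since the failure of integrality is detected in codimension one on $\mathrm{Proj}(\mathcal{B})$). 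Picking an element in the symmetric difference and using that the module $\mathcal{B}/\overline{\mathcal{A}}$ is then supported exactly at $\mathfrak{m}$ locally, one deduces $\mathfrak{m}\in\mathrm{Ass}_{R}(\mathcal{B}/\overline{\mathcal{A}})$ by the standard characterization of associated primes via annihilators.

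The main obstacle is the first direction: extracting from the general theorems in \cite{Ran19a} the precise way in which the universal catenarity and the contraction hypothesis interact with associated primes of $\mathcal{B}/\overline{\mathcal{A}}$. Once this reduction is clean, both \rm{(i)} and \rm{(ii)} follow immediately. The role of $\mathcal{B}$ sitting above $\mathcal{A}$ with control on fiber dimensions is precisely what makes the criterion sharp; without the assumption $\codim c_{\mathcal{B}}^{-1}(\mathfrak{m})\geq 2$ in part \rm{(ii)}, one could have $\overline{\mathcal{A}}=\mathcal{B}$ locally and no associated prime at $\mathfrak{m}$ at all.
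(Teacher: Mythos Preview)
Your high-level plan---reduce both parts to the main theorems of \cite{Ran19a}---is exactly what the paper does. The paper's entire proof consists of one preliminary reduction (pass from $\mathrm{Ass}_{R}(\mathcal{B}/\overline{\mathcal{A}})$ to $\mathrm{Ass}_{\mathcal{A}}(\mathcal{B}/\overline{\mathcal{A}})$ via \cite[Prp.\ 2.1]{Ran19a} or \cite[\href{http://stacks.math.columbia.edu/tag/05DZ}{Tag 05DZ}]{Stacks}) followed by direct citation: part \rm{(i)} is \cite[Thm.\ 1.1 (iii)]{Ran19a} and part \rm{(ii)} is \cite[Thm.\ 1.2]{Ran19a}. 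You omit the $\mathrm{Ass}_{R}\to\mathrm{Ass}_{\mathcal{A}}$ step, which is the only actual content in the paper's argument beyond citation.

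However, your speculative sketch of the internal argument for part \rm{(ii)} contains a genuine error. You write that $\codim c_{\mathcal{A}}^{-1}(\mathfrak{m})\leq 1$ implies $\overline{\mathcal{A}}_{\mathfrak{p}}=\mathcal{B}_{\mathfrak{p}}$ for every $\mathfrak{p}\subsetneq\mathfrak{m}$. This is false: the codimension of the fiber over $\mathfrak{m}$ says nothing about integrality of $\mathcal{B}$ over $\mathcal{A}$ at smaller primes, and in general $\mathcal{B}/\overline{\mathcal{A}}$ is not supported only at $\mathfrak{m}$. You are conflating ``$\mathfrak{m}$ is an associated prime'' with ``$\mathfrak{m}$ is the only point of the support.'' The correct mechanism is different: the hypothesis $\codim c_{\mathcal{A}}^{-1}(\mathfrak{m})\leq 1$ produces a prime $\mathfrak{q}$ of $\mathcal{A}$ of height at most one contracting to $\mathfrak{m}$, while $\codim c_{\mathcal{B}}^{-1}(\mathfrak{m})\geq 2$ forces every prime of $\mathcal{B}$ over $\mathfrak{q}$ to have height at least two; this height discrepancy is what furnishes an element of $\mathcal{B}$ not integral over $\mathcal{A}$ whose annihilator (after localizing appropriately) contracts to $\mathfrak{m}$. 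Your sketch for part \rm{(i)} is vaguer but at least points in the right direction.
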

\begin{proof} By the second part of \cite[Prp.\ 2.1]{Ran19a} or \cite[\href{http://stacks.math.columbia.edu/tag/05DZ}{Tag 05DZ}]{Stacks}
it's enough to consider primes in $\mathrm{Ass}_{\mathcal{A}}(\mathcal{B}/\overline{\mathcal{A}})$ contracting to $\mathfrak{m}$. Then part \rm{(i)} follows from \cite[Thm.\ 1.1 \rm{(iii)}]{Ran19a}.  Part \rm{(ii)} follows from \cite[Thm.\ 1.2]{Ran19a}.
\end{proof}

The following theorem characterizes the vanishing of $\varepsilon(\mathcal{A})$. The result is the main inspiration for introducing the class of deficient conormal singularities in  Sct.\ \ref{deficient section}. 
\begin{theorem}\label{vanishing} Let $(R,\mathfrak{m})$ be a reduced Noetherian local equidimensional ring with $\dim R \geq 2$. Assume $R$ is Nagata and universally catenary. Assume $\mathcal{A}$ is a reduced standard graded $R$-algebra such that its minimal primes contract to minimal primes of $R$. Then 
 \begin{equation}\label{vanishing of volumes}
 \varepsilon(\mathcal{A}) = 0
 \end{equation}
 if and only if $\codim c_{\mathcal{A}}^{-1}(\mathfrak{m}) \geq 2$. 
\end{theorem}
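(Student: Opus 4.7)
My plan is to string together the three preparatory results in the section: the Noether-normalization statement Prp.~\ref{nice embedding}, the $H^1_{\mathfrak{m}}$-to-$H^0_{\mathfrak{m}}$ conversion Prp.~\ref{intr. of limits} (ii), and the associated-prime dichotomy Prp.~\ref{associated primes}. First I would apply Prp.~\ref{nice embedding} with $\mathfrak{p}=\mathfrak{m}$ to produce a homogeneous birational extension $\mathcal{A}\subset\mathcal{B}$ in which every minimal prime of $\mathfrak{m}\mathcal{B}$ has height at least $\mathrm{ht}(\mathfrak{m})=\dim R\geq 2$; equivalently, $\codim c_{\mathcal{B}}^{-1}(\mathfrak{m})\geq 2$ in $\mathrm{Proj}(\mathcal{B})$. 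Since $R$ is Nagata and $\mathcal{B}$ inherits reducedness, equidimensionality, and the minimal-primes-contract property from $\mathcal{A}$ by birationality, Prp.~\ref{intr. of limits} (ii) applies and yields
\[
\varepsilon(\mathcal{A})\;=\;\limsup_{n\to\infty}\frac{r!}{n^r}\,\lambda_R\!\bigl(H^0_{\mathfrak{m}}(\mathcal{B}_n/\mathcal{A}_n)\bigr).
\]
I would then insert the integral closure $\overline{\mathcal{A}}$ of $\mathcal{A}$ in $\mathcal{B}$ and take $H^0_{\mathfrak{m}}$ of $0\to\overline{\mathcal{A}}/\mathcal{A}\to\mathcal{B}/\mathcal{A}\to\mathcal{B}/\overline{\mathcal{A}}\to 0$. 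Because $R$ is Nagata, $\overline{\mathcal{A}}$ is module-finite over $\mathcal{A}$, so $\overline{\mathcal{A}}/\mathcal{A}$ is a finitely generated graded $\mathcal{A}$-module supported on the non-normal locus.

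For the ($\Leftarrow$) direction, assume $\codim c_{\mathcal{A}}^{-1}(\mathfrak{m})\geq 2$. The contrapositive of Prp.~\ref{associated primes} (i) forces $\mathfrak{m}\notin\mathrm{Ass}_R(\mathcal{B}/\overline{\mathcal{A}})$, hence $H^0_{\mathfrak{m}}(\mathcal{B}/\overline{\mathcal{A}})=0$. The long exact sequence in local cohomology then identifies $H^0_{\mathfrak{m}}(\mathcal{B}_n/\mathcal{A}_n)\cong H^0_{\mathfrak{m}}((\overline{\mathcal{A}}/\mathcal{A})_n)$. Now $H^0_{\mathfrak{m}}(\overline{\mathcal{A}}/\mathcal{A})$ is finitely generated and graded over $\mathcal{A}$, and its support lies in the intersection of the non-normal locus with $c_{\mathcal{A}}^{-1}(\mathfrak{m})$. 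The codimension hypothesis forces this support to have Krull dimension strictly smaller than $\dim\mathcal{A}$, and the standard Hilbert-function estimate for a finitely generated graded $\mathcal{A}$-module of such dimension gives $\lambda_R((H^0_{\mathfrak{m}}(\overline{\mathcal{A}}/\mathcal{A}))_n)=o(n^r)$; hence $\varepsilon(\mathcal{A})=0$.

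For the converse ($\Rightarrow$), suppose $\codim c_{\mathcal{A}}^{-1}(\mathfrak{m})\leq 1$. Combined with the estimate $\codim c_{\mathcal{B}}^{-1}(\mathfrak{m})\geq 2$ from Step~1, Prp.~\ref{associated primes} (ii) produces an $\mathcal{A}$-associated prime $Q$ of $\mathcal{B}/\overline{\mathcal{A}}$ with $Q\cap R=\mathfrak{m}$, so $H^0_{\mathfrak{m}}(\mathcal{B}/\overline{\mathcal{A}})\neq 0$. The crucial distinction from the previous direction is that $\mathcal{B}/\overline{\mathcal{A}}$ need not be module-finite over $\mathcal{A}$, leaving room for its $\mathfrak{m}$-torsion to grow of order $n^r$. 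The main obstacle is making this quantitative: the bare existence of one associated prime would only give polynomial growth of suboptimal order. To reach the sharp bound one must invoke the finer structural information on $\mathrm{Ass}_{\mathcal{A}}(\mathcal{B}_n/\overline{\mathcal{A}_n})$ coming from the results of \cite{Ran19a} (which are what lie behind Prp.~\ref{associated primes}), so as to extract, for infinitely many $n$, enough graded $\mathfrak{m}$-torsion tied to a maximal-dimensional component of $c_{\mathcal{A}}^{-1}(\mathfrak{m})$ to force $\lambda_R(H^0_{\mathfrak{m}}(\mathcal{B}_n/\overline{\mathcal{A}}_n))\gtrsim c\,n^r$ along a subsequence, whence $\varepsilon(\mathcal{A})>0$. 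Matching this polynomial order with the codimension condition on the fiber is the heart of the theorem.
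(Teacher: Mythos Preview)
Your setup and the $(\Leftarrow)$ direction are essentially the paper's argument: once $\mathfrak{m}\notin\mathrm{Ass}_R(\mathcal{B}/\overline{\mathcal{A}})$, the inclusion $H^0_{\mathfrak{m}}(\mathcal{B}_n/\mathcal{A}_n)\subset \overline{\mathcal{A}_n}/\mathcal{A}_n$ together with module-finiteness of $\overline{\mathcal{A}}$ over $\mathcal{A}$ gives $o(n^r)$ growth. (A minor point: you do not actually need the hypothesis $\codim c_{\mathcal{A}}^{-1}(\mathfrak m)\ge 2$ at that step; module-finiteness alone forces the Hilbert polynomial degree to be at most $\dim c_{\mathcal{A}}^{-1}(\mathfrak m)\le r-1$, which already suffices.)

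The gap is in your $(\Rightarrow)$ direction. You correctly observe that Prp.~\ref{associated primes}~(ii) yields $\mathfrak{m}\in\mathrm{Ass}_R(\mathcal{B}/\overline{\mathcal{A}})$, and you correctly flag that the bare existence of one such associated prime does not obviously force $n^r$ growth. But you then defer the resolution to unspecified ``finer structural information'' from \cite{Ran19a}, and that is precisely where the content lies. The paper's mechanism is not a structural result on associated primes at all; it is a multiplicity argument. From $\mathfrak m\in\mathrm{Ass}_R(\mathcal{B}/\overline{\mathcal{A}})$ one extracts a single homogeneous witness $b_k\in\mathcal{B}_k$ with $b_k\notin\overline{\mathcal{A}_k}$ but $\mathfrak m^l b_k\subset\overline{\mathcal{A}_k}$. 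One then passes to the $k$th Veronese $G':=\bigoplus_i\mathcal{A}_{ik}$ and the standard graded algebra $G$ generated over $G'$ by $b_k$. The pair $G'\subset G$ is now a finitely generated extension with $G_n/G'_n$ of finite length, and the key input is the positivity of the generalized Buchsbaum--Rim multiplicity from \cite[Cor.~5.10]{KT-Al}: since $b_k$ is not integral over $G'$, one has $\lambda_R(G_n/G'_n)=e(G',G)\,n^r/r!+\cdots$ with $e(G',G)$ a \emph{positive} integer. Bounding $H^0_{\mathfrak m}(\mathcal{B}_{nk}/\mathcal{A}_{nk})$ below by $G_n/G'_n$ then gives $\varepsilon(\mathcal{A})>0$ along the subsequence $nk$. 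In short, the ``heart of the theorem'' you allude to is not more associated-prime bookkeeping but rather reducing to a finitely generated pair where Kleiman--Thorup positivity applies; without naming that reduction and that positivity result, the argument is incomplete.
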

\begin{proof}
As in the proof of Cor.\ \ref{epsilon as limit} there exists a $\mathcal{B}$ with the properties prescribed in Prp.\ \ref{nice embedding} such that 
(\ref{zero and one}) holds. 
Assume $\codim c_{\mathcal{A}}^{-1}(\mathfrak{m}) \geq 2$. Denote by  $\overline{\mathcal{A}_n}$ the integral closure of $\mathcal{A}_n$ in $\mathcal{B}_n$. Note that $\overline{\mathcal{A}}=\oplus_{n=0}^{\infty} \overline{\mathcal{A}_n}$ with $\overline{\mathcal{A}_0}=R$ by \cite[Prp.\ 2.3.5]{Huneke}.

By Prp.\ \ref{associated primes} \rm{(i)} $\mathfrak{m} \not \in \mathrm{Ass}_{R}(\mathcal{B}_n/\overline{\mathcal{A}_n})$. Thus $H_{\mathfrak{m}}^{0}(\mathcal{B}_n/\mathcal{A}_n) \subset \overline{\mathcal{A}_n}/\mathcal{A}_n$ for each $n$. But $R$ is Nagata. Then so is $\mathcal{B}$. Furthermore, $\mathcal{B}$ is reduced, because $\mathcal{A}$ is. So, $\overline{\mathcal{A}}$ is module-finite over $\mathcal{A}$. Therefore, by repeating the argument from the proof of Prp.\ \ref{intr. of limits} $\rm{(ii)}$ we obtain (\ref{vanishing of volumes}). 

Conversely, assume that (\ref{vanishing of volumes}) holds. Suppose $b$ is an element from $\mathcal{B}_k$ for some $k$ such that there exists positive $l$ with $\mathfrak{m}^{l}b_k \in \overline{\mathcal{A}_k}$ and $b_k \not \in \overline{\mathcal{A}_k}$. Set $G':= \bigoplus_{i=1}^{\infty} \mathcal{A}_{ik}$ and let $G$ be the algebra generated by $G$ and $b_k$. By shift in degrees assume $G'$ and $G$ are standard graded. Then by \cite[Cor.\ 5.10 ]{KT-Al} 
$$\lambda_{R}(G_n/G'_n):= e(G',G)n^r/r! + \cdots$$
where the Buchsbaum--Rim multiplicity $e(G',G)$ is a positive integer.  Thus 
$$\limsup_{n \to \infty} \frac{r!}{(nk)^r}\lambda_{R}(H_{\mathfrak{m}}^{0}(\mathcal{B}_{nk}/\mathcal{A}_{nk})) \geq \lim_{n \to \infty}\frac{r!}{n^r}\lambda_{R}(G_n/G'_n) = e(G',G)>0$$
which contradicts our assumption. Hence $\mathfrak{m} \not \in \bigcup_{n=1}^{\infty}\mathrm{Ass}_R(\mathcal{B}_n/\overline{\mathcal{A}_n})$ which by reasons of grading is equivalent to $\mathfrak{m} \not \in \bigcup_{n=1}^{\infty}\mathrm{Ass}_R(\mathcal{B}/\overline{\mathcal{A}})$. So by Prp.\ \ref{associated primes} \rm{(ii)} $\codim c_{\mathcal{A}}^{-1}(\mathfrak{m}) \geq 2$ in $\mathrm{Proj}(\mathcal{A})$. 
\end{proof}
The forward direction of Thm.\ \ref{vanishing} was proved for the  epsilon multiplicity $\varepsilon (\mathcal{A}|\mathcal{B})$ with the additional hypothesis that $\mathcal{B}$ is $R$-flat by Ulrich and Validashti (see \cite[Thm.\ 4.2]{Validashti}).
We record two observations about  primary decomposition and local cohomology that will be used in the proof of the Local Volume Formula in the next section. Their statements and proofs can be found in \cite[Chp.\ 18]{Altman} for example. 

\begin{proposition}\label{saturation} Let $X$ be an affine Noetherian  scheme and let $S$ be a subscheme of $X$ with an ideal sheaf $\mathcal{I}_S$ in $\mathcal{O}_X$. Assume $\mathcal{M}$ and $\mathcal{F}$  are coherent $\mathcal{O}_{X}$-modules with $\mathcal{M}\subset \mathcal{F}$.
Let $\mathcal{M} = \cap \mathcal{M}_{i}$ be a primary  decomposition of $\mathcal{M}$ in $\mathcal{F}$ with
$x_{i} = \mathrm{Ass}_{X}(\mathcal{F}/\mathcal{M}_{i})$. The following holds

$$H_{S}^{0}(\mathcal{F}/\mathcal{M}) = (\cap_{\{j|x_j \not \subset S\}}\mathcal{M}_{j})/\mathcal{M}.$$
\end{proposition}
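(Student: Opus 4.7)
The plan is to reduce the sheaf-theoretic identity to a module-theoretic one using that $X$ is affine, characterize local cohomology with supports as ideal torsion, and then dispatch each primary component case-by-case.

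First, since $X$ is affine Noetherian, I would pass to global sections. Set $R:=\Gamma(X,\mathcal{O}_X)$ and let $F,M,M_i$, and $I$ denote the $R$-modules (resp.\ ideal) of global sections of $\mathcal{F},\mathcal{M},\mathcal{M}_i,\mathcal{I}_S$. The standard identification of $H^0_S$ with the $I$-power torsion functor yields
\[
H^0_S(\mathcal{F}/\mathcal{M})\;=\;(M:_F I^\infty)/M,
\]
where $(M:_F I^\infty):=\{\,f\in F \mid I^n f\subset M\text{ for some }n\geq 0\,\}$.

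Second, since the primary decomposition $M=\bigcap_i M_i$ is finite, one has $I^n f\subset M$ iff $I^{n_i}f\subset M_i$ for each $i$, and a single $n$ works by taking the maximum of the $n_i$. Hence $(M:_F I^\infty)=\bigcap_i(M_i:_F I^\infty)$, reducing the problem to computing each factor.

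Third, fix $i$ and let $\mathfrak{p}_i\subset R$ be the prime corresponding to $x_i$. By hypothesis $\mathrm{Ass}_R(F/M_i)=\{\mathfrak{p}_i\}$. If $x_i\not\subset S$, i.e.\ $I\not\subset\mathfrak{p}_i$, I choose $s\in I\setminus\mathfrak{p}_i$; since every zero-divisor on $F/M_i$ belongs to the union of its associated primes, $s$ (and each power $s^n$) is a non-zero-divisor on $F/M_i$. From $I^n f\subset M_i$ one extracts $s^n f\in M_i$ and concludes $f\in M_i$, so $(M_i:_F I^\infty)=M_i$. If instead $x_i\subset S$, i.e.\ $I\subset\mathfrak{p}_i$, then since $F/M_i$ is finitely generated with unique associated prime $\mathfrak{p}_i$, $\sqrt{\mathrm{ann}_R(F/M_i)}=\mathfrak{p}_i$, and by Noetherianity of $\mathfrak{p}_i$ one has $\mathfrak{p}_i^k\cdot (F/M_i)=0$ for some $k$. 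Thus $I^k F\subset M_i$, so $(M_i:_F I^\infty)=F$.

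Combining the two cases gives $(M:_F I^\infty)=\bigcap_{\{j:x_j\not\subset S\}}M_j$, and dividing by $M$ yields the claimed identity. I do not foresee a genuine obstacle: the argument is formal once finite primary decomposition and standard facts on associated primes are in hand. The only mildly delicate point is the passage from ``unique associated prime'' to ``power annihilation'' in the case $x_i\subset S$, which needs both the finite generation of $F/M_i$ (granted by coherence) and the Noetherian hypothesis on $R$ to ensure that a suitable power of $\mathfrak{p}_i$ lies in $\mathrm{ann}_R(F/M_i)$.
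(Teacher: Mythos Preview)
Your argument is correct and is exactly the standard one: identify $H^0_S$ with $I$-power torsion, split the saturation over the finitely many primary components, and treat each component according to whether $I\subset\mathfrak{p}_i$ or not. The paper does not actually prove this proposition; it simply records it and cites \cite[Chp.\ 18]{Altman}, where essentially the same argument appears.
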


\begin{proposition}\label{maximality of saturation} Let $X$ be an affine Noetherian scheme, and let $x$ be a point in $X$. Consider the nested chain of coherent $\mathcal{O}_X$-modules $\mathcal{M} \subset \mathcal{N}\subset \mathcal{F}$. Assume that $\mathcal{M}_{z} = N_{z}$ for every point $z \notin \overline{\{x\}}$. Then
$$\mathcal{N}/\mathcal{M} \subset H_{x}^{0}(\mathcal{F}/\mathcal{M}).$$
Furthermore, $H_{x}^{0}(\mathcal{F}/\mathcal{M})$ is the largest submodule of $\mathcal{F}/\mathcal{M}$ equal to $\mathcal{M}$ locally off $\overline{\{x\}}$.
\end{proposition}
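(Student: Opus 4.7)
The plan is to unwind the definition of $H^{0}_{x}(\mathcal{F}/\mathcal{M})$ as the submodule of sections supported on $\overline{\{x\}}$, and then use coherence to turn the pointwise equality $\mathcal{M}_{z}=\mathcal{N}_{z}$ (for $z\notin\overline{\{x\}}$) into an honest annihilation by a power of the ideal sheaf $\mathcal{I}_{\overline{\{x\}}}$. This is how local cohomology and support interact on an affine Noetherian scheme.

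For the first assertion, I would pick a local section $n$ of $\mathcal{N}$ and consider its image $\bar{n}$ in $\mathcal{F}/\mathcal{M}$. By hypothesis, $\bar{n}_{z}=0$ for every $z\notin\overline{\{x\}}$, so the annihilator $\operatorname{Ann}(\bar{n})\subset\mathcal{O}_{X}$ is a coherent ideal sheaf whose vanishing locus is contained in $\overline{\{x\}}$. Since $X$ is affine and Noetherian, this forces $\mathcal{I}_{\overline{\{x\}}}^{k}\subset\operatorname{Ann}(\bar{n})$ for some $k\gg 0$, i.e.\ $\bar{n}\in H^{0}_{x}(\mathcal{F}/\mathcal{M})$. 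Hence the natural map $\mathcal{N}/\mathcal{M}\hookrightarrow\mathcal{F}/\mathcal{M}$ factors through $H^{0}_{x}(\mathcal{F}/\mathcal{M})$.

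For the maximality statement, I would first observe that $H^{0}_{x}(\mathcal{F}/\mathcal{M})$ itself has the desired property: every section is killed by a power of $\mathcal{I}_{\overline{\{x\}}}$, so its localization at any $z\notin\overline{\{x\}}$ vanishes, meaning that if $\mathcal{N}'$ denotes its preimage in $\mathcal{F}$ we have $\mathcal{N}'_{z}=\mathcal{M}_{z}$. Conversely, any intermediate $\mathcal{M}\subset\mathcal{N}\subset\mathcal{F}$ with $\mathcal{M}_{z}=\mathcal{N}_{z}$ off $\overline{\{x\}}$ embeds into $H^{0}_{x}(\mathcal{F}/\mathcal{M})$ by the first part, yielding the claimed maximality.

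The only delicate point is the passage from pointwise equality to annihilation by a power of $\mathcal{I}_{\overline{\{x\}}}$; this is where coherence and the Noetherian hypothesis are essential, and it is the unique technical ingredient. Everything else is formal manipulation of the definitions.
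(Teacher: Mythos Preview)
Your argument is correct. The paper does not actually supply its own proof of this proposition: it records the statement as a standard fact and refers the reader to \cite[Chp.\ 18]{Altman} for a proof, so there is no in-paper argument to compare against. Your approach---identifying $H_{x}^{0}(\mathcal{F}/\mathcal{M})$ with the submodule of sections supported on $\overline{\{x\}}$ and using the Noetherian hypothesis to pass from $V(\mathrm{Ann}(\bar{n}))\subset\overline{\{x\}}$ to $\mathcal{I}_{x}^{k}\,\bar{n}=0$---is exactly the standard one and matches the description given just after the proposition, where the paper recalls that $H_{x}^{0}(\mathcal{F}/\mathcal{M})=\bigcup_{n\ge 0}(\mathcal{M}:_{\mathcal{F}}\mathcal{I}_{x}^{n})/\mathcal{M}$.
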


Sometimes it will be convenient to write the local cohomology in terms of saturations: 
$H_{S}^{0}(\mathcal{F}/\mathcal{M})=\cup_{n=0}^{\infty}(\mathcal{M} :_{\mathcal{F}} \mathcal{I}_{S}^{n})/\mathcal{M}$ and
$H_{x}^{0}(\mathcal{F}/\mathcal{M})=\cup_{n=0}^{\infty}(\mathcal{M}:_{\mathcal{F}} \mathcal{I}_{x}^{n})/\mathcal{M}.$

\section{The Local Volume Formula}\label{sec. main result}
In this section we prove the Local Volume Formula (LVF). We show that the restricted local volume exists as a limit.

Let $X$  and $Y$ be affine reduced schemes of finite type over a field $\Bbbk$. Assume $X$ is equidimensional and $Y$ is regular and integral of dimension one. Suppose $h \colon X \rightarrow Y$ is a morphism with equidimensional fibers of positive dimension. Let $S$ be a subscheme of $X$ that is quasi-finite over $Y$. Assume that $\Bbbk$ is the residue field of each closed point $y$ and the points in $S_{y}$. Let $C$  be an equidimensional reduced scheme projective over $X$ such that the structure morphism $c \colon C \rightarrow X$  maps each irreducible component of $C$ to an irreducible component of $X$. Set $D:= c^{-1}S$ and $\dim C = r+1$. Fix a closed point $y_0$ in $Y$ such that $S_{y_0}$ is nonempty. 
For simplicity we will assume that $y_0$ and the points in $S_{y_0}$ are rational. For an affine neighborhood $U$ of $y_0$ set $U':=U-\{y_0\}$. Denote by $D_{\mathrm{vert}}$ the union of components of $D$ that maps to $y_0$ under $h \circ c$.

Let $\mathcal{L}$ be an invertible very ample sheaf on $C$ relative to $X$. Let $\mathcal{A}:= \oplus_{n \geq 0} \Gamma (C, \mathcal{L}^{\otimes n})$ be the ring of sections of $\mathcal{L}$. Denote by $\mathcal{A}_n$ the $n$th graded piece of $\mathcal{A}$. Suppose $\dim X_y \geq 2$. Recall that restricted local volume of $\mathcal{L}$ at $S_y$ is defined as

$$
\mathrm{vol}_{C_y}(\mathcal{L}):= \limsup_{n\to \infty} \frac{r!}{n^r} \dim_{\Bbbk}H_{S}^{1}(\mathcal{A}_n)\otimes_{\mathcal{O}_Y} k(y).
$$

In the definition of the restricted local volume we can assume that $\mathcal{A}$ is the coordinate ring of $C$ as the $n$th graded component of the coordinate ring and $\Gamma (C, \mathcal{L}^{\otimes n})$ coincide for $n \gg 0$ (cf.\ Chp.\ II Ex.\ 5.9 in \cite{Hartshorne}). 

Let $\mathcal{B}$ be a standard graded algebra containing $\mathcal{A}$ satisfying the properties listed in Prp.\ \ref{nice embedding}. Denote by $\overline{\mathcal{A}}$ the integral closure of $\mathcal{A}$ in $\mathcal{B}$. Set $\mathcal{B}(y):=\mathcal{B} \otimes_{\mathcal{O}_X} \mathcal{O}_{X_y}$ and $\mathcal{B}_n(y):=\mathcal{B}_n \otimes_{\mathcal{O}_X} \mathcal{O}_{X_y}$. Denote the images of $\mathcal{A}$ in $\mathcal{B}(y)$ and $\mathcal{A}_n$ in $\mathcal{B}_n (y)$ by  $\mathcal{A}(y)$ and $\mathcal{A}_n(y)$, respectively. We say that the restricted volume {\it specializes with passage to the fiber} $X_y$ if 
$$
\mathrm{vol}_{C_y}(\mathcal{L}):= \limsup_{n\to \infty} \frac{r!}{n^r} \dim_{\Bbbk}H_{S_y}^{1}(\mathcal{A}_n(y)).
$$

\begin{theorem}[Local Volume Formula]\label{LVF} The following holds. 
\begin{enumerate}
\item[\rm{(i)}] Suppose $\dim X=2$ and $S:=\mathrm{Supp}_{\mathcal{O}_X}(\mathcal{B}/\overline{\mathcal{A}})$. Then there exists a an affine neighborhood $U$ of $y_0$ in $Y$ such that
$$e(\mathcal{A}(y_0),\mathcal{B}(y_0))-e(\mathcal{A}(y),\mathcal{B}(y))= \int_{S_{y_0}} l^{r}[D_{\mathrm{vert}}]$$
for each $y \in U'$. 
\item[\rm{(ii)}] Suppose $\dim X \geq 3$. Then there exists an affine neighborhood $U$ of $y_0$ in $Y$ such that $$\mathrm{vol}_{C_{y_0}}(\mathcal{L})- \mathrm{vol}_{C_{y}}(\mathcal{L}) =  \int_{S_{y_0}} l^{r}[D_{\mathrm{vert}}]$$
for each $y \in U'$ and the restricted local volume specializes with passage to $X_y$. Furthermore, $\mathrm{vol}_{C_{y}}(\mathcal{L})$ is finite and exists as a limit for each $y \in U$.
\end{enumerate}
\end{theorem}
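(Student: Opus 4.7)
The plan is to bridge the restricted local volume and the Buchsbaum--Rim multiplicity through the embedding of Prp.\ \ref{nice embedding}, then apply the Generalized Multiplicity-Polar Theorem (Thm.\ \ref{GMPT}). First I would invoke Prp.\ \ref{nice embedding} applied to $\mathcal{A}$ to produce a standard graded $\mathcal{O}_X$-algebra $\mathcal{B}$ containing $\mathcal{A}$ as a homogeneous birational subalgebra, with the codimension property on fibers. Set $Q := \mathrm{Proj}(\mathcal{B})$ with structure morphism $q \colon Q \to X$. By construction, $\codim q^{-1}(S_y) \geq 2$ for each closed $y \in Y$, so Prp.\ \ref{intr. of limits}\rm{(ii)} applies fiberwise and gives the identity (\ref{zero and one}) relating $\dim_{\Bbbk} H^1_{S_y}(\mathcal{A}_n(y))$ with $\dim_{\Bbbk} H^0_{S_y}(\mathcal{B}_n(y)/\mathcal{A}_n(y))$ in the leading order, replacing $H^1$ with the more manageable $H^0$.

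Next I would establish specialization: for $y$ in a suitable punctured neighborhood $U'$ of $y_0$, the natural map
$$H^0_S(\mathcal{B}_n/\mathcal{A}_n) \otimes_{\mathcal{O}_Y} k(y) \longrightarrow H^0_{S_y}(\mathcal{B}_n(y)/\mathcal{A}_n(y))$$
is an isomorphism. The key input is \cite[Thm.\ 1.1\rm{(ii)}]{Ran19a}, which asserts that $\bigcup_n \mathrm{Ass}_{\mathcal{O}_X}(\mathcal{B}_n/\overline{\mathcal{A}_n})$ is essentially finite modulo grading. Shrinking $U$ so that any such associated prime either contracts to $y_0$ or meets $S$ horizontally, combined with generic flatness of $\mathcal{B}_n$ and $\mathcal{A}_n$ over $Y$, allows one to exchange primary decomposition with fiber restriction, by Prps.\ \ref{saturation} and \ref{maximality of saturation}. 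Specialization of the restricted local volume then follows; finiteness and existence as a limit follow from Cor.\ \ref{epsilon as limit} applied fiberwise.

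The heart of the argument is to apply Thm.\ \ref{GMPT} to the pair $(\mathcal{A}, \mathcal{B})$, with $P = C$, $Q = \mathrm{Proj}(\mathcal{B})$, and $\tilde{C}$ the blowup of $Q$ along the variety of $\mathcal{A}_1 \mathcal{B}$. This yields
$$e(\mathcal{A}(y_0), \mathcal{B}(y_0)) - e(\mathcal{A}(y), \mathcal{B}(y)) = l^{r}[\tilde{D}^{P}_{\mathrm{vert}}] - l'^{\,r}[\tilde{D}^{Q}_{\mathrm{vert}}],$$
with $l' = c_1(\mathcal{O}_Q(1))$. One then identifies $\tilde{D}^{P}_{\mathrm{vert}}$ with the projection of $D_{\mathrm{vert}} \subset c^{-1}(S) = D$ under $\tilde{C} \to P = C$. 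For part \rm{(i)}, where $\dim X = 2$ and $S = \mathrm{Supp}_{\mathcal{O}_X}(\mathcal{B}/\overline{\mathcal{A}})$, the fibers $X_y$ are curves so $\mathcal{B}(y)/\mathcal{A}(y)$ has finite length supported on $S_y$; a dimension count on $Q$-vertical components forces $l'^{\,r}[\tilde{D}^{Q}_{\mathrm{vert}}] = 0$, yielding the stated formula directly.

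The main obstacle is to prove in part \rm{(ii)} that the ``outside'' contribution to the BR multiplicity is captured exactly by $l'^{\,r}[\tilde{D}^{Q}_{\mathrm{vert}}]$, so that the restricted local volume, which via Prps.\ \ref{saturation} and \ref{maximality of saturation} captures only the $S_y$-primary piece of $\mathcal{B}_n(y)/\mathcal{A}_n(y)$, differs from the full BR multiplicity by precisely this term. The mechanism I expect to use is: primary components of $\mathcal{B}_n/\overline{\mathcal{A}_n}$ with associated primes not contracting to $S$ must, by Prp.\ \ref{associated primes}\rm{(i)}, contract to codimension-one primes of $X$, and these contribute to $\tilde{D}^{Q}_{\mathrm{vert}}$ via the projection $\tilde{C} \to Q$. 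Once the BR side is split accordingly using the Kleiman--Thorup mixed-multiplicity formalism from the proof of Thm.\ \ref{GMPT}, subtracting the outside piece from both sides of the GMPT identity produces the LVF.
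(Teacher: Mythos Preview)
Your treatment of part \rm{(i)} is correct and coincides with the paper's: one applies Thm.\ \ref{GMPT} to the pair $(\mathcal{A},\mathcal{B})$, and since $\dim X_y=1$ the properties of $\mathcal{B}$ from Prp.\ \ref{nice embedding} force every component of $\mathrm{Proj}(\mathcal{B}(y_0))$ to surject onto a component of $X_{y_0}$, so $l'^{\,r}[D^{Q}_{\mathrm{vert}}]=0$ and the formula drops out.

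For part \rm{(ii)} there is a genuine gap. A single application of Thm.\ \ref{GMPT} to $(\mathcal{A},\mathcal{B})$ yields a relation between the intersection-theoretic Buchsbaum--Rim numbers $e(\mathcal{A}(y),\mathcal{B}(y))$ of (\ref{BR}) and the two vertical degrees. But when $\dim X_y\geq 2$ the quotient $\mathcal{B}_n(y)/\mathcal{A}_n(y)$ has infinite length in general, so $e(\mathcal{A}(y),\mathcal{B}(y))$ is \emph{not} the normalized leading term of $\lambda(\mathcal{B}_n(y)/\mathcal{A}_n(y))$, and there is no direct identity linking it to $\mathrm{vol}_{C_y}(\mathcal{L})=\limsup \frac{r!}{n^r}\lambda(H^{0}_{S_y}(\mathcal{B}_n(y)/\mathcal{A}_n(y)))$. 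Your proposed ``splitting'' would have to produce, for each $y$, an exact equality $e(\mathcal{A}(y),\mathcal{B}(y))=\mathrm{vol}_{C_y}(\mathcal{L})+(\text{a term independent of }y\text{ matching }l'^{\,r}[D^{Q}_{\mathrm{vert}}])$; but the volume can be irrational while the BR number is an integer, and the $Q$-vertical degree lives only over $y_0$, so no such fiberwise identity is available. The mechanism you sketch via Prp.\ \ref{associated primes}\rm{(i)} locates the non-$S_y$-primary components but does not convert their contribution into the single intersection number $l'^{\,r}[D^{Q}_{\mathrm{vert}}]$.

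The paper proceeds entirely differently for \rm{(ii)}: it never invokes Thm.\ \ref{GMPT}. Writing $\mathcal{N}_n/\mathcal{A}_n=H^{0}_{S}(\mathcal{B}_n/\mathcal{A}_n)$ and $\mathcal{P}_n/\mathcal{A}_n=H^{0}_{S_{y_0}}(\mathcal{B}_n/\mathcal{A}_n)$, it first uses that $\mathcal{N}_n/\mathcal{P}_n$ is torsion-free over the DVR $\mathcal{O}_{Y,y_0}$, hence free, to obtain for each $n$ the exact ``change'' identity $\dim_\Bbbk(\mathcal{N}_n/\mathcal{A}_n)\otimes k(y_0)-\dim_\Bbbk(\mathcal{N}_n/\mathcal{A}_n)\otimes k(y)=\dim_\Bbbk(\mathcal{P}_n/\mathcal{A}_n)\otimes k(y_0)$. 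It then proves a key $\Bbbk$-vector-space isomorphism $\mathcal{P}_n/\mathcal{A}_n\otimes k(y_0)\simeq (t\mathcal{P}_n\cap\mathcal{A}_n)/t\mathcal{A}_n$ via a filtration by $t$-order, identifies the right-hand side with the $n$th graded piece of the ideal of the residual scheme $W$ to $D_{\mathrm{vert}}$ inside $c^{-1}(X_{y_0})$, and finally shows that the normalized limit of $\dim_\Bbbk(\mathcal{I}_W)_n$ equals $\int_{S_{y_0}} l^{r}[D_{\mathrm{vert}}]$ by an elementary Hilbert-polynomial computation. Your specialization step is essentially the paper's Prp.\ \ref{generic limit}, and existence as a limit comes from Cor.\ \ref{epsilon as limit} as you say; those parts are fine. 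An approach genuinely in the spirit of yours \emph{can} be made to work, but only asymptotically: one applies Thm.\ \ref{GMPT} to the Veronese pairs $(\langle\mathcal{A}_n\rangle,\langle\mathcal{N}_n\rangle)$ for each $n$, divides by $n^r$, and shows the $\mathcal{N}_n$-vertical term tends to $0$; the paper sketches this in Rmk.\ \ref{Fujita Rmk} and defers the nontrivial vanishing to \cite{Ran19d}.
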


\vspace{.2cm}
\begin{center}
{\it Proof}
\end{center}
\vspace{.2cm}



Consider $\rm{(i)}$. Denote by $c_{\mathcal{B}} \colon \mathrm{Proj}(\mathcal{B}) \rightarrow X$ the structure morphism. Because $\mathcal{B}$ is a birational extension of $\mathcal{A}$, it follows that the irreducible components of $\mathrm{Proj}(\mathcal{B})$ surject onto those of $X$.  By Prp.\ \ref{nice embedding} $\rm{(ii)}$ and the fact that $X$ and $X_y$ are equidimensional, and $Y$ is regular, it follows that the irreducible  components of $\mathrm{Proj}(\mathcal{B}(y_0))$ surject onto those of $X_{y_0}$. So no irreducible component of $\mathrm{Proj}(\mathcal{B}(y_0))$ is supported over $S_{y_0}$. As $\mathcal{B}(y_0)$ is integral over $\mathcal{A}(y_0)$ locally off $S_{y_0}$ it follows that $c^{-1}X_{y_0} = \mathrm{Proj}(\mathcal{A}(y_0)) \cup D_{\mathrm{vert}}$ (cf.\ Prp.\ \ref{residual ideal}). Applying Thm.\ \ref{GMPT} we get the desired result.

Consider  $\rm{(ii)}$. We break the proof of the LVF into several parts. Because $\dim X \geq 3$ using the analysis in Prp.\ \ref{intr. of limits}, we can replace in the LVF $H_{S}^{1}(\mathcal{A}_n)$  by $H_{S}^{0}(\mathcal{B}_n/\mathcal{A}_n).$ In Prp.\ \ref{initial version} for each $n$ we relate the change of the dimension of $H_{S}^{0}(\mathcal{B}_n/\mathcal{A}_n)\otimes_{\mathcal{O}_Y} k(y_0)$ 
as $y$ ``moves'' from $y_0$ to a generic $y \in U'$,  to the dimension of $H_{S_{y_0}}^{0}(\mathcal{B}_n/\mathcal{A}_n)\otimes_{\mathcal{O}_Y} k(y_0)$. Here the main algebraic tool we use is the well-known fact that a torsion-free module over a principal ideal domain is free. Then we relate the limit of normalized vector space dimensions of $H_{S_{y_0}}^{0}(\mathcal{B}_n/\mathcal{A}_n)\otimes_{\mathcal{O}_Y} k(y_0)$ to the degree of the cycle $[c^{-1}(S_{y_0})]_r$ using two reductions. The first relates the dimensions of $H_{S_{y_0}}^{0}(\mathcal{B}_n/\mathcal{A}_n)\otimes_{\mathcal{O}_Y} k(y_0)$ to the dimension of the $n$th graded piece of the ideal defining the residual scheme in $c^{-1}X_{y_0}$ to the union of components that surject onto $S_{y_0}$. The second one relates the limit of normalized vector space dimensions of the graded pieces of the ideal of the residual scheme to $\int_{S_{y_0}} l^{r}[D_{\mathrm{vert}}]$. Finally, we prove that the formation of the ``generic'' limit term in the LVF specializes with passage to generic fibers using in \cite[Prp.\ 2.1] {Ran19a}, which along with the LVF implies that $\mathrm{vol}_{C_{y}}(\mathcal{L})$ is finite and exists as a limit for each $y \in U$.

Let $\mathcal{P}_n$ and $\mathcal{N}_n$ be submodules of $\mathcal{B}_n$ such that
\begin{displaymath}
H_{S}^{0}(\mathcal{B}_n/\mathcal{A}_n)=\mathcal{N}_{n}/\mathcal{A}_n \ \text{and} \ H_{S_{y_0}}^{0}(\mathcal{B}_n/\mathcal{A}_n) = \mathcal{P}_{n}/\mathcal{A}_n.
\end{displaymath}
Let $V$ be an affine neighborhood in $X$ containing $S_{y_0}$. Denote by $A[V]$ and $A[U]$ the homogeneous coordinate rings of $V$ and $U$.  Because $S \rightarrow Y$ is quasi-finite and $S_{y_0}$ is nonempty, then by shrinking $U$ if necessary, we can assume that that $S \rightarrow Y$ is finite.
Note that the support of each of the quotients $\mathcal{N}_n/\mathcal{P}_n$, $\mathcal{N}_n/\mathcal{A}_n$ and $\mathcal{P}_n/\mathcal{A}_n$ is in $S$. Because $S$ is finite over $Y$, the direct image of each of these quotient by $h$ is a coherent $\mathcal{O}_Y$-module. 

\vspace{.2cm}
\begin{center}
{\it Flatness}
\end{center}
 
\begin{proposition}\label{initial version} Assume $U$ is small enough so that for each $n$, the associated points of  $\mathcal{B}_n/\mathcal{A}_n$ viewed as $A[V]$-module map to $y_0$ or the generic point of $U$. Then
\begin{equation}\label{initial eq.}
\dim_{\Bbbk} (\mathcal{N}_n/\mathcal{A}_n) \otimes_{\mathcal{O}_Y} k(y_0)- \dim_{\Bbbk}(\mathcal{N}_n/\mathcal{A}_n)\otimes_{\mathcal{O}_Y} k(y) = \dim_{\Bbbk}(\mathcal{P}_n/\mathcal{A}_n)\otimes_{\mathcal{O}_Y} k(y_0).
\end{equation}
\end{proposition}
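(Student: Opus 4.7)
The plan is to push everything down to $Y$ and apply the structure theorem for finitely generated modules over the discrete valuation ring $\mathcal{O}_{Y,y_0}$. Since $S \to Y$ is finite, each of $h_*(\mathcal{N}_n/\mathcal{A}_n)$ and $h_*(\mathcal{P}_n/\mathcal{A}_n)$ is a coherent $\mathcal{O}_Y$-module; abbreviate $M := h_*(\mathcal{N}_n/\mathcal{A}_n)$. The hypothesis on the associated points of $\mathcal{B}_n/\mathcal{A}_n$ over $A[V]$, combined with the standard compatibility of associated primes under finite restriction of scalars, forces $\mathrm{Ass}_{\mathcal{O}_Y}(M) \subseteq \{(0),\, m_{y_0}\}$ after possibly shrinking $U$.

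The crucial step is to identify $h_*(\mathcal{P}_n/\mathcal{A}_n)$ with the $m_{y_0}$-torsion submodule $M_{\mathrm{tor}}$ of $M$. The inclusion $h_*(\mathcal{P}_n/\mathcal{A}_n) \subseteq M_{\mathrm{tor}}$ is immediate since $m_{y_0}\mathcal{O}_X \subseteq \mathcal{I}_{S_{y_0}}$. For the reverse inclusion, suppose $m \in \mathcal{N}_n$ satisfies both $m_{y_0}^k m \subseteq \mathcal{A}_n$ and $\mathcal{I}_S^l m \subseteq \mathcal{A}_n$. Using the identity $\mathcal{I}_{S_{y_0}} = \mathcal{I}_S + m_{y_0}\mathcal{O}_X$, the expansion $\mathcal{I}_{S_{y_0}}^{k+l} = \sum_{i+j=k+l} \mathcal{I}_S^{i} m_{y_0}^{j}\mathcal{O}_X$ shows that in every summand either $i \geq l$ or $j \geq k$, so each summand annihilates $m$ modulo $\mathcal{A}_n$. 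Hence $\mathcal{I}_{S_{y_0}}^{k+l}m \subseteq \mathcal{A}_n$ and $m \in \mathcal{P}_n$.

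With this identification in hand, localize at $y_0$: the structure theorem for finitely generated modules over the DVR $\mathcal{O}_{Y,y_0}$ gives a splitting $M_{y_0} \cong \mathcal{O}_{Y,y_0}^{\rho} \oplus M_{\mathrm{tor},y_0}$, where $\rho$ is the generic rank of $M$. Tensoring with $k(y_0) = \Bbbk$ yields $\dim_{\Bbbk} M \otimes_{\mathcal{O}_Y} k(y_0) = \rho + \dim_{\Bbbk} M_{\mathrm{tor}} \otimes_{\mathcal{O}_Y} k(y_0)$. After further shrinking $U$ so that $M_{\mathrm{tor}}$ is supported only over $y_0$ on $U$ (possible because its support on $Y$ is zero-dimensional), the restriction $M|_{U'}$ is torsion-free, hence locally free of rank $\rho$ on the Dedekind scheme $U'$, giving $\dim_{\Bbbk} M \otimes_{\mathcal{O}_Y} k(y) = \rho$ for every closed $y \in U'$. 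Subtracting the two equalities yields (\ref{initial eq.}).

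The main obstacle is establishing the algebraic identification of $M_{\mathrm{tor}}$ with $h_*(\mathcal{P}_n/\mathcal{A}_n)$ in the second paragraph; once this is in place the rest is a formal application of the DVR structure theorem. Uniformity in $n$, needed so that a single shrinkage of $U$ works for all graded pieces at once, is supplied by the standing hypothesis on associated points of $\mathcal{B}_n/\mathcal{A}_n$, which in turn is ensured by the finiteness of $\bigcup_{n} \mathrm{Ass}(\mathcal{B}_n/\overline{\mathcal{A}_n})$ recalled from \cite{Ran19a} in the introduction.
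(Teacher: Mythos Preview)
Your proof is correct and follows essentially the same route as the paper: both pivot on the fact that $\mathcal{N}_n/\mathcal{P}_n$ is torsion-free over the DVR $\mathcal{O}_{Y,y_0}$, hence locally free, so that fiber dimensions are constant and the short exact sequence $0\to\mathcal{P}_n/\mathcal{A}_n\to\mathcal{N}_n/\mathcal{A}_n\to\mathcal{N}_n/\mathcal{P}_n\to 0$ stays exact after tensoring with $k(y_0)$. You package this via the DVR structure theorem and an explicit binomial identification of $\mathcal{P}_n/\mathcal{A}_n$ with the $m_{y_0}$-torsion of $\mathcal{N}_n/\mathcal{A}_n$, whereas the paper uses the snake lemma together with Proposition~\ref{saturation}; the mathematical content is the same.
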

\begin{proof}
By \cite[Prp.\ 2.1]{Ran19a} there are finitely many points in $\bigcup_{i=0}^{\infty}\mathrm{Ass}_{X}(\mathcal{B}_i/\mathcal{A}_i)$. Each of them maps to a closed point in $Y$, or the generic point of $Y$. Thus we can select $U$ so that the only associated points of $\mathcal{B}_n/\mathcal{A}_n$ are those that map to $y_0$ or to the generic point of $U$. Therefore, the only associated points of $\mathcal{N}_n/\mathcal{A}_n$ viewed now as $A[U]$-module are $y_0$ or the generic point of $U$. 

Consider the nested chain of modules
$$\mathcal{A}_{n} \subset \mathcal{P}_{n} \subset \mathcal{N}_{n}$$
Form the exact sequence
\begin{equation}\label{snake}
\begin{CD}
0    @>>> \mathcal{P}_{n}/\mathcal{A}_{n} @>>> \mathcal{N}_{n}/\mathcal{A}_{n} @>>> \mathcal{N}_{n}/\mathcal{P}_{n} @>>> 0\\
@.   @VV\mu V   @VV\mu V   @VV\mu V   @. \\
0 @>>> \mathcal{P}_{n}/\mathcal{A}_{n} @>>> \mathcal{N}_{n}/\mathcal{A}_{n} @>>> \mathcal{N}_{n}/\mathcal{P}_{n} @>>> 0
\end{CD}
\end{equation}
where the vertical maps $\mu$ are multiplication by the ideal $\mathfrak{m}_{y_0}$ of $y_0$. Since $\mu$ is injective on $\mathcal{N}_n/\mathcal{P}_n$, then the Snake Lemma yields
\begin{equation}\label{special ex. seq}
\begin{CD}
0    @>>> \mathcal{P}_{n}/\mathcal{A}_{n}\otimes_{\mathcal{O}_Y} k(y_0) @>>> \mathcal{N}_{n}/\mathcal{A}_{n}\otimes_{\mathcal{O}_Y} k(y_0) @>>> \mathcal{N}_{n}/\mathcal{P}_{n}\otimes_{\mathcal{O}_Y} k(y_0)@>>> 0.
\end{CD}
\end{equation}
Therefore,
\begin{equation}\label{snake special}
\dim_{\Bbbk} \mathcal{N}_{n}/\mathcal{A}_{n}\otimes_{\mathcal{O}_Y} k(y_0) = \dim_{\Bbbk} \mathcal{P}_{n}/\mathcal{A}_n\otimes_{\mathcal{O}_Y} k(y_0) + \dim_{\Bbbk}\mathcal{N}_{n}/\mathcal{P}_{n}\otimes_{\mathcal{O}_Y} k(y_0).
\end{equation}

Now let $y \in Y$ be a point from $U'$. Then
\begin{equation}
\begin{CD}
\mathcal{P}_{n}/\mathcal{A}_n\otimes_{\mathcal{O}_Y} k(y) @>>> \mathcal{N}_{n}/\mathcal{A}_n\otimes_{\mathcal{O}_Y} k(y) @>>> \mathcal{N}_{n}/\mathcal{P}_{n}\otimes_{\mathcal{O}_Y} k(y)@>>> 0.
\end{CD}
\end{equation}
However, $\mathcal{P}_{n}/\mathcal{A}_n\otimes_{\mathcal{O}_Y} k(y)=0$ as the support of $\mathcal{P}_{n}/\mathcal{A}_n$ is $S_{y_0}$. Thus, $$\mathcal{N}_{n}/\mathcal{A}_n\otimes_{\mathcal{O}_Y} k(y) \cong \mathcal{N}_{n}/\mathcal{P}_{n}\otimes_{\mathcal{O}_Y} k(y).$$ But the only associated point of the $A[U]$-module $\mathcal{N}_{n}/\mathcal{P}_{n}$ is the generic point of $U$, hence $\mathcal{N}_{n}/\mathcal{P}_{n}$ is torsion-free $\mathcal{O}_{Y,y}$ module for each closed point $y \in U$. 
Because 
$\mathcal{O}_{Y,y}$ is a DVR, and $Y$ is reduced, then  $\mathcal{N}_{n}/\mathcal{P}_{n}$ is locally free. Hence
$\dim_{\Bbbk} \mathcal{N}_{n}/\mathcal{P}_n\otimes_{\mathcal{O}_Y} k(y) = \dim_{\Bbbk}\mathcal{N}_{n}/\mathcal{P}_{n}\otimes_{\mathcal{O}_Y} k(y_0)$ (cf.\ Ex.\ 5.8 in Chp.\ II in \cite{Hartshorne}). So, 
\begin{equation}\label{snake generic}
\dim_{\Bbbk}\mathcal{N}_{n}/\mathcal{A}_n\otimes_{\mathcal{O}_Y} k(y)=\dim_{\Bbbk}\mathcal{N}_{n}/\mathcal{P}_n\otimes_{\mathcal{O}_Y} k(y_0)
\end{equation}
Subtracting (\ref{snake generic}) from (\ref{snake special}) we get the desired result.
\end{proof}

Let $x \in S_{y_0}$. Define $\mathcal{P}_{n}(x)$ so that $H_{x}^{0}(\mathcal{B}_n/\mathcal{A}_n)=\mathcal{P}_{n}(x)/\mathcal{A}_n$. Clearly, $$\dim_{\Bbbk}(\mathcal{P}_n/\mathcal{A}_n)\otimes_{\mathcal{O}_Y} k(y_0)= \sum_{x \in S_{y_0}}\dim_{\Bbbk}(\mathcal{P}_{n}(x)/\mathcal{A}_n)\otimes_{\mathcal{O}_Y} k(y_0).$$
Let $D_{\mathrm{vert}}(x)$ be the union of components of $D_{\mathrm{vert}}$ that map to $x$. We will show that $$\lim_{n\to \infty} \frac{r!}{n^r}\dim_{\Bbbk}(\mathcal{P}_{n}(x)/\mathcal{A}_n)\otimes_{\mathcal{O}_Y} k(y_0)=\int_{x} l^{r}[D_{\mathrm{vert}}(x)].$$
Thus we can assume that $S_{y_0}$ consists of a single point $x_0$. 

\vspace{.1cm}
\begin{center}
{\it Key Isomorphism}
\end{center}
\vspace{.1cm}

The next proposition provides a key isomorphism that allows to connect $\mathcal{P}_n/\mathcal{A}_n$ to the ideal of the residual scheme to the union of components of dimension $r$ in $c^{-1}(x_0)$.
Let $t \in \mathfrak{m}_{y_0}-\mathfrak{m}_{y_0}^2$ be a uniformizing parameter of $\mathcal{O}_{Y,y_0}$. Identify $t$ with its image $h^{\#}t$ in $\mathfrak{m}_{x_0}$. Replace $\mathcal{P}_n$ and $\mathcal{A}_n$ by their localizations at $x_0$. This does not affect the length of $\mathcal{P}_n/\mathcal{A}_n \otimes_{\mathcal{O}_Y} k(y_0)$. 
\begin{proposition}\label{key isomorphism} We have
\begin{equation}
\mathcal{P}_{n}/\mathcal{A}_{n}\otimes_{\mathcal{O}_Y} k(y_0) \simeq (t\mathcal{P}_n \cap \mathcal{A}_{n})/t\mathcal{A}_n
\end{equation}
as $\Bbbk$-vector spaces.
\end{proposition}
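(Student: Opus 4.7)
The plan is to obtain the stated identification by applying the Snake Lemma to the short exact sequence $0\to\mathcal{A}_n\to\mathcal{P}_n\to\mathcal{P}_n/\mathcal{A}_n\to 0$ with vertical maps multiplication by $t$, and then to compare the $t$-torsion and the $t$-cotorsion of $\mathcal{P}_n/\mathcal{A}_n$ using the structure theorem for finitely generated modules over the DVR $\mathcal{O}_{Y,y_0}$.

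First I would verify that, after the localization at $x_0$ made in the statement, $t$ acts as a non-zero-divisor on both $\mathcal{A}_n$ and $\mathcal{P}_n$. Since $X$ is equidimensional with positive-dimensional fibers over the regular one-dimensional scheme $Y$, the local ring $\mathcal{O}_{X,x_0}$ is $t$-torsion-free over $\mathcal{O}_{Y,y_0}$; the same therefore holds for the reduced $\mathcal{O}_X$-algebras $\mathcal{A}$ and $\mathcal{B}$, whose minimal primes contract to minimal primes of $\mathcal{O}_X$ by hypothesis and by Prp.~\ref{nice embedding}. As $\mathcal{P}_n\subset\mathcal{B}_n$, the submodule $\mathcal{P}_n$ inherits $t$-torsion-freeness. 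Moreover, $\mathcal{P}_n/\mathcal{A}_n=H^{0}_{x_0}(\mathcal{B}_n/\mathcal{A}_n)$ is supported only at $x_0$ and annihilated by a power of $\mathfrak{m}_{x_0}$, hence is a finitely generated torsion $\mathcal{O}_{Y,y_0}$-module.

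Next I would apply the Snake Lemma to
$$
\begin{CD}
0 @>>> \mathcal{A}_n @>>> \mathcal{P}_n @>>> \mathcal{P}_n/\mathcal{A}_n @>>> 0 \\
@. @VV{t}V @VV{t}V @VV{t}V @. \\
0 @>>> \mathcal{A}_n @>>> \mathcal{P}_n @>>> \mathcal{P}_n/\mathcal{A}_n @>>> 0.
\end{CD}
$$
The two leftmost vertical maps are injective by step one, so the Snake Lemma produces the four-term exact sequence
$$
0\to(\mathcal{P}_n/\mathcal{A}_n)[t]\to\mathcal{A}_n/t\mathcal{A}_n\to\mathcal{P}_n/t\mathcal{P}_n\to\mathcal{P}_n/(\mathcal{A}_n+t\mathcal{P}_n)\to 0.
$$
The rightmost term is $\mathcal{P}_n/\mathcal{A}_n\otimes_{\mathcal{O}_Y}k(y_0)$ since $k(y_0)=\mathcal{O}_{Y,y_0}/(t)$, and the kernel of the middle map, by direct inspection, equals $(t\mathcal{P}_n\cap\mathcal{A}_n)/t\mathcal{A}_n$. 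This produces a canonical isomorphism $(t\mathcal{P}_n\cap\mathcal{A}_n)/t\mathcal{A}_n\simeq(\mathcal{P}_n/\mathcal{A}_n)[t]$, realized explicitly by the snake boundary $\bar p\mapsto tp\bmod t\mathcal{A}_n$ for any lift $p\in\mathcal{P}_n$ of $\bar p\in(\mathcal{P}_n/\mathcal{A}_n)[t]$.

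Finally, I would conclude by the structure theorem for finitely generated modules over $\mathcal{O}_{Y,y_0}$: writing $\mathcal{P}_n/\mathcal{A}_n\simeq\bigoplus_i\mathcal{O}_{Y,y_0}/(t^{a_i})$, each summand contributes exactly one $\Bbbk$-dimension both to $(\mathcal{P}_n/\mathcal{A}_n)[t]$ and to $(\mathcal{P}_n/\mathcal{A}_n)/t(\mathcal{P}_n/\mathcal{A}_n)$, so the two are $\Bbbk$-vector spaces of equal finite dimension. Combining with the canonical isomorphism from the previous step yields the desired isomorphism $\mathcal{P}_n/\mathcal{A}_n\otimes_{\mathcal{O}_Y}k(y_0)\simeq(t\mathcal{P}_n\cap\mathcal{A}_n)/t\mathcal{A}_n$. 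The only substantive input is the torsion-freeness verified in step one; the remainder of the argument is formal, and I do not expect a serious obstacle.
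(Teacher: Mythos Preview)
Your proof is correct and takes a genuinely different route from the paper. The paper builds the isomorphism by hand: it filters $\mathcal{P}_n/\mathcal{A}_n\otimes k(y_0)$ by the images of $(\mathcal{A}_n:_{\mathcal{P}_n}t^i)$ and filters $(t\mathcal{P}_n\cap\mathcal{A}_n)/t\mathcal{A}_n$ by the order of divisibility of preimages, then matches the successive quotients via $\hat b\mapsto t^i b$. Your argument replaces this explicit matching with two structural observations: the Snake Lemma identifies $(t\mathcal{P}_n\cap\mathcal{A}_n)/t\mathcal{A}_n$ canonically with the $t$-torsion $(\mathcal{P}_n/\mathcal{A}_n)[t]$, and the structure theorem over the DVR $\mathcal{O}_{Y,y_0}$ then equates the $\Bbbk$-dimensions of the $t$-torsion and the $t$-cotorsion of the finite torsion module $\mathcal{P}_n/\mathcal{A}_n$. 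Your approach is shorter and more conceptual; the paper's hands-on filtration argument yields a more explicit correspondence (sending $b$ to $t^{\mathrm{ord}_t(b)}b$), but since only the dimension equality is used downstream, nothing is lost by your method. The one point worth stating explicitly is why $\mathcal{P}_n/\mathcal{A}_n$ is a \emph{finitely generated} $\mathcal{O}_{Y,y_0}$-module: it is annihilated by a power of $\mathfrak{m}_{x_0}$ and finitely generated over $\mathcal{O}_{X,x_0}$, hence of finite length, hence finite-dimensional over $\Bbbk$, which suffices.
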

\begin{proof}  To begin, observe that both quotients $\mathcal{P}_{n}/\mathcal{A}_{n}/t(\mathcal{P}_{n}/\mathcal{A}_{n})$ and $(t\mathcal{P}_n \cap \mathcal{A}_{n})/t\mathcal{A}_n$ are supported over $x_0$. Because $k(y_0)=\Bbbk$ is contained in $\mathcal{O}_{X,x_0}$ and equals $k(x_0)$, then the two quotients are $\Bbbk$-vector spaces of finite dimension. Because $X$ and the fibers of $X \rightarrow Y$ are equidimensional, there is no component of $X$ supported over $X_{y_0}$. Also, $\mathrm{Proj}(\mathcal{B})$ is reduced and its components surject onto those of $X$. Thus $t$ is a nonzero divisor of $\mathcal{B}$. 

The isomorphism is built from the obvious correspondence: for $b \in \mathcal{P}_n$, not divisible by $t$ in $\mathcal{B}_n$, associate $t^{s}b \in t\mathcal{P}_n \cap \mathcal{A}_n$ where $s$ is the smallest integer such that $t^sb \in \mathcal{A}_n$ but $t^sb \not \in t\mathcal{A}_n$. Below we make this  precise.

For each $b$ in $\mathcal{P}_{n}$ define $\mathrm{ord}_{t}(b)$ to be the smallest integer such that $t^{\mathrm{ord}_{t}(b)}b \in \mathcal{A}_n$. Set
$s_n := \max \{\mathrm{ord}_{t}(b)| b \in \mathcal{P}_{n}\}$ and let $\mathcal{P}_{n}^{(i)}$ be the image of $(\mathcal{A}_n :_{\mathcal{P}^{n}} t^{i})$ in $\mathcal{P}_{n}/\mathcal{A}_{n}\otimes_{\mathcal{O}_Y} k(y_0)$. Let $L_{n}^{(i)}$ be submodule of $(t\mathcal{P}_n \cap \mathcal{A}_{n})/t\mathcal{A}_n$ generated by the images of all elements $a \in \mathcal{A}_n$ such that $a/t^i \in \mathcal{P}_n$. Consider the following filtrations

$$\mathcal{P}_{n}/\mathcal{A}_{n}\otimes_{\mathcal{O}_Y} k(y_0)=:\mathcal{P}_{n}^{(s_n)} \supset \mathcal{P}_{n}^{(s_{n}-1)} \supset \cdots \supset \mathcal{P}_{n}^{(1)} \supset 0$$
and
$$0 \subset L_{n}^{(s_n)} \subset \cdots \subset L_{n}^{(1)}:=(t\mathcal{P}_n \cap \mathcal{A}_{n})/t\mathcal{A}_n.$$
To prove the isomorphism in question, it's enough to show 
$$\mathcal{P}_{n}^{(i)}/\mathcal{P}_{n}^{(i-1)} \simeq L_{n}^{(i)}/L_{n}^{(i+1)}$$
as $\Bbbk$-vector spaces for each $i=1, \ldots s_n$, where we assume that $\mathcal{P}_{n}^{(0)}=L_{n}^{(s_n+1)}=0$. Let $\hat{b_{i}}$ be an element in $\mathcal{P}_{n}^{(i)}/\mathcal{P}_{n}^{(i-1)}$ and $b_{i}$ be an element $(\mathcal{A}_n :_{\mathcal{P}^{n}} t^{i})$ that maps to it. Denote the image of $t^{i}b_{i}$ in $L_{n}^{(i)}/L_{n}^{(i+1)}$ by $\widetilde{t^{i}b_{i}}$. Define the map $\psi_{i}(\hat{b_{i}}) = \widetilde{t^{i}b_{i}}$. To see that the map is well-defined suppose there exists $b_{i}'\in (\mathcal{A}_n :_{\mathcal{P}^{n}} t^{i})$ such that $\hat{b_{i}'}=\hat{b_{i}}$. Then $b_i-b_{i}' \in (\mathcal{A}_n :_{\mathcal{P}^{n}} t^{i-1})$ and so $t^{i}(b_i-b_{i}') \in t\mathcal{A}_n$ which implies that $\widetilde{t^ib_i}=\widetilde{t^ib_{i}'}$.
The map $\psi_i$ is a $\Bbbk$-linear because the composition $(\mathcal{A}_n :_{\mathcal{P}^{n}} t^{i}) \rightarrow t\mathcal{P}_n \cap \mathcal{A}_n \rightarrow (t\mathcal{P}_n \cap \mathcal{A}_{n})/t\mathcal{A}_n$ is a $\Bbbk$-homomorphism. Also, $\psi_{i}$ is obviously surjective. Suppose  $\psi_{i}(\hat{b_i})=0$. Then $t^ib_{i} \in t\mathcal{A}_n$. As $t$ is nonzero divisor in $\mathcal{B}_n$ it follows that $t^{i-1}b_{i} \in \mathcal{A}_n$, or $b_{i} \in (\mathcal{A}_n :_{\mathcal{P}^{n}} t^{i-1})$ which implies that $\hat{b_i}=0$. Thus $\psi_i$ is an isomorphism. 
\end{proof}

\begin{center}
{\it The Residual Scheme}
\end{center}
\vspace{.3cm}

The next proposition identifies the ideal of the residual scheme to the union of components of $c^{-1}(X_{y_0})$ that surject onto $x_0$. In what follows we will replace $\mathcal{B}$ with its integral closure in its total ring of fractions and redefine $\mathcal{P}_n$ and $\mathcal{N}_n$ accordingly. By the analysis in Prp.\ \ref{intr. of limits} $\rm{(ii)}$ applied to the family setting, the terms participating in the LVF will be unaffected. 

Let $V$ be the union of the components of $c^{-1}(X_{y_0})$ whose projection to $X_{y_0}$ is $x_0$. Let $W:=(c^{-1}(X_{y_0})-V)^{-}$ be the residual scheme of $V$ in $c^{-1}(X_{y_0})$. As the problem is local at $x_0$, assume $X_{y_0}$ is local with closed point $x_0$. Denote by $A[C_{y_0}]$ the homogeneous coordinate ring of $c^{-1}(X_{y_0})$. Set $\mathcal{P} := \mathcal{O}_{X,x_0} \oplus \mathcal{P}_1 \oplus \mathcal{P}_2 \oplus \cdots$

\begin{proposition}\label{residual ideal} The ideal $\mathcal{I}_W$ of $W$ in $A[C_{y_0}]$ is  $(t\mathcal{P} \cap \mathcal{A})/t\mathcal{A}.$
\end{proposition}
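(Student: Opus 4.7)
The plan is to prove $\mathcal{I}_W = (t\mathcal{P} \cap \mathcal{A})/t\mathcal{A}$ by establishing both inclusions. The geometric content is that an element $a \in \mathcal{A}$ defines a section vanishing on the residual scheme $W$ precisely when the fraction $a/t$ makes sense in $\mathcal{B}$ and represents an element of $\mathcal{P}$; that is, when $a = tp$ for some $p \in \mathcal{P}$.

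For the inclusion $(t\mathcal{P} \cap \mathcal{A})/t\mathcal{A} \subseteq \mathcal{I}_W$, I would take $a = tp$ with $p \in \mathcal{P}$, so $p \in \mathcal{B}$ and $\mathfrak{m}^k p \subseteq \mathcal{A}$ for some $k$, and show that $\bar a$ vanishes at the generic point of every component of $W$. Let $\mathfrak{p} \subset \mathcal{A}$ be the minimal prime of $(t)$ defining such a component; then $\mathfrak{p} \cap R \neq \mathfrak{m}$ by definition of $W$, so one can choose $m \in \mathfrak{m} \setminus \mathfrak{p}$. Since $m^k \notin \mathfrak{p}$ and $m^k p \in \mathcal{A}$, the element $p$ lies in $\mathcal{A}_\mathfrak{p}$ inside the common total ring of fractions of $\mathcal{A}$ and $\mathcal{B}$ (which coincide because $\mathcal{A} \subset \mathcal{B}$ is birational by Prp.\ \ref{nice embedding}). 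Hence $a = tp \in t\mathcal{A}_\mathfrak{p}$, giving vanishing of $\bar a$ in the localization at the image of $\mathfrak{p}$ in $\mathcal{A}/t\mathcal{A}$, as required.

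For the inclusion $\mathcal{I}_W \subseteq (t\mathcal{P} \cap \mathcal{A})/t\mathcal{A}$, I take $\bar a \in \mathcal{I}_W$ with a lift $a \in \mathcal{A}$. Since $V \subseteq c^{-1}(x_0)$, vanishing of $\bar a$ off $V$ makes $\bar a$ an $\mathfrak{m}$-torsion element, so $\mathfrak{m}^k a \subseteq t\mathcal{A}$ for some $k$; call this $(\star)$. Define $p := a/t$ in the total ring of fractions; condition $(\star)$ immediately yields $\mathfrak{m}^k p \subseteq \mathcal{A}$, so the only remaining task is to establish $p \in \mathcal{B}$, which then places $p \in \mathcal{P}$ and $a \in t\mathcal{P} \cap \mathcal{A}$. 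Because $\mathcal{B}$ has been replaced by its integral closure in its total ring of fractions, it is normal and satisfies $\mathcal{B} = \bigcap_{\operatorname{ht} \mathfrak{P} = 1} \mathcal{B}_\mathfrak{P}$. For height-one primes $\mathfrak{P}$ not containing $t$, the inclusion $p \in \mathcal{B}_\mathfrak{P}$ is trivial. For $\mathfrak{P}$ containing $t$, Property~(2) of Prp.\ \ref{nice embedding} applied at $\mathfrak{p} = \mathfrak{m}$ shows that every minimal prime of $\mathfrak{m}\mathcal{B}$ has height at least $\dim R \geq 3$ in the setting of Thm.\ \ref{LVF}~(ii); consequently no height-one prime of $\mathcal{B}$ can contain $\mathfrak{m}$. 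Picking $m \in \mathfrak{m} \setminus \mathfrak{P}$ and writing $m^k a = tb$ with $b \in \mathcal{A}$ by $(\star)$ yields $p = b/m^k \in \mathcal{B}_\mathfrak{P}$.

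I expect the reverse inclusion to be the main obstacle. The key subtlety is the interplay of the $S_2$-property of normal $\mathcal{B}$ (which enables the reduction to height-one primes) with the codimension bound from Property~(2) of Prp.\ \ref{nice embedding} together with the hypothesis $\dim R \geq 3$. These structural inputs jointly exclude the "vertical" height-one primes of $\mathcal{B}$ at which $(\star)$ by itself would not suffice to place $a/t$ in $\mathcal{B}_\mathfrak{P}$; without them the construction of $p$ would break down at precisely those primes corresponding to the $V$-components lifted to the normalization.
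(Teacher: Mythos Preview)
Your proof is correct, with one small caveat and a genuinely different route for the hard direction.

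In the forward inclusion you check that $\bar a$ vanishes at the \emph{minimal} primes of $(t)$ defining components of $W$, but $\mathcal I_W$ is by definition (via Prp.~\ref{saturation}) the intersection of \emph{all} primary components of $t\mathcal A$ whose associated prime does not contain $\mathfrak m_{x_0}$, embedded ones included. Your localization argument works verbatim for any such associated prime $\mathfrak p$, since the only thing used is $\mathfrak m_{x_0}\not\subset\mathfrak p$, so this is a one-word fix. The paper bypasses this entirely by using the saturation description $\mathcal I_W=\bigcup_i(t\mathcal A:_{\mathcal A}\mathfrak m_{x_0}^i)/t\mathcal A$ directly: from $\mathfrak m_{x_0}^j p\in\mathcal A$ one gets $\mathfrak m_{x_0}^j(tp)\in t\mathcal A$ in one line, with no localization needed.

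For the reverse inclusion the two arguments genuinely diverge. The paper produces a single element $\mathfrak z\in\mathfrak m_{x_0}$ such that $(t,\mathfrak z)$ is a regular sequence on $\mathcal B$ (this is the depth argument from Prp.~\ref{intr. of limits}~(ii)); then $\mathfrak z^j q=ta$ with $a\in\mathcal A$ forces $q/t\in\mathcal B$ because $\mathfrak z$ is a nonzerodivisor on $\mathcal B/t\mathcal B$. You instead invoke normality of $\mathcal B$ to write $\mathcal B=\bigcap_{\operatorname{ht}\mathfrak P=1}\mathcal B_{\mathfrak P}$ and use Prp.~\ref{nice embedding}~(2) to exclude height-one primes lying over $\mathfrak m_{x_0}$. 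Both routes rest on the same structural input, namely $\operatorname{codim}(c_{\mathcal B}^{-1}(\mathfrak m_{x_0}),\operatorname{Proj}\mathcal B)\geq 2$, and both use that $\mathcal B$ has been replaced by its normalization. The paper's version is slightly more economical (one regular element rather than the full $S_2$ intersection, and height $\geq 2$ rather than $\geq 3$ already suffices), while yours makes the Serre-criterion mechanism more transparent.
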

\begin{proof} Since $A[C_{y_0}] =\mathcal{A}/t\mathcal{A}$, then by Prp.\ \ref{saturation}
$\mathcal{I}_W = \bigcup_{i=0}^{\infty}(t\mathcal{A} :_{\mathcal{A}} \mathfrak{m}_{x_0}^{i})/t\mathcal{A}$
where $\mathfrak{m}_{x_0}$ is the ideal of $x_0$ in $\mathcal{O}_{X,x_0}$. We claim that
$$\bigcup_{i=0}^{\infty}(t\mathcal{A} :_{\mathcal{A}} \mathfrak{m}_{x_0}^{i}) = t\mathcal{P}\cap \mathcal{A}.$$
Indeed, let $tp \in t\mathcal{P} \cap \mathcal{A}$ where $p \in \mathcal{P}$. Let $j \gg 0$ such that
$\mathfrak{m}_{x_0}^{j}p \in \mathcal{A}$. Then $\mathfrak{m}_{x_0}^{j}tp \in t\mathcal{A}$ and hence $tp \in \bigcup_{i=0}^{\infty}(t\mathcal{A} :_{\mathcal{A}} \mathfrak{m}_{x_0}^{i})$.

As in the proof of Prp.\ \ref{intr. of limits} $\rm{(ii)}$ we can find $\mathfrak{z}$ such that $t,\mathfrak{z}$  
is a $2$-regular sequence for $\mathcal{B}$. Let $q$ be an element from $\bigcup_{i=0}^{\infty}(t\mathcal{A} :_{\mathcal{A}} \mathfrak{m}_{x_0}^{i})$. For $j \gg 0$ we have $\mathfrak{m}_{x_0}^{j}q \in t\mathcal{A}$ so we can write $\mathfrak{z}^{j}q=ta$ for $a \in \mathcal{A}$. Consider the last identity as an identity in $\mathcal{B}$. Because $\mathfrak{z}$ is a nonzero divisor modulo $t$, then $q/t \in \mathcal{B}$. We claim that $q/t \in \mathcal{P}$. Indeed, because $\mathfrak{m}_{x_0}^{j}q \in t\mathcal{A}$ we have $\mathfrak{m}_{x_0}^{j}q/t \in \mathcal{A}$ which yields $q/t \in \mathcal{P}$. Hence $q \in t\mathcal{P} \cap \mathcal{A}$.
\end{proof}

Let $Z:= \mathrm{Spec}(R)$ be an affine Noetherian scheme of finite type over a field $K$. Let $C$ be a subscheme of $Z \times \mathbb{P}_{K}^{u}$, where $u$ is a positive integer. Denote the homogeneous coordinate ring of $C$ by $A[C]$. It is a graded ring with respect to the coordinates of $\mathbb{P}_{K}^{u}$. Denote its $n$th graded piece by $A[C]_n$. Let $W$ be a closed subscheme of $C$  and denote its ideal in $A[C]$ by $\mathcal{I}_W$. Set $(\mathcal{I}_W)_n = A[C]_n \cap \mathcal{I}_W$. 

Let $\mathrm{pr}_1$ be the projection from $Z\times \mathbb{P}_{K}^{u}$ onto the first factor. Suppose $z$ is a rational point of $Z$. Set $V: = \mathrm{pr}_{1}^{-1}(z) \cap C$ and $r: = \dim V$. Let $l:=c_{1}\mathcal{O}_{C}(1)$ and let $[V]_r$ be the dimension $r$ part of the fundamental cycle of $V$. Finally, define $\deg [V]_r:=\int l^{r}V$. 

\begin{proposition}\label{residual-degree}
Assume $C= V \cup W$ where $W$ is a closed subscheme of $Z \times \mathbb{P}_{K}^{u}$ with $\dim (V\cap W)<r$. Then $(\mathcal{I}_W)_n$ is a finite-dimensional $K$-vector space for each $n$, and
$$\lim_{n \to \infty} \frac{r!}{n^{r}} \dim_{K} (\mathcal{I}_W)_{n}=\deg [V]_r.$$
\end{proposition}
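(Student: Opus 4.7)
The plan is to leverage the scheme-theoretic decomposition $C = V \cup W$ to reduce the computation of $\dim_K(\mathcal{I}_W)_n$ to a Hilbert polynomial calculation on the projective $K$-scheme $V \subset \mathbb{P}^u_K$. The key identification I want to establish is
\begin{equation*}
(\mathcal{I}_W)_n \;\cong\; (\mathcal{I}_{V\cap W,\,V})_n \qquad (n \gg 0),
\end{equation*}
where $\mathcal{I}_{V\cap W,\,V}\subset A[V]$ is the homogeneous ideal of $V\cap W$ in $V$.

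To produce this isomorphism, I would first note that the restriction map $A[C]\to A[V]$ has kernel $\mathcal{I}_V$, so the restriction of the injection $\mathcal{I}_W\hookrightarrow A[C]$ to $A[V]$ has kernel $\mathcal{I}_V\cap\mathcal{I}_W = \mathcal{I}_{V\cup W} = \mathcal{I}_C = 0$ (using that $C=V\cup W$ scheme-theoretically). This already gives an injection $\mathcal{I}_W\hookrightarrow A[V]$, whose image is clearly contained in $\mathcal{I}_{V\cap W,\,V}$. In particular, each $(\mathcal{I}_W)_n$ is a $K$-subspace of the finite-dimensional $K$-vector space $A[V]_n$ (finite-dimensionality of $A[V]_n$ holds because $V$ is projective over $K$, and its $K$-structure comes from the fact that $V$ is annihilated by the maximal ideal $\mathfrak{m}_z\subset R$ at the rational point $z$), giving the first claim of the proposition. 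For surjectivity onto $\mathcal{I}_{V\cap W,\,V}$, I would twist the Mayer--Vietoris exact sequence
\begin{equation*}
0 \longrightarrow \mathcal{O}_C \longrightarrow \mathcal{O}_V \oplus \mathcal{O}_W \longrightarrow \mathcal{O}_{V\cap W} \longrightarrow 0
\end{equation*}
by $\mathcal{O}(n)$, push forward along the proper morphism $C\to Z$, and invoke Serre vanishing on the projective morphism $C\to Z$ to conclude that for $n\gg 0$ the sequence
\begin{equation*}
0 \longrightarrow A[C]_n \longrightarrow A[V]_n \oplus A[W]_n \longrightarrow A[V\cap W]_n \longrightarrow 0
\end{equation*}
is exact. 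From this, the image of $(\mathcal{I}_W)_n$ in $A[V]_n$ equals $(\mathcal{I}_{V\cap W,\,V})_n$ for $n\gg 0$, proving the isomorphism.

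Given the isomorphism, the short exact sequence $0\to (\mathcal{I}_{V\cap W,V})_n \to A[V]_n \to A[V\cap W]_n \to 0$ lets me compute via standard Hilbert polynomial asymptotics: since $V\subset\mathbb{P}^u_K$ has top-dimensional cycle $[V]_r$ of dimension $r$,
\begin{equation*}
\dim_K A[V]_n \;=\; \frac{\deg[V]_r}{r!}\, n^{r} + O(n^{r-1}),
\end{equation*}
while $\dim(V\cap W)<r$ forces $\dim_K A[V\cap W]_n = O(n^{r-1})$. Subtracting and dividing by $n^r/r!$ yields the desired limit $\deg[V]_r$.

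The step I expect to require the most care is the scheme-theoretic hypothesis $C=V\cup W$, which is implicit in the way Proposition \ref{residual-degree} is applied (in the preceding proof, $c^{-1}X_{y_0} = \mathrm{Proj}(\mathcal{A}(y_0))\cup D_{\mathrm{vert}}$ is derived scheme-theoretically via Proposition \ref{residual ideal}). The only minor annoyance is the discrepancy between $A[C]_n$ and $H^0(C,\mathcal{O}_C(n))$ coming from saturation, but since this affects only finitely many graded pieces it has no effect on the leading-order behavior, so it may be absorbed into the $O(n^{r-1})$ error.
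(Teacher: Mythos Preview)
Your proposal is correct and follows the same overall strategy as the paper: identify $(\mathcal{I}_W)_n$ with the graded piece $(\mathcal{I}_{V\cap W,V})_n$ inside $A[V]$, then finish by Hilbert polynomial asymptotics using $\dim(V\cap W)<r$.

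The one place where you take a longer route than necessary is the surjectivity of $\mathcal{I}_W \to \mathcal{I}_{V\cap W,V}$. You invoke Mayer--Vietoris and Serre vanishing to get it for $n\gg 0$, but in fact it holds for \emph{all} $n$ by elementary ring theory: the image of $\mathcal{I}_W$ under the quotient $A[C]\to A[V]=A[C]/\mathcal{I}_V$ is precisely $(\mathcal{I}_W+\mathcal{I}_V)/\mathcal{I}_V$, and since $\mathcal{I}_{V\cap W}=\mathcal{I}_V+\mathcal{I}_W$ in $A[C]$, this \emph{equals} $\mathcal{I}_{V\cap W,V}$ on the nose. The paper proceeds exactly this way, writing the isomorphism (\ref{Noether first}) and the exact sequence $0\to(\mathcal{I}_W+\mathcal{I}_V)/\mathcal{I}_V\to A[V]\to A[V\cap W]\to 0$ directly from the isomorphism theorems. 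This also removes your saturation concern, since no comparison with $H^0(C,\mathcal{O}_C(n))$ is ever needed.
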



\begin{proof}
Because $\mathcal{I}_V \cap \mathcal{I}_W =0$ in $A[C]$, then as $R$-modules
\begin{equation}\label{Noether first}
(\mathcal{I}_W)_n \simeq ((\mathcal{I}_W)_n+(\mathcal{I}_V)_n)/(\mathcal{I}_V)_n
\end{equation}
for each $n$. Set $A[V]=A[C]/\mathcal{I}_V$. Since the residue field of $x$ is $K$, it follows that $A[V]_n$ is finite-dimensional $K$-vector space. The inclusion $((\mathcal{I}_W)_n+(\mathcal{I}_V)_n)/(\mathcal{I}_V)_n \subset A[V]_n$
shows that $(\mathcal{I}_W)_n+(\mathcal{I}_V)_n)/(\mathcal{I}_V)_n$ is finite dimensional $K$-vector space as well. But $R$ is a $K$-algebra. Thus, by (\ref{Noether first}) the $R$-module $(\mathcal{I}_W)_n$ is a finite-dimensional $K$-vector space and

\begin{equation}\label{dim. Noether first}
\dim_{K}(\mathcal{I}_W)_n=\dim_{K}((\mathcal{I}_W)_n+(\mathcal{I}_V)_n)/(\mathcal{I}_{V})_n.
\end{equation}
Next, consider the exact sequence
$0 \rightarrow (\mathcal{I}_W +\mathcal{I}_V)/\mathcal{I}_{V} \rightarrow A[V] \rightarrow A[V\cap W] \rightarrow 0$
where $A[V\cap W]= A[C]/\mathcal{I}_{V \cap W}$. 
As $\dim (V \cap W) <r$, we get $\lim_{n \to \infty} \frac{r!}{n^{r}} \dim_K A[V\cap W]_n =0.$
Thus
\begin{equation}\label{intermidiate isom.}
\lim_{n \to \infty} \frac{r!}{n^{r}} \dim_K ((\mathcal{I}_W)_n+(\mathcal{I}_V)_n)/(\mathcal{I}_V)_n=\lim_{n \to \infty} \frac{r!}{n^{r}} \dim_K A[V]_n.
\end{equation}
But
\begin{equation}\label{def. degree}
\lim_{n \to \infty} \frac{r!}{n^{r}} \dim_K A[V]_n = \deg[V]_r.
\end{equation}
Combining (\ref{dim. Noether first}), (\ref{intermidiate isom.}) and (\ref{def. degree}) we get the desired result.
\end{proof}
\begin{proposition}\label{lim-degree relation}
We have
$$\lim_{n \to \infty} \frac{r!}{n^{r}} \dim_{\Bbbk}\mathcal{P}_n/\mathcal{A}_n \otimes_{\mathcal{O}_{Y}} k(y_0) = \int_{S_{y_0}} l^{r}[D_{\mathrm{vert}}].$$
\end{proposition}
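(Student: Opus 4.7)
The plan is to chain the three preceding propositions into a single limit identity. For each $n$, applying Prp.\ \ref{key isomorphism} graded-piece-by-graded-piece gives
\[
\dim_{\Bbbk}(\mathcal{P}_n/\mathcal{A}_n)\otimes_{\mathcal{O}_Y}k(y_0)\;=\;\dim_{\Bbbk}(t\mathcal{P}_n\cap \mathcal{A}_n)/t\mathcal{A}_n.
\]
Since the identification $(t\mathcal{P}\cap\mathcal{A})/t\mathcal{A}=\mathcal{I}_W$ of Prp.\ \ref{residual ideal} is homogeneous, the right-hand side is exactly $\dim_{\Bbbk}(\mathcal{I}_W)_n$, where $\mathcal{I}_W$ is the ideal in $A[C_{y_0}]=\mathcal{A}/t\mathcal{A}$ of the residual scheme $W$ to the union $V$ of components of $c^{-1}(X_{y_0})$ projecting to $x_0$.

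Next I invoke Prp.\ \ref{residual-degree} with $Z=\mathrm{Spec}(\mathcal{O}_{X_{y_0},x_0})$, $K=\Bbbk$, $z=x_0$, and the ambient scheme $c^{-1}(X_{y_0})=V\cup W$ sitting in $Z\times\mathbb{P}^u_{\Bbbk}$ via the given projective embedding. Both hypotheses are immediate: $V\cup W=c^{-1}(X_{y_0})$ by construction of $W$, and $\dim(V\cap W)<r$ because $W$ contains no top-dimensional component of $c^{-1}(X_{y_0})$ shared with $V$, while every component of $V$ has dimension $r$. The proposition then yields
\[
\lim_{n\to\infty}\frac{r!}{n^r}\dim_{\Bbbk}(\mathcal{I}_W)_n\;=\;\deg[V]_r.
\]

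The remaining step is to identify $[V]_r=[D_{\mathrm{vert}}]$ as cycles supported over $x_0$. Since $t$ is a nonzero divisor on $\mathcal{B}$ (as verified in the proof of Prp.\ \ref{key isomorphism}), it cuts out $c^{-1}(X_{y_0})$ as a Cartier divisor in $C$, so $[C_{y_0}]$ counts each top-dimensional component with its $t$-adic multiplicity and $[V]_r$ is precisely the part of $[C_{y_0}]$ supported on components over $x_0$. Because $S_{y_0}=\{x_0\}$, these are exactly the top-dimensional components of $D=c^{-1}(S)$ mapping to $y_0$, and with the multiplicities inherited from $[C_{y_0}]$ they coincide with $[D_{\mathrm{vert}}]$ in the sense of the cycle decomposition (\ref{key eq.}) used in the EDF. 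Hence $\deg[V]_r=\int l^r[D_{\mathrm{vert}}]=\int_{S_{y_0}}l^r[D_{\mathrm{vert}}]$, completing the proof.

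The one subtle point I anticipate is this final cycle matching: the multiplicities assigned to components of $V$ come from the Cartier divisor structure of $c^{-1}(X_{y_0})$, whereas $[D_{\mathrm{vert}}]$ was introduced in Sct.\ \ref{Excess-Degree} via the identity $[C(y_0)]-[C_{y_0}]=-[D_{\mathrm{vert}}]$. Reconciling these is straightforward once one notes that both conventions extract top-dimensional components of $[C_{y_0}]$ over $x_0$ with the same $t$-adic weights; everything else is a routine assembly of the machinery developed above.
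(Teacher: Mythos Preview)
Your proposal is correct and follows essentially the same route as the paper: chain Prp.\ \ref{key isomorphism}, Prp.\ \ref{residual ideal}, and Prp.\ \ref{residual-degree} in that order. The paper's own proof is a three-line invocation of these results with $Z=X_{y_0}$, $z=x_0$, $K=\Bbbk$, $C=c^{-1}(X_{y_0})$, and it does not spell out the cycle identification $[V]_r=[D_{\mathrm{vert}}]$ that you flag as the subtle point; your added paragraph on the $t$-adic multiplicities is a reasonable justification of what the paper leaves implicit.
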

\begin{proof} By Prp. \ref{key isomorphism}
$\dim_{\Bbbk}\mathcal{P}_n/\mathcal{A}_n \otimes_{\mathcal{O}_{Y}} k(y_0) = \dim_{\Bbbk} (t\mathcal{P}_n \cap \mathcal{A}_n)/t\mathcal{A}_n.$ By Prp.\ \ref{residual ideal} we have $(\mathcal{I}_W)_n =(t\mathcal{P}_n \cap \mathcal{A}_n)/t\mathcal{A}_n$. Therefore,
$\dim_{\Bbbk} \mathcal{P}_n/\mathcal{A}_n \otimes_{\mathcal{O}_{Y}} k(y_0) = \dim_{\Bbbk}(\mathcal{I}_W)_n$.
Finally, Prp.\ \ref{residual-degree} applied with $Z=X_{y_0}$ and $z=x_0$, $K=\Bbbk$ and $C=c^{-1}(X_{y_0})$ give the desired equality.
\end{proof}

\vspace{.1cm}
\begin{center}
{\it Specialization to the generic fiber}
\end{center}
\vspace{.1cm}

\begin{proposition}\label{generic limit} Let $U$ be a sufficiently small neighborhood of $y_0$. Then 
$$H_{S}^{0}(\mathcal{B}_n/\mathcal{A}_n)\otimes_{\mathcal{O}_Y}
k(y)=H_{S_y}^{0}((\mathcal{B}_n/\mathcal{A}_n)\otimes_{\mathcal{O}_Y} k(y))$$
for  $y \in U'$ and each $n$. 
\end{proposition}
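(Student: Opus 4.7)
The plan is to compare $(\mathcal{N}_n/\mathcal{A}_n)\otimes_{\mathcal{O}_Y} k(y)$ with $H_{S_y}^0((\mathcal{B}_n/\mathcal{A}_n)\otimes_{\mathcal{O}_Y} k(y))$ via the short exact sequence
\begin{equation*}
0 \to \mathcal{N}_n/\mathcal{A}_n \to \mathcal{B}_n/\mathcal{A}_n \to \mathcal{B}_n/\mathcal{N}_n \to 0,
\end{equation*}
showing it remains exact after $\otimes_{\mathcal{O}_Y} k(y)$ and that the $H_{S_y}^0$ of the rightmost fiber vanishes. The key input is \cite[Prp.\ 2.1]{Ran19a}, which gives finiteness of $\bigcup_n \mathrm{Ass}_X(\mathcal{B}_n/\mathcal{A}_n)$ and so permits a single neighborhood $U$ of $y_0$ to work uniformly in $n$.

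First, by the primary-decomposition description of Prp.\ \ref{saturation}, one has $\mathrm{Ass}_X(\mathcal{B}_n/\mathcal{N}_n)\subset \mathrm{Ass}_X(\mathcal{B}_n/\mathcal{A}_n)$, consisting of those $\mathfrak{p}$ with $V(\mathfrak{p})\not\subset S$. Invoking \cite[Prp.\ 2.1]{Ran19a}, I shrink $U$ so that every associated prime of $\mathcal{B}_n/\mathcal{A}_n$ (for every $n$) contracts in $\mathcal{O}_Y$ to $\mathfrak{m}_{y_0}$ or to $(0)$. If $t$ denotes a uniformizer of $\mathcal{O}_{Y,y}$ for $y\in U'$, none of these primes contracts to $\mathfrak{m}_y$, so $t$ is a nonzero divisor on $\mathcal{B}_n/\mathcal{N}_n$ locally at $y$. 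The snake lemma applied to the SES above together with multiplication by $t$ then yields
\begin{equation*}
0 \to (\mathcal{N}_n/\mathcal{A}_n)\otimes k(y) \to (\mathcal{B}_n/\mathcal{A}_n)\otimes k(y) \to (\mathcal{B}_n/\mathcal{N}_n)\otimes k(y) \to 0.
\end{equation*}

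Since $\mathcal{N}_n/\mathcal{A}_n$ is supported on $S$, its fiber is supported on $S_y$ and thus equals its own $H_{S_y}^0$. Applying the left-exact functor $H_{S_y}^0$ to the fibered sequence, the proposition reduces to $H_{S_y}^0((\mathcal{B}_n/\mathcal{N}_n)\otimes k(y))=0$. For this I use the standard fact that when $M$ is a finitely generated module flat over a DVR with uniformizer $t$, every associated prime of $M/tM$ is a minimal prime of $\mathfrak{p}+(t)$ for some $\mathfrak{p}\in\mathrm{Ass}(M)$, i.e.\ a generic point of the fiber $V(\mathfrak{p})_y$ for a horizontal $\mathfrak{p}\in\mathrm{Ass}_X(\mathcal{B}_n/\mathcal{N}_n)$. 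For each of the finitely many such $\mathfrak{p}$ (again by \cite[Prp.\ 2.1]{Ran19a}), the scheme $V(\mathfrak{p})$ is horizontal and irreducible with $V(\mathfrak{p})\not\subset S$, and I claim the generic points of $V(\mathfrak{p})_y$ lie outside $S_y$. Indeed, if $\dim V(\mathfrak{p})\geq 2$ these generic points are positive-dimensional and cannot be contained in the zero-dimensional $S_y$. If $\dim V(\mathfrak{p})=1$, then $V(\mathfrak{p})\cap S$ is a proper closed subscheme of $V(\mathfrak{p})$, finite, and since $S$ is quasi-finite over $Y$ its image in $Y$ is finite; shrinking $U$ to avoid this image forces $V(\mathfrak{p})_y\cap S_y=\emptyset$ for $y\in U'$.

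The main obstacle is the associated-primes-of-fiber characterization for modules flat over a DVR, together with the simultaneous shrinking of $U$ that must work for every $n$, which is made possible only by the global finiteness assertion of \cite[Prp.\ 2.1]{Ran19a}. Once these inputs are in hand, the snake-lemma and dimension-counting steps are routine.
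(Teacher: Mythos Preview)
Your overall strategy---tensoring the short exact sequence, then reducing to $H_{S_y}^0\bigl((\mathcal{B}_n/\mathcal{N}_n)\otimes k(y)\bigr)=0$---is reasonable, and the use of \cite[Prp.\ 2.1]{Ran19a} to obtain a uniform $U$ matches the paper. The exactness of the fibered sequence via the snake lemma is fine.

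The gap is your ``standard fact'' that for $M$ flat over a DVR with uniformizer $t$, every associated prime of $M/tM$ is a minimal prime of $\mathfrak{p}+(t)$ for some $\mathfrak{p}\in\mathrm{Ass}(M)$. This is false. Take $A=k[x,y,t]$, $M=(x,t)\subset A$. Then $M$ is torsion-free over $k[t]_{(t)}$ and $\mathrm{Ass}_A(M)=\{(0)\}$, yet the image $\bar t\in M/tM$ has annihilator exactly $(x,t)$ (since $at\in(tx,t^2)$ iff $a\in(x,t)$), so $(x,t)\in\mathrm{Ass}_A(M/tM)$; but $(x,t)$ is not minimal over $(0)+(t)=(t)$. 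Thus an embedded prime supported in the analogue of $S_y$ can appear in the fiber of $\mathcal{B}_n/\mathcal{N}_n$ even though no associated prime of $\mathcal{B}_n/\mathcal{N}_n$ lies in $S$, and your vanishing step does not follow.

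The paper circumvents this by never analysing $\mathrm{Ass}$ of the fiber. Instead it lifts $H_{S_y}^0\bigl((\mathcal{B}_n/\mathcal{A}_n)\otimes k(y)\bigr)$ to a submodule $\mathcal{N}_n'\subset\mathcal{B}_n$, sets $S'=\mathrm{Supp}_X(\mathcal{N}_n'/\mathcal{A}_n)$, and uses that $S'_y\subset S_y$ together with the shrinking of $U$ (so every horizontal closure of an associated point either is a component of $S$ or misses $S$ over $U'$) to force the horizontal part of $S'$ to equal $S$. Then the maximality of $\mathcal{N}_n$ (Prp.\ \ref{maximality of saturation}) gives $(\mathcal{N}_n'/\mathcal{N}_n)\otimes k(y)=0$. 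If you want to keep your short-exact-sequence framework, you should replace the false associated-primes claim by this support argument.
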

\begin{proof}
As usual identify $H_{S}^{0}(\mathcal{B}_n/\mathcal{A}_n)$ with $\mathcal{N}_n/\mathcal{A}_{n}$. We want to show that $$\mathcal{N}_n/\mathcal{A}_n \otimes_{\mathcal{O}_Y} k(y) = H_{S_y}^{0}((\mathcal{B}_n/\mathcal{A}_n)\otimes_{\mathcal{O}_Y} k(y))$$ for each $y \in U$. First, because the support of $\mathcal{N}_n/\mathcal{A}_n \otimes_{\mathcal{O}_Y} k(y)$ is in $S_y$, then by Prp.\ \ref{maximality of saturation} it follows that $\mathcal{N}_n/\mathcal{A}_n \otimes_{\mathcal{O}_Y} k(y) \hookrightarrow H_{S_y}^{0}((\mathcal{B}_n/\mathcal{A}_n)\otimes_{\mathcal{O}_Y} k(y))$. Suppose $\mathcal{N}_n'$ is such that
$$\mathcal{N}_n'/\mathcal{A}_n \otimes_{\mathcal{O}_Y} k(y)=H_{S_y}^{0}((\mathcal{B}_n/\mathcal{A}_n)\otimes_{\mathcal{O}_Y} k(y)).$$ 

By  \cite[Prp.\ 2.1]{Ran19a} $\bigcup _{n=1}^{\infty}\mathrm{Ass}_{X}(\mathcal{B}_n/\mathcal{A}_n)$ is finite (we have better qualitative result if instead we appeal to \cite[Thm.\ 1.4]{Ran19b} which describes the associated primes $\mathrm{Ass}_{X}(\mathcal{B}_n/\overline{\mathcal{A}_n})$ as the generic points of the irreducible components where the fiber dimension of $c$ jumps (cf.\  \cite[Thm.\ 7.8]{Rangachev2}). Shrink $V$ and $U$ so that for each associated point $z \in \bigcup _{n=1}^{\infty}\mathrm{Ass}_{X}(\mathcal{B}_n/\mathcal{A}_n)$, the image $h(\overline{\{z\}})$ surjects onto $U$ or onto $y_0$. Then further shrink $U$ if necessary, so that each $\overline{\{z\}}$ of dimension one that is not a component of $S$ intersects $S$ at $x_0$ only. Set $S': =\mathrm{Supp}_{X}(\mathcal{N}_{n}'/\mathcal{A}_n)$. The support of $\mathcal{N}_n'/\mathcal{A}_n \otimes_{\mathcal{O}_Y} k(y)$ is $S_{y}'$. Also, the support of $H_{S_y}^{0}((\mathcal{B}_n/\mathcal{A}_n)\otimes_{\mathcal{O}_Y} k(y))$ is $S_y$. This forces $S_{y}'=S_y$. Therefore, the union of components of $S'$ that do not surject onto $x_0$ is of dimension one and hence equal to $S$. By Prp.\  \ref{maximality of saturation} $\mathcal{N}_n$ is the  maximal submodule of $\mathcal{B}_n$ that contains $\mathcal{A}_n$ with $\mathrm{Supp}(\mathcal{N}_n/\mathcal{A}_n)=S$. Thus $(\mathcal{N}_{n}'/\mathcal{N}_n)\otimes_{\mathcal{O}_Y} k(y)=0$.
\end{proof}

\vspace{.1cm}
\begin{center}
{\it Completing the proof of the LVF}
\end{center}
\vspace{.1cm}

Now we are in position to finish the proof of Thm.\ \ref{LVF}.
By Prp.\ \ref{generic limit} and by our hypothesis 
$$\limsup_{n \to \infty} \frac{r!}{n^{r}} \dim_{\Bbbk}H_{S}^{0}(\mathcal{B}_n /\mathcal{A}_n)\otimes_{\mathcal{O}_Y} k(y)$$
is finite for $y \in U'$. Therefore, by
Prp.\ \ref{initial version} and Prp.\ \ref{lim-degree relation}
$$\limsup_{n \to \infty} \frac{r!}{n^{r}} \dim_{\Bbbk}H_{S}^{0}(\mathcal{B}_n /\mathcal{A}_n)\otimes_{\mathcal{O}_Y} k(y_0)-\limsup_{n \to \infty} \frac{r!}{n^{r}} \dim_{\Bbbk}H_{S}^{0}(\mathcal{B}_n /\mathcal{A}_n)\otimes_{\mathcal{O}_Y} k(y)=\int_{S_{y_0}} l^{r}[D_{\mathrm{vert}}].$$
This completes the proof of the LVF. 
By Prp.\ \ref{generic limit} and Cor.\ \ref{epsilon as limit} $\mathrm{vol}_{C_{y}}(\mathcal{L})$ exists as a finite limit for $y \in U'$. In particular, $\mathrm{vol}_{C_{y}}(\mathcal{L})$ is the sum of the local volumes at each point of $S_y$. By the LVF $\mathrm{vol}_{C_{y_0}}(\mathcal{L})$ exists as a finite limit, too.
\qedsymbol


We conclude this section with two immediate applications of Thm.\ \ref{LVF}.
\begin{corollary}\label{upper semi-continuity} The restricted local volume $\mathrm{vol}_{C_{y}}(\mathcal{L})$
is constant for $y \in U'$.
\end{corollary}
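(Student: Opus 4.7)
The plan is to deduce this directly from the Local Volume Formula (Thm.\ \ref{LVF} (ii)). That theorem asserts the existence of a single affine neighborhood $U$ of $y_0$ such that for \emph{every} closed point $y \in U' = U - \{y_0\}$ one has
$$\mathrm{vol}_{C_{y_0}}(\mathcal{L}) - \mathrm{vol}_{C_{y}}(\mathcal{L}) = \int_{S_{y_0}} l^{r}[D_{\mathrm{vert}}].$$
The right-hand side depends only on the fixed point $y_0$: the cycle $D_{\mathrm{vert}}$ is the union of components of $D$ mapping to $y_0$ under $h \circ c$, and the intersection is computed against a fixed class $l = c_{1}\mathcal{O}_{C}(1)$. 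Similarly, $\mathrm{vol}_{C_{y_0}}(\mathcal{L})$ is a single fixed real number (finite, by the last sentence of Thm.\ \ref{LVF} (ii)).

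Rearranging gives
$$\mathrm{vol}_{C_y}(\mathcal{L}) \;=\; \mathrm{vol}_{C_{y_0}}(\mathcal{L}) \;-\; \int_{S_{y_0}} l^{r}[D_{\mathrm{vert}}],$$
whose right-hand side is independent of $y$. Hence $\mathrm{vol}_{C_y}(\mathcal{L})$ is constant as $y$ ranges over the closed points of $U'$, which is the claim. There is essentially no obstacle here: the content is entirely packaged in the uniform neighborhood $U$ produced by the LVF, together with the observation that both terms other than $\mathrm{vol}_{C_y}(\mathcal{L})$ in the formula are manifestly independent of $y$.
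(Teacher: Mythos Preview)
Your proof is correct and takes essentially the same approach as the paper: invoke the LVF and observe that both $\mathrm{vol}_{C_{y_0}}(\mathcal{L})$ and $\int_{S_{y_0}} l^{r}[D_{\mathrm{vert}}]$ are independent of $y$, forcing $\mathrm{vol}_{C_y}(\mathcal{L})$ to be constant on $U'$.
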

\begin{proof} Indeed, both $\mathrm{vol}_{C_{y_{0}}}(\mathcal{L})$ and $\int_{S_{y_0}} l^{r}[D_{\mathrm{vert}}]$ remain constant as $y$ moves around in $U'$.
\end{proof}
The following is an immediate corollary from the proof of the LVF. 
\begin{corollary}
In the setup of Thm.\ \ref{LVF} the scheme $c^{-1}S$ is flat over $Y$ if and only if 
$\dim_{\Bbbk}H_{S}^{0}(\mathcal{B}_n /\mathcal{A}_n)\otimes_{\mathcal{O}_Y} k(y)$  is constant over $Y$ for $n \gg 0$. 
\end{corollary}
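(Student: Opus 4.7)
The plan is to extract the corollary directly from the proof of Thm.\ \ref{LVF}. At any closed point $y_0 \in Y$, the snake-lemma argument of Prp.\ \ref{initial version} applied to the chain $\mathcal{A}_n \subset \mathcal{P}_n \subset \mathcal{N}_n$ yields, for every $n$ and every $y$ in a small punctured neighborhood of $y_0$,
\begin{equation*}
\dim_{\Bbbk} H_S^0(\mathcal{B}_n/\mathcal{A}_n)\otimes_{\mathcal{O}_Y} k(y_0) - \dim_{\Bbbk} H_S^0(\mathcal{B}_n/\mathcal{A}_n)\otimes_{\mathcal{O}_Y} k(y) = \dim_{\Bbbk} H_{S_{y_0}}^0(\mathcal{B}_n/\mathcal{A}_n)\otimes_{\mathcal{O}_Y} k(y_0),
\end{equation*}
a non-negative quantity. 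By Prp.\ \ref{key isomorphism} and Prp.\ \ref{residual ideal} this equals $\dim_{\Bbbk}(\mathcal{I}_W)_n$, the Hilbert function of the ideal in $A[C_{y_0}]$ of the residual scheme $W$ to the union $V$ of $r$-dimensional components of $C_{y_0}=c^{-1}(X_{y_0})$ that map to points of $S_{y_0}$. Since $V$ is either empty or pure of dimension $r$, its Hilbert polynomial has degree $r$ when nonempty, so $(\mathcal{I}_W)_n=0$ for $n \gg 0$ is equivalent to $V=\emptyset$. Thus constancy of $\dim_{\Bbbk} H_S^0(\mathcal{B}_n/\mathcal{A}_n)\otimes k(y)$ on $Y$ for $n \gg 0$ is equivalent to $V=\emptyset$ at every closed $y_0 \in Y$.

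The remaining task is to identify "$V=\emptyset$ at every closed $y_0$" with flatness of $D := c^{-1}S$ over $Y$. Since $C/X$ is projective and $S/Y$ may be taken finite after the usual shrinking, $D/Y$ is projective, and since $Y$ is integral regular of dimension one, Hironaka's flatness theorem equates flatness of $D/Y$ with constancy of the Hilbert polynomial of the fiber $D_y$ in $y$, or equivalently with the vanishing, for $n \gg 0$ at every closed $y_0$, of the $\mathcal{O}_{Y,y_0}$-torsion of each graded piece $\mathcal{A}_n/\mathcal{I}_S\mathcal{A}_n$. Re-executing the $t$-saturation calculation of Prp.\ \ref{key isomorphism} and Prp.\ \ref{residual ideal} with the pair $(\mathcal{I}_S\mathcal{A}_n, \mathcal{A}_n)$ in place of $(\mathcal{A}_n, \mathcal{B}_n)$ identifies this torsion with the Hilbert function of an analogously defined residual ideal cutting out the same cycle $V$; the only inputs required are that $t$ is a non-zero-divisor on $\mathcal{A}$ (already established in the proof of the LVF) and that $\mathcal{A}$ is reduced and equidimensional. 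Combining with the first paragraph then yields the biconditional.

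The main obstacle is the last identification: one must verify that the full $\mathcal{O}_{Y,y_0}$-torsion of $\mathcal{A}_n/\mathcal{I}_S\mathcal{A}_n$, not merely its top-dimensional contribution, is concentrated on $V$, i.e.\ there are no lower-dimensional vertical embedded primes detectable only in $\mathcal{A}/\mathcal{I}_S\mathcal{A}$ but invisible to the residual-ideal machinery. This is handled by the same manipulation of $t$-saturations as in Prp.\ \ref{key isomorphism} and Prp.\ \ref{residual ideal}, and requires no new ingredient beyond what is already in place in the LVF proof.
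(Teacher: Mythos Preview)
Your first paragraph is correct and matches the paper: via Prp.\ \ref{initial version}, Prp.\ \ref{key isomorphism} and Prp.\ \ref{residual ideal}, constancy of $\dim_{\Bbbk}H_S^0(\mathcal{B}_n/\mathcal{A}_n)\otimes k(y)$ for $n\gg 0$ is equivalent to $(\mathcal{I}_W)_n=0$ for $n\gg 0$, i.e.\ to $V=\emptyset$ at every closed $y_0$.

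For the remaining equivalence ``$V=\emptyset$ everywhere $\Longleftrightarrow$ $c^{-1}S$ flat over $Y$'', the paper does \emph{not} go through Hilbert polynomials or a second pass of the saturation machinery. It simply invokes Hartshorne~III, Prop.~9.7: over a regular one-dimensional base, flatness is equivalent to having no associated point over a closed point. The link to $V$ is then direct and does not require re-running Prps.~\ref{key isomorphism}--\ref{residual ideal} on a new pair of algebras.

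Your proposed alternative has a genuine gap. You claim that the Key Isomorphism and Residual Ideal computations transfer to the pair $(\mathcal{I}_S\mathcal{A}_n,\mathcal{A}_n)$ and that the resulting residual ideal ``cuts out the same cycle $V$''. Neither assertion is justified. The proof of Prp.~\ref{residual ideal} uses that $\mathcal{B}$ (taken normal) admits a $2$-regular sequence $t,\mathfrak z$ in $\mathfrak m_{x_0}$; there is no reason $\mathcal{A}$ inherits this, and without it the crucial step ``$q/t\in\mathcal{B}$'' has no analogue. More seriously, the object you would produce lives in $A[D_{y_0}]=\mathcal{A}/(\mathcal{I}_S,t)\mathcal{A}$, whereas $V$ is a union of components of $C_{y_0}=\mathrm{Proj}(\mathcal{A}/t\mathcal{A})$; there is no evident identification between a residual ideal in the former and the ideal $\mathcal{I}_W$ in the latter. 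Your final paragraph acknowledges the embedded-prime issue but resolves it only by assertion.

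In short: keep your first paragraph, then replace the Hironaka/Hilbert-polynomial detour and the unexecuted ``re-execution'' by the one-line appeal to Hartshorne~III, Prop.~9.7, as the paper does.
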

\begin{proof}
We have $\dim_{\Bbbk}H_{S}^{0}(\mathcal{B}_n /\mathcal{A}_n)\otimes_{\mathcal{O}_Y} k(y)$ is constant over $Y$ for $n \gg 0$ iff $(\mathcal{I}_W)_n=0$ for $n \gg 0$ iff $V$ is empty iff $c^{-1}S$ is flat (see \cite[Prp.\ 9.7]{Hartshorne}). 
\end{proof}


\begin{remark}\label{CA LVF} \rm{The proof of the LVF translates almost verbatim to the complex analytic setting of Rmk.\ \ref{CA}. As before one should work with sufficiently small distinguished compact Stein subsets of $X$ and $Y$ containing $S_{y_0}$ and $y_0$. The theory of local cohomology and in particular of gap-sheaves (saturations) of the corresponding analytic sheaves is developed in \cite{ST71}. The rest of the proof remains the same.}

\end{remark}
\begin{remark}\label{Fujita Rmk}
\rm{In \cite{Ran19d} we give a Fujita-type version of the LVF by replacing local cohomology with intersection numbers: using Thm.\ \ref{GMPT} it's shown that in the LVF $\mathrm{vol}_{C_{y}}(\mathcal{L})$ can be replaced by the BR multiplicity $e(\mathcal{A}_{n}(y),\mathcal{N}_{n}(y))/n^r$ for each $y$. More details can be found in the previous arXiv version of the current paper.}
\end{remark}

\section{Volume Stability and the Principle of Specialization of Integral Dependence}\label{stability}
In this section we show how to extend the LVF to the case $\dim Y >1$ under the assumption of volume stability. As an application we prove a general version of Teissier's Principle of Specialization of Integral Dependence. 

Let $X \rightarrow Y$ be a family of reduced complex analytic spaces such that $X$ is equidimensional, $Y$ is reduced and irreducible, and the fibers $X_y$ are equidimensional of dimension at least $2$. Fix a point $y_0 \in Y$. Assume $Y$ is contained in a reduced and irreducible analytic space $W$ and 
\begin{equation}\label{def. eq.}
 \mathcal{X} \longrightarrow W
\end{equation}
is an equidimensional reduced family with equidimensional fibers of positive dimension such that $\mathcal{X}_{y_0}=X_{y_0}$. Assume $S(W)$ is a subspace of $\mathcal{X}$ quasi-finite over $W$ with $S(W)_{y_0}$ nonempty. Let $c \colon C(W) \rightarrow \mathcal{X}$  be a reduced,  equidimensional and projective space over $\mathcal{X}$ such that its irreducible components surject onto irreducible components of $\mathcal{X}$ and such that for generic $w$ the fiber $C(W)_w$ is equidimensional of dimension $r$ with irreducible components surjecting onto those of $\mathcal{X}_w$. Let $\mathcal{L}$ be an invertible very ample sheaf on $C(W)$ relative to $X$. Set $\mathcal{A}:= \oplus_{n \geq 0} \Gamma (C(W), \mathcal{L}^{\otimes n})$. For each closed point $w$ define the restricted local volume of $\mathcal{L}$ at $S(W)_w$ as 

$$\mathrm{vol}_{C(W)_w}(\mathcal{L}):= \limsup_{n\to \infty} \frac{r!}{n^r} \dim_{\mathbb{C}}H_{S(W)}^{1}(\mathcal{A}_n)\otimes_{\mathcal{O}_W} k(w).
$$
Let $\mathcal{B}$ be a sheaf of graded $\mathcal{O}_{\mathcal{X}}$-algebras containing $\mathcal{A}$ such that locally $\mathcal{B}$ is a birational extension of $\mathcal{A}$ satisfying the properties listed in Prp.\ \ref{nice embedding}. Set $\mathcal{B}(w):=\mathcal{B} \otimes_{\mathcal{O}_{\mathcal{X}}} \mathcal{O}_{\mathcal{X}_{w}}$. For each $n$ denote the image of  $\mathcal{A}_n$ in $\mathcal{B}_n (w)$ by  $\mathcal{A}_{n}(w)$  We say that the restricted local volume specializes with passage to the fiber $\mathcal{X}_w$ if 
$$
\mathrm{vol}_{C(W)_w}(\mathcal{L}):= \limsup_{n\to \infty} \frac{r!}{n^r} \dim_{\mathbb{C}}H_{S(W)_w}^{1}(\mathcal{A}_n(w)).
$$
Replacing $Y$ by $W$ in Prp.\ \ref{generic limit} we get that there exists a Zariski open subset $U$ of $W$ such that the restricted local volume specializes with passage to $\mathcal{X}_w$ for each $w \in U$.

\begin{definition}
We say $W$ is a {\it good base space} for $\mathrm{vol}_{C(W)_w}(\mathcal{L})$ if there exists a Zariski open dense subset $U$ of $W$ such that $\mathrm{vol}_{C(W)_w}(\mathcal{L})$ is constant for all $w \in U$. In this case we say $\mathrm{vol}_{C(W)_w}(\mathcal{L})$ is stable. 
\end{definition}

Set $\mathcal{B}(Y):=\mathcal{B} \otimes_{\mathcal{O}_{\mathcal{X}}} \mathcal{O}_X$. Let $\mathcal{A}(Y)$ be the image of $\mathcal{A}$
in $\mathcal{B}(Y)$. Set $C:= \mathrm{Proj}(\mathcal{A}(Y))$ and $S:=S(W) \times_{W} Y$. Let $c_{X} \colon C \rightarrow X$ be the structure morphism. 

Let $W_1$ and $W_2$ be generic curves in $W$ passing through $y_0$ and a generic point $y$ in $Y$ close enough to $y_0$, respectively. Consider the families $\mathcal{X}(W_i):=\mathcal{X} \times_{W} W_i$.  Let $\mathcal{A}(W_i)$ be the image of $\mathcal{A}$
in $\mathcal{B} \otimes_{\mathcal{O}_{\mathcal{X}}}\mathcal{O}_{\mathcal{X}_{i}(W_i)}$. Set $C(W_i): = \mathrm{Proj}(\mathcal{A}(W_i))$ and $S(W_i):= S(W) \times_{W} W_i$. Denote by $D(W_1)_{\mathrm{vert}}$ the union of  components of the inverse image of $S(W_1)$  in $C(W_1)$ supported over $S(W)_{y_0}$, and by $D(W_2)_{\mathrm{vert}}$ the union of components of the inverse image of $S(W_2)$ in $C(W_2)$ supported over $S(W)_{y}$. 

\begin{theorem}\label{covering}
Suppose $W$ is a good base space for $\mathrm{vol}_{C(W)_w}(\mathcal{L})$. Assume that the set of points $x$ in $\mathcal{X}$ with  $\dim c^{-1}x \geq r$ is contained in $S(W)$. Then $\mathrm{vol}_{C(W_1)_{y_0}}(\mathcal{L})= \mathrm{vol}_{C(W_2)_{y}}(\mathcal{L})$ for $y \in Y$ near $y_0$ if and only if $\dim c_{X}^{-1}S_{y_0}<r$.

\end{theorem}
\begin{proof}
Select $W_1$ and $W_2$ generic enough so that $W_{1}-\{y_0\}$ and $W_{2}-\{y\}$ lie in the Zariski open subset $U$ over which the volume $\mathrm{vol}_{C(W)_w}(\mathcal{L})$ is stable and specializes with passage to $\mathcal{X}_w$. Let $w_1$ and $w_2$ be points from $W_1$ and $W_2$ close enough to $y_0$ and $y$, respectively. Apply the LVF to the families $\mathcal{X}(W_i) \rightarrow W_i$ to get

$$\mathrm{vol}_{C(W_1)_{y_0}}(\mathcal{L})-  \mathrm{vol}_{C(W_1)_{w_1}}(\mathcal{L}) = \int_{S_{y_0}} l^{r}[D(W_1)_{\mathrm{vert}}]$$
and
$$\mathrm{vol}_{C(W_2)_{y}}(\mathcal{L})-  \mathrm{vol}_{C(W_2)_{w_2}}(\mathcal{L}) = \int_{S_{y}} l^{r}[D(W_2)_{\mathrm{vert}}].$$

Because of volume stability and specialization (Prp.\ \ref{generic limit})
$\mathrm{vol}_{C(W_1)_{w_1}}(\mathcal{L})=\mathrm{vol}_{C(W_2)_{w_2}}(\mathcal{L})$.
Subtracting the second identity from the first we get 
\begin{equation}\label{curve subs}
\mathrm{vol}_{C(W_1)_{y_0}}(\mathcal{L})-\mathrm{vol}_{C(W_2)_{y}}(\mathcal{L})= \int_{S_{y_0}} l^{r}[D(W_1)_{\mathrm{vert}}]-\int_{S_{y}} l^{r}[D(W_2)_{\mathrm{vert}}].
\end{equation}
Our goal is to show that the right-hand side (\ref{curve subs}) equals $\int_{S_{y_0}} l^{r}[D_{\mathrm{vert}}].$ To do this we interpret each of the three intersection numbers as the local degrees of a covering of $W$.

Let $W(y_0)$ be a small enough neighborhood of $W$ around $y_0$. Let $y \in W(y_0) \cap Y$ be a generic point of $Y$ to be specified later, and let $W(y)$  be a small enough neighborhood of $y$ such that $W(y) \subset W(y_0)$.
Let  $C(W) \hookrightarrow \mathcal{X} \times \mathbb{P}^{u}$ be the embedding of $C$ induced by $\mathcal{L}$.

By the dimension formula $\dim C(W)= \dim W +r$. Let $H_r$ be a general plane in $\mathbb{P}^{u}$ of codimension $r$, with genericity conditions to be specified below. Set $\Gamma(C(W)):=C(W) \cap H_r$. Note that by replacing $\mathcal{X}$ with a smaller neighborhood of $S(W)_{y_0}$ if necessary, then $\Gamma(C(W))$ is nonempty if and only if $c^{-1}S(W)_{y_0} \geq r$. By Kleiman's transversality theorem (see Thm.\ 2 \rm{(ii)} and Rmk.\ 7 in \cite{K}),  $\Gamma(C(W))$ is reduced and of pure dimension equal to $\dim W$. Furthermore, the intersection of $c^{-1}S(W)$ with a general plane $H_{r}$ is of dimension at most $\dim W - 1$. Therefore, for generic $w \in W(y_0)$ the fiber $\Gamma(C(W))_{w}$ consists of the same number of points, each of them appearing with multiplicity one, and none of them lying in $c^{-1}S(W)$. Set $\mathrm{deg}_{W(y_0)}\Gamma(C(W)):= \mathrm{Card}(\Gamma(C(W))_{w})$. 


Define $\Gamma(C(W_1)):= C(W_1) \cap H_r$ and set $\mathrm{deg}_{W_1(y_0)}\Gamma(C(W_1)):= \mathrm{Card}(\Gamma(C(W_1))_{w_1})$ for generic $w_1 \in W_1$, where $W_{1}(y_0)$ is a small enough neighborhood of $y_0$ in $W_1$. Then for a generic $W_1$ we have $\Gamma(C(W))_{w_1}=\Gamma(C(W_1))_{w_1}$ for generic $w_1 \in W_1$. Because by assumption the only points in $X_{y_0}$ over which the fiber of $c$ is of dimension at least $r$ are contained in $S_{y_0}$. By replacing $W_1$ with a smaller neighborhood around $y_0$ if necessary we can assume
that $\Gamma(C(W_1)) \rightarrow W_1$ is finite. By conservation of number we obtain $$\mathrm{deg}_{W(y_0)}\Gamma(C(W))= \mathrm{deg}_{W_{1}(y_0)}\Gamma(C(W_1))=
\int_{S_{y_0}} l^{r}[D(W_1)_{\mathrm{vert}}].$$

In the same way define $\Gamma(C) = C \cap H_r$ and $\mathrm{deg}_{Y}\Gamma(C)$ where $H_r$ is generic enough so that the cover $\Gamma(C) \rightarrow Y$ is unramified for generic $y \in W(w_0) \cap Y$ and $\Gamma(C)_y \cap c_{X}^{-1}S_y = \emptyset$. 

Set $\Gamma(C(W_2)):= C(W_2) \cap H_r$ and $\mathrm{deg}_{W_2(y)}\Gamma(C(W_2)):= \mathrm{Card}(\Gamma(C(W_2))_{w})$ where $W_2(y)$ is small enough neighborhood of $y$ in $W_2$ and $w \in W_2$ is generic. We claim that 
$$\mathrm{deg}_{W_2(y)}\Gamma(C(W_2))= \int_{S_{y}} l^{r}[D(W_2)_{\mathrm{vert}}].$$

Note  $\int_{S_y}(H_r \cap c^{-1}S_{y})= \int_{S_{y}} l^{r}[D(W_2)_{\mathrm{vert}}]$ because the codimension of $H_r$ is right. Denote by $s_1, s_2, \ldots s_q$ to be the points in $S_y$ such that $\dim c^{-1}s_i \geq r$. Because $W_2$ is generic and because the cover the cover $\Gamma(C(W)) \rightarrow (W,w_0)$ is unramified for generic $w \in W(y)$, we get that the branches passing through $s_i$ form $\Gamma(C(W_2))$, which establishes the identity above.


The following relation (see \cite[Thm.\ 2.8]{GR}) shows that the presence of $\mathrm{deg}_{Y}\Gamma(C)$ is controlled by $\mathrm{deg}_{W(y_0)}\Gamma(C(W))$ and $\mathrm{deg}_{W_2(y)}\Gamma(C(W_2))$. We claim
\begin{equation}\label{key polar eq}
\mathrm{deg}_{W(y_0)}\Gamma(C(W)) - \mathrm{deg}_{W_2(y)}\Gamma(C(W_2))=\mathrm{deg}_{Y}\Gamma(C).
\end{equation}

Recall that $H_{r}$ was chosen so that it produces the covers $\Gamma(C(W)) \rightarrow (W,y_0)$ and $\Gamma(C) \rightarrow (Y,y_0)$. Observe that for generic $w \in W(y)$ close enough to $y$ the degree of the cover $\Gamma(C(W)) \rightarrow (W,w_0)$ is $\mathrm{deg}_{W(y_0)}\Gamma(C(W))$. Over a generic $y \in Y$ some of these branches merge at the $s_i$s and their number is $\mathrm{deg}_{W_2(y)}\Gamma(C(W_2))$ as shown above. The rest of the branches intersect $C_y$ at points away from $c^{-1}S_y$. Their number is $\mathrm{deg}_{Y}\Gamma(C)$ by our choice of $H_{r}$. 

Note that after possibly replacing $X$ with a smaller neighborhood of $S_{y_0}$ we have $\Gamma(C)$ is empty if and only if $\dim c_{X}^{-1}S_{y_0}<r$. 
So, if $\mathrm{vol}_{C(W_1)_{y_0}}(\mathcal{L})= \mathrm{vol}_{C(W_2)_{y}}(\mathcal{L})$ for all $y$ in a neighborhood of $y_0$ in $Y$, then by (\ref{curve subs}) $\Gamma(C)$ is empty and thus $\dim c_{X}^{-1}S_{y_0}<r$. Conversely, assume $\dim c_{X}^{-1}S_{y_0}<r$. Then $\mathrm{vol}_{C(W_1)_{y_0}}(\mathcal{L})= \mathrm{vol}_{C(W_2)_{y}}(\mathcal{L})$ for generic $y \in Y$. But $\mathrm{vol}_{C(W_1)_{y}}(\mathcal{L})$ is upper-semicontinuous because of (\ref{curve subs}) and $\mathrm{deg}_{W(y_0)}\Gamma(C(W)) \geq \mathrm{deg}_{W_2(y)}\Gamma(C(W_2))$. Thus, $\mathrm{vol}_{C(W_1)_{y_0}}(\mathcal{L})= \mathrm{vol}_{C(W_2)_{y}}(\mathcal{L})$ for all $y$ in a neighborhood of $y_0$ in $Y$. The proof of the theorem is now complete. 
\end{proof}

\begin{figure}[h]
\includegraphics[scale=0.2]{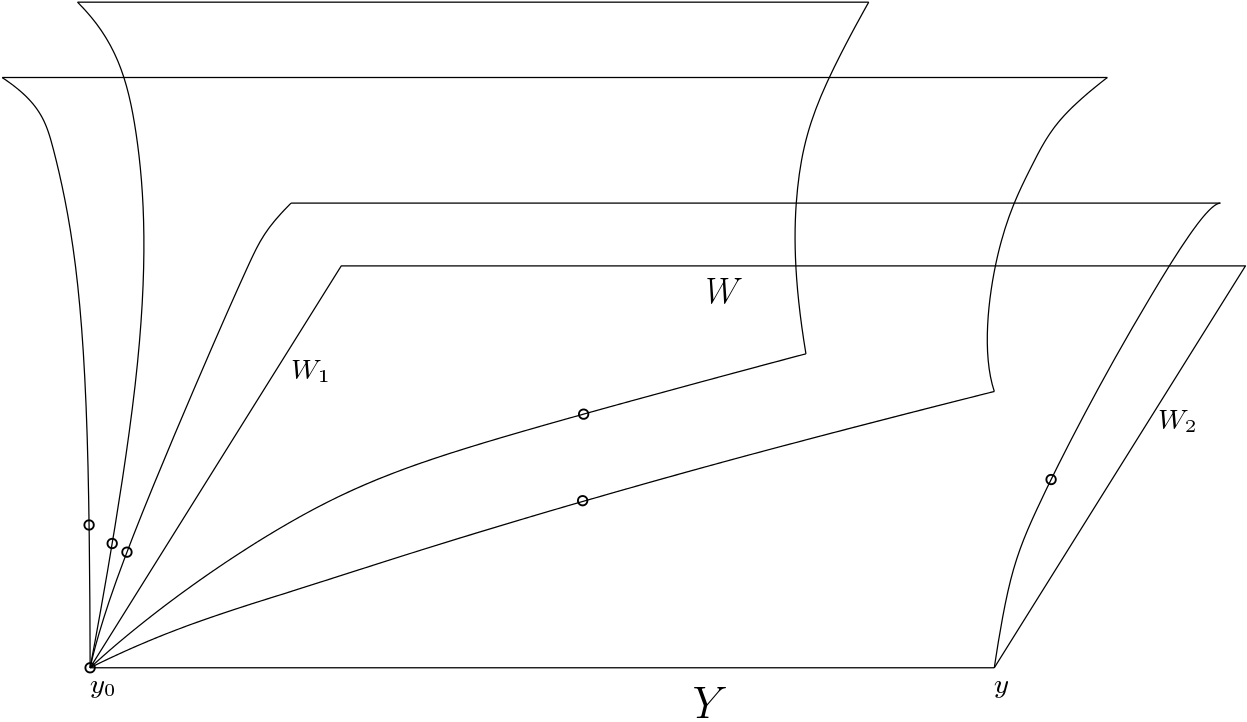}
\end{figure}

The figure summarizes the proof of (\ref{key polar eq}). For convenience the covers are pictured after projecting to $\mathcal{X}$. It's also assumed that there exists a section $\sigma \colon W \rightarrow \mathcal{X}$ where $W$ is identified with $\sigma(W)$ and $S=Y$. The following corollary follows immediately from (\ref{curve subs}) and (\ref{key polar eq}).

\begin{corollary}\label{bigMPT} In the setup of Thm.\ \ref{covering}, for generic $y \in Y$ in a sufficiently small neighborhood of $y_0$, we have
$$\mathrm{vol}_{C(W_1)_{y_0}}(\mathcal{L})-  \mathrm{vol}_{C(W_2)_{y}}(\mathcal{L}) = \mathrm{deg}_{Y}\Gamma(C).$$
\end{corollary}

In complex analytic singularity theory we apply Thm.\ \ref{covering} in the following setting: $(X,x_{0}) \rightarrow (Y,y_{0})$ is a family of isolated singularties, $W$ is a larger deformation base space containing $Y$, usually taken to be a component of the miniversal base space of $X_{y_0}$, $C(W)$ is the relative conormal space of $\mathcal{X} \rightarrow W$, and $S(W)$ is the singular locus of $\mathcal{X}$. Assume that $W$ is a good deformation space. Thus for $X_{y_0}$ and $X_{y}$ for $y \in Y$ close enough to $y_0$,  we can associate unique nonnegative real numbers depending on $W$ such that  $\int_{S_{y_0}}[D_{\mathrm{vert}}]$ vanishes if and only if these numbers are the same.


We finish this section by proving a general version Teissier's Principle of Specialization of Integral Dependence. Preserve the setup of Thm.\ \ref{covering}. When we write $\mathrm{vol}_{C_y}(\mathcal{L})$ we really mean the restricted local volume 
computed through the family $\mathcal{X} \times_{W} W'$ with special fiber $X_y$ where $W'$ is a generic smooth curve in $W$ passing through $y$. As usual denote by $\overline{\mathcal{A}(Y)}$ the integral closure of $\mathcal{A}(Y)$ in $\mathcal{B}(Y)$. 

\begin{theorem}[Principle of Specialization of Integral Dependence]\label{PSID} Assume $W$ is a good base space for $\mathrm{vol}_{C(W)_w}(\mathcal{L})$. Let $g \in \mathcal{B}(Y)$ is such that $\mathrm{Supp}_{\mathcal{O}_{\mathcal{X}}}(\overline{(g,\mathcal{A}(Y))}/\overline{\mathcal{A}(Y)}) \subset S$. Assume that there exists a Zariski open set $U$ in $Y$ such that for each $y \in U$ the image of $g$ in $\mathcal{B} \otimes_ {\mathcal{O}_{Y}}k(y)$ is integral over $\mathcal{A}(y)$. If $\mathrm{vol}_{C_y}(\mathcal{L})$ is constant, then $g$ is integral over $\mathcal{A}(Y)$. 
\end{theorem}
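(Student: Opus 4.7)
The plan is to apply the Local Volume Formula in parallel to the given algebra $\mathcal{A}(Y)$ and to the enlargement $\mathcal{A}'(Y) := \mathcal{A}(Y) + g\mathcal{A}(Y) + g^2\mathcal{A}(Y) + \cdots \subset \mathcal{B}(Y)$, combining the constancy of $\mathrm{vol}_{C_y}(\mathcal{L})$ on $Y$ with the generic fiberwise integrality of $g$ to force the extra vertical intersection number on the $\mathcal{A}'$-side to vanish. Since $\mathcal{A}(Y)$ is graded its integral closure is graded (Prp.~2.3.5 in \cite{Huneke}), so one may assume $g$ is homogeneous, and after passing to a sufficiently high Veronese that $g$ has degree one. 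Set $C' := \mathrm{Proj}(\mathcal{A}'(Y))$, $\mathcal{L}' := \mathcal{O}_{C'}(1)$, and $D' := c'^{-1}S$ with $S := S(W) \cap X$; invoke Thm.~\ref{covering} (using that $W$ is a good base space) to legitimate Thm.~\ref{LVF} on $Y$ for both algebras.

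For $y \in U$ the integrality $g(y) \in \overline{\mathcal{A}(y)}$ in $\mathcal{B}(y)$ yields $\overline{\mathcal{A}'(y)} = \overline{\mathcal{A}(y)}$, hence $\mathrm{vol}_{C'_y}(\mathcal{L}') = \mathrm{vol}_{C_y}(\mathcal{L})$ by the invariance of the restricted local volume under passage to integral closure (the Nagata finiteness of $\overline{\mathcal{A}}/\mathcal{A}$ that underlies the proof of Thm.~\ref{vanishing} forces the discrepancy's local cohomology to contribute only subleading terms). Combined with constancy, the two Local Volume Formulas simplify to
$$\int_{S_{y_0}} l^r[D_{\mathrm{vert}}] = 0 \quad\text{and}\quad \mathrm{vol}_{C'_{y_0}}(\mathcal{L}') - \mathrm{vol}_{C_{y_0}}(\mathcal{L}) = \int_{S_{y_0}} l'^r[D'_{\mathrm{vert}}] \ge 0,$$
the nonnegativity coming from relative ampleness of $\mathcal{L}'$ on $D'_{\mathrm{vert}}$, which is supported over $S_{y_0}$. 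Next I specialize the support hypothesis to the fiber at $y_0$: because an integral relation for $g$ over $\mathcal{A}(Y)_x$ at $x \notin S$ reduces modulo the maximal ideal of $\mathcal{O}_{Y,y_0}$ to an integral relation for $g|_{y_0}$ over $\mathcal{A}(y_0)_x$ at $x \in X_{y_0}\setminus S_{y_0}$, one obtains $\mathrm{Supp}(\overline{\mathcal{A}'(y_0)}/\overline{\mathcal{A}(y_0)}) \subset S_{y_0}$. The quotient is then $S_{y_0}$-torsion, its $H^1_{S_{y_0}}$ vanishes, and the long exact sequence in local cohomology exhibits $H^1_{S_{y_0}}(\overline{\mathcal{A}'(y_0)_n})$ as a quotient of $H^1_{S_{y_0}}(\overline{\mathcal{A}(y_0)_n})$. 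Taking $\limsup$ yields $\mathrm{vol}_{C'_{y_0}}(\mathcal{L}') \le \mathrm{vol}_{C_{y_0}}(\mathcal{L})$, forcing $\int_{S_{y_0}} l'^r[D'_{\mathrm{vert}}] = 0$ and therefore $\mathrm{vol}_{C'_{y_0}}(\mathcal{L}') = \mathrm{vol}_{C_{y_0}}(\mathcal{L})$.

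By the Rees criterion for the $\varepsilon$-multiplicity (Validashti, as invoked in the remark following Thm.~\ref{vanishing}), this equality forces $\overline{\mathcal{A}'(y_0)} = \overline{\mathcal{A}(y_0)}$ inside $\mathcal{B}(y_0)$, i.e., $g|_{y_0}$ is integral over $\mathcal{A}(y_0)$. Combining fiberwise integrality at every closed $y \in Y$ with the support hypothesis, which confines the global discrepancy $\overline{(g,\mathcal{A}(Y))}/\overline{\mathcal{A}(Y)}$ to the subscheme $S$ that is finite over $Y$, and with the DVR structure of $\mathcal{O}_{Y,y_0}$, a standard Nakayama/valuative lifting argument produces a global integral equation for $g$ over $\mathcal{A}(Y)$, proving $g \in \overline{\mathcal{A}(Y)}$. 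The main obstacle is the monotonicity $\mathrm{vol}_{C'_{y_0}}(\mathcal{L}') \le \mathrm{vol}_{C_{y_0}}(\mathcal{L})$: it rests on extracting the fiberwise support statement $\mathrm{Supp}(\overline{\mathcal{A}'(y_0)}/\overline{\mathcal{A}(y_0)}) \subset S_{y_0}$ from the global support hypothesis, after which the local cohomology comparison is routine; the one-dimensionality of $Y$ is essential for the final lifting step, while the passage from $\dim W > 1$ to $\dim Y = 1$ is handled by Thm.~\ref{covering}.
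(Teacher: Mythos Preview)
Your route is quite different from the paper's, which is much shorter and avoids the second LVF, the Rees criterion, and the lifting step entirely. The paper argues as follows. Constancy of $\mathrm{vol}_{C_y}(\mathcal{L})$ together with Thm.~\ref{covering} forces $\int_{S_{y_0}} l^r[D_{\mathrm{vert}}]=0$, so $\dim c^{-1}S_y < r$ for every $y\in Y$. Next, generic fiberwise integrality lifts via \cite[Lem.~1.2]{GK} to integrality of $g$ over $\mathcal{A}(Y)$ on a dense open of $Y$; hence $Z:=\mathrm{Supp}\bigl(\overline{(g,\mathcal{A}(Y))}/\overline{\mathcal{A}(Y)}\bigr)$ is a \emph{proper} closed subset of $S$, so $\dim Z < \dim S = \dim Y$. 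A dimension count then gives
\[
\dim c^{-1}Z \;\le\; \dim c^{-1}S_{y_0} + \dim Z \;\le\; (r-1)+(\dim Y-1) \;=\; \dim C - 2,
\]
and Kleiman--Thorup (\cite[Cor.~10.7]{KT-Al}, recorded as Thm.~\ref{KT}) forces $Z=\emptyset$, which is precisely the desired integrality of $g$.

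Your argument has a genuine gap at the monotonicity step. The LVF identities you write involve the \emph{restricted} local volumes $\mathrm{vol}_{C_{y_0}}(\mathcal{L})$ and $\mathrm{vol}_{C'_{y_0}}(\mathcal{L}')$, computed through the family over $Y$; but the $H^1_{S_{y_0}}$ comparison you perform is with the \emph{fiber} quantities $\varepsilon(\mathcal{A}(y_0))$ and $\varepsilon(\mathcal{A}'(y_0))$. By Prp.~\ref{generic limit} these coincide only for generic $y\in U'$, not at $y_0$; in general one has only $\mathrm{vol}_{C_{y_0}}(\mathcal{L})\le\varepsilon(\mathcal{A}(y_0))$ (from the injection $H^0_S(\cdot)\otimes k(y_0)\hookrightarrow H^0_{S_{y_0}}(\cdot\otimes k(y_0))$), which points the wrong way. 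So even granting $\varepsilon(\mathcal{A}'(y_0))\le\varepsilon(\mathcal{A}(y_0))$, you cannot conclude $\mathrm{vol}_{C'_{y_0}}(\mathcal{L}')\le\mathrm{vol}_{C_{y_0}}(\mathcal{L})$. The final ``Nakayama/valuative lifting'' is also underspecified: passing from fiberwise integrality at every $y$ to a global integral equation is nontrivial and is essentially what \cite[Lem.~1.2]{GK} provides --- but the paper uses that lemma at the outset (on the generic open) to make $Z$ small, after which the Kleiman--Thorup codimension criterion finishes the job without any of your detour.
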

\begin{proof}
By Thm.\ \ref{covering}, constancy of $\mathrm{vol}_{C(W)_y}(\mathcal{L})$ implies that $\dim c^{-1}S_y <r$ for each $y$. Set $Z:=\mathrm{Supp}_{\mathcal{O}_{\mathcal{X}}}(\overline{(g,\mathcal{A}(Y))}/\overline{\mathcal{A}(Y)})$. By \cite[Lem.\ 1.2]{GK} there exists a smaller Zariski subset $U'$ of $Y$ such that $(g,\mathcal{A})$ is integral over $\mathcal{A}$. Thus $Z$ is proper closed subset of $S$. In particular, $\dim Z < \dim S=\dim Y$. 
Let $y_0 \in Y$ be such that $\dim c^{-1}S_{y_0}$ is maximal. Because $c^{-1}Z \subset c^{-1} S$, by upper semi-continuity of fiber dimension we have $$\dim c^{-1}Z \leq \dim c^{-1}S_{y_0}+\dim Z \leq  r-1+\dim Y -1 = \dim C -2.$$
Then by \cite[Cor.\ 10.7]{KT-Al} (cf.\ \cite[Thm.\ 4.1]{SUV} or \cite[Thm.\ 1.1 \rm{(iii)}]{Ran19a})
$Z$ is empty, and thus $g$ is integral over $\mathcal{A}(Y)$.
\end{proof}

\section{Whitney--Thom conditions, Jacobian modules, conormal spaces and integral dependence}\label{Whitney, Jacobian}
First we review the analytic and algebro-geometric formulations of the Whitney conditions and their connection to integral closure of modules. 
For a thorough overview of the history  of differential equisingularity theory we refer the reader to the masterful treatments of Kleiman \cite{KF}, Gaffney and Massey \cite{GaM} and 
Teissier \cite{Teissier3}. For a more recent account see \cite{Flores}. 

Let $X \subset \mathbb{C}^{n+k}$ be a equidimensional complex analytic space, and let $Y$ be a smooth subspace of $X$ of dimension $k$, so that $X-Y$ is smooth. Choose an embedding of $(X,0)$ in $\mathbb{C}^{n} \times \mathbb{C}^{k}$, so that $(Y,0)$ is represented by $0 \times V$, where $V$ is an open neighborhood of $0$ in $\mathbb{C}^k$. Let $\mathrm{pr}: \mathbb{C}^{n} \times \mathbb{C}^{k} \rightarrow \mathbb{C}^{k}$ be the projection. Set $h:=\mathrm{pr}_{|X}$. View $X$ as the total space of the family $h: (X,0) \rightarrow (Y,0)$. 
For each closed point $y \in Y$ set $X_y: = X \cap \mathrm{pr}^{-1}(y)$. Whitney \cite{Whitney} introduced the following conditions on the regularity of the pair $(X-Y,Y)$ at $0$. 

{\bf Whitney Condition A}. Let $(x_{i})$ be a sequence of points from $X-Y$ that converges to $0$, and suppose that the sequence $\{T_{x_i}X\}$ of tangent hyperplanes has a limit $T$ in the corresponding Grassmannian. Then $T_{0}Y \subset T$.

{{\bf Whitney Condition B}. Let $(x_{i})$ be a sequence of points from $X-Y$ and let $(y_{i})$ be a sequence of points from $Y$ both converging to $0$. Suppose that the sequence of secants $({\overline{x_{i}y_{i}}})$ has limit $l$ and the sequence of tangents $\{T_{x_{i}}X\}$ has limit $T$. Then $l \subseteq T$.

Whitney conjectured that these conditions would ensure the existence of a homeomorphism $h$ from $X_0 \times Y$ onto $X$. In other words the family $X_y$ is {\it topologically equisingular}. The conjecture is nowadays known as the Thom--Mather \cite{Thom} first isotopy theorem. Thom and Mather proved it by considering constant tangent vector field to $Y$ and lifting it carefully to $X$ such that the lift is integrable. The integral provides a continuous flow on $X$ which is $\mathbb{C}^{\infty}$ away from $Y$.

Hironaka \cite{Hironaka} introduced an intrinsic modified verson of the Whitney conditions. We say that $X$ satisfies the strict Whitney condition A if the distance from $Y$ to the tangent space $T_{x}X$ approaches $0$ as $x$ approaches $0$ along any analytic path $\phi: (\mathbb{C},0) \rightarrow (X,0)$ such that $\phi(u) \subset X-Y$ for any $u \neq 0$, i.e.
\begin{equation}\label{analytic}
\mathrm{dist}(Y,T_{x}X) \leq c \ \mathrm{dist}(x,Y)^{e},
\end{equation}
holds where the exponent $e$ and the constant $c$ depend on the path $\phi$. Hironaka proved that if $(X-Y,Y,0)$ satisfies both Whitney condition A and B, then it satisfies the strict Whitney condition A with exponent $e$ independent of the path $\phi$. The strict Whitney condition with exponent $e=1$ and $c$ independent of the path $\phi$ is called Verdier's W condition. Verdier \cite{Verdier} showed that his condition implies Whitney A and B. Teissier (Thm.\ 1.2 in Chap.\ V of \cite{Teissier}) showed that in the complex-analytic case $W$ is in fact equivalent to Whitney $A$ and $B$.

Let $f \colon (\mathbb{C}^{n+k},0) \rightarrow \mathbb{C}$ be an analytic function. Identify $f$ with its restriction to $(X,0)$. Assume $f$ is of constant rank off $Y$. In (\ref{analytic}) replace $T_{x}X$ by $T_{x}f^{-1}fx$.  The ``relative'' form of the Whitney condition A, called Thom's $A_f$ condition, holds at $0$ for the pair $(X-Y,Y)$ if (\ref{analytic}) is satisfied. The ``relative'' form of the Whitney condition B, called the $W_f$ condition, holds at $0$ for the pair $(X-Y,Y)$ if (\ref{analytic}) is satisfied
with $e=1$ and $c$ independent of the path $\phi$.

Let $U$ be an open set of  $\mathbb{C}^{n+k}$ containing a representative of $(X,0)$. The  differential $df$ of
$f$ defines an embedding of $X$ in  $\mathbb{C}^{n+k}\times {\mathbb{C}^{n+k}}^*$ by the graph map. Let $x_1, \ldots, x_{n+k}$ be coordinates on $U$ and $w_1, \ldots, w_{n+k}$ be the cotangent coordinates. Consider the blowup of $T_{X}^{*}U$ along the image of the graph map. It is the blowup of $T_{X}^{*}U$ by the ideal $(w_1- \frac{\partial f}{\partial x_1}, \ldots, w_{n+k} - \frac{\partial f}{\partial x_{n+k}})$ in $T_{X}^{*}U$. We denote this blowup by $\mathrm{Bl}_{\mathrm{im} \ df}T_{X}^{*}U$. Thus, the blowup is contained in $X \times {\mathbb{C}^{n+k}}^* \times \mathbb{P}^{n+k-1}$. Denote the exceptional divisor of this blowup by $E_{f}$. Denote the projection on $X$ of this exceptional divisor to $X$ by $\Sigma(f)$ and call it the {\it critical locus} of  $f$.

Define the the absolute conormal space $C(X,f)$ as the closure in $X \times \check{\mathbb{P}}^{n+k-1}$ of the set of pairs $(x,H)$ such that $x$ is a point in $X-\Sigma(f)$ and $H$ is a hyperplane tangent at $x$ to the level hypersurface $f^{-1}fx$. Teissier inspired by some work of Hironaka, considered the following normal-conormal diagram

\begin{displaymath}
\begin{CD}
\mathrm{Bl}_{c^{-1}(Y)}C(X,f) @>a'>> C(X,f)\\
@VVc'V  @VVcV\\
\mathrm{Bl}_{Y}X  @>a>> X
\end{CD}
\end{displaymath}

Set $\xi := c \circ a'$. Teissier \cite{Teissier} in the case of Whitney B, and Henry, Merle and Sabbah \cite{HMS} in the case of $W_f$  obtained the following equivalence for the Whitney conditions.
\begin{theorem}[Teissier]\label{Teissier} Preserving the setup from above, the following conditions are equivalent:
\begin{enumerate}
\item[\rm{(i)}] The pair $(X-Y,Y)$ satisfies $W_f$ at $0$.
\item[\rm{(ii)}] Let $\xi_{Y} : D_{Y} \rightarrow Y$ be the restriction of $\xi$ to the exceptional divisor $D_Y$ of
$\mathrm{Bl}_{c^{-1}(Y)}C(X,f)$. Then $\xi_{Y}$ is equidimensional and $\dim \xi_{Y}^{-1}(0) = n-2$. \end{enumerate}
\end{theorem}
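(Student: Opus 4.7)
The plan is to prove Theorem~\ref{Teissier} by translating the $W_f$ condition, which is formulated in terms of limits of sequences, into a scheme-theoretic assertion about fibers of $\xi_Y$ on the exceptional divisor $D_Y$. The unifying idea is that points of $D_Y$ lying over $0\in Y$ parametrize exactly the triples $(0,[\ell],H)$ appearing in the definition of $W_f$: here $[\ell]$ is a limit of normalized secants from points of $X-Y$ to points of $Y$, and $H$ is a simultaneous limit of tangent hyperplanes to the level hypersurfaces $f^{-1}f(x_i)$. Under this identification, $W_f$ at $0$ amounts to the requirement that every triple in $\xi_Y^{-1}(0)$ satisfies $\ell\subset H$, and condition~(ii) amounts to the absence of any component of $\xi_Y^{-1}(0)$ lying strictly outside this incidence locus.

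The first step is to establish this geometric description of $D_Y$ over $0$. By the universal property of blow-ups applied to $c\colon C(X,f)\to X$ and to the subscheme $c^{-1}(Y)$, the natural morphism
\[
\mathrm{Bl}_{c^{-1}(Y)}C(X,f)\longrightarrow \mathrm{Bl}_Y X\times_X C(X,f)
\]
is birational and an isomorphism off the exceptional loci, so a point of $D_Y$ lying over $0$ is encoded as a pair $([\ell],H)$ with $[\ell]$ a limit secant direction in the exceptional divisor of $\mathrm{Bl}_Y X$ at $0$ and $H\in c^{-1}(0)$ a limit tangent hyperplane. The complex-analytic curve-selection lemma furnishes, for every triple $(0,[\ell],H)$ arising from arcs $\gamma(t)\in X-Y$ and $\gamma'(t)\in Y$ with $\gamma(t),\gamma'(t)\to 0$, tangent hyperplanes $T_{\gamma(t)}f^{-1}f(\gamma(t))\to H$, and secants $[\overline{\gamma(t)\gamma'(t)}]\to[\ell]$, a corresponding point in $D_Y$ over $0$; conversely every point of $D_Y$ over $0$ arises in this way.

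The second step is the dimension count for the incidence locus
\[
\mathcal{G}_0:=\{(0,[\ell],H)\in\xi_Y^{-1}(0):\ell\subset H\},
\]
which one shows is a closed subscheme of dimension exactly $n-2$ by a standard incidence-variety calculation in which $\ell\subset H$ cuts out the expected codimension once one imposes that $H$ be a limit tangent hyperplane to the level set. Since $D_Y$ is pure of dimension $n+k-2$, $\xi_Y$ is proper onto $Y$ of dimension $k$, and fiber dimension is upper semi-continuous, one obtains $\dim\xi_Y^{-1}(0)\geq n-2$ in general, with equality precisely when no vertical component of $D_Y$ over $0$ attains dimension $\geq n-1$.

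Combining the two steps, a vertical oversize component of $D_Y$ over $0$ corresponds, via the first step, to a positive-dimensional family of triples $(0,[\ell],H)\in\xi_Y^{-1}(0)$ escaping $\mathcal{G}_0$, i.e.\ with $\ell\not\subset H$; by the second step this occurs exactly when $\dim\xi_Y^{-1}(0)>n-2$, i.e.\ when $\xi_Y$ fails to be equidimensional near $0$. Thus (i) and (ii) are equivalent. The main technical obstacle lies in the first step: making the correspondence between points of $D_Y$ over $0$ and limiting triples precise enough to transfer dimensions and irreducible components scheme-theoretically, not merely as set-theoretic inclusions. This scheme-theoretic control is what is carried out in detail in~\cite{Teissier} for Whitney $B$ and in~\cite{HMS} for the full $W_f$ condition.
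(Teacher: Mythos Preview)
Your proposal is correct and follows essentially the same approach as the paper. The paper does not prove this theorem but attributes it to Teissier and Henry--Merle--Sabbah, offering only a brief heuristic (in the Whitney~B case) immediately after the statement: set-theoretically $D_Y$ has the same irreducible components as $\mathbb{P}(C_YX)\times_Y c^{-1}(Y)$, so points over $y$ are triples $(y,\ell,H)$; $W_f$ amounts to $(D_Y)_{\mathrm{red}}$ lying in the incidence correspondence $\{\ell\subset H\}$, and then a dimension count gives (ii). Your outline expands exactly this sketch, with the same identification of $D_Y$ with secant--hyperplane pairs, the same incidence condition, and the same dimension argument, and you correctly defer the scheme-theoretic precision to \cite{Teissier} and \cite{HMS}.
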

Here is a way to see the relation between $\rm{(i)}$ and $\rm{(ii)}$ in the Whitney B case. Observe that set-theoretically $D_Y$ and $\mathbb{P}(C_{Y}X)\times_Y c^{-1}(Y)$ have the same irreducible components. Each point of
$\mathbb{P}(C_{Y}X)\times_Y c^{-1}(Y)$ has the form $(y,l,H)$ where $y$ is point of $Y$,
$l$ belongs to $(\mathbb{P}(C_{Y}X))_y$ and $H \in c^{-1}(y)$. Then the condition is
equivalent to $l \in H$. Thus $W_f$ holds at $0$ if $(D_Y)_{\mathrm{red}}$ is contained
in the incidence correspondence of $\mathbb{P}(m_{Y}/(m_Y)^{2})\times_{Y} \mathbb{P}((m_{Y}/(m_Y)^{2})^{*})$. Then a simple dimension count yields $\rm{(ii)}$. 

Next we provide a concrete
algebraic description of the conormal variety $C(X)$ using the Jacobian module of $X$. Suppose $X$ is reduced. Let $X$ be defined by the vanishing of some analytic
functions $f_1, \ldots, f_p$ on a Euclidean neighborhood of $0$ in $\mathbb{C}^{n+k}$. Consider the following exact sequence
\begin{equation}\label{normal sequence}
\begin{CD}
I/I^{2} @> \delta >> \Omega_{\mathbb{C}^{n+k}}^{1}|X \longrightarrow \Omega_{X}^{1} \longrightarrow 0 \end{CD}
\end{equation}
where $I$ is the ideal of $X$ in $\mathcal{O}_{\mathbb{C}^{n+k},0}$ and the map $\delta$ sends a function $f$ vanishing on $X$ to its differential $df$. Dualizing
we obtain the following  nested sequence of torsion-free sheaves:
\begin{displaymath}
\mathrm{Image}(\delta^{*}) \subset (\mathrm{Image} \ \delta)^{*} \subset (I/I^{2})^{*}.
\end{displaymath}
Observe that locally the sheaf $\mathrm{Image}(\delta^{*})$ can be viewed as the column space of the {\it (absolute) Jacobian module} of $X$ which we denote by $J(X)$. It's a module contained in the free module $\mathcal{O}_{X,0}^p$. Define the {\it Rees algebra} $\mathcal{R}(J(X))$ to be the subalgebra of $\mathrm{Sym}(\mathcal{O}_{X,0}^p)$ generated by the generators of $J(X)$ (for a general definition see \cite{Eisenbud}). Define the {\it conormal space} $C(X)$ of $X$ to be the closure in $X \times \check{\mathbb{P}}^{n+k-1}$ of the set of pairs $(x,H)$ where $x$ is a smooth point of $X$ and $H$ is a tangent hyperplane at $x$. Algebraically, 
\begin{displaymath}
 C(X) = \mathrm{Projan}(\mathcal{R}(J(X))).
\end{displaymath}
To see that equality holds observe that both sides are equal over $X-Y$ to the set of pairs $(x,H)$ where $H$ is a tangent hyperplane
to the simple point $x$. The left side is the closure of this set, and so is the right side simply because the Rees algebra is by construction
a subalgbebra of the symmetric algebra $\mathrm{Sym}(\mathcal{O}_{X,0}^{p})$. 
Similarly, one sees that $C(X,f)=\mathrm{Projan}(\mathcal{R}(J(X,f)))$ where $J(X,f)$ is the augmented Jacobian module with the partials of $f$. 

Consider the family setup $h: (X,0) \rightarrow (Y,0)$ from the beginning of the section. Assume each $f_i$ is analytic function on two sets of variables: the {\it fiber} variables $(x_1, \ldots, x_n)$ and the {\it parameter} variables $y_1, \ldots, y_k$. Form the augmented Jacobian matrix of $X$ and $f$ with respect to the fiber variables:
$$
\begin{pmatrix}
\partial f_{1} / \partial x_{1} & \cdots & \partial f_{1} / \partial x_{n} \\ \vdots & \ddots & \vdots  \\
\partial f_{p} / \partial x_{1} & \cdots & \partial f_{p} / \partial x_{n}\\
\partial f / \partial x_{1} & \cdots & \partial f / \partial x_{n}
\end{pmatrix}.
$$
The column space of this matrix generates a module over the local ring $\mathcal{O}_{X,0}$, which we denote by
\begin{displaymath}
J_{\mathrm{rel}}(X,f) := JM_{x}(f_1, \ldots, f_p;f) \subset \mathcal{O}_{X,0}^{p+1}
\end{displaymath}
and call the {\it relative Jacobian module} of $X$ and $f$. Given $y \in Y$ form the image of the module
 $JM_{x}(f_1, \ldots, f_p;f)$ in the free module $\mathcal{O}_{X_y}^{p+1}$ to get the Jacobian module of $(X_y,f_y)$:
\begin{displaymath}
J(X_y,f_y) := JM_{x}(f_1, \ldots, f_p;f)|X_y \subset {O}_{X_y}^{p+1}.
\end{displaymath}

Thus $C(X_y,f_y)=\mathrm{Projan}(\mathcal{R}(J(X_y,f_y)))$. Define the {\it relative conormal space} $C_{\mathrm{rel}}(X,f):=\mathrm{Projan}(\mathcal{R}(J_{\mathrm{rel}}(X,f)))$. 

Dropping the row with partials of $f$ in $J_{\mathrm{rel}}(X,f)$ we obtain the relative Jacobian module $J_{\mathrm{rel}}(X)$. Then the {\it relative conormal space} $C_{\mathrm{rel}}(X)$ of $X$ is defined as the closure of the set of pairs $(x,H)$ where $x$ is a smooth of $X$ and $H$ is tangent hyperplane containing a parallel to $Y$. Algebraically, $C_{\mathrm{rel}}(X)=\mathrm{Projan}(\mathcal{R}(J_{\mathrm{rel}}(X)))$.

The next ingredient we need in the proof of Thm.\ \ref{W_f} is the relation between $W_f$ and integral dependence of modules. Keep the setup from above. Let $\mathcal{F}$ be a free module, $\mathcal{M}$ be a submodule of $\mathcal{F}$, and
let $g$ be an element from $\mathcal{F}$. Assume $X$ is reduced. Form the Rees algebra $\mathcal{R}(\mathcal{M})$ of $\mathcal{M}$. It is a subalgebra of the symmetric algebra $\mathrm{Sym}(\mathcal{F})$ (see \cite[Prp.\ 1.8]{Eisenbud}). View $g$ as a degree one element in
$\mathrm{Sym}(\mathcal{F})$. We say that $g$ is {\it integrally dependent} on $\mathcal{M}$
if it satisfies an equation of integral dependence,
$$g^{u}+r_1g^{u-1}+ \cdots + r_u = 0$$
where $u \geq 1$ and each $r_i$ belongs to the $i$th homogeneous component of $\mathcal{R}(\mathcal{M})$.
The following well-known criteria tell us that we can check integral dependence on curves or by considering speeds of vanishing.

{\bf Valuative criterion.} An element $g \in \mathcal{F}$ is integrally dependent on $\mathcal{M}$ if for any map germ $\phi: (\mathbb{C},0) \rightarrow (X,0)$
the following holds
$$\phi^{*}g \in \phi^{*}\mathcal{M} \subset \phi^{*}\mathcal{F}.$$

{\bf Analytic criterion.} A necessary condition for $g \in \mathcal{F}$ to be integrally dependent on $\mathcal{M}$ is that for any finite set of generators $m_i$ of $\mathcal{M}$, there exists an Euclidean neighborhood $U$ of $0$ in $X$ and a constant $c$ such that $$|g(x)| \leq c \max |m_i(x)|$$ for each $x \in U$. Conversely, it suffices that this inequality holds for a finite generating set of $\mathcal{M}$.

Now for each $j=1, \ldots, k$ let $g_j$ be the column vector
\begin{equation}\label{section}
g_j :=
\begin{pmatrix}
\partial f_{1} / \partial y_{j}\\
\vdots \\
\partial f_{p} / \partial y_{j}\\
\partial f / \partial y_j
\end{pmatrix}.
\end{equation}
Denote the ideal of $Y$ in $\mathcal{O}_{X,0}$ by $m_{Y}$. The following result (\cite[Thm.\ 2.5]{Gaf1} and \cite[Prp.\ 2.3]{GK}) characterizes
the Whitney conditions by the ingtegral dependence of $g$ on the module $m_{Y}J_{\mathrm{rel}}(X,f)$. The main idea of its proof is to connect (\ref{analytic}) with integral dependence of modules through the analytic criterion mentioned above. This was done 
by Teissier in the case when $X$ is a of codimension one in $\mathbb{C}^{n+k}$ using integral closure of ideals,  and generalized by Gaffney and Kleiman \cite[Prp.\ 6.1]{GK} to arbitrary codimension using integral closure of modules.

\begin{theorem}[Gaffney--Kleiman]\label{Wht int dep}
 The pair $(X-Y,Y)$ satisfies $W_f$ at $0$ if and only if $g_j$ is integrally dependent on $m_Y J_{\mathrm{rel}}(X,f) $ for each $j=1, \ldots, p$.
\end{theorem}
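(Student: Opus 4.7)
The plan is to pass between the metric formulation of $W_f$ (Verdier's strict Whitney condition with linear exponent) and the analytic criterion for integral dependence, using the fact that the vectors $g_j$ measure exactly how the parameter directions $\partial/\partial y_j$ fail to lie in $T_x f^{-1}fx$.

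First I would invoke the analytic criterion: $g_j$ is integral over $m_Y J_{\mathrm{rel}}(X,f)$ if and only if there is a neighborhood $U$ of $0$ and a constant $c>0$ such that
$$|g_j(x)| \leq c \max_{l,\, i} |y_l(x)|\, |m_i(x)| \quad \text{for all } x \in U,$$
where the $m_i$ range over the columns of the fiber Jacobian matrix (the generators of $J_{\mathrm{rel}}(X,f)$). Since $Y = \{y_1=\cdots=y_k=0\}$ is smooth and cut out by the $y_l$, one has $\max_l|y_l(x)| \asymp \mathrm{dist}(x,Y)$, so the analytic criterion reads $|g_j(x)|/\max_i|m_i(x)| \lesssim \mathrm{dist}(x,Y)$ up to bounded factors.

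Next I would establish the key geometric identification. Let $A(x)$ denote the full $(p+1)\times(n+k)$ Jacobian matrix of $(f_1,\ldots,f_p,f)$ at a smooth point $x$ of $X$ where $f$ has constant rank. Then $T_x f^{-1}fx$ is the kernel of $A(x)$, its orthogonal complement is spanned by the rows of $A(x)$, and $A(x)\cdot(\partial/\partial y_j) = g_j(x)$ by definition of $g_j$. A direct computation with the Gram matrix yields
$$\mathrm{dist}\bigl(\partial/\partial y_j,\, T_x f^{-1}fx\bigr)^2 \;=\; g_j(x)^{T}\bigl(A(x)A(x)^T\bigr)^{-1} g_j(x),$$
and the eigenvalues of the Gram matrix $A(x)A(x)^T$ are comparable, away from the singular fibers, to the squared norms of its columns, so that
$$\mathrm{dist}\bigl(\partial/\partial y_j,\, T_x f^{-1}fx\bigr) \;\asymp\; \frac{|g_j(x)|}{\max_i |m_i(x)|}.$$
Since $\mathrm{dist}(Y, T_x f^{-1}fx) = \max_j \mathrm{dist}(\partial/\partial y_j, T_x f^{-1}fx)$, the $W_f$ inequality $\mathrm{dist}(Y, T_x f^{-1}fx) \leq c'\mathrm{dist}(x,Y)$ is equivalent, modulo bounded factors, to the conjunction over $j$ of the analytic inequalities displayed above. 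Combining both directions yields the claimed equivalence.

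The main obstacle is controlling the Gram matrix $A(x)A(x)^T$ near the singular locus $\Sigma(f)\cup\mathrm{Sing}(X)$, where the fiber Jacobian drops rank and the comparability $\asymp$ in the third step can degenerate. I would handle this by restricting first to the stratum $X - \Sigma(f) - Y$ (where $f$ has constant rank and the comparison is quantitative and uniform on compact subsets) and extending using the valuative criterion: both integral dependence and the $W_f$ inequality are preserved by testing on analytic arcs $\phi\colon(\mathbb{C},0)\to (X,0)$ which generically avoid the singular locus, so that the pullback comparison of orders of vanishing is the same on either side. This is precisely the mechanism by which Gaffney and Kleiman in \cite[Prp.\ 6.1]{GK} upgrade Teissier's original codimension-one argument via integral closure of ideals to the module-theoretic statement in arbitrary codimension.
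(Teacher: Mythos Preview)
The paper does not prove this theorem; it cites \cite[Thm.\ 2.5]{Gaf1} and \cite[Prp.\ 2.3, Prp.\ 6.1]{GK} and offers only the one-line sketch that the proof connects Hironaka's metric inequality (\ref{analytic}) to the analytic criterion for integral dependence. Your proposal follows exactly this strategy and is the right approach.

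One technical point deserves care. Your claim that ``the eigenvalues of the Gram matrix $A(x)A(x)^T$ are comparable, away from the singular fibers, to the squared norms of its columns'' is not correct as stated: a matrix can have unit columns yet arbitrarily small least singular value, so only the upper bound $\mathrm{dist}(\partial/\partial y_j, T_x f^{-1}fx) \lesssim |g_j(x)|/\max_i|m_i(x)|$ follows directly, and any comparability constant for the reverse inequality blows up as $x\to \Sigma(f)$. You correctly identify this as the obstacle. The standard resolution is not to force a two-sided pointwise estimate, but to argue each implication separately: integral dependence $\Rightarrow W_f$ via the upper bound (which is uniform), and $W_f \Rightarrow$ integral dependence via the valuative criterion on arcs, where one compares orders of vanishing of $g_j\circ\phi$ against those of the $m_i\circ\phi$ and of $y_l\circ\phi$ using the hyperplane formulation of distance (a tangent hyperplane has conormal $\eta = \sum a_i\nabla f_i + a_{p+1}\nabla f$, and $\mathrm{dist}(\partial/\partial y_j,H)=|\langle\eta,\partial/\partial y_j\rangle|/|\eta|$). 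With that adjustment your outline matches the Gaffney--Kleiman argument.
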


To prove Thm.\ \ref{Wht int dep} one shows that (\ref{analytic}) is equivalent to statement that $|g_j(x)|$ are
bounded by $|m_{i}(x)|$ in a neighborhood of $0$ in $X$ where $m_i(x)$ are generators of $m_{Y}J_{\mathrm{rel}}(X,f)$. By the analytic criterion this in turn is equivalent to the integral dependence of $g_j$ on $m_{Y}J_{\mathrm{rel}}(X,f)$.

Hironaka \cite{Hironaka1} observed that the Whitney conditions hold generically, i.e.\ there exists
a Zariski dense open subset $U$ of $Y$ such that the Whitney conditions hold at each $y \in U$. The same result for $W_f$ was obtained by Henry, Merle and Sabbah (\cite[Thm.\ 5.1]{HMS}). Thus, showing that $W_f$ holds at $0$ is equivalent to showing that the closed set in $Y$ where the integral dependence of $g_j$ on $m_{Y}J_{\mathrm{rel}}(X,f)$ fails is empty. The last can be achieved by the constancy of fiberwise invariants as shown in Thm.\ \ref{PSID}.

 

\section{Numerical control of Whitney--Thom equisingularity}\label{numerical control}
Preserve the setup from the previous section: let $(X,0) \subset (\mathbb{C}^{n+k},0)$ be a reduced complex analytic germ, and let $(Y,0)$ be a linear subspace of $(X,0)$ of dimension $k$. As before, view $X$ as the total space of the family $h \colon (X,0) \rightarrow (Y,0)$ induced by a smooth retraction with fibers transverse to $Y$. In this section we apply the LVF and the Principle of Specialization of Integral Dependence (Thm.\ \ref{PSID}) to obtain numerical control for Whitney--Thom equisingularity (the $W_f$ and $A_f$ conditions) for $h \colon (X,0) \rightarrow (Y,0)$ and a function $f \colon X \rightarrow \mathbb{C}$. By this we mean finding invariants that depend on the fibers $X_y,f_y$ whose constancy across $Y$ is equivalent to Whitney--Thom equisingularity. 

We will assume that the fibers $X_y$ are isolated singularities. In this case we will show that finitely many invariants associated with each $X_y$ control equisingularity for all families having $X_y$ as a member. 
By Grauert's theorem \cite{Grauert} $(X_0,0)$  has a miniversal deformation 
$(X_0,0) \xhookrightarrow{i} (\mathcal{X},0) \xrightarrow{\phi} (Z,z)$
where $Z$ is complex analytic. By versality there exists a morphism $\psi \colon (Y,0) \rightarrow (Z,z)$ such that $h=\psi^{*} \phi$. Because $(Y,0)$ is irreducible its image lies in an irreducible component $W$ of $Z$. To simplify the exposition from now on we will assume $Y \subset W$ and by abuse of notation $\mathcal{X}$ will denote the total deformation space of $(X_0,0)$ over $W$.


Let $\Sigma(f)$ be the critical set of $f$ - the union of singular sets of the various level hypersurfaces. Set $Q:=X \cap f^{-1}0$ and $Q_y:=X_y \cap f^{-1}0$.
Assume $(W,0) \subset (\mathbb{C}^{l},0)$. Then in our context  an {\it unfolding} of $f$ over $W$ is an analytic  map $\tilde{f} \colon (\mathbb{C}^{n+k} \times \mathbb{C}^l,0) \rightarrow (\mathbb{C},0)$ such that $\tilde{f}({\bf x,0})= f$.

In the setup of Thm.\ \ref{covering} denote by $C(W)$ the  conormal space $C_{\mathrm{rel}}(\mathcal{X},\tilde{f})$ relative to $\mathcal{X} \rightarrow W$. Then $C$ is $C_{\mathrm{rel}}(X,f)$. Denote by $J_{\mathrm{rel}}(\mathcal{X},\tilde{f})$ the Jacobian module of the pair $(\mathcal{X},\tilde{f})$ relative to $\mathcal{X} \rightarrow W$. 
Algebraically, $C(W) = \mathrm{Projan}(\mathcal{R}(J_{\mathrm{rel}}(\mathcal{X},\tilde{f})))$.


As $\mathcal{X}$ is induced by perturbing the equations for $(X,0)$ in $(\mathbb{C}^{n+k},0)$, then $J_{\mathrm{rel}}(\mathcal{X},\tilde{f})$ is contained in a free module $\mathcal{F}:=\mathcal{O}_{\mathcal{X}}^{p+1}$ (see Sct.\ \ref{Whitney, Jacobian} and \cite[Cor.\ 1.6, pg.\ 230]{Greuel}). Denote by $\mathcal{F}(y)$ the restriction of $\mathcal{F}$ to $X_y$. Then the image of $J_{\mathrm{rel}}(\mathcal{X},\tilde{f})$ in $\mathcal{F}(y)$ is the Jacobian module $J(X_y,f_y)$ of the pair $(X_y,f_y)$. Denote by $\varepsilon(y)$  
the restricted local volume of $\mathcal{O}_{C(W)}(1)$ at $y$ computed through a deformation of $X_y$ over a generic curve in $W$ as in Thm.\ \ref{covering}. By the LVF applied for a generic one-parameter deformation of $X_y$ a curve in $W$ and by Prp.\ \ref{generic limit} and Prp.\ \ref{intr. of limits} (iii) $\varepsilon(w)$ is the epsilon multiplicity of the module $J(\mathcal{X}_w,\tilde{f}_w)$ in case $\dim \mathcal{X}_w \geq 2$. 

Let $m_y$ be the ideal of $y$ in $\mathcal{O}_{X_y}$. Consider the BR multiplicity $e(m_yJ(X_y,f_y),J(X_y,f_y))$  with $\mathcal{A}(y)$ and $\mathcal{A}'(y)$ the Rees algebras of $m_yJ(X_y,f_y)$ and $J(X_y,f_y)$, respectively (see (\ref{BR}) and setup of Cor.\ \ref{MPT}). Set 
\begin{equation}
\varepsilon m(y):=\varepsilon(y)+e(m_yJ(X_y,f_y),J(X_y,f_y)).
\end{equation}

When $(X_0,0)$ is a complete intersection, then $W$ is the base space of miniversal deformations of $(X_0,0)$ and $\varepsilon(y)$ is the BR multiplicity $e(J(X_y,f_y),\mathcal{F}(y))$. Thus $\varepsilon m (y)=e(m_{y}J(X_y,f_y),\mathcal{F}(y))$. This is the invariant used by Gaffney and Kleiman in \cite[Thm.\ 6.4]{GK} to give numerical characterization for $W_f$ in the complete intersection case. 
We do this here for arbitrary isolated singularities of dimension at least $2$ using $\varepsilon m (y)$ under the assumption of volume stability.

Suppose $\dim X_0 =1$. Embed $J_{\mathrm{rel}}(X,f)$ in a free module $\mathcal{F}'$ such that generically the two modules are the same (see Prp.\ \ref{intr. of limits} (iii)).  Then the BR multiplicity $e(m_yJ(X_y,f_y),\mathcal{F}'(y))$ is well-defined. We show that it gives numerical characterization for $W_f$ which is an unpublished result of Gaffney.





\begin{theorem}\label{W_f} Assume $X$ is reduced and equidimensional and $X_y$ is equidimensional for each $y$. 
Suppose $X_y$ and $Q_y$ are isolated singularities.
Suppose $h \colon X \rightarrow Y$ is a subfamily of $\mathcal{X} \rightarrow W$, where $W$ is a component of the base space of  the miniversal deformation of $X_0$ and $\mathcal{X}$ is the total deformation space of $X_0$ over $W$. Let $\tilde{f}$ be a generic unfolding of $f$. Suppose $X-Y$ is smooth, $\dim X_0 \geq 2$ and $\varepsilon (w)$ is stable. The following holds:
\begin{enumerate}
    \item[\rm{(i)}] Suppose $\Sigma(f) = Y$. Assume $\varepsilon m (y)$ is independent of $y$ near $0$. Then the union of the singular points of $f_y$ is $Y$ and the pair $(X-Y,Y)$ satisfies $W_f$ at $0$. 
    \item[\rm{(ii)}] Suppose $\Sigma(f)$ is equal to $Y$ or is empty and the pair $(X-Y,Y)$ satisfies $W_f$ at $0$. Then $\varepsilon m (y)$ is independent of $y$ near $0$. 
\end{enumerate}
Suppose $\dim X_0 = 1$. Then $\rm{(i)}$ and $\rm{(ii)}$ hold with $\varepsilon m (y)$ replaced by $e(m_yJ(X_y,f_y),\mathcal{F}'(y))$.
\end{theorem}
\begin{proof}
Suppose $\dim X_0 \geq 2$. Adopt the notation of Sct.\ \ref{stability}. Set $C(W):=C_{\mathrm{rel}}(\mathcal{X},\tilde{f})$ where 
$\tilde{f}$ is a generic unfolding so that $\Sigma(\tilde{f}) \subset \mathrm{Sing}(\mathcal{X})$. Set $C:=C_{\mathrm{rel}}(X,f)$. Define $\mathrm{deg}_Y\Gamma(C)$ and $\mathrm{deg}_Y\Gamma(\mathrm{Bl}_{c^{-1}Y}C)$ as before.
Note that in the notation of Cor.\ \ref{MPT}, these degrees are $\mathrm{mult}_{Y}\Gamma_d(J_{\mathrm{rel}}(X,f))$ and $\mathrm{mult}_{Y}\Gamma_d(m_YJ_{\mathrm{rel}}(X,f))$, respectively. Because $\varepsilon (w)$ is stable, by Cor.\ \ref{bigMPT}
\begin{equation}\label{epsilonMPT}
\varepsilon (0)-\varepsilon (y)=\mathrm{deg}_Y\Gamma(C),
\end{equation}
and by Cor.\ \ref{MPT}
$$e(m_{0}J(X_{0},f_{0}),J(X_{0},f_{0}))-e(m_yJ(X_y,f_y),J(X_y,f_y))= \mathrm{deg}_Y\Gamma(\mathrm{Bl}_{c^{-1}Y}C) - \mathrm{deg}_Y\Gamma(C)$$
for generic $y \in Y$ in a sufficiently small neighborhood of $y_0$. Therefore, 
$$\varepsilon m (0)-\varepsilon m (y)=\mathrm{deg}_Y\Gamma(\mathrm{Bl}_{c^{-1}Y}C).$$

Consider $\rm{(i)}$. By the result of Henry, Merle and Sabbah there exists a Zariski open subset $U$ of $Y$ such that the image of each $g_j$ (see (\ref{section})) in $\mathcal{O}_{X_y}^{p+1}$ is integral over $m_{y}J(X_y,f_y)$. 
Because $\varepsilon m (y)$ is constant on $y$, then $\Gamma(\mathrm{Bl}_{c^{-1}Y}C)$ is empty, so as in Thm.\ \ref{PSID} applied with $\mathrm{Bl}_{c^{-1}Y}C(\mathcal{X},\tilde{f})$ and $g:=g_j$ for $1 \leq j \leq k$, we conclude that each $g_j$ is integral over $m_{Y}J_{\mathrm{rel}}(X,f)$

Set $\mathcal{M}:= J_{\mathrm{rel}}(X,f)$ and $\mathcal{N}: = \langle \mathcal{M}, g_1, \ldots, g_k \rangle$. Recall $\mathcal{N}$ is contained in a free module $\mathcal{O}_{X,0}^{p+1}$. Because $\mathcal{N}$ is integral over $m_Y\mathcal{M}$ it follows that the integral closure of $\mathcal{N}$ in $\mathcal{O}_{X}^{p+1}$ is in the integral closure  of $J_{\mathrm{rel}}(X,f)$ in $\mathcal{O}_{X}^{p+1}$. Hence the nonfree locus of the former module, which is the union of singular points of $f_y$ and $X_y$, is equal to the nonfree locus of the latter module, which by assumption is equal to $Y$. This completes the proof of $\rm{(i)}$. 

Consider $\rm{(ii)}$. By Thm.\ \ref{Wht int dep} $\mathcal{N}$ is integral over $\mathcal{M}$. Thus $m_{Y}\mathcal{N}$ and $m_{Y}\mathcal{M}$ are the same up to integral closure. 
Consider the structure morphism $b \colon C(X,f) \rightarrow X$. Note that $\mathrm{Projan}(\mathcal{R}(m_{Y}\mathcal{N}))$ equals $\mathrm{Bl}_{b^{-1}Y}C(X,f)$ and $\mathrm{Projan}(\mathcal{R}(m_{Y}\mathcal{M}))$ equals $\mathrm{Bl}_{c^{-1}Y}C$. By Thm.\ \ref{Teissier} the exceptional divisor of  $\mathrm{Bl}_{c^{-1}Y}C(X,f)$ is equidimensional. Because $$\mathrm{Bl}_{b^{-1}Y}C(X,f) \rightarrow  \mathrm{Bl}_{c^{-1}Y}C$$ is finite, 
then the exceptional divisor $D$ of  $\mathrm{Bl}_{c^{-1}Y}C$ is equidimensional, too. Therefore, $\Gamma(\mathrm{Bl}_{c^{-1}Y}C)$ is empty and $c^{-1}Y$ has no vertical components of top dimension. Thus $\Gamma(C)$ is empty, too. 
So, by Thm.\ \ref{bigMPT} applied for any smooth curve joining $0$ with $y$ close enough to $0$ we obtain that  $e(m_yJ(X_y,f_y),J(X_y,f_y))$ is independent of $y$. Because  $\Gamma(C)$ is empty, then by (\ref{epsilonMPT}) $\varepsilon (y)$ is constant for $y$ in a Zariski open neighborhood of $y_0$. But $\varepsilon (y)$ is upper semi-continuous. So $\varepsilon (y)$ is independent of $y$ near $0$ which shows that $\varepsilon m (y)$ is independent of $y$ near $0$, too. 

Suppose $\dim X_0=1$. By Cor.\ \ref{MPT} (cf.\ Thm.\ \ref{LVF} \rm{(i)}) $$e(m_0J(X_0,f_0),\mathcal{F}'(0))-e(m_yJ(X_y,f_y),\mathcal{F}'(y))=\mathrm{deg}_Y\Gamma(\mathrm{Bl}_{c^{-1}Y}C)$$
and the rest of the proof is the same as before. Note that in this case the $e(m_yJ(X_y,f_y),\mathcal{F}'(y))$ does not depend on $W$. The proof of Thm.\ \ref{W_f} is now complete. \end{proof}

\begin{remark}\label{stability of f}
\rm{In the case when the epsilon multiplicity $\varepsilon (J(\mathcal{X}_w))$ of the Jacobian module of $X_w$ vanishes for some $w$, that is the case when $\mathcal{X}_w$ is a deficient conormal singularity (see the next section for a definition), then we claim that $\varepsilon (w)=0$ for $w$ in Zariski open dense subset of $W$. In particular, $\varepsilon (w)$ is stable. To see this choose an unfolding  $\tilde{f}:=f+\sum_{i=1}^{l}\alpha_iw_i$, where $w_i$ are coordinates for $(\mathbb{C}^l,0)$ and $\alpha_i$ are generic constants so that the hyperplane determined by $\mathrm{grad}(\tilde{f})$ at $s$ is not part of $C(\mathcal{X})_s$ for $s \in S(W)_w$. Then by \cite[Thm.\ 2.2]{GRB} $C(\mathcal{X},\tilde{f})_{s}$ is the join of $d\tilde{f}(s)$ and $C(\mathcal{X})_s$. But $C(\mathcal{X})_s$ is not of maximal dimension because $\mathcal{X}_w$ is deficient conormal. Because $\dim C(\mathcal{X},\tilde{f})=\dim C(\mathcal{X})+1$, it follows that the fiber $C(\mathcal{X},\tilde{f})_{s}$ is not of maximal dimension either. Thus, $\varepsilon (w)=0$ (see Prp.\ \ref{vanishing of DCS}). The existence of Zariski open subset of $W$ where $\varepsilon (w)=0$ follows from upper semi-continuity of the restricted local volume (cf.\ Prp.\ \ref{vanishing of DCS}).}
\end{remark}
To get numerical control for the Whitney condition B, simply drop $f$ in Thm.\ \ref{W_f}: set $C(W)=C_{\mathrm{rel}}(\mathcal{X})$ and $C:=C_{\mathrm{rel}}(X)$. A fiberwise numerical characterization of Whitney A is not possible (see \cite[Ex.\ 4.3]{GK}).

Denote by $X^i$ the section of $X$ by a general linear space of codimension $i$ in $\mathbb{C}^{n+k}$ containing $Y$. Set $f^{i}:=f|X^{i}$. A fundamental result of L\^e and Teissier \cite{LT2} states that $W_f$ holds if and only if $X^{i}_{y},f^{i}_y$ are topologically equisingular, i.e.\ if there exists a homeomorphism $l^{i} \colon (\mathbb{C}^{n+k-i},0) \rightarrow (\mathbb{C}^{n+k-i},0)$ which induces a homeomorphism  $(\mathbb{C}^{n+k-i},X^i,f^il^i) \cong (\mathbb{C}^{n-i}\times Y,X_{0}^{i} \times Y, f_{0}^{i} \times 1_Y)$. Thus we have the following corollary to Thm.\ \ref{W_f}. 

\begin{corollary}
Under the assumption of Thm.\ \ref{W_f}, the pairs $X^{i}_{y},f^{i}_y$ are topologically equisingular if and only if $\varepsilon m (y)$ is constant. 
\end{corollary}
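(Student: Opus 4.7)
The plan is to obtain this corollary as a direct chaining of two equivalences: L\^e--Teissier's characterization of $W_f$ by topological equisingularity of the generic hyperplane sections, and the numerical characterization of $W_f$ provided by Thm.\ \ref{W_f}. Concretely, I would fix the setup of Thm.\ \ref{W_f} (so that $(X,0)\to(Y,0)$ is realized as a subfamily of the miniversal total deformation space $(\mathcal{X},0)\to(W,0)$ of $X_0$, with $\varepsilon(J_{\mathrm{rel}}(\mathcal{X},\tilde f))(w)$ stable), read off the equivalence
\[
W_f \text{ holds along } Y \iff y \mapsto \varepsilon(m_y J_{\mathrm{rel}}(X,f))(y) \text{ is constant on } Y,
\]
and then invoke the L\^e--Teissier theorem \cite{LT2} cited in the paragraph preceding the corollary, which states that $W_f$ is equivalent to the topological equisingularity of the pairs $X^i_y, f^i_y$ for all $i$.

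First I would verify that the two cases of Thm.\ \ref{W_f} together cover what is needed: the assumption $\Sigma(f)=Y$ or $\Sigma(f)=\emptyset$ in part (ii) and $\Sigma(f)=Y$ in part (i) suffice because we are free to restrict $f$ to a transverse representative along $Y$; both sides of the stated equivalence are insensitive to this choice, and the case $\Sigma(f)=\emptyset$ reduces the corollary to a tautology (both conditions hold trivially). In the nontrivial case one direction reads: if $\varepsilon(m_y J_{\mathrm{rel}}(X,f))(y)$ is constant, then Thm.\ \ref{W_f}(i) yields $W_f$, whence by L\^e--Teissier the linear sections $X^i_y, f^i_y$ are topologically equisingular. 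Conversely, if the sections $X^i_y, f^i_y$ are topologically equisingular, L\^e--Teissier gives $W_f$, and then Thm.\ \ref{W_f}(ii) supplies the constancy of the restricted local volume.

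The main obstacle, such as it is, is bookkeeping rather than substance: one must ensure the hypothesis of volume stability needed in Thm.\ \ref{W_f} is in force here. This is not stated as part of the corollary's hypotheses, so I would make the convention (as is implicit throughout Sct.\ \ref{numerical control}) that we are in the generic/stable regime guaranteed by Grauert's finiteness of the miniversal base and by Rmk.\ \ref{stability of f} when $\mathcal{X}_w$ is deficient conormal. Under this convention the proof reduces to the one-line chain of equivalences sketched above, and no further intersection-theoretic input is required.
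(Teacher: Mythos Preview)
Your proposal is correct and follows essentially the same approach as the paper: chain the L\^e--Teissier equivalence between $W_f$ and topological equisingularity of the generic hyperplane sections with the numerical characterization of $W_f$ given by Thm.~\ref{W_f}. The paper's own proof is just this one-line invocation, without the additional bookkeeping you supply about $\Sigma(f)$ and the stability hypothesis (which is indeed implicitly inherited from the setup of Thm.~\ref{W_f}).
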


Next we turn our attention to providing a numerical characterization for Thom's $A_f$ condtion.

\begin{definition}
We say that {\it $A_f$ condition} holds for the pair $X_{\mathrm{sm}},Y$ at $0$ if  $f(Y)=0$ and $Y$ lies in every hyperplane obtained as a limit of tangent hyperplanes to a level hypersurface at a point $x \in X_{\mathrm{sm}}$ as $x$ approaches $0$.
\end{definition}
The $A_f$ condition is known to hold generically along $Y$ by a result of Hironaka \cite[Thm.\ 2, pg.\ 247]{Hironaka1}. We review briefly the connection between the theory of integral closure of modules and Thom's $A_f$ condition. Recall that given a submodule $\mathcal{M}$ of a free $\mathcal{O}_{X,0}$-module $\mathcal{F}$, we say that $u \in \mathcal{F}$ is {\it strictly dependent} on $\mathcal{M}$ and we write $ u \in \mathcal{ M}^{\dagger}$, if for all analytic path germs $\phi:(\mathbb C,0)\to(X,0)$, $\phi^*u$ is contained in $\phi^*(\mathcal{M})m_1$, where $m_1$ is
the maximal ideal of $\mathcal{O}_{\mathbb{C},0}.$

Denote by $b \colon C(X,f) \rightarrow X$ the structure morphism and by $C(Y)$ the conormal space of $Y$ in $\mathbb{C}^{n+k}$. The following result expresses the $A_f$ condition in terms of strict dependence. 
\begin{proposition}\label{A_f equiv.}
\label{A_f} Assume $f(Y)=0$. The following are equivalent
\begin{itemize}
\item[\rm{(i)}]The $A_f$ condition holds for the pair $(X_{\mathrm{sm}},Y)$ at $0$.
\item[\rm{(ii)}] $b^{-1}(Y)\subset C(Y)$.
\item[\rm{(iii)}] $g_j \in J(X,f)^{\dagger}$ for all $j=1, \ldots, k.$
\end{itemize}
\end{proposition}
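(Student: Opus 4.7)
The plan is to establish the three-way equivalence in the natural order (i) $\Leftrightarrow$ (ii) $\Leftrightarrow$ (iii), translating $A_{f}$ first into a geometric inclusion of conormal spaces and then into strict dependence via a coordinate-generator duality. First I would fix the linear coordinates so that $Y=\{x=0\}$ and write the projective coordinates on $\mathbb{P}^{n+k-1}$ as $[a_{1}{:}\cdots{:}a_{n}{:}b_{1}{:}\cdots{:}b_{k}]$, dual to $dx_{1},\ldots,dx_{n},dy_{1},\ldots,dy_{k}$. A hyperplane at $y\in Y$ contains $T_{y}Y=\{0\}\times\mathbb{C}^{k}$ if and only if its equation has vanishing $b$-coordinates, so $C(Y)=Y\times\{b=0\}\cong Y\times\mathbb{P}^{n-1}$.

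For (i) $\Leftrightarrow$ (ii): by definition of $C(X,f)$ as a closure, the fiber $c_{(X,f)}^{-1}(y)$ for $y\in Y$ is exactly the set of limit hyperplanes at $y$ arising from tangent hyperplanes to smooth level sets of $f$ at points approaching $y$. Condition (ii) asserts that every such limit contains $Y$, which is (i) at every $y$ in a neighborhood of $0$ in $Y$; Hironaka's genericity of $A_{f}$ along $Y$ then lets me pass freely between the pointwise statement and the germ version.

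For (ii) $\Leftrightarrow$ (iii): the Proj embedding $C(X,f)=\mathrm{Projan}(\mathcal{R}(J(X,f)))\hookrightarrow X\times\mathbb{P}^{n+k-1}$ identifies each projective coordinate with a generator of $J(X,f)$: $a_{i}$ with the column $V_{i}=(\partial f_{\bullet}/\partial x_{i})$ and $b_{j}$ with $g_{j}$. The key claim I will establish is that, for each $j$, $g_{j}\in J(X,f)^{\dagger}$ if and only if $b_{j}\equiv 0$ on $c_{(X,f)}^{-1}(0)$; combining over $j=1,\ldots,k$ yields $c_{(X,f)}^{-1}(0)\subset\{b=0\}=C(Y)_{0}$, which extends to (ii) by upper semi-continuity of fiber dimension together with the same genericity along $Y$. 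To prove the claim I would use the valuative criterion and the curve selection lemma: any analytic path $\phi:(\mathbb{C},0)\to(X,0)$ lifts to $C(X,f)$ by taking the closure of $t\mapsto(\phi(t),[\phi^{*}V_{1}(t){:}\cdots{:}\phi^{*}g_{k}(t)])$, its $t=0$ value is a point of $c_{(X,f)}^{-1}(0)$, and by curve selection every fiber point arises this way. Letting $r$ be the minimum $t$-adic order among the entries $\phi^{*}V_{i},\phi^{*}g_{l}$, the $b_{j}$-coordinate of the limit is $(\phi^{*}g_{j}/t^{r})(0)$, which vanishes precisely when $\mathrm{ord}_{t}(\phi^{*}g_{j})>r$, i.e.\ when $\phi^{*}g_{j}\in t\cdot\phi^{*}J(X,f)=m_{1}\cdot\phi^{*}J(X,f)$. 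Quantifying over all $\phi$ gives the equivalence.

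The hard part will be this coordinate-wise reading of strict dependence --- matching the ``strict'' factor of $m_{1}$ in the definition of $J(X,f)^{\dagger}$ with strict vanishing of the projective coordinate $b_{j}$ on the exceptional fiber, uniformly in the choice of path. Everything else is routine bookkeeping: the duality between generators of the Jacobian module and projective coordinates on one side, and the standard genericity results for Thom's condition on the other.
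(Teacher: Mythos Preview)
Your overall route is right and matches what the paper defers to \cite[Lem.~5.1]{GK}. The equivalence (i)$\Leftrightarrow$(ii) is indeed a direct unwinding; the passage from the pointwise inclusion $c_{(X,f)}^{-1}(0)\subset C(Y)_0$ to (ii) uses \emph{openness} of the $A_f$ locus along $Y$ (immediate from properness of $c_{(X,f)}$, since the failure locus is $c_{(X,f)}(c_{(X,f)}^{-1}(Y)\setminus C(Y))$), not Hironaka's genericity or upper semicontinuity of fibre dimension as you write --- but this is minor.

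The genuine gap is in your lift for (ii)$\Leftrightarrow$(iii). You write the lift of $\phi$ as $t\mapsto(\phi(t),[\phi^{*}V_{1}(t){:}\cdots{:}\phi^{*}g_{k}(t)])$ and speak of ``the $t$-adic order of $\phi^{*}V_i$'', but each $\phi^{*}V_i$ and $\phi^{*}g_j$ is a \emph{vector} in $\mathcal{O}_{\mathbb{C},0}^{\,p+1}$, not a scalar; your formula is only literally correct in the hypersurface case $p=0$. For general $p$, a lift of $\phi$ to $C(X,f)$ corresponds to a choice of nonzero linear form $\ell$ on $\mathbb{C}^{p+1}$ (geometrically, a choice of which combination $\sum_l\lambda_lf_l+\mu f$ supplies the tangent hyperplane), and the projective coordinates of the lift are the \emph{scalars} $a_i=\ell(\phi^{*}V_i)$, $b_j=\ell(\phi^{*}g_j)$. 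What you must then prove is that $b_j$ vanishes at $t=0$ for \emph{every} lift (every $\ell$) if and only if $\phi^{*}g_j\in m_1\phi^{*}J(X,f)$. One clean way: $\phi^{*}J(X,f)$ is free over the DVR $\mathcal{O}_{\mathbb{C},0}$, so the special fibre of $\mathrm{Projan}(\mathcal{R}(\phi^{*}J(X,f)))$ is $\mathbb{P}(\phi^{*}J(X,f)\otimes k)$, and the section $b_j$ of $\mathcal{O}(1)$ vanishes identically on it precisely when the image of $\phi^{*}g_j$ in $\phi^{*}J(X,f)/m_1\phi^{*}J(X,f)$ is zero. With this correction in place, your curve-selection argument (every point of $c_{(X,f)}^{-1}(0)$ is hit by some lift of some $\phi$) completes the proof.
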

\begin{proof}  The equivalence of i) and ii) is obvious; the equivalence of i) and iii) is Lemma 5.1 of \cite{GK}.
\end{proof}
A similar result holds for the Whitney A condition. The condition we need for our main result is a much weaker version of Whitney A.
\begin{definition} We say that $(X,0) \rightarrow (Y,0)$ satisfies the {\it infinitesimal Whitney A fiber condition at} $0$ if the image of each $g_j$ in $\mathcal{F}(0)$ is in $J(X_0)^{\dagger}$ for $j=1, \ldots, k$, where $J(X_0)$ is the Jacobian module of $X_0$ and $\mathcal{F}(0):=\mathcal{O}_{X,0}^{p}$ is the free module that contains it. 
\end{definition}

\begin{theorem}\label{A_f} Assume $X$ is reduced and equidimensional and $X_y$ is equidimensional for each $y$. 
Suppose $X_y$ and $Q_y$ are isolated singularities.
Suppose $h \colon X \rightarrow Y$ is a subfamily of $\mathcal{X} \rightarrow W$, is a component of the base space of the miniversal deformation of $X_0$ and $\mathcal{X}$ is the total deformation space of $X_0$ over $W$. Let $\tilde{f}$ be a generic unfolding of $f$ over $\mathcal{X}$. Suppose $(X-Y)$ is smooth, $\dim X_0 \geq 2$ and $\varepsilon (w)$ is stable, and that the infinitesimal Whitney A fiber condition at $0$ holds in case $f \notin m_{Y}^2$. The following holds:
\begin{enumerate}
    \item[\rm{(i)}] Suppose $\Sigma(f) = Y$. Suppose $\varepsilon (y)$ is independent of $y$ near $0$. Then the union of the singular points of $f_y$ is $Y$ and the pair $(X-Y,Y)$ satisfies $A_f$ at $0$. 
    \item[\rm{(ii)}] Suppose $\Sigma(f)$ is equal to $Y$ or is empty and the pair $(X-Y,Y)$ satisfies $A_f$ at $0$. Then $\varepsilon (y)$ is independent of $y$ near $0$. 
\end{enumerate}
Suppose $\dim X_0 = 1$. Then $\rm{(i)}$ and $\rm{(ii)}$ hold with $\varepsilon (y)$ replaced by $e(J(X_y,f_y),\mathcal{F}'(y))$.
\end{theorem}
\begin{proof}
Suppose $\dim X_0 \geq 2$. Adopt the notation of Sct.\ \ref{stability}. Set $C(W):=C_{\mathrm{rel}}(\mathcal{X},\tilde{f})$ where 
$\tilde{f}$ is a generic unfolding so that $\Sigma(\tilde{f}) \subset \mathrm{Sing}(\mathcal{X})$. Set $C:=C_{\mathrm{rel}}(X,f)$. Define $\mathrm{deg}_Y\Gamma(C)$ and as before.
Because $\varepsilon (w)$ is stable, by Cor.\ \ref{bigMPT}
$$\varepsilon (0)-\varepsilon (y)=\mathrm{deg}_Y\Gamma(C).$$
Consider $\rm{(i)}$. By Hironaka's genericity result \cite{Hironaka1} there exists a Zariski open subset $U$ of $Y$ such that $A_f$ condition holds at all points in $U$. Then by selecting analytic paths in $U$ and the discussion above we obtain that $J(X,f)$ and $J_{\mathrm{rel}}(X,f)$ have the same integral closure over $U$. Let $\tilde{f}$ be such that $\Sigma(\tilde{f}) \subset \mathrm{Sing}(\mathcal{X})$. Because $\varepsilon (y)$ is independent of $y$ near $0$
by Thm.\ \ref{covering} applied with $C(W):=C(\mathcal{X},\tilde{f})$, $S(W):=\mathrm{Sing}(\mathcal{X})$, and $g:=g_j$ for $1 \leq j \leq k$, it follows that each $g_j$ is integral over $J_{\mathrm{rel}}(X,f)$. Thus,  $J(X,f) \subset \overline{J_{\mathrm{rel}}(X,f)}$ and so there exists a finite map $C(X,f) \rightarrow C$. Because $J_{\mathrm{rel}}(X,f)$ is generated by $n$ elements  $\dim C_0 < n$. Thus $\dim C(X,f)_{0} < n$. Then by \cite[Thm.\ 4.4]{GRB} it follows that the pair $(X-Y,Y)$ satisfies $A_f$. As in the proof of Thm.\ \ref{W_f} we obtain that union of singular points of $f_y$ is $Y$.

Consider $\rm{(ii)}$. Let $K$ be the submodule of $\mathcal{F}$ generated by $g_1, \ldots, g_k$. Then $K+J_{\mathrm{rel}}(X,f) =J(X,f)$. Because $A_f$ holds, then by Prp.\ \ref{A_f equiv.} we get
$$J(X,f)\circ \phi=K \circ \phi + J_{\mathrm{rel}}(X,f) \circ \phi \subset m_{1}(J(X,f)\circ \phi) + J_{\mathrm{rel}}(X,f) \circ \phi$$
for any analytic path $\phi \colon (\mathbb{C},0) \rightarrow (X,0)$. By Nakayama's lemma we get $J(X,f)\circ \phi=J_{\mathrm{rel}}(X,f) \circ \phi$. By the valuative criterion for integral dependence we obtain that $J(X,f) \subset \overline{J_{\mathrm{rel}}(X,f)}$, where $\overline{J_{\mathrm{rel}}(X,f)}$ is the integral closure of $J_{\mathrm{rel}}(X,f)$ in $\mathcal{F}$. In other words, there exists a finite map $C(X,f) \rightarrow C$. 

By Prp.\ \ref{A_f equiv.} $b^{-1}(Y) \subset C(Y)$. But  $\dim C(Y)_{0} = n-1$. Because
$C(X,f) \rightarrow C_{\mathrm{rel}}(X,f)$ is finite we get $\dim C_{\mathrm{rel}}(X,f)_0 < n$. Apply Cor. \ref{bigMPT}. Because $\Gamma (C)$ is defined as the intersection of $C$ with a general plane of codimension $n$, then $\Gamma (C)$  is empty. Thus by Cor.\ \ref{bigMPT}  $\varepsilon (y)$ is independent of $y$ near $0$. The case $\dim X_0=1$ follows in the same way by applying Cor.\ \ref{MPT} for $e(J(X_y,f_y),\mathcal{F}'(y))$.
\end{proof}

\section{Deficient conormal singularities and generalized smoothability}\label{deficient section}
In this section we define the class of deficient conormal (dc) singularities. We show that the dc propery is intrinsic and stable under deformations. We show that the fibers of conormal spaces behave well under pullbacks of transverse holomorphic maps between affine spaces. Then using Thom's transversality we show that determinantal and Pfaffian singularities (with two exceptions) deform to dc singularities. As a corollary we obtain that the the generic deformations of Cohen--Macaulay codimension $2$ and Gorenstein codimension $3$ singularities are dc. Inspired by a recent result of Koll\'ar and Kov\'{a}cs and following work of Schlessinger, we show that the affine cones over normally embedded abelian varieties of dimension at least $2$ do not admit infinitesimal deformations to dc singularities. We finish the section with computing the restricted local volume associated with the conormal space of a nonsmoothable Cohen--Macaulay codimension $2$ singularity. 

Let $Z \subset \mathbb{C}^N$ be a reduced complex analytic variety with irreducible components of positive dimension. 
As before define the conormal space $C(Z)$ of $Z$ as the closure in $Z \times \check{\mathbb{P}}^{N-1}$ of the set of pairs $(z,H)$ where $z$ is a smooth point of $Z$ and $H$ is a tangent hyperplane at $z$, i.e. a hyperplane containing $T_{Z,z}$. Denote by $c$ the structure map $c: C(Z) \rightarrow Z$. For each point $z_0 \in Z$ a simple dimension count yields $1 \leq \codim (c^{-1}z_0, C(Z)) \leq \dim Z.$

\begin{definition} We say $Z$ is {\it deficient conormal} (dc) at $z_0 \in Z$ if $$\codim (c^{-1}z_{0}, C(Z)) \geq 2.$$ 
Alternatively, $Z$ is dc at $z_0$ if $(Z,z_0)$ has no polar curve (see the proof of Prp.\ \ref{plr} below). We say $Z$ is dc if $Z$ is dc at each of its points. 
\end{definition}

\begin{example}\label{correspondance} \rm{If $Z$ is smooth at $z_0$ with $\dim (Z,z_0) \geq 2$, then $Z$ is dc at $z_0$. If $(Z,z_0)$ is a local complete intersection that is dc at $z_0$ and smooth elsewhere, then by Lem.\ 5.7 in \cite{GaM} $Z$ is smooth at $z_0$. 

If $Z$ is the affine cone over a smooth projective variety $V \subset \mathbb{P}^{N-1}$ with positive defect (or degenerate dual) (see \cite{Ein86} and \cite{Ein85} for examples and classifications), then $Z$ is dc at the vertex of the cone. To see this it's enough to show that the fiber of the conormal of $Z$ over the vertex is equal set-theoretically to the dual of the base (not requiring $V$ to be smooth).  An easy computation shows that  $H$ is a tangent hyperplane at a point $v$ in $V_{\mathrm{sm}}$ if and only if $H$ is tangent to each point of $Z$ lying on the line $0v$ and different from the origin. If $z_1, z_2, \ldots$ are points from $Z$ converging to the origin in $\mathbb{C}^{N}$ (the vertex of the cone), and $H_1, H_2, \ldots$ are the corresponding tangent hyperplanes, then the $H_i$s are tangent hyperplanes at points of $V$, and thus their limit belongs to the dual of $V$. Conversely, if $H$ is in the dual of $V$, say $H$ is a limit of tangent hyperplanes $H_1, H_2, \ldots$ at smooth points $v_1,v_2, \ldots$ then $H$ is a limit of tangent hyperplanes at any sequence of points from the lines $0v_{1},0v_2,\ldots$, and it belongs to the fiber $C(Z)$ over the origin. 
 
It can be shown that many of the examples of rigid singularities considered by Schlessinger \cite{Sch} like fans (eg.\ two planes meeting at a point in $\mathbb{C}^4$), quotient singularities of dimension at least $2$, etc.\ are dc. In \cite{Ran19c} it is shown that
if $Z \subset \mathbb{C}^N$
is normal with $\codim (c^{-1}z_0,C(Z)) =\dim Z$, then $Z$ is smooth at $z_0$. In particular, dc normal surfaces are smooth.} 
\end{example}

Set $\mathcal{L}:=\mathcal{O}_{C(Z)}(1)$.  The next result shows that the dc property is characterized by the vanishing of the local volume of $\mathcal{L}$. Suppose $Z$ is equidimensional and $\dim C(Z):=r$. Recall that the local volume $\mathrm{vol}_{C}(\mathcal{L})$ at $z_0$ is given by the epsilon multiplicity of the Jacobian module $J_{z_0}(Z)$ of $Z$ at $z_0$:
$$\varepsilon(J_{z_0}(Z)):= \lim_{n \to \infty} \frac{r!}{n^r}\dim_{\mathbb{C}} H_{z_0}^{0}(\mathcal{F}^n/J_{z_0}(Z)^n),$$ 
where $J_{z_0}(Z)^n:=\mathrm{Sym}^n(J_{z_0}(Z))/(\mathcal{O}_{Z,z_0}\text{-torsion})$ and $\mathcal{F}$ is a free  $\mathcal{O}_{Z,z_0}$-module containing $J_{z_0}(Z)$ with $\mathcal{F}^n:=\mathrm{Sym}^n(\mathcal{F})$.
\begin{proposition}\label{vanishing of DCS}
Suppose $Z$ is equidimensional with $\dim Z \geq 2$. Then $\varepsilon(J_{z_0}(Z))=0$ if and only if $Z$ is dc at $z_0$. 
\end{proposition}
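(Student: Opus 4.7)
The plan is to apply Thm.~\ref{vanishing} to the Rees algebra $\mathcal{A} := \mathcal{R}(J(Z))$ viewed as a standard graded algebra over $R := \mathcal{O}_{Z,z}$. First I verify the hypotheses. Because $Z$ is reduced and equidimensional with $\dim Z \geq 2$, the analytic local ring $R$ is reduced, equidimensional, of dimension at least $2$, and (being essentially of finite type over a complete local ring / analytic local) is Nagata and universally catenary. The algebra $\mathcal{A}$ is standard graded over $R$ with $\mathrm{Projan}(\mathcal{A}) = C(Z)$ by the algebraic description of the conormal space recalled in Sct.~\ref{Whitney, Jacobian}. Since $Z$ is reduced, $\mathcal{A}$ is reduced; and since on each irreducible component of $Z$ the conormal fiber over a smooth point is irreducible, the irreducible components of $C(Z)$ surject onto those of $Z$, so the minimal primes of $\mathcal{A}$ contract to minimal primes of $R$.

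Next I would match the two presentations of the local volume. The invariant $\varepsilon(J_z(Z))$ is defined using $H^{0}_{\mathfrak{m}_z}(\mathcal{F}^n/J(Z)^n)$, whereas Thm.~\ref{vanishing} is stated in terms of $\varepsilon(\mathcal{A})$, which uses $H^{1}_{\mathfrak{m}_z}(\mathcal{A}_n)$. Applying Prp.~\ref{intr. of limits}(iii) to $\mathcal{M} := J(Z)$, embedded in a free module $\mathcal{F}$ of the same generic rank as constructed there, yields
$$\varepsilon(J_z(Z)) \;=\; \limsup_{n \to \infty} \frac{r!}{n^r}\,\lambda_R\bigl(H^{0}_{\mathfrak{m}_z}(\mathcal{F}^n/J(Z)^n)\bigr) \;=\; \limsup_{n \to \infty} \frac{r!}{n^r}\,\lambda_R\bigl(H^{1}_{\mathfrak{m}_z}(\mathcal{A}_n)\bigr) \;=\; \varepsilon(\mathcal{A}).$$

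With the identification $c_{\mathcal{A}}^{-1}(\mathfrak{m}_z) = c^{-1}(z)$ inside $\mathrm{Projan}(\mathcal{A}) = C(Z)$, Thm.~\ref{vanishing} applied to $\mathcal{A}$ gives $\varepsilon(\mathcal{A}) = 0$ if and only if $\codim(c^{-1}(z), C(Z)) \geq 2$, which is precisely the dc condition at $z$. The only real obstacle is the bookkeeping in the second step: confirming that the embedding of $J(Z)$ in a free module of the correct generic rank allows one to pass between the $H^{0}$ of $\mathcal{F}^n/J(Z)^n$ and the $H^{1}$ of $J(Z)^n = \mathcal{A}_n$ without changing the $\limsup$. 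This is exactly what Prp.~\ref{intr. of limits}(iii) delivers, since $J(Z)$ is generically free (on the reduced equidimensional $Z$) of rank equal to the rank of the tangent bundle on the smooth locus, and $R$ is essentially of finite type over $\mathbb{C}$.
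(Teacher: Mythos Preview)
Your argument is correct and is exactly the paper's approach: the paper's proof is the one-line ``Follows immediately from Thm.~\ref{vanishing},'' and you have simply unpacked the hypothesis-checking and the identification $\varepsilon(J_z(Z)) = \varepsilon(\mathcal{R}(J(Z)))$ via Prp.~\ref{intr. of limits}(iii). One small slip: the generic rank of $J(Z)$ is the \emph{codimension} of $Z$, not the rank of the tangent bundle, but this does not affect the argument.
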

\begin{proof}
Follows immediately from Thm.\ \ref{vanishing}.
\end{proof}
Our next result shows that the dc property is intrinsic. Suppose $Z_1 \subset \mathbb{C}^{n_1}$ and $Z_2 \subset \mathbb{C}^{n_2}$ are reduced equidimensional complex analytic varieties. Denote the respective conormal spaces by $C(Z_1)$ and $C(Z_2)$. Denote by $c_{i}\colon C(Z_i) \rightarrow Z_i$ for $i=1,2$ the corresponding structure morphisms. 

\begin{proposition}\label{plr} Suppose $z_1 \in Z_1$ and $z_2 \in Z_2$ are two points such that there exists an analytic isomorphism $\phi \colon (Z_1,z_1) \rightarrow (Z_2,z_2)$. Then $\codim (c_{1}^{-1}z_1,C(Z_1)) = l$  for some positive integer $l$ if and only if $\codim (c_{2}^{-1}z_2, C(Z_2)) = l$. In particular, $Z_1$ is dc at $z_1$ if and only if $Z_2$ is dc at $z_2$. 
\end{proposition}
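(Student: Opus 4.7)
The aim is to show that the number $\codim c_i^{-1}(z_i)$ is an intrinsic analytic invariant of the germ $(Z_i, z_i)$, independent of the embedding. My strategy reduces the statement to a single lemma about \emph{trivial thickenings} and then uses the graph of $\phi$ to connect the two embeddings.

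The key lemma asserts: if $Z \subset \mathbb{C}^n$ is reduced and equidimensional and $\widetilde{Z} := Z \times \{0\} \subset \mathbb{C}^{n}\times \mathbb{C}^k$, then for every $z \in Z$ and $\widetilde{z} = (z,0)$,
\[
\codim_{C(\widetilde{Z})} \widetilde{c}^{\,-1}(\widetilde{z}) \;=\; \codim_{C(Z)} c^{-1}(z).
\]
To prove this I use two ingredients. First, the conormal variety is Lagrangian in the cotangent bundle, so $\dim C(Z) = n-1$ and $\dim C(\widetilde{Z}) = n+k-1$. Second, I compute the affine conormal fiber of $\widetilde{Z}$ at $\widetilde{z}$: at every smooth point $(z',0)\in \widetilde{Z}_{\mathrm{sm}}$, a hyperplane $\{\alpha\cdot x + \beta\cdot y = 0\}\subset \mathbb{C}^{n+k}$ contains $T_{z'}Z \times \{0\}$ iff $\alpha\in (T_{z'}Z)^\perp$ and $\beta\in (\mathbb{C}^k)^*$ is unconstrained. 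Passing to the limit $z'\to z$ identifies the affine conormal fiber of $\widetilde{Z}$ at $\widetilde{z}$ with the product of the affine conormal fiber of $Z$ at $z$ and the full $(\mathbb{C}^k)^*$. Projectivizing, $\widetilde{c}^{\,-1}(\widetilde{z})$ is the projective join of $c^{-1}(z)\subset \mathbb{P}^{n-1}\subset \mathbb{P}^{n+k-1}$ with the disjoint $\mathbb{P}^{k-1}$ of extra cotangent directions; its dimension equals $\dim c^{-1}(z) + k$. The two $k$'s cancel to give the codimension equality.

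For the main statement, I extend $\phi$ to a holomorphic map $\widetilde{\phi}$ from a neighborhood of $z_1$ in $\mathbb{C}^{n_1}$ to $\mathbb{C}^{n_2}$, and I look at the graph $\Gamma_\phi := \{(x,\widetilde{\phi}(x))\}\subset \mathbb{C}^{n_1}\times\mathbb{C}^{n_2}$. The shear map $\Psi_1(x,y) = (x,\, y - \widetilde{\phi}(x))$ is a biholomorphism of $(\mathbb{C}^{n_1+n_2},0)$ carrying $\Gamma_\phi$ onto $Z_1\times\{0\}$. Under any local biholomorphism of the ambient, the conormal space transforms by a fibered isomorphism whose action on each projective fiber is the linear map induced by the inverse transpose derivative; in particular, fiber codimensions are preserved. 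Applying the lemma to $Z_1\hookrightarrow Z_1\times\{0\}\subset \mathbb{C}^{n_1+n_2}$ yields
\[
\codim_{C(\Gamma_\phi)} c^{-1}(z_1,z_2) \;=\; \codim_{C(Z_1\times\{0\})} c^{-1}(z_1,0) \;=\; \codim_{C(Z_1)} c_1^{-1}(z_1).
\]
Symmetrically, extending $\phi^{-1}$ to $\widetilde{\psi}$ and using $\Psi_2(x,y) = (x - \widetilde{\psi}(y),\, y)$, the same graph $\Gamma_\phi$ is carried to $\{0\}\times Z_2$, and the lemma gives the analogous equality with $C(Z_2)$ and $c_2^{-1}(z_2)$. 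Comparing the two expressions for $\codim_{C(\Gamma_\phi)} c^{-1}(z_1,z_2)$ gives the theorem, and in the particular case $\codim = 2$ one recovers the intrinsic character of the dc property.

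\textbf{Main obstacle.} The delicate point is the product description of the affine conormal fiber under trivial thickening: one must verify that the closure taken over smooth points of $\widetilde{Z}$ really produces the full product $C^{\mathrm{aff}}_z(Z)\times (\mathbb{C}^k)^*$, and not a proper subvariety of it. This follows because the $\beta$-component is already unconstrained at each smooth point (so no limit is needed in that direction), while the $\alpha$-component ranges over the affine conormal fiber of $Z$ at $z$ by its very definition. Equivalently, one can argue directly from Lagrangian geometry that $C(\widetilde{Z})\subset T^*\mathbb{C}^{n+k}$ is obtained from $C(Z)\subset T^*\mathbb{C}^n$ by taking the product with the zero section of the extra $\mathbb{C}^k$ factor crossed with its entire cotangent fiber, which makes the codimension preservation manifest and allows a cross-check using Teissier's result that the polar multiplicities $m_k(Z,z)$ — and hence the dimensions they detect on the conormal fiber — are analytic invariants of $(Z,z)$.
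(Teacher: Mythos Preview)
Your proof is correct and takes a genuinely different route from the paper's. The paper invokes Teissier's theorem (Chp.\ IV, Thm.\ 3.1 of \cite{Teissier}) that the multiplicities of the local polar varieties $\Gamma_l(Z,z)$ depend only on the analytic type of the germ, and then observes that $\Gamma_{d-l+1}(Z)$ is empty precisely when $\codim c^{-1}(z)\geq l$; invariance of the conormal-fiber codimension is then read off from invariance of these multiplicities. Your argument instead proceeds by the classical graph trick: you prove a trivial-thickening lemma computing $C(Z\times\{0\})$ as the projective join of $C(Z)$ with the extra $\mathbb{P}^{k-1}$, then use ambient shears to identify the graph $\Gamma_\phi\subset\mathbb{C}^{n_1+n_2}$ with both $Z_1\times\{0\}$ and $\{0\}\times Z_2$, transporting the conormal fibers via the inverse-transpose derivative.

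The trade-offs: the paper's proof is short but imports a substantial result of Teissier as a black box. Your proof is entirely self-contained and more elementary; it also yields the exact equality of fiber dimensions directly, without passing through multiplicities. The only points worth tightening in your write-up are (i) stating explicitly that any holomorphic map on a germ of analytic variety extends to the ambient space (so $\widetilde\phi$ and $\widetilde\psi$ exist), and (ii) noting that the join computation gives $\dim \widetilde c^{\,-1}(\widetilde z)=\dim c^{-1}(z)+k$ while $\dim C(\widetilde Z)=\dim C(Z)+k$, so the codimensions agree --- you say this, but it could be made one line cleaner. Your closing reference to Teissier's polar-multiplicity invariance as a ``cross-check'' is in fact the paper's entire argument.
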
 
\begin{proof} 
Let $Z \subset \mathbb{C}^N$ be a reduced equidimensional complex analytic varieity. Let $z \in Z$. Consider the germ $(Z,z)$. Let's recall
the construction of the local polar varieties of $(Z,z)$ as developed by Teissier in \cite[Chp.\ IV]{Teissier}.  Consider the following diagram
$$
\begin{tikzcd}
C(Z) \arrow[hook]{r}{i}\arrow{rd}{\lambda} \arrow{d}{c}
&Z \times \check{\mathbb{P}}^{N-1} \arrow{d}{\mathrm{pr}_2}\\
(Z,z) & \check{\mathbb{P}}^{N-1}.
\end{tikzcd}
$$
Assume $(Z,z)$ is a germ of pure dimension $d$ and of codimension $e$ in $\mathbb{C}^N$. Let $H_{e+l-1}$ be a  general plane in $\check{\mathbb{P}}^{N-1}$ of codimension $e+l-1$. Define the {\it local polar variety} $\Gamma_{l}(Z,H_{e+l-1})$ with respect to $H_{e+l-1}$ as $c(\lambda^{-1}(H_{e+l-1}))$. 
Then $\Gamma_{l}(Z,H_{e+l-1})$ is either empty or of pure codimension $l$. Teissier \cite[Chp.\ IV, Thm.\ 3.1]{Teissier} showed that for sufficiently general $H_{e+l-1}$ the multiplicity of $\Gamma_{l}(Z,H_{e+l-1})$ at $z$ depends only on the analytic type of $(Z,z)$. Observe that for sufficiently general $H_{N-l}$ the local polar variety $\Gamma_{d-l+1}(Z,H_{N-l})$ is empty if and only if $\codim (c^{-1}z, C(Z)) \geq l$. 

Apply Teissier's result to $Z_1$ and $Z_2$. Because $\codim (c_{1}^{-1}z_1, C(Z_1)) = l$, then the multiplicity at $z_1$ of the polar varieties of $(Z_1,z_1)$ of codimension $d-l+1$ is zero. But then by Teissier's result, so is the multiplicity at $z_2$ of the the polar varieties of $(Z_2,z_2)$ of dimension $d-l+1$. Hence $\codim (c_{2}^{-1}z_2, C(Z_2)) = l$.   
\end{proof}
Note that dc is an open condition: by upper semicontinuity of fiber dimension if $Z$ is dc at $z_0$, then $Z$ is dc at $z$ close enough to $z_0$.  Next we show that the dc property is stable under flat deformations. 

\begin{proposition}
Let $(X,0) \rightarrow (Y,y_0)$ be one-parameter flat deformation of a reduced equidimensional dc singularity $(X_{y_0},0) \subset (\mathbb{C}^n,0)$.
Then $X_y$ is dc for all $y$ close enough to $y_0$. 
\end{proposition}
\begin{proof}
Assume $(X,0) \subset (\mathbb{C}^{n+1},0)$. Let $J_{\mathrm{rel}}(X)$ be the relative Jacobian module of $(X,0) \rightarrow (Y,0)$, and let $\mathcal{F}$ be a free module that contains it. Set $\mathcal{F}(y):=\mathcal{F} \otimes_{\mathcal{O}_Y} k(y)$ for each point $y \in Y$. The image of $J_{\mathrm{rel}}(X)$ in $\mathcal{F}(y)$ is the Jacobian module $J(X_y)$ of $X_y$. Denote by $c \colon C_{\mathrm{rel}}(X) \rightarrow (X,0)$ the relative conormal space of $(X,0)$. Then $\dim C_{\mathrm{rel}}(X)=n$. If $\dim c^{-1}0 \leq n-3$, then $X$ is dc at each point sufficiently close to $0$, thus $X_y$ is dc for $y$ close enough to $0$. Suppose $\dim c^{-1}0 \geq n-2$. Set $T:= \{x \in X | \dim c^{-1}x \geq n-2\}$. Then $T$ is closed and nonempty with $\dim T \leq 1$. If $\dim c^{-1} x_y \leq n-3$ for $x_y \in X_y$ then $X_y$ is dc at $x_y$. Thus all non-dc points of the fibers $X_y$ are contained in $T$. If $T$ lies in $X_{y_0}$, then $X_y$ is dc for $y \neq y_0$. Suppose $T$ has a component $S$ such that its image in $Y$ is dense. By replacing $Y$ with a small enough neighborhood of $y_0$ we can assume that $S$ is finite over $Y$. We have $$H_{S}^{0}(\mathcal{F}^n/J_{\mathrm{rel}}(X)^n)\otimes_{\mathcal{O}_Y} k(y_0) \hookrightarrow  H_{S_{y_0}}^{0}(\mathcal{F}(y_0)^n/J(X_{y_0})^n).$$ Thus $\varepsilon(J_{\mathrm{rel}}(X))(y_0) \leq \varepsilon(J(X_{y_0}))$.
Because $X_{0}$ is dc at each point of $0$, then by Prp.\ \ref{vanishing of DCS} $\varepsilon(J(X_{y_0}))=0$ and so  $\varepsilon(J_{\mathrm{rel}}(X))(y_0)=0$. Applying the LVF and noting that the intersection number in its right-hand side is nonnegative we obtain  $\varepsilon(J_{\mathrm{rel}}(X))(y)=0$. 
By Prp.\ \ref{generic limit} $\varepsilon(J_{\mathrm{rel}}(X))(y)=\varepsilon(J(X_y))$ for $y$ close enough to $0$. Thus $\varepsilon(J(X_y))=0$ locally at each singular point of $X_y$. So by Prp.\ \ref{vanishing of DCS} $X_y$ is dc. 
\end{proof}

The next three results are the key ingredients in the proof of the main theorem of this section. The first one shows that the codimension of the fibers of conormal spaces can only increase under transverse maps. The second result due to Trivedi allows us to deform holomorphic maps in such a way so that the deformations become transverse to a predetermined collection of submanifolds in the target space. The third result due to Laksov and Buchweitz allows us to obtain deformations of singularities obtained from pullbacks of holomorphic maps. 

\begin{definition}
Let $M$ and $N$ be complex manifolds. Let $g \colon M \rightarrow N$ be a smooth map. We say that $g$ is transvserse to a submanifold $S$ of $N$ at a point $m \in M$ and we denote it by $g \pitchfork_{m} S$, if either $g(m) \not \in S$ or $g(m) \in S$ and $Dg_{m}(T_{m}M)+T_{g(m)}S=T_{g(m)}N$. If $V = \bigsqcup_{i=1}^{q} V_i$ is a stratification of a complex analytic variety $V \subset N$ and $K \subset M$, then by $g \pitchfork_{K} V$ we mean that $g$ is transverse to each stratum $V_i$ at all points in $K$. 
\end{definition}

Suppose $(V,0) \subset (\mathbb{C}^m,0)$ is a complex analytic variety. We say that $V = \bigsqcup_{i=1}^{q} V_i$ is Whitney A (respectively Whitney B) stratification if pairs of nearby strata satisfy Whitney condition A (respectively Whitney B). The existence of these stratifications was established by Whitney \cite{Whitney} as discussed in Sct.\ \ref{Whitney, Jacobian}. 

Let $g \colon (\mathbb{C}^n,0) \rightarrow (\mathbb{C}^m,0)$ be a holomorphic map.  Consider the diagram where the right square is cartesian:

\begin{displaymath}
\begin{CD}
T^{*}\mathbb{C}^n   @<<(dg)^{*}< g^{*}T^{*}\mathbb{C}^m @>g_{\pi}>> T^{*}\mathbb{C}^m\\
@VV\pi_{n}V @VV\pi V  @VV \pi_{m}V\\
\mathbb{C}^n @<<\mathrm{Id}< \mathbb{C}^n  @>g>> \mathbb{C}^m.
\end{CD}
\end{displaymath}
We say that $A$ is a {\it conic} subset of $T^{*}\mathbb{C}^m$ if its fibers are invariant in the fibers of $T^{*}\mathbb{C}^m$ under a multiplication with a nonzero complex number. For a closed conic subset $A$ of $T^{*}\mathbb{C}^m$ we say that $g$ is {\it noncharacteristic} with respect to $g$ if $g_{\pi}^{-1}(A) \cap \mathrm{Ker}((dg)^{*})$ is the
zero-section of $g^{*}T^{*}\mathbb{C}^m$. Denote by $T_{V}^{*}\mathbb{C}^{m}$ the closure in $\mathbb{C}^m \times \mathbb{C}^{m*}$ of the pairs of $(v, \eta_v)$ where $v$ is a smooth point of $V$ and $\eta_v$ is a conormal vector at $v$. Define $T_{X}^{*}\mathbb{C}^{n}$ analogously. Note that $T_{V}^{*}\mathbb{C}^{m}$ and $T_{X}^{*}\mathbb{C}^{n}$ are conical. By projectivizing with respect to vertical homotheties in $T^{*}\mathbb{C}^m$ and $T^{*}\mathbb{C}^n$ we obtain $\mathbb{P}(T_{V}^{*}\mathbb{C}^{m})=C(V)$ and $\mathbb{P}(T_{X}^{*}\mathbb{C}^{n})=C(X)$. 

\begin{theorem}\label{functoriality of conormal} Let $(X,0) \subset (\mathbb{C}^n,0)$ and $(V,0) \subset (\mathbb{C}^m,0)$ be equidimensional reduced complex analytic varieties with $g^{-1}V=X$ and $\codim X = \codim V$. Assume $g^{-1}V'$ is an irreducible component of $X$ for each irreducible component $V'$ of $V$. Assume that $g \pitchfork_{0}V$ where $V$ is given a Whitney B stratification. Let $x \in g^{-1}(0)$. If $V$ is dc at $0$, then so is $X$ at $x$.  
\end{theorem}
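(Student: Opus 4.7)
The plan is to combine Whitney stratification theory with the Lagrangian/microlocal functoriality of conormal spaces, invoking \cite[Thm.~3.1]{GRB} as the main technical input. The overall strategy is to transfer the stratification of $V$ to $X$ via $g$, establish the set-theoretic formula at smooth strata and then extend by limits, and finally extract the codimension bound from the non-characteristic condition encoded in transversality.

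First I would transfer the stratification. Given the Whitney B stratification $V=\bigsqcup V_i$, the transversality $g\pitchfork_0 V$ together with $\codim X=\codim V$ and the component-preservation hypothesis implies (by a classical result of Mather) that the preimages $X_i:=g^{-1}(V_i)$ form a Whitney stratification of $X$ near each $x\in g^{-1}(0)$, with $\dim X_i=\dim V_i-(m-n)$ and with transverse preimages of smooth points of $V$ being smooth points of $X$. At such a smooth $x_0\in X_i$ lying over $v_0\in V_i$, transversality gives $T_{x_0}X_i=(dg_{x_0})^{-1}(T_{v_0}V_i)$, and taking annihilators yields $(dg_{x_0})^{\ast}(T^{\ast}_{v_0}V_i)=T^{\ast}_{x_0}X_i$, establishing the set-theoretic formula at smooth points.

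Next I would extend the formula to the fiber over a prescribed $x\in g^{-1}(0)$. For the inclusion $C(X)_x\subseteq(dg)^{\ast}(g_{\pi}^{-1}(C(V)_0))$, take $H\in C(X)_x$ as a limit of tangent hyperplanes at smooth $x_n\to x$ in some stratum $X_i$, write each as $(dg_{x_n})^{\ast}(H_n')$ for $H_n'$ a tangent hyperplane to $V_i$ at $g(x_n)\to 0$, and pass to a subsequence with $H_n'\to H'\in C(V)_0$ (using Whitney A on $V$). For the reverse inclusion, start with $H'\in C(V)_0$ approximated by tangent hyperplanes at smooth $v_n\to 0$ in a stratum $V_i$; use transversality and the implicit function theorem to lift $v_n$ to smooth $x_n\to x$ in $X_i$, and transfer the convergence of conormal hyperplanes through $(dg_{x_n})^{\ast}$. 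The precise statement that this Lagrangian pullback is well-behaved under the non-characteristic condition is the content of \cite[Thm.~3.1]{GRB}.

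Finally, for the codimension bound I would use that transversality $g\pitchfork_0 V_i$ dualizes to $\ker(dg_0)^{\ast}\cap T^{\ast}_0V_i=0$ for every stratum $V_i$ through $0$; by Whitney A this extends to the non-characteristic condition $\ker(dg_0)^{\ast}\cap T^{\ast}_V\mathbb{C}^m|_0=\{0\}$. Hence the projective linear map $[dg_0]^{\ast}$ is regular on $C(V)_0$, and the required codimension inequality $\codim(C(X)_x,C(X))\geq\codim(C(V)_0,C(V))$ is read off from the dimension bookkeeping for this map combined with the Lagrangian (middle-dimensional) nature of $T^{\ast}_V\mathbb{C}^m$ and the conic invariance of its fiber at $0$. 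The dc conclusion is then immediate. I expect the main obstacle to be the reverse inclusion in the previous step, where one must simultaneously arrange smoothness in $X$, correct stratum membership in $V$, and convergence of the pulled-back conormal hyperplanes; this is precisely what the Lagrangian functoriality of \cite[Thm.~3.1]{GRB} is designed to deliver, but verifying its applicability requires careful bookkeeping of the non-characteristic condition along every stratum.
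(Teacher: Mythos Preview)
Your overall strategy is in the right spirit but contains two concrete gaps, and you have misidentified the role of \cite[Thm.~3.1]{GRB}.

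\textbf{The reverse inclusion.} Your lifting argument cannot work as stated: since $m\geq n$, the map $g:\mathbb{C}^n\to\mathbb{C}^m$ has image of dimension at most $n$, so a generic smooth point $v_n\in V$ near $0$ does \emph{not} lie in $g(\mathbb{C}^n)$, and there is nothing to lift. The paper avoids this entirely. It first uses \cite[Thm.~3.1]{GRB} --- which is \emph{not} a Lagrangian-functoriality statement but rather says that under Whitney~B the conormal fiber $C(X)_x$ equals the conormal fiber $C(X_x)_x$ of a transversal slice --- to reduce both sides to slices $X_x\subset T_{X_2}$ and $V_0\subset T_{V_2}$ on which $g$ restricts to a diffeomorphism. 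Then, to get the reverse inclusion, it does not chase limits at all: it shows that the Liouville $1$-form $\omega_n$ vanishes on $\Lambda':=(dg)^*g_\pi^{-1}(T^*_{V_0}\mathbb{C}^m)$ (using $\omega_m|_{T^*_{V_0}\mathbb{C}^m}\equiv 0$, the chain rule $g_\pi^*\omega_m=(dg)^*\omega_n$, and properness of $(dg)^*$ on $\Lambda$), so $\Lambda'$ is Lagrangian and hence, by \cite[Prp.~2.2]{Flores}, equals the conormal of its image $X_x$. This microlocal argument from \cite{KS} and \cite{Sc} is the substitute for your lifting step.

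\textbf{The codimension bound.} Knowing only that $[dg_0]^*$ is regular on $C(V)_0$ (the non-characteristic condition at the single point $0$) gives $\dim C(X)_x\leq\dim C(V)_0$, hence
\[
\codim(C(X)_x,C(X))=(n-1)-\dim C(X)_x\geq (m-1)-\dim C(V)_0-(m-n)=\codim(C(V)_0,C(V))-(m-n),
\]
which is too weak when $m>n$. What is needed is $\dim C(X)_x\leq\dim C(V)_0-(m-n)$. The paper obtains this by a global argument: the proper map $\mathbb{P}(\Lambda)\to\mathbb{P}(\Lambda')$ has generic fiber of dimension $m-n$ (since $\dim\Lambda=m$ and $\dim\Lambda'=n$, both being Lagrangian), and upper semi-continuity of fiber dimension then forces every fiber over $\Lambda'_x$ to have dimension at least $m-n$, yielding $\dim\Lambda_x\geq\dim\Lambda'_x+(m-n)$. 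Your pointwise linear-algebra reading cannot recover this drop.
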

\begin{proof} Let $(V,0)= \bigsqcup_{i=1}^{q} V_i$ be a Whitney $B$ stratification of $(V,0)$. Suppose $V_1$ is the smooth locus of $V$ and $0 \in V_2$. First we show that there exists a neighborhood $U$ of $0$ in $(\mathbb{C}^m,0)$ such that $g$ is transverse to each point in $U \cap V_i$ for $i=1, \ldots q$. Note that $f$ is transverse at $U \cap V_2$ for sufficiently small $U$ by openess of transversality. We will show that $g$ is transverse to $U \cap V_1$ for $U$ sufficiently small (the proof for the rest of the strata is the same). Suppose $x_1, x_2, \ldots$ is a sequence of points in $X$ converging to $x$ with $x \in g^{-1}(0)$ such that $g$ fails to be transverse at $g(x_1), g(x_2), \ldots$ Then $Dg(T_{x_j}\mathbb{C}^n)+T_{g(x_j)}V_1 \neq T_{g(x_j)}\mathbb{C}^m$ for $j=1,2, \ldots$. 
Assume that as $x_j \rightarrow x$ the linear spaces $T_{g(x_j)}V_1 \rightarrow T$. Because $(V_1,V_2)$ satisfies Whitney condition A at $0$ it follows that 
$T_{0}V_2 \subset T$. This would imply that $Dg(T_{x}\mathbb{C}^n)+T_{0}V_2 \neq T_{0}\mathbb{C}^m$ which contradicts our assumption $g \pitchfork_0 V_1$. Replace $V_i$ by $U \cap V_i$ for each $i$ where $U$ is a sufficiently small neighborhood of the origin in $\mathbb{C}^m$. Because $g$ is transverse to $V_i$, then $\bigsqcup_{i=1}^{q}g^{-1}(V_i)$ is a Whitney B stratification of $X$ (see \cite[pg.\ 257]{Sc}). Set $X_1:=g^{-1}V_1$ and $X_2:=g^{-1}V_2$. 

Suppose $n > m$. Then $\dim X_2 = n-m+\dim V_2 \geq 1$. Because $(X_2,X_1)$ satisfies Whitney B at $x$, then by Thm.\ \ref{Teissier} the exceptional divisor $D_{X_2}$ of $\mathrm{Bl}_{c^{-1}(X_2)}C(X)$ is equidimensional. Thus $\dim D_{X_2}(x)=n-\dim X_2 - 2$. But $D_{X_2}(x)$ surjects onto $C(X)_x$. So $X$ is dc at $x$.

Suppose $n \leq m$. Let $T_{X_2}$ be a transversal of $X_2$ through $x$. Because $g$ is transverse at $V_2$, it follows from the local diffeomorphism theorem, after $V_2$ and $X_2$ are replaced by small enough neighborhood around $0$ and $x$, that $(T_{V_2},0):=g((T_{X_2},x))$ is transversal of $V_2$ through $0$ and it is diffeomorphic to $T_{X_2}$. Let $X_x$ be the fiber over $x$ of the transverse retraction from $X$ to $X_2$ induced by $T_{X_2}$. Similarly, denote by $V_0$ the fiber over $0$ of a transverse retraction from $V$ to $V_2$ induced by $T_{V_2}$.

Because $(X_2,x)$ is smooth, then its defining equations are part of regular system of parameters in $\mathcal{O}_{X,x}$. Thus the family $(X,x) \rightarrow (X_2,x)$ obtained from the transversal retraction induced by $T_{X_2}$ has equidimensional fibers.  Because $(X_1,X_2)$ satisfies Whitney B at $x$, by  \cite[Thm.\ 3.1]{GRB}
set-theoretically, $C(X)_x = C(X_x)_x$ where the conormal spaces are taken in $\mathbb{C}^n$ and $C(X)_x$ is the fiber of $C(X)$ over $x$. 

By assumption the inverse image of each irreducible component of $(V,0)$ is an irreducible component of $(X,x)$. Thus the family $(V,0) \rightarrow (V_2,0)$ obtained from the transversal retraction induced by $T_{V_2}$ has equidimensional fibers. Hence by  \cite[Thm.\ 3.1]{GRB}
we have a set-theoretic equality $C(V)_0 = C(V_0)_0$. Our goal is to show that set-theoretically $C(X_x)_x = (dg)^{*}(g_{\pi}^{-1}(C(V_0)_0))$.

By hypothesis we can arrange $T_{V_2}$ such that the set in $V_0$ where the preimage of $g$ is empty does not contain a component of $V_0$. Thus $g((X_0)_{\mathrm{sm}})$ is dense in $V_0$. Because $g$ is a diffeomorphism between $T(X_2)$ and $T(V_2)$ it follows that $g$ is a bijection between $(X_0)_{\mathrm{sm}}$ and $g((X_0)_{\mathrm{sm}})$, and the last set is open.

Set $\Lambda:= g_{\pi}^{-1}(T_{V_0}^{*}\mathbb{C}^m)$ where $T_{V_0}^{*}\mathbb{C}^m$ is the closure of $(v, \eta_v)$ in $\mathbb{C}^m \times \mathbb{C}^{m*}$ with $v$ a smooth point of $V_0$ and $\eta_v$ a conormal vector at $v$. Set $\Lambda':= dg^{*}(\Lambda)$. Because the restriction of $g$ is a diffeomorphism between $T_{X_2}$ and $T_{V_2}$, then $g_{\pi}^{-1}(A) \cap \mathrm{Ker}((dg)^{*})$ is the zero section for every closed conic $A \subset T_{T_{V_2}}^{*}\mathbb{C}^m$. Thus $g$ is noncharacteristic with respect to 
$\Lambda$. In particular, by Lem.\ 4.3.1 in \cite[pg. 255]{Sc} $(dg)^{*} \colon \Lambda \rightarrow \Lambda'$ is proper.

Let $\omega_n$ and $\omega_m$ be the Liouville $1$-forms on $T^{*}\mathbb{C}^n$ and $T^{*}\mathbb{C}^m$ respectively. We have $$\omega_m | T_{V_0}^{*}\mathbb{C}^m \equiv 0 \implies g_{\pi}^{*}\omega_m | \Lambda \equiv 0 \implies (dg)^{*}\omega_{n} | \Lambda \equiv 0 \implies \omega_n | \Lambda' \equiv 0$$
where the last implication follows from the properness of the map $(dg)^{*}$ and Prp.\ 8.3.11 in \cite[pg. 332]{KS}. Because $\Lambda'$ is closed and $\omega_n$ vanishes at at all tangent vectors to $\Lambda'$ at each of its smooth points, then by \cite[Prp.\ 2.2]{Flores} $\Lambda'$ equals the conormal of its image under $\pi_n$, i.e.\ $\Lambda' = T_{X_x}^{*}\mathbb{C}^{n}$. Note that $\mathbb{P}(T_{V_0}^{*}\mathbb{C}^m)=C(V_0)$ and $\mathbb{P}(T_{X_x}^{*}\mathbb{C}^n)=C(X_x)$. Thus, set-theoretically, $C(X)_x = (dg)^{*}(g_{\pi}^{-1}(C(V)_0))$.

Let $f_1(y_1, \ldots, y_m), \ldots, f_s(y_1, \ldots, y_m)$ be equations for $V_0$ in $(\mathbb{C}^m,0)$. Observe that $\mathbb{P}(\Lambda)$ is the $\mathrm{Projan}$ of the Rees algebra of the coherent $\mathcal{O}_{X_{x},x}$-module generated by the columns of the matrix $(\frac{\partial f_i}{\partial y_j} \circ g)$. In particular,  $\mathbb{P}(\Lambda)$ is analytic. Because $\mathbb{P}(\Lambda) \rightarrow \mathbb{P}(\Lambda')$ is a proper map between analytic spaces whose generic fiber is of dimension $m-n$, by upper semi-continuity of fiber dimension and the dimension formula we get  $\dim (\Lambda_x) \geq  \dim \Lambda'_x + m-n$. Because $\Lambda_x = C(V)_0$ and $\Lambda'_x = C(X)_x$, then  $$\codim (C(X)_x,C(X)) \geq \codim (C(V)_0,C(V)).$$ In particular, if $V$ is dc at $0$, then so is $X$ at $x$.
\end{proof}





Denote by $\mathcal{H}(M,N)$ the complete metric space of holomorphic maps between two complex manifolds $M$ and $N$ with the weak topology induced by the weak topology of $C^{\infty}(M,N)$. The following result due to Trivedi is a generalization of Thom's classical transversality result \cite{Thom} to the complex analytic setting. 

\begin{theorem}[Trivedi \cite{Trivedi}, Thm.\ 2.1 and Thm.\ 3.1]\label{Trivedi} Let $M$ be a Stein manifold and $N$ be an Oka manifold. Let $V$ be a complex analytic variety in $N$ and let $V = \bigsqcup_{i=1}^{u} V_i$ be a Whitney A stratification. Then for any compact subset $K$ in $M$, the set of maps $ \{g \in \mathcal{H}(M,N): g \pitchfork_{K} V_i\}$ is open and dense in $\mathcal{H}(M,N)$.
\end{theorem}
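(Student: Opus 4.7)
The plan is to establish openness and density separately and then combine them via the Baire category theorem. First I would exhaust $M$ by an increasing sequence of $\mathcal{O}(M)$-convex compact Stein subsets $K_j$, so that it suffices to show: for each stratum $V_i$ and each $K_j$ the set $\mathcal{T}_{i,j} := \{g \in \mathcal{H}(M,N) : g \pitchfork_{K_j} V_i\}$ is both open and dense. The full conclusion then follows since $\mathcal{H}(M,N)$ is Baire and the required set of maps is a countable intersection $\bigcap_{i,j} \mathcal{T}_{i,j}$, whose openness on each $K$ is immediate from the openness at the individual $\mathcal{T}_{i,j}$.

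For openness I would argue stratum by stratum using Whitney condition $A$. Suppose $g_n \to g$ in $\mathcal{H}(M,N)$ and each $g_n$ fails to be transverse to $V_i$ at some $x_n \in K_j$. Passing to a subsequence, $x_n \to x \in K_j$ and $y_n := g_n(x_n) \to y := g(x)$. If $y \in V_i$, the failure of transversality passes directly to $g$ at $x$ by continuity of $Dg$ and the local diffeomorphism of the tangent bundle of $V_i$. If instead $y$ lies in a deeper stratum $V_\ell \subset \overline{V_i}$, then by Whitney condition $A$ any limit of the tangent spaces $T_{y_n}V_i$ contains $T_y V_\ell$; a dimension count then forces $Dg_x(T_x M) + T_y V_\ell \neq T_y N$, so $g$ itself is not transverse to $V_\ell$ at $x$. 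Either way the complement of $\mathcal{T}_{i,j}$ is closed.

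Density is the hard part and is where the Oka hypothesis on $N$ is essential. The strategy is the parametric Thom trick: given $g$ and a compact $K \Subset K' \Subset M$ of Stein type, I would construct a holomorphic family $g_t \in \mathcal{H}(M,N)$ with $t \in B \subset \mathbb{C}^L$ and $g_0 = g$, such that the evaluation $\Phi \colon K' \times B \to N$, $\Phi(x,t) := g_t(x)$, is a holomorphic submersion on a neighborhood of $K \times \{0\}$. Granting such a $\Phi$, the preimage $\Phi^{-1}(V_i)$ inherits the Whitney $A$ stratification from $V$; the holomorphic version of Sard's theorem applied to the projection $\Phi^{-1}(V_i) \to B$ then produces a dense set of arbitrarily small $t$ for which $g_t \pitchfork_K V_i$ simultaneously across all strata, proving density.

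The main obstacle is the construction of $\Phi$ on an Oka target. Here one uses Forstneri\v{c}'s characterization of Oka manifolds via the existence of a dominating holomorphic spray $s \colon N \times \mathbb{C}^L \to N$ with $s(y,0)=y$ and $\partial_t s(y,0) \colon \mathbb{C}^L \to T_y N$ surjective; combined with the Stein hypothesis on $M$, this lets one define $g_t(x) := s(g(x), \psi(x) t)$ for a suitable holomorphic section $\psi$ on $K'$, and then use Oka--Weil approximation to globalize the perturbation from $K'$ to all of $M$ without leaving $\mathcal{H}(M,N)$. The delicate interplay between the spray construction on the Oka side, the Stein approximation on the source side, and parametric Sard is the technical heart of Trivedi's argument; once the submersive family $\Phi$ is in hand, the transversality machinery runs exactly as in the smooth Thom theorem.
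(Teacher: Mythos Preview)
The paper does not prove this theorem: it is quoted verbatim from Trivedi \cite{Trivedi} (Thm.\ 2.1 and Thm.\ 3.1) and used as a black box in the proof of the main result of Section~\ref{deficient section}. So there is no ``paper's own proof'' to compare your proposal against.

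That said, your sketch is a faithful outline of how Trivedi's argument actually goes: openness via Whitney~A plus a limit argument, and density via the parametric Thom--Sard trick using a dominating spray on the Oka target and Stein approximation on the source. One small organizational slip: your openness argument does not show that each individual $\mathcal{T}_{i,j}$ is open. When the limit point $y$ falls into a deeper stratum $V_\ell$, you conclude that $g$ fails transversality to $V_\ell$, not to $V_i$; so what you have actually shown is that the \emph{intersection} $\bigcap_i \mathcal{T}_{i,j}$ is open. This is harmless for the final statement (which concerns transversality to all strata simultaneously, cf.\ the definition of $g \pitchfork_K V$ just before the theorem), but it means the Baire step should be run over $j$ only, with the strata handled together at each stage.
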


Next we record a key result due to Laksov in the algebraic case and due to Buchweitz in the complex analytic case which allows to obtain flat deformations of varieties obtained from pullbacks of holomorphic maps between complex affine spaces by deforming these maps. 

\begin{theorem}\rm{(\cite[Sct.\ 3]{Laksov} and \cite[4.3.4]{Bu81}\label{Buchweitz}).}
Let $F \colon (\mathbb{C}^n,0) \rightarrow (\mathbb{C}^m,0)$ be a holomorphic map. Let $(X,0) \subset (\mathbb{C}^n,0)$ and $(V,0) \subset (\mathbb{C}^m,0)$ be complex analytic varieties with $\codim X = \codim V$ and $g^{-1}V=X$. Assume $g^{-1}V'$ is an irreducible component of $X$ for each irreducible component $V'$ of $V$ and assume that $(V,0)$ is Cohen--Macaulay. Then for each unfolding $\tilde{F} \colon (\mathbb{C}^n \times \mathbb{C}^k,0) \rightarrow (\mathbb{C}^m,0)$ the family $\tilde{F}^{-1}V \rightarrow (\mathbb{C}^k,0)$ is a flat deformation of $X$. 
\end{theorem}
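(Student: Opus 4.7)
The strategy is to apply Hironaka's miracle flatness criterion to the local ring homomorphism $R := \mathcal{O}_{\mathbb{C}^k,0} \to S := \mathcal{O}_{\tilde F^{-1}(V),0}$ induced by projection to the unfolding parameters. Since $R$ is regular and the closed fibre $S/\mathfrak{m}_R S$ equals $\mathcal{O}_{X,0}$, it suffices to prove that $S$ is Cohen--Macaulay of dimension $\dim R + \dim \mathcal{O}_{X,0} = k + (n-c)$, where $c := \codim V = \codim X$.

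First I would rewrite $\tilde F^{-1}(V) = \Phi^{-1}(V \times \mathbb{C}^k)$, where $\Phi(x,t) := (\tilde F(x,t), t)$ is a morphism between smooth germs and $V \times \mathbb{C}^k$ is Cohen--Macaulay of codimension $c$ in $(\mathbb{C}^{m+k},0)$. This reduces the claim to a purely algebraic statement: a pullback, under a morphism between regular local rings, of a CM subscheme having the expected codimension is again CM of that codimension. For the dimension bound, the hypothesis $\codim X = c$ together with the matching of irreducible components yield that, on the central fibre, the pullback ideal reduces to an ideal of height $c$ in $\mathcal{O}_{\mathbb{C}^n,0}$; since $\mathcal{O}_{\mathbb{C}^{n+k},0}$ is $R$-flat and a height-$c$ ideal in a regular ring always contains a regular sequence of length $c$, such a sequence lifts to $\mathcal{O}_{\mathbb{C}^{n+k},0}$, cutting out a complete intersection $\tilde W \supset \tilde F^{-1}(V)$ of codimension $c$. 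This forces $\dim S \leq n+k-c$.

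For Cohen--Macaulayness, I would invoke Auslander--Buchsbaum in the regular ring $\mathcal{O}_{\mathbb{C}^{m+k},0}$ to produce a free resolution of $\mathcal{O}_{V \times \mathbb{C}^k}$ of length exactly $c$. Pulling it back along $\Phi^*$ yields a complex of free $\mathcal{O}_{\mathbb{C}^{n+k},0}$-modules whose $H_0$ is $S$, with higher homology given by the Tor modules $\mathrm{Tor}^{\mathcal{O}_{\mathbb{C}^{m+k},0}}_i(\mathcal{O}_{V \times \mathbb{C}^k}, \mathcal{O}_{\mathbb{C}^{n+k},0})$. If these vanish for $i \geq 1$, the pulled-back complex is a length-$c$ free resolution of $S$, so Auslander--Buchsbaum in $\mathcal{O}_{\mathbb{C}^{n+k},0}$ yields $\operatorname{depth} S \geq n+k-c$. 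Combined with the previous dimension bound, one obtains $\dim S = \operatorname{depth} S = n+k-c$, so $S$ is Cohen--Macaulay of the required dimension, and miracle flatness concludes.

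The main obstacle is the Tor-vanishing step. The Koszul resolution on the lifted regular sequence yields vanishing for the complete intersection $\tilde W$ directly; propagating this to $\mathcal{O}_{V \times \mathbb{C}^k}$ uses the fact that $\mathcal{O}_V$ is a maximal Cohen--Macaulay module over a suitable complete intersection in $\mathcal{O}_{\mathbb{C}^{m+k},0}$, together with a change-of-rings spectral sequence. Equivalently, one may invoke directly the classical ``CM-pullback'' principle: morphisms between regular schemes that preserve the codimension of a CM subscheme produce CM pullbacks. Either way, this homological vanishing is precisely where the Cohen--Macaulay hypothesis on $V$ is used in an essential way.
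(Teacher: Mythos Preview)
The paper does not prove this statement; it is quoted as a result of Buchweitz (his thesis, \cite{Bu81}, 4.3.4) and used as a black box. So there is no ``paper's own proof'' to compare against.

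Your approach via miracle flatness is the standard one and is essentially correct. Two comments on the execution. First, the dimension bound $\dim S \le n+k-c$ does not need your regular-sequence lifting argument (which is a bit muddled as written): it follows immediately from the general inequality $\dim S \le \dim R + \dim(S/\mathfrak m_R S)$ for local homomorphisms of Noetherian local rings (e.g.\ Matsumura, \emph{Commutative Ring Theory}, Thm.~15.1), since $\dim R = k$ and $\dim(S/\mathfrak m_R S) = \dim \mathcal O_{X,0} = n-c$ by hypothesis. Second, once you have $\operatorname{codim}\tilde F^{-1}(V) \ge c$, the Tor-vanishing step is most cleanly handled by the Peskine--Szpiro acyclicity lemma (or the Buchsbaum--Eisenbud exactness criterion): pull back a length-$c$ free resolution of $\mathcal O_{V\times\mathbb C^k}$ over the regular ring $\mathcal O_{\mathbb C^{m+k},0}$; the resulting complex has homology supported on a set of codimension $\ge c$, hence is acyclic. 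This gives $\operatorname{pd}_{\mathcal O_{\mathbb C^{n+k},0}} S \le c$, so Auslander--Buchsbaum yields $\operatorname{depth} S \ge n+k-c$, forcing $S$ Cohen--Macaulay of the right dimension, and miracle flatness concludes. Your change-of-rings spectral sequence remark is a valid alternative but less direct.
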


Denote by $\Sigma^{u}$ the subvariety of $\mathrm{Hom}(\mathbb{C}^{l}, \mathbb{C}^{l+s})$ consisting of linear maps of rank less than $u$. Consider the map $F \colon (\mathbb{C}^{n},0) \rightarrow \mathrm{Hom}(\mathbb{C}^{l}, \mathbb{C}^{l+s})$ given by a $l+s$ by $l$ matrix $M_X$ with entries complex analytic functions in a neighborhood of the origin in $\mathbb{C}^{n}$. We say $X:=F^{-1}\Sigma^{u}$ is {\it determinantal of type} $(l+s,l,u)$ ($X$ consists of the points for which $\mathrm{rk}(M_X) < u$) if $\codim X = \codim \Sigma^u = (l+s-u+1)(l-u+1)$ in $\mathbb{C}^n$. 

An isolated determinantal singularity $X$ of type $(l+s,l,u)$ is smoothable if $\dim X \leq 2l+s-2u+2$ \cite[Thm.\ 6.2]{Wahl81}. By the Hilbert--Burch theorem \cite[Sct.\ 20.4]{EisenbudCM} Cohen--Macaulay codimension $2$ varieties are determinantal of type $(l+1,l,l)$. In particular, they are smoothable up to dimension $3$ (\cite{Schaps}, cf.\ pg.\ 19--20 in \cite{Artin}) and in general nonsmoothable in dimension $4$ and higher unless they are complete intersections (M.\ Zach, priv.\ comm., 2019). 

Denote by $\mathrm{Skew}_q$ the space of $q \times q$ skew-symmetric matrices. Depending on the parity of $q$ we write $q=2l$ or $q=2l+1$. Denote by $\Pi_{2u} \subset \mathrm{Skew}_q$ the  generic Pfaffian variety defined by the Pfaffians of the $2u \times 2u$ skew-symmetric submatrices of the generic $q \times q$ skew-symmetric matrix with entries $q(q-1)/2$ indeterminates. Consider the map $F\colon (\mathbb{C}^n,0)\rightarrow \mathrm{Skew}_q$ given by a skew-symmetric $q \times q$ matrix $S_X$ with entries complex analytic functions in a neighborhood of the origin in $\mathbb{C}^n$. We say $X:=F^{-1}\Pi_{2u}$ is Pfaffian  of type $(q,2u)$ ($X$ consists of the points for which $\mathrm{rk}(S_X) < 2u$) if $\codim X = \codim \Pi_{2u} = \binom{q-2u+2}{2}$ (see \cite[Sct.\ 5]{KL80}).

Isolated Pfaffian singularities of a  $(2l+1) \times (2l+1)$ skew-symmetric matrix are smoothable if $\dim X \leq 4(l-u)+6$ \cite[Thm.\ 6.3]{Wahl81}. Gorenstein codimension $3$ singularities are Pfaffian with $q=2l+1$ and $u=l$ by the structure theorem of Buchsbaum and Eisenbud \cite[Thm.\ 2.1]{BE77}. 

The following theorem, which combines theorems \ref{functoriality of conormal}, \ref{Trivedi} and \ref{Buchweitz}, is the main result of this section. 
\begin{theorem} Let $F \colon (\mathbb{C}^n,0) \rightarrow (\mathbb{C}^m,0)$ be a holomorphic map. Let $(X,0) \subset (\mathbb{C}^n,0)$ and $(V,0) \subset (\mathbb{C}^m,0)$ be reduced complex analytic varieties with $\codim X = \codim V$ and $g^{-1}V=X$. Assume $g^{-1}V'$ is an irreducible component of $X$ for each irreducible component $V'$ of $V$. Assume $(V,0)$ is dc and Cohen--Macaulay. Then there exists an embedded flat deformation $(\mathcal{X},0) \rightarrow (W,0)$ of $X$ such that $\mathcal{X}_{w}$ is dc for generic $w$. In particular, determinantal and Pfaffian singularities deform to dc singularities with two exceptions: 
\begin{enumerate}
    \item [\rm{(i)}] in the determinantal case when  $n \geq m:=l^2$ and $V$ is the affine cone over the Segre embedding of $\mathbb{P}^{l-1} \times \mathbb{P}^{l-1}$ in $\mathbb{P}^{l^2-1}$; 
    \item [\rm{(ii)}] in the Pfaffian case when $n \geq m:=\binom{2l}{2}$ and $V$ is the affine cone over the Pl\"{u}cker embedding of $\mathrm{Gr}(2,2l)$ in $\mathbb{P}(\bigwedge^2\mathbb{C}^{2l})$.
\end{enumerate}
\end{theorem}
\begin{proof}
Let $V = \bigsqcup_{i=1}^{u} V_i$ be a Whitney B stratification. Deforming the entries of $F$ by generic linear forms on $(\mathbb{C}^m,0)$ by Thm.\ \ref{Buchweitz} we obtain an unfolding $\tilde{F}$ of $F$ such that $\tilde{F}^{-1}V \rightarrow (\mathbb{C}^m,0)$ is a flat deformation of $X$. Set $\mathcal{X}:= \tilde{F}^{-1}V$ and $W:=(\mathbb{C}^m,0)$. By Thm.\ \ref{Trivedi} $\tilde{F}_w \pitchfork_{0}V$ for generic $w$ as complex affine spaces are Oka and Stein. Because $\tilde{F}_w \pitchfork_{v}V$
and $V$ is dc at $v$ for $v$ close enough to $0$, we can assume that $\tilde{F}^{-1}0$ is nonempty. By Thm.\ \ref{functoriality of conormal} $\mathcal{X}_w$ is dc. 

Each determinantal or Pfaffian variety $X$ is obtained as $F^{-1}V$ where $V$ is the generic determinantal or Pfaffian singularity of appropriate sizes. Note that $V$ is irreducible and Cohen--Macaulay by a result of Eagon and Hochster \cite{EH71} in the determinantal case, and by a result of Kleppe and Laksov \cite{KL80} in the Pfaffian case. In the determinantal case $\codim(C(V)_0, C(V))$ is $(u-1)(s+u-1)$ by  \cite[Prp.\ 2.6]{GR} which is greater or equal to $2$ unless $u=2$ and $s=0$. In this case $V$ is the affine cone over the Segre embedding of $\mathbb{P}^{l-1} \times \mathbb{P}^{l-1}$ in $\mathbb{P}^{l^2-1}$. By  \cite[Prp.\ 2.6]{GR} $C(V)_0$ is the hypersurface in  $\mathbb{P}^{l^2-1}$ cut out by the determinant of the generic $l \times l$ matrix. 

If $n<l^2$, then $\tilde{F}_w$ will miss the singular locus of $V$ which is the origin. Thus by the implicit function theorem $\mathcal{X}_w: = \tilde{F}_{w}^{-1}V$ is smooth. If $n=l^2$ and $X=V$, then $X$ is rigid (e.g.\ \cite[Thm.\ 2.2.8]{Land}), hence $X$ cannot be deformed to a dc singularity.

Suppose $X$ is Pfaffian with $V=\Pi_{2u}$. Then by \cite[Thm.\ 5.9]{LS17} $C(V)_0$ is $\Pi_{q-2u+4}$ if $q$ is even and $\Pi_{q-2u+3}$ if $q$ is odd. The only 
way $\codim (C(V)_0,C(V)) = 1$ is if $q=2l$ and $u=2$. Then $V$ is the affine cone over the image of the Pl\"ucker embedding of $\mathrm{Gr}(2,2l)$ in $\mathbb{P}(\bigwedge^2 \mathbb{C}^{2l})$. If $n<\binom{2l}{2}$, then by the implicit function theorem $\mathcal{X}_w$ is smooth. If $n=\binom{2l}{2}$ and $X=V$, then $X$ is rigid by \cite[Thm.\ 5.3]{Sv75} and thus cannot be deformed to a dc singularity.  
\end{proof}
An immediate consequence of the structure theorems of Hilbert--Buch and Buchsbaum--Eisenbud is that the versal deformation space of a Cohen--Macaulay codimension $2$ or a Gorenstein codimension $3$ singularity is smooth (see pg.\ 67--68 and pg.\ 76 in \cite{Har}). Thus we have following corollary. 
\begin{corollary}\label{CMGor}
Suppose $(X,0)$ is a Cohen--Macaulay codimension $2$ or a Gorenstein codimension $3$ singularity. Then the generic deformation of $(X,0)$ is dc. 
\end{corollary}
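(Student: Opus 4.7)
The plan is to combine the preceding theorem with the smoothness of the versal deformation space. First I would invoke the structure theorems: by Hilbert--Burch, a Cohen--Macaulay codimension $2$ singularity $X$ is cut out by the maximal minors of an $(l+1)\times l$ matrix of holomorphic functions, which realises $X = F^{-1}(V)$ where $F\colon (\mathbb{C}^n,0) \to \mathrm{Hom}(\mathbb{C}^l,\mathbb{C}^{l+1})$ and $V$ is the generic determinantal variety of rank $\le l-1$. In the type-notation of the paper this corresponds to $u=l$ and $s=1$, so the excluded case $(u=2,\,s=0)$ does not occur. Similarly, by Buchsbaum--Eisenbud, a Gorenstein codimension $3$ singularity is realised as $F^{-1}(V)$ where $V$ is the Pfaffian variety cut out by the $2l\times 2l$ Pfaffians of a $(2l+1)\times(2l+1)$ skew-symmetric matrix, so $u=l$ and the Pfaffian hypothesis of the theorem applies.

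Next, as the paper recalls (citing Hartshorne's deformation theory), the structure theorems imply that the miniversal deformation bases of these two classes of singularities are smooth, hence irreducible; in particular there is a single component $W$ and the phrase \emph{generic deformation} is unambiguous. The preceding theorem produces, by unfolding the entries of $F$ with generic linear forms, a flat embedded deformation $\mathcal{X} \to W'$ of $X$ whose generic fibre $\mathcal{X}_{w'}$ is a dc singularity. By the defining universal property of the miniversal deformation, this deformation is induced from the miniversal family $\mathcal{X}_{\mathrm{vers}} \to W$ by a base-change map $\varphi\colon W' \to W$.

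Finally, I would transfer the dc property from $\mathcal{X}_{w'}$ back to the miniversal family. The dc locus in $W$, namely
\[
W_{\mathrm{dc}} := \{\, w \in W : \codim(c_w^{-1}(S_w),\, C(\mathcal{X}_{\mathrm{vers},w})) \ge 2 \,\},
\]
is Zariski open: it is the complement of the set on which the fibres of the projective morphism $c_w$ over $S_w$ attain maximal dimension, and so openness follows from upper semi-continuity of fibre dimension applied to the conormal family over $W$. Since $\varphi^{-1}(W_{\mathrm{dc}})$ contains the Zariski dense subset of $W'$ over which $\mathcal{X}_{w'}$ is dc, it is nonempty; because $W$ is irreducible, $W_{\mathrm{dc}}$ is therefore Zariski dense in $W$, which proves the corollary.

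The only delicate point I foresee is ensuring that the deformation $\mathcal{X}\to W'$ obtained by unfolding the presentation matrix genuinely factors through the miniversal deformation as an \emph{embedded} deformation (so that the two conormal families can be compared fibrewise); this is exactly where the smoothness and irreducibility of $W$ given by Hilbert--Burch and Buchsbaum--Eisenbud are essential, since they guarantee that every embedded deformation of $X$ of the type produced by unfolding $F$ is captured by $W$, so that the open non-emptiness argument in the last paragraph goes through.
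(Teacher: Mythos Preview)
Your proof is correct and follows essentially the same outline as the paper's: invoke Hilbert--Burch and Buchsbaum--Eisenbud to realise $X$ as determinantal of type $(l+1,l,l)$ (so $s=1$, avoiding the excluded case) or Pfaffian with $u=l$, apply the preceding theorem to obtain a deformation whose generic fibre is dc, and then use smoothness (hence irreducibility) of the miniversal base to conclude.

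The one place where your route differs slightly from the paper's is the transfer step. You argue that the dc locus $W_{\mathrm{dc}}\subset W$ is Zariski open by upper semi-continuity of fibre dimension, pull it back along the classifying map $\varphi\colon W'\to W$, and use irreducibility of $W$ to conclude it is dense. The paper's implicit argument is more direct: the Hartshorne reference (pp.\ 67--68 and 76) gives the stronger consequence of the structure theorems, namely that \emph{every} flat deformation of $X$ arises from deforming the entries of the presentation matrix, so the unfolding family $\mathcal{X}\to W'$ already surjects onto the miniversal base. Thus a generic $w'\in W'$ maps to a generic $w\in W$ and no openness argument is needed. Your approach is perfectly valid and has the mild advantage of being reusable whenever one has \emph{some} dc deformation and an irreducible versal base, without needing the full ``all deformations are matrix deformations'' statement; the paper's approach is shorter because it exploits that stronger input.
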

\begin{example}
\rm{We give another example of isolated singularities whose nearby flat deformations are not dc. Let $Z$ be an abelian variety and $\mathcal{L}$ an ample line bundle on $Z$. Then $\mathcal{L}^3$ gives a projectively normal embedding  $Z \hookrightarrow \mathbb{P}^{N-1}=\mathbb{P}H^{0}(\mathcal{L})$ where $N= \dim H^{0}(Z,\mathcal{L})$ (see \cite{Koizumi}). Denote by $\Theta$ the tangent sheaf of $Z$. Then for nonzero $i$ we have $H^{1}(\Theta(i))=0$, because the tangent bundle of any group variety is trivial, and $H^{1}(\mathcal{O}_{Z}(i))=0$ by the vanishing theorem in \cite[pg.150]{Mumford2}. Denote by $C_Z$ the affine cone over $Z$. By \cite[Thm.\ 2, pg.159]{Sch} the versal deformation space of of $C_Z$ is isomorphic to the versal deformation space of $Z$ in $\mathbb{P}^{N-1}$. So, every deformation of $C_Z$ is conical. In particular, $C_Z$ is not smoothable (this was proved without the hypothesis of projective normality by Koll\'ar and Kov\'{a}cs in \cite{KK18}). 

Let  $C_{\mathcal{Z}} \rightarrow Y$ be a deformation of $C_Z$ with $(C_{\mathcal{Z}})_{y_0}=C_{Z}$ induced by a deformation $\mathcal{Z} \rightarrow Y$ of $Z$ with $\mathcal{Z} \subset Y \times \mathbb{P}^{N-1}$ and $\mathcal{Z}_{y_0}=Z$ for some closed point $y_0 \in Y$. To show that the fiber of the conormal over the vertex of the cone $(C_{\mathcal{Z}})_y=C_{\mathcal{Z}_y}$ is of maximal dimension for each $y$, by the correspondence established in Ex.\ \ref{correspondance}, it is enough to show that the dual of $\mathcal{Z}_y$ is a hypersurface in $\check{\mathbb{P}}^{N-1}$, i.e.\ $\mathrm{def}(\mathcal{Z}_y)=0$. 
If $\mathrm{def}(Z)=r \geq 1$, then by  \cite[Thm.\ 1.8 \rm{(i)}]{Tevelev}  $Z$ is ruled by projective subspaces of dimension $r$. But that's impossible because any morphism from $\mathbb{P}^1$ to a group variety is constant. Thus $\mathrm{def}(Z)=0$. We claim that $\mathrm{def}(\mathcal{Z}_y)=0$ for each $y$ close enough to $y_0$. 

Let $e$ be the identity element in $Z$. Because $Z$ is smooth, after $Y$ is replaced by sufficiently small neighborhood of $y_0$, there exists $Y' \subset Z$ passing through $e$ such that $r \colon Y' \rightarrow Y$ is \'etale. Consider the family $Z \times_{Y} Y' \rightarrow Y'$ with the section $y' \rightarrow (y',y')$ for $y' \in Y'$. Then for each $y'$ the fiber over $y'$ is isomorphic $Z_{r(y')}$. By \cite[Thm.\ 6.14]{Fogarty}  $Z \times_{Y} Y'$ is abelian; hence, there are no projective spaces contained in it and thus $\mathcal{Z}_y$ is not ruled. Therefore, $C_{\mathcal{Z}_y}$ is not dc.}
\end{example}

As mentioned before all Cohen--Macaulay codimension $2$ singularities of dimension at most $3$ are smoothable.  The result is sharp because the cone in $\mathbb{C}^6$ over the Segre embedding of $\mathbb{P}^{1} \times \mathbb{P}^2$ in $\mathbb{P}^5$, which is a Cohen--Macaulay codimension $2$ singularity, is rigid and hence not smoothable (this is the first example of a nonsmoothable singularity due to Thom). In fact, in dimension $4$ and higher Cohen--Macaulay codimension $2$ singularities are nonsmoothable unless they are complete intersections. Next we consider a class of  $4$-dimensional isolated nonsmoothable Cohen--Macaulay codimension $2$ singularities suggested to me by T.\ Gaffney. The generic deformations of this isolated singularity are singular but dc by Cor.\ \ref{CMGor}. For this class of singularities the restricted local volume associated with the the relative conormal spaces takes particularly nice form - it's a sum of a Buchsbaum--Rim multiplcity and a polar multiplicity.

\begin{example}
\rm{To each polynomial $h(w,x,y)$ which defines an isolated singularity in $(\mathbb{C}^3,0)$ associate the Cohen--Macaulay codimension $2$ singularity $X_h$ in $(\mathbb{C}^6,0)$ defined by the vanishing of the $2$ by $2$ minors of the following matrix

\begin{equation}\label{CM example}
F_h:=
\begin{pmatrix}
u & x  \\
v & y\\
h(w,x,y) & z
\end{pmatrix}.
\end{equation}
where we view the presentation matrix above as a map $F_{h}\colon (\mathbb{C}^6,0) \rightarrow \mathrm{Hom}(\mathbb{C}^2,\mathbb{C}^3)$. Observe that $X_h$ is not smoothable (see \cite[pg.\ 19--20]{Artin}). Note that $X_h=F_{h}^{-1}(\Sigma^2)$.  A one-parameter deformation $\mathcal{X}_h$ of $X_h$ with fibers $\mathcal{X}_{h}(t)$ is obtained by perturbing the entries of the presentation matrix (\ref{CM example}). Then by Cor.\ \ref{CMGor} the generic fiber $\mathcal{X}_{h}(t)$ is dc. 

Apply Cor.\ \ref{MPT} to the pair of modules: the relative Jacobian module $J_{\mathrm{rel}}(\mathcal{X}_h)$, and the normal module $N(\mathcal{X}_h)$ of $\mathcal{X}_h$ which in this case is $F_{h}^{*}(J(\Sigma^2))$, the pullback of the Jacobian module of $\Sigma^2$ (see  \cite[Prp.\  2.11]{GRu} and the discussion preceeding \cite[Prp.\ 2.4]{GR}). Observe that $J_{\mathrm{rel}}(\mathcal{X}_h)$ specializes to the Jacobian module $J(\mathcal{X}_h (t))$ of each fiber $\mathcal{X}_{h}(t)$. 
Because $\mathcal{X}_{h}(t)$ is dc for generic $t$, by \cite[Thm.\ 6.1]{Rangachev2} the module $N(\mathcal{X}_h (t))$ is integrally dependent on $J(\mathcal{X}_{h}(t))$. So, for generic $t$ the Buchsbaum--Rim multiplicity vanishes: $$e(J(\mathcal{X}_{h}(t)),N(\mathcal{X}_h (t)))=0.$$ For dimension reasons the codimension $4$ polar variety $\Gamma_{4}(\Sigma^2)$ is 
empty (see the discussion that follows  \cite[Prp.\ 2.14]{GRu}). Hence, by \cite[Thm.\ 2.5]{GR} $\Gamma_{4}(N(\mathcal{X}_h))$ is empty. Thus Cor.\ \ref{MPT} yields $e(J(X_{h}), N(X_h))=m_{4}(X_h)$
where $m_4(X_h)$ is the degree of the polar curve of $\mathcal{X}_h$. Applying the LVF, or more precisely (\ref{epsilonMPT}), with the observation that the generic term on the left-hand side vanishes by Prp.\ \ref{vanishing of DCS}, we get 
$$\varepsilon (0) =e(J(X_{h}),N(X_h)).$$
Thus we reduced the problem of computing the restricted local volume  to computing a relative Buchsbaum--Rim multiplicity, which as defined in Sct.\ \ref{Excess-Degree} can be computed as a sum of intersection numbers of the blowup of $\mathrm{Proj}(\mathcal{R}(N(X_h)))$ with center the ideal generated by $J(X_{h})$. First we need to find $\mathrm{Proj}(\mathcal{R}(N(X_h)))$. Recall that 

$$B_{X_h}:= \overline{\{(a,l_1,l_2)|a \in X_{h}, l_1 \in \mathbb{P}(\mathrm{Ker}(M_{X_{h}}(a))), l_2 \in \mathbb{P}(\mathrm{Ker}(M_{X_{h}}^t(a))\}}$$
which sits inside $X_{h}\times \mathbb{P}^{1} \times \mathbb{P}^2$ is set-theoretically equal to $\mathrm{Proj}(\mathcal{R}(N(X_h)))$ by  \cite[Thm.\ 3.7]{GR}. For $a \in X_h$ the morphism between $X_{h} \times \mathbb{P}^{1} \times \mathbb{P}^2$ and $X_{h} \times \mathbb{P}(\mathrm{Hom}(\mathbb{C}^2, \mathbb{C}^3))$ is given by $$(a,[S_1,S_2], [T_1,T_2,T_3]) \rightarrow  \left( a, \begin{bmatrix}
S_1T_1 & S_2T_1  \\
S_1t_2 & S_2T_2\\
S_1T_3 & S_2T_3
\end{bmatrix} \right) .$$

An easy computation shows that $B_{X_h}$ is cut out locally at the chart $[1,s],[1,t_1,t_2]$ from $\mathbb{C}[x,y,z,u,v,w,s,t_1,t_2]$ by $u+sx=0, v+sy=0,h+sz=0, t_{1}x+t_{2}y+z=0$. Hence $B_{X_h}$ is a complete intersection. Then $\mathrm{Proj}(\mathcal{R}(N(X_h)))$ set-theoretically is a complete intersection. But it is generically reduced because $X_h$ is reduced. So, $\mathrm{Proj}(\mathcal{R}(N(X_h)))$ is reduced. Thus 
\begin{equation}\label{projisom}
\mathrm{Proj}(\mathcal{R}(N(X_h))) \simeq \mathbb{C}[x,y,w][s,t_1,t_2]/ (h-s(t_1x+t_2y)). 
\end{equation}
where $\mathbb{C}[x,y,w]$ is localized at the origin. Our next task is to compute the ideal induced by $J(X_h)$ in $\mathcal{R}(N(X_h))$. Set $$G \colon \mathrm{Hom}(\mathbb{C}^2, \mathbb{C}^3) \rightarrow \mathbb{C}^2$$ such that $G^{-1}(0) = \Sigma^2$. From the chain rule $D(G \circ F_h)=(D(G) \circ F_h)\circ D(F_h)$ and $N(X_h)=F_{h}^{*}(J(\Sigma^2))$ it follows that the ideal $\mathcal{I}_{J}$ generated by $J(X_h)$ in $\mathcal{R}(N(X_h))$ is generated by $D(F_h)$. An easy computation shows that the generators for $\mathcal{I}_{J}$  are $t_1,t_2,st_1+h_x,st_2+h_y,s,h_w$. Thus by (\ref{projisom}) $\mathbb{V}(\mathcal{I}_{J}) = \mathrm{Spec}(\mathbb{C}[x,y,w]/\langle h,J(h) \rangle)$ where $J(h)$ is the {\it Jacobian ideal} of $h$. Therefore, the computation of the Buchsbaum--Rim multiplicity reduces to computing the Hilbert--Samuel multiplicity of $\mathcal{I}_{J}$ in $\mathbb{C}[x,y,w][s,t_1,t_2]/ (h-s(t_1x+t_2y))$. The latter ring is Cohen--Macaulay of dimension $5$. Therefore, if $\mathcal{I}_{J}'$ is a reduction of $\mathcal{I}_{J}$, i.e. an ideal generated by $5$ generic $\mathbb{C}$-linear combinations of the generators, then the Hilbert--Samuel multiplicity $e(\mathcal{I}_{J})$ of $\mathcal{I}_{J}$ is equal to $\dim_{\mathbb{C}}\mathbb{C}[x,y,w][s,t_1,t_2]/ (\mathcal{I}_{J}',h-s(t_1x+t_2y))$. For generators of $\mathcal{I}_{J}'$ we can choose $t_1,t_2,s$ and $2$ generic linear combinations of $h_x,h_y$ and $h_w$. Thus, 
$$e(\mathcal{I}_J) = \dim_{\mathbb{C}}\mathbb{C}[x,y,w]/(h,\alpha_{1}h_x+\alpha_{2}h_y+\alpha_{3}h_w, \beta_{1}h_x+\beta_{2}h_y+\beta_{3}h_w)$$ for generic $\alpha_i$ and $\beta_i$. Hence $e(\mathcal{I}_{J})$ is equal to the Hilbert--Samuel multiplicity $e(J(h))$ of $J(h)$ in $\mathbb{C}[x,y,w]$. Finally, by \cite[Cor.\ 1.5, pg.\ 320]{Teissier2}
$$\varepsilon (0)= \mu(\mathbb{V}(h)) + \mu(\mathbb{V}(h) \cap H)$$
where $\mu(\mathbb{V}(h))$ is the Milnor number of $\mathbb{V}(h)$ and $\mu(\mathbb{V}(h) \cap H)$ is the Milnor number of $\mathbb{V}(h) \cap H$ for a generic
hyperplane $H$ in $(\mathbb{C}^{3},0)$. 
}
\end{example}

\end{document}